\documentclass[]{amsart}
\usepackage{mathrsfs}
\usepackage{color}
\usepackage{bm}
\usepackage{mathrsfs}
\usepackage{amsmath}
\usepackage{amsfonts}
\usepackage{mathtools}
\usepackage{amssymb}
\usepackage{color}
\usepackage{graphicx}
\usepackage{hyperref}
\usepackage{cleveref}
\usepackage{enumitem}
\setlist[enumerate,1]{label=\textup{(\arabic*)}}

% THEOREM Environments ---------------------------------------------------

 \newtheorem{Theorem}{Theorem}[section]
 \newtheorem{Corollary}[Theorem]{Corollary}
 \newtheorem{Lemma}[Theorem]{Lemma}
 \newtheorem{Proposition}[Theorem]{Proposition}

 \newtheorem{Definition}[Theorem]{Definition}
\newtheorem{Question}[Theorem]{Question}

 \newtheorem{Remark}[Theorem]{Remark}

 \newtheorem{Example}[Theorem]{Example}
 \numberwithin{equation}{section}

% MATH -------------------------------------------------------------------
%\DeclareMathOperator{\D}{D}
\DeclareMathOperator{\Tn}{Tn}

\DeclareMathOperator{\ord}{ord}
\DeclareMathOperator{\sing}{sing}
\DeclareMathOperator{\reg}{reg}

\def\i{\mathrm{i}}

%%% ----------------------------------------------------------------------

\begin{document}
\title[The existence of valuative interpolation at a singular point]
{The existence of valuative interpolation at a singular point}

\author{Shijie Bao}
\address{Shijie Bao: Academy of Mathematics
	and Systems Science, Chinese Academy of Sciences, Beijing 100190, China.}
\email{bsjie@amss.ac.cn}

\author{Qi'an Guan}
\address{Qi'an Guan: School of Mathematical Sciences,
	Peking University, Beijing, 100871, China.}
\email{guanqian@math.pku.edu.cn}

\author{Zhitong Mi}
\address{Zhitong Mi: School of Mathematics and Statistics, Beijing Jiaotong University, Beijing,
	100044, China.
}
\email{zhitongmi@amss.ac.cn}

\author{Zheng Yuan}
\address{Zheng Yuan: State Key Laboratory of Mathematical Sciences, Academy of Mathematics and Systems Science, Chinese Academy of Sciences, Beijing 100190, China.}
\email{yuanzheng@amss.ac.cn}

\thanks{}

\subjclass[2020]{13A18, 14B05, 32U05}

\keywords{Singular point, valuation, relative type,  interpolation}

\date{\today}

\dedicatory{}

\commby{}
%%% ----------------------------------------------------------------------

%%% ----------------------------------------------------------------------

%%% ----------------------------------------------------------------------

\begin{abstract}
	The present paper studies the  existence of valuative interpolation on the local ring of an irreducible analytic subvariety at singular points. We firstly develop the concepts and methods of Zhou weights and Tian functions near singular points of  irreducible analytic subvarieties. By applying these tools, we  establish the necessary and sufficient conditions for the existence of valuative interpolations on the rings of germs of holomorphic functions and weakly holomorphic functions at a singular point.

	As applications, we characterize  the existence of valuative interpolations on the quotient ring of the ring of convergent power series in real  variables.  We also present separated necessary and sufficient conditions  for the existence of valuative interpolations on the quotient ring of polynomial rings with complex  coefficients and real coefficients. Furthermore, we show that the conditions become both necessary and sufficient under certain conditions on the zero set of the given polynomials.

\end{abstract}

%%% ----------------------------------------------------------------------
\maketitle

\tableofcontents
%%% ----------------------------------------------------------------------

\section{Introduction}

Valuation theory is deeply connected  to the  theory of singularities in several complex variables and complex algebraic geometry (see e.g. \cite{FJ04,FJ05,FJ05b,BFJ08,JON-Mus2012,BGMY-valuation}). 
Let $X$ be an analytic subvariety and $x$ a point of $X$.
Denote the ring of germs of holomorphic functions at $x$ in $X$ by $\mathcal{O}_{X,x}$, and $\mathcal{O}_{X,x}^*\coloneqq \mathcal{O}_{X,x}\setminus\{0\}$.
Recall that
a \emph{valuation} on $\mathcal{O}_{X,x}$ is a nonconstant map $\nu\colon \mathcal{O}_{X,x}^* \rightarrow\mathbb{R}_{\ge0}$ satisfying the following:
\begin{enumerate}
    \item $\nu(f g)=\nu(f)+\nu(g)$;
    \item $\nu(f+g)\ge\min\{\nu(f),\nu(g)\}$;
    \item $\nu(c)=0$, where $c\not=0$ is a constant function.
\end{enumerate}
 Similarly, we can define the valuations on the polynomial ring $\mathbb{C}[z_1,\ldots,z_n]/I$, where $I$ is a proper ideal of $\mathbb{C}[z_1,\ldots,z_n]$.

A natural problem is how to characterize the existence of valuative interpolation:

\begin{Question}
	\label{q:1}
Given any positive integer $m$, a finite set of elements $\{f_j\}_{1\le j\le m}$ in $\mathcal{O}_{X,x}$ (or  $\mathbb{C}[z_1,\ldots,z_n]/I$), and any finite nonnegative real numbers $\{a_j\}_{1\le j\le m}$, can one find necessary and sufficient conditions for the existence of the valuation $\nu$ on $\mathcal{O}_{X,x}$ (or on $\mathbb{C}[z_1,\ldots,z_n]/I$) such that $\nu(f_j)=a_j$ for all $j\in\{1,\ldots, m\}$?
\end{Question}

When  $x$ is a regular point, we \cite{BGMY-interpolation25A} established the  necessary and sufficient conditions for the existence of valuative interpolations on $O_{X,x}$. The proof in \cite{BGMY-interpolation25A} relies heavily on the properties of Zhou weights and Tian functions, where the  concepts and properties of Zhou weights and Tian functions in the smooth case were  established by the authors in \cite{BGMY-valuation}. As applications, characterizations of the existence of valuative interpolation in the polynomial ring case and the real case were also established in  \cite{BGMY-interpolation25A}. 

In the present paper, we consider the valuative interpolation problem when $x$ is a singular point of an irreducible analytic subvariety $X$. To do this, we generalize the concepts and properties of Zhou weights, Zhou valuations and Tian functions from the smooth case (presented in \cite{BGMY-valuation}) to the singular case. The properties of jumping numbers and relative types in the singular case are also studied. Based on these preparations, we  present  necessary and sufficient conditions for the existence of valuative interpolations on  rings of germs of holomorphic functions and weakly holomorphic functions at a singular point $x$ of an irreducible analytic subvariety $X$. As an application, we also give the  necessary and sufficient conditions separately for the existence of valuative interpolations on the quotient ring of polynomial rings with complex  coefficients and we show that the conditions become both necessary and sufficient when the intersection of the zero sets of the given polynomials is the origin $o$ contained in $X$. As a further application, we derive necessary and sufficient conditions for the valuative interpolation problem in the real case.

\subsection{Zhou weights and Zhou valuations near singular points}
\label{sec:1.1}

Let $X$ be an analytic subset of $\Omega\in\mathbb{C}^{n}$ with pure dimension $d$. Let 
\[dV_{X}\coloneqq \frac{1}{2^{d}d!}\bigwedge^d (\sum_{1\le j\le n}\sqrt{-1}dz_j\wedge d\bar z_j)\]
be the volume form on $X$ induced by the standard volume form on $\mathbb{C}^{n}$.

We call a Lebesgue function $\varphi$ on an analytic set $X$ a \emph{plurisubharmonic} function, if $\varphi$ is plurisubharmonic on $X_{\reg}$ and is bounded above near any $z\in X_{\sing}$. In this article, when we consider the values of plurisubharmonic functions on analytic sets, we ignore their values on singular points.

Let $(X,z_0)$ be an irreducible germ of an analytic set. 
Let $f_{0}=(f_{0,1},\cdots,f_{0,m})$ be a vector,
where $f_{0,1},\cdots,f_{0,m}$ are holomorphic functions near $z_0$.
Denote by $|f_{0}|^{2}=|f_{0,1}|^{2}+\cdots+|f_{0,m}|^{2}$.
Let $\varphi_{0}$ be a plurisubharmonic function near $z_0$,
such that $|f_{0}|^{2}e^{-2\varphi_{0}}$ is integrable near $z_0$.

\begin{Definition}
	\label{def:max_relat}
	We call a plurisubharmonic function $\Phi^{f_0,\varphi_0}_{z_0,\max}$ ($\Phi_{z_0,\max}$ for short) near $z_0$ on $X$ a \textbf{local Zhou weight related to $|f_{0}|^{2}e^{-2\varphi_{0}}$ near $z_0$},
	if the following three statements hold
	
	$(1)$ $|f_{0}|^{2}e^{-2\varphi_{0}}|z|^{2N_{0}}e^{-2\Phi_{z_0,\max}}$ is integrable near $z_0$
	for large enough $N_{0}\gg0$, where $|z|^2\coloneqq \sum_{1\le j\le n}|z_j|^2$;
	
	$(2)$ $|f_{0}|^{2}e^{-2\varphi_{0}}e^{-2\Phi_{z_0,\max}}$
	is not integrable near $z_0$;
	
	$(3)$ for any plurisubharmonic function $\varphi'\geq\Phi_{z_0,\max}+O(1)$ near $z_0$
	such that $|f_{0}|^{2}e^{-2\varphi_{0}}e^{-2\varphi'}$
	is not integrable near $z_0$,
	$\varphi'=\Phi_{z_0,\max}+O(1)$ holds.
\end{Definition}

Let $\varphi$ be a plurisubharmonic function near $z_0$. The existence of local Zhou weights follows from the strong openness property of multiplier ideal sheaves \cite{GZopen-c}.

\begin{Remark}
	\label{rem:max_existence}
	Assume that $|f_{0}|^{2}e^{-2\varphi_{0}}|z|^{2N_{0}}e^{-2\varphi}$ is integrable near $o$
	for large enough $N_{0}\gg0$,
	and $|f_{0}|^{2}e^{-2\varphi_{0}-2\varphi}$
	is not integrable near $z_0$.
	
	Then there exists a local Zhou weight $\Phi_{z_0,\max}$ related to $|f_{0}|^{2}e^{-2\varphi_{0}}$ near $z_0$
	such that $\Phi_{z_0,\max}\geq\varphi$.
	
	Moreover, for any local Zhou weight $\Phi_{z_0,\max}$, $\Phi_{z_0,\max}\geq N\log|z|+O(1)$ near $z_0$ holds for some $N\gg0$.
\end{Remark}
We recall the definition of \emph{weakly holomorphic functions} on a complex space.
\begin{Definition}[{see \cite[Definition 7.1]{demailly-book}}]
   Let $X$ be a complex space. A weakly holomorphic function $f$ on $X$ is a holomorphic function on $X_{\mathrm{\reg}}$ such that any point of $X_{\mathrm{\sing}}$ has a neighborhood $V$ for which $f$ is bounded on $X_{\mathrm{\reg}} \cap V$. We denote by  $\mathcal{O}^{w}_{X,x}$ the ring of germs of weakly holomorphic functions on neighborhoods of $x$ and $\mathcal{O}^{w}_X$ the associated sheaf.
\end{Definition}

When $X$ is normal at some point $x$, we know that $\mathcal{O}^{w}_{X,x}=\mathcal{O}_{X,x}$. In general, we only have $\mathcal{O}_{X,x}\subset\mathcal{O}^{w}_{X,x}$.

When $f_{0,i}$ ($i=1,\ldots,m$) are weakly holomorphic functions near $z_0$, we can also define the local Zhou weight related to $|f_0|^2e^{-2\varphi_0}$.
\begin{Remark}
 \label{rem:zhou weight for weak holomorphic function}
Let $f_{0,i}$ ($i=1,\ldots,m$) be weakly holomorphic functions near $z_0$ and $\varphi_{0}$ be a plurisubharmonic function near $z_0$,
such that $|f_{0}|^{2}e^{-2\varphi_{0}}$ is integrable near $z_0$. It follows from Theorem \ref{th:universal denominators} that there exists a
holomorphic function $\delta$ near $o$ such that $g_{0,i}\coloneqq \delta f_{0,i}$ ($i=1,\ldots,m$) is holomorphic near $z_0$. Then we define that a plurisubharmonic function $\Phi$ near $o$ is a local Zhou weight related  to $|f_0|^2e^{-2\varphi_0}$ if and only if $\Phi$ is a local Zhou weight related to $|g_0|^2e^{-2\log|\delta|-2\varphi_0}$.

Thus, by Remark \ref{rem:max_existence}, the local Zhou weights exists for weakly holomorphic functions.
\end{Remark}

When $z_0$ is a smooth point of $X$, to study the singularity of the plurisubharmonic functions near $z_0$,
Rashkovskii \cite{Rash06} introduced the concept of \textbf{relative type}
$$\sigma(\psi,\varphi)\coloneqq\sup\{c\ge0 \colon \psi\le c\varphi+O(1)\ \text{near\ } o\},$$
where $\psi$ is a  plurisubharmonic function near  $z_0$ and $\varphi$ is a maximal weights with an isolated singularity at $z_0$. The notation of relative type generalizes the classical Lelong number \cite{Lelong} and Kiselman number \cite{Kiselman}.

When $z_0$ is a singular point of $X$, we can generalize the definition of relative type to the singular case.
Let  $\psi,\varphi$ be any plurisubharmonic functions defined near
$z_0$. Define the \textbf{relative type} by
   \begin{equation*}
        \begin{split}
            \sigma(\psi,\varphi)\coloneqq \sup\{c\ge0 \colon &\psi\le c\varphi+O(1)\ \text{holds} \ \text{on}\  U\backslash X_{\sing}\\
            & \text{for some open neighborhood}\  U\  \text{of}\ z_0 \ \text{in} \ X \}.
        \end{split}
    \end{equation*}

When the $\varphi=\Phi_{z_0,\max}$ is a local Zhou weight,
we call the relative type $\sigma(\cdot,\Phi_{z_0,\max})$ the \textbf{Zhou number}.

Note that for any $b<\sigma(\psi,\Phi_{z_0,\max})$,
$|f_{0}|^{2}e^{-2\varphi_{0}}e^{-2\max\{\Phi_{z_0,\max},\frac{1}{b}\psi\}}$ is not integrable near $z_0$.
Then it follows from the \emph{strong openness property} of multiplier ideal sheaves (Theorem \ref{thm:SOC})
that $|f_{0}|^{2}e^{-2\varphi_{0}}e^{-2\max\big\{\Phi_{z_0,\max},\frac{1}{\sigma(\psi,\Phi_{z_0,\max})}\psi\big\}}$
is not integrable near $z_0$.
Note that
$$\max\left\{\Phi_{z_0,\max},\frac{1}{\sigma(\psi,\Phi_{z_0,\max})}\psi\right\}\geq\Phi_{z_0,\max}.$$
Then
$$\max\left\{\Phi_{z_0,\max},\frac{1}{\sigma(\psi,\Phi_{z_0,\max})}\psi\right\}=\Phi_{z_0,\max}+O(1),$$
which implies that
$$\Phi_{z_0,\max}\geq\frac{1}{\sigma(\psi,\Phi_{z_0,\max})}\psi+O(1),$$
i.e.,
\begin{equation*}
	\psi\leq \sigma(\psi,\Phi_{z_0,\max})\Phi_{z_0,\max}+O(1).
\end{equation*}

For the Lelong numbers and the Kiselman numbers, there are expressions in integral
form (see \cite{demailly-book}). Now,  we show that Zhou numbers also have  expressions in integral form.
\begin{Theorem}
	\label{thm:expression of relative types}
	Let $\Phi_{z_0,\max}$ be a local Zhou weight related to $|f_{0}|^{2}e^{-2\varphi_{0}}$ near $z_0$.
	Then for any plurisubharmonic function $\psi$ near $z_0$ satisfying $\psi\le c\log|z|+O(1)$ near $z_0$ for some $c>0$, we have
	\begin{equation}\nonumber
		\begin{split}
			\sigma(\psi,\Phi_{z_0,\max})
			&=
			\lim_{t\to+\infty}
			\frac{\int_{\{\Phi_{z_0,\max}<-t\}}|f_{0}|^{2}e^{-2\varphi_{0}}(-\psi)}
			{t\int_{\{\Phi_{z_0,\max}<-t\}}|f_{0}|^{2}e^{-2\varphi_{0}}}.
		\end{split}
	\end{equation}
\end{Theorem}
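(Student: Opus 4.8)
The plan is to reduce the statement to the corresponding integral formula for relative types in the smooth case, which was established in \cite{BGMY-valuation}, by pulling everything back under a resolution of singularities; the extra hypothesis $\psi\le c\log|z|+O(1)$ is exactly what is needed to keep the pulled-back data under control near the exceptional divisor. First I would fix a log resolution $\pi\colon \widetilde X\to X$ of $(X,z_0)$ together with the ideal sheaves attached to $f_0$, $\varphi_0$ and $\psi$, so that on $\widetilde X$ the functions $\pi^*\Phi_{z_0,\max}$, $\pi^*\psi$ and $\pi^*\varphi_0$ are, up to bounded terms, combinations of $\log$ of monomials in local coordinates, while $|f_0|^2 e^{-2\varphi_0}\,dV_X$ pulls back to a density with analytic zero/pole behaviour. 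A standard point to check is that $\pi^*\Phi_{z_0,\max}$ is a local Zhou weight on $\widetilde X$ (in the smooth sense) relative to the pulled-back data; this uses that integrability is preserved under $\pi$ (change of variables) and that $\Phi_{z_0,\max}\ge N\log|z|+O(1)$ from Remark~\ref{rem:max_existence}, so that the singularity of $\pi^*\Phi_{z_0,\max}$ is supported on $\pi^{-1}(z_0)$ and has the required ``isolated singularity over $z_0$'' form upstairs after possibly passing to toroidal charts.

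Granting that, the key step is the identity
\[
\sigma(\psi,\Phi_{z_0,\max})=\sigma\bigl(\pi^*\psi,\pi^*\Phi_{z_0,\max}\bigr),
\]
which is immediate from the definition of relative type as a sup of comparison constants, since $\pi$ is a biholomorphism over $X_{\reg}\setminus(\text{a measure-zero set})$ and the defining inequalities $\psi\le c\varphi+O(1)$ are tested on $U\setminus X_{\sing}$; the condition $\psi\le c\log|z|+O(1)$ guarantees $\sigma(\psi,\Phi_{z_0,\max})>0$ and that $\pi^*\psi$ has at worst logarithmic poles along $\pi^{-1}(z_0)$, so the smooth-case theorem from \cite{BGMY-valuation} applies to the triple $(\pi^*\psi,\pi^*\Phi_{z_0,\max},|f_0|^2e^{-2\varphi_0}\text{ pulled back})$ and yields
\[
\sigma\bigl(\pi^*\psi,\pi^*\Phi_{z_0,\max}\bigr)
=\lim_{t\to+\infty}
\frac{\int_{\{\pi^*\Phi_{z_0,\max}<-t\}}\pi^*\bigl(|f_0|^2e^{-2\varphi_0}\bigr)(-\pi^*\psi)}
{t\int_{\{\pi^*\Phi_{z_0,\max}<-t\}}\pi^*\bigl(|f_0|^2e^{-2\varphi_0}\bigr)}.
\]

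Finally I would transport the right-hand side back to $X$: the sublevel set $\{\pi^*\Phi_{z_0,\max}<-t\}$ is (up to a set of measure zero, namely $\pi^{-1}(X_{\sing})$ and the exceptional locus) the preimage of $\{\Phi_{z_0,\max}<-t\}$, and the change-of-variables formula for $\pi$ identifies $\int_{\pi^{-1}(A)}\pi^*(|f_0|^2e^{-2\varphi_0})\,\pi^*(-\psi)$ with $\int_A |f_0|^2e^{-2\varphi_0}(-\psi)\,dV_X$ for any Borel $A$; this gives exactly the claimed integral formula. The main obstacle I anticipate is the first step—verifying that after the resolution the pulled-back weight genuinely qualifies as a Zhou weight with an isolated singularity in the smooth sense, and dealing cleanly with the fact that $\widetilde X$ need not be smooth but only have toroidal/quotient singularities if one insists on a minimal resolution; one can sidestep this either by using Hironaka to get $\widetilde X$ smooth at the cost of a more complicated exceptional divisor, or by proving the smooth-case integral formula is local enough to run the coarea/limit argument directly on $X_{\reg}$ using only the estimate $\Phi_{z_0,\max}\ge N\log|z|+O(1)$ and the openness property (Theorem~\ref{thm:SOC}), without ever leaving $X$. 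The latter route is probably cleanest: one repeats the proof of the smooth case verbatim, replacing ``Lebesgue measure'' by $|f_0|^2e^{-2\varphi_0}\,dV_X$ on $X_{\reg}$ and noting that every analytic ingredient used there (strong openness, the comparison $\psi\le\sigma\,\Phi_{z_0,\max}+O(1)$ derived in the paragraph preceding the theorem, and Fubini in $t$) is available here.
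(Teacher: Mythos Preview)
Your main proposed route---pull back via a resolution and invoke the smooth-case integral formula from \cite{BGMY-valuation}---has the gap you yourself flag, and it is a genuine one: after resolution, the singularity of $\pi^*\Phi_{z_0,\max}$ is supported on the compact exceptional fiber $Z_0=\widetilde X\cap\pi^{-1}(z_0)$, which is generally positive-dimensional, so $\pi^*\Phi_{z_0,\max}$ is \emph{not} a local Zhou weight at any single point of $\widetilde X$ in the sense of \cite{BGMY-valuation}, and there is no ``compact-fiber'' version of that theorem to invoke. Likewise, the identity $\sigma(\psi,\Phi_{z_0,\max})=\sigma(\pi^*\psi,\pi^*\Phi_{z_0,\max})$ only makes sense when the right side is the relative type over $Z_0$ (as in Definition~\ref{def: relative type in singular case}), not at a point. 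This cannot be patched by choosing a better resolution.

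Your alternative route---rerun the smooth-case argument directly on $X$---is exactly what the paper does. The paper proves two propositions in the singular setting: Proposition~\ref{p:sidediv} sandwiches the $\liminf$ and $\limsup$ of the integral ratio between the one-sided derivatives $\mathrm{Tn}'_{\pm}(0)$ of the Tian function (via Jensen's inequality and the sublevel-set asymptotics of Lemma~\ref{lem:jump_asyp_C}), and Proposition~\ref{p:relative=derivative} shows that $\mathrm{Tn}$ is differentiable at $0$ with $\mathrm{Tn}'(0)=\sigma(\psi,\Phi_{z_0,\max})$ (using the maximality in Definition~\ref{def:max_relat} together with Lemma~\ref{lem:jialidun0131} and strong openness). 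Resolution of singularities enters only in the auxiliary Lemma~\ref{lem:JM}, not in the main flow. So your instinct that ``the latter route is probably cleanest'' is correct, and that is precisely the paper's approach; you should commit to it and drop the resolution strategy.
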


Denote by
$$\nu(f,\Phi_{z_0,\max})\coloneqq \sigma(\log|f|,\Phi_{z_0,\max})$$
for any $(f,z_0)\in\mathcal{O}_{X,z_0}$.  By Theorem \ref{thm:expression of relative types} and the definition of $\nu(\cdot,\Phi_{z_0,\max})$, we see that $\nu(\cdot,\Phi_{z_0,\max})$ is a \emph{valuation} of $\mathcal{O}_{X,z_0}$ for any local Zhou weight $\Phi_{z_0,\max}$, and we call it \textbf{Zhou valuation}.
\begin{Corollary}
	\label{coro:valuation}For any local Zhou weight $\Phi_{z_0,\max}$ near $o$, $\nu(\cdot,\Phi_{z_0,\max})\colon\mathcal{O}_{X,z_0}\to\mathbb{R}_{\ge0}$ satisfies the following:
	\begin{enumerate}
	    \item $\nu(fg,\Phi_{z_0,\max})=\nu(f,\Phi_{z_0,\max})+\nu(g,\Phi_{z_0,\max})$;
	    \item $\nu(f+g,\Phi_{z_0,\max})\ge\min\big\{\nu(f,\Phi_{z_0,\max}),\nu(g,\Phi_{z_0,\max})\big\}$;
	    \item $\nu(f,\Phi_{z_0,\max})=0$ if and only if $f(z_0)\not=0$.
	\end{enumerate}
\end{Corollary}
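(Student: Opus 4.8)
The plan is to verify the three properties in turn, establishing (3) first since it feeds into (1). For (3): if $f(z_0)\ne 0$ then $\log|f|$ is bounded near $z_0$, i.e. $\log|f|=O(1)$; on the other hand condition $(2)$ of Definition~\ref{def:max_relat}, together with the standing integrability of $|f_0|^2e^{-2\varphi_0}$, forces $\Phi_{z_0,\max}$ to be unbounded from below near $z_0$ (otherwise $e^{-2\Phi_{z_0,\max}}$ would be bounded and $|f_0|^2e^{-2\varphi_0}e^{-2\Phi_{z_0,\max}}$ integrable). Hence no bound $O(1)\le c\Phi_{z_0,\max}+O(1)$ can hold with $c>0$, so $\nu(f,\Phi_{z_0,\max})=\sigma(\log|f|,\Phi_{z_0,\max})=0$. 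Conversely, if $f(z_0)=0$, lifting $f$ to the ambient $\mathbb{C}^n$ and applying Hadamard's lemma gives $|f|\le C|z|$ near $z_0$ on $X$, so $\log|f|\le\log|z|+O(1)$; combined with $\Phi_{z_0,\max}\ge N\log|z|+O(1)$ from Remark~\ref{rem:max_existence} (so that $\log|z|\le\frac1N\Phi_{z_0,\max}+O(1)$), this yields $\log|f|\le\frac1N\Phi_{z_0,\max}+O(1)$ and thus $\nu(f,\Phi_{z_0,\max})\ge\frac1N>0$.

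For (2): from $|f+g|\le 2\max\{|f|,|g|\}$ we get $\log|f+g|\le\max\{\log|f|,\log|g|\}+O(1)$ near $z_0$. If $c_1<\nu(f,\Phi_{z_0,\max})$ and $c_2<\nu(g,\Phi_{z_0,\max})$, then $\log|f|\le c_1\Phi_{z_0,\max}+O(1)$ and $\log|g|\le c_2\Phi_{z_0,\max}+O(1)$, and since $\Phi_{z_0,\max}$ is bounded above, $\max\{c_1\Phi_{z_0,\max},c_2\Phi_{z_0,\max}\}=\min\{c_1,c_2\}\Phi_{z_0,\max}+O(1)$; hence $\log|f+g|\le\min\{c_1,c_2\}\Phi_{z_0,\max}+O(1)$, so $\nu(f+g,\Phi_{z_0,\max})\ge\min\{c_1,c_2\}$, and letting $c_1,c_2$ increase to $\nu(f,\Phi_{z_0,\max}),\nu(g,\Phi_{z_0,\max})$ gives the claim. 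Only the definition of relative type is used here.

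For (1) (the cases $f\equiv 0$ or $g\equiv 0$ being immediate): the inequality $\nu(fg,\Phi_{z_0,\max})\ge\nu(f,\Phi_{z_0,\max})+\nu(g,\Phi_{z_0,\max})$ follows at once from the definition, since $\log|f|\le c_1\Phi_{z_0,\max}+O(1)$ and $\log|g|\le c_2\Phi_{z_0,\max}+O(1)$ force $\log|fg|=\log|f|+\log|g|\le(c_1+c_2)\Phi_{z_0,\max}+O(1)$. For the reverse inequality, if $f(z_0)\ne 0$ (the case $g(z_0)\ne 0$ being symmetric), then $\log|fg|=\log|g|+O(1)$, so by $O(1)$-invariance of the relative type and property (3), $\nu(fg,\Phi_{z_0,\max})=\nu(g,\Phi_{z_0,\max})=\nu(f,\Phi_{z_0,\max})+\nu(g,\Phi_{z_0,\max})$. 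If $f(z_0)=g(z_0)=0$, then by the estimate from (3) we have $\log|f|,\log|g|\le\log|z|+O(1)$ and $\log|fg|\le 2\log|z|+O(1)$, so all three of $\log|f|,\log|g|,\log|fg|$ satisfy the hypothesis of Theorem~\ref{thm:expression of relative types}. Writing the integral formula of that theorem for $\sigma(\log|fg|,\Phi_{z_0,\max})=\sigma(\log|f|+\log|g|,\Phi_{z_0,\max})$ and using linearity of the integral in the numerator, the quotient splits as the sum of the two quotients computing $\sigma(\log|f|,\Phi_{z_0,\max})$ and $\sigma(\log|g|,\Phi_{z_0,\max})$; each of these converges by Theorem~\ref{thm:expression of relative types}, so the limit of the sum equals the sum of the limits, giving $\nu(fg,\Phi_{z_0,\max})=\nu(f,\Phi_{z_0,\max})+\nu(g,\Phi_{z_0,\max})$.

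The main obstacle is the reverse inequality in (1): the integral representation of Theorem~\ref{thm:expression of relative types} is only available for weights bounded above by $c\log|z|+O(1)$, which fails for $\log|f|$ when $f(z_0)\ne 0$, so one is forced into the case split above and must lean on $O(1)$-invariance of $\sigma$ and on property (3) to dispatch the non-vanishing cases; some care is also needed to ensure that only convergent limits are being added when the integral quotient is split.
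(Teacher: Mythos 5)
Your proposal is correct and takes essentially the same route as the paper: establish (3) from the two boundedness observations (the local Zhou weight is $\ge N\log|z|+O(1)$ and also unbounded below), derive (1) from the integral formula of Theorem~\ref{thm:expression of relative types} together with (3) to handle the case where one factor is nonvanishing, and obtain (2) from the elementary estimate $\log|f+g|\le\max\{\log|f|,\log|g|\}+O(1)$. You merely spell out the case analysis for (1) that the paper's one-line citation of ``Theorem~\ref{thm:expression of relative types} and statement~(3)'' leaves implicit.
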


Let $G$ be a holomorphic function near $z_0$. We recall the definition of jumping number (see \cite{JON-Mus2012,JON-Mus2014})     
$$c^G_{z_0}(\Phi_{z_0,\max})\coloneqq \sup\big\{c:|G|^{2}e^{-2c\Phi_{z_0,\max}}\text{ is integrable near }z_0\big\}.$$ When $G=1$,  the jumping number was also called complex singularity exponent and denote $c_{z_0}(\Phi_{z_0,\max})\coloneqq c^1_{z_0}(\Phi_{z_0,\max})$ (see \cite{tian87,demailly2010}).

\begin{Theorem}
	\label{thm:valu-jump}Let $\Phi_{z_0,\max}$  be a local Zhou weight near $z_0$.
	For any holomorphic function $G$ near $z_0$,	
	we have the following relation between  $c^G_{z_0}(\Phi_{z_0,\max})$ and  $\sigma(G,\Phi_{z_0,\max})$,
	\begin{equation*}
		\begin{split}
			\nu(G,\Phi_{z_0,\max}) + c_{z_0}(\Phi_{z_0,\max})
			&\le c^G_{z_0}(\Phi_{z_0,\max}) \\
			&\le  \nu(G,\Phi_{z_0,\max})-\sigma(\log|f_0|,\Phi_{z_0,\max})+1+\sigma(\varphi_0,\Phi_{z_0,\max}).
		\end{split}
	\end{equation*}
	Especially, if $|f_0|^2e^{-2\varphi_0}\equiv1$, we have
	$$\nu(G,\Phi_{z_0,\max})+1=c^G_{z_0}(\Phi_{z_0,\max}).$$
\end{Theorem}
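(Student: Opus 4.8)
The plan is to translate every integrability condition into a growth rate of sublevel sets of $\Phi_{z_0,\max}$. For a holomorphic $G$ near $z_0$ set $\mu_G(t):=\int_{\{\Phi_{z_0,\max}<-t\}}|G|^2\,dV_X$, the integral being over a fixed small neighborhood of $z_0$ in $X$, and normalize $\Phi_{z_0,\max}\le0$ (harmless, as everything is invariant under adding a constant). A layer-cake expansion of $\int|G|^2e^{-2c\Phi_{z_0,\max}}\,dV_X$, together with the monotonicity of $t\mapsto\mu_G(t)$, gives $c^G_{z_0}(\Phi_{z_0,\max})=-\tfrac12\limsup_{t\to+\infty}t^{-1}\log\mu_G(t)$, and in particular $c_{z_0}(\Phi_{z_0,\max})=-\tfrac12\limsup_{t\to+\infty}t^{-1}\log\mathrm{Vol}(\{\Phi_{z_0,\max}<-t\})$. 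The same expansion applied to the measure $|f_0|^2e^{-2\varphi_0}\,dV_X$ shows that, with $\rho(t):=\int_{\{\Phi_{z_0,\max}<-t\}}|f_0|^2e^{-2\varphi_0}\,dV_X$, the number $\sup\{c:|f_0|^2e^{-2\varphi_0}e^{-2c\Phi_{z_0,\max}}\text{ integrable near }z_0\}$ equals $-\tfrac12\limsup_{t}t^{-1}\log\rho(t)$; and this number is $1$: the bound $\le1$ is property $(2)$ of Definition \ref{def:max_relat} (for $c\ge1$ one has $e^{-2c\Phi_{z_0,\max}}\ge e^{-2\Phi_{z_0,\max}}$), while $\ge1$ follows by applying property $(3)$ to $\varphi'=(1-\epsilon)\Phi_{z_0,\max}$, which is plurisubharmonic, satisfies $\varphi'\ge\Phi_{z_0,\max}+O(1)$, and is not $\Phi_{z_0,\max}+O(1)$ because $\Phi_{z_0,\max}$ is unbounded below (else $e^{-2\Phi_{z_0,\max}}$ would be integrable, contradicting $(2)$). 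Hence $\limsup_{t}t^{-1}\log\rho(t)=-2$.

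For the lower bound, since the supremum defining the relative type is attained (recorded just before Theorem \ref{thm:expression of relative types}), $\log|G|\le\nu(G,\Phi_{z_0,\max})\Phi_{z_0,\max}+O(1)$ near $z_0$ off $X_{\sing}$, so $|G|^2\le Ce^{2\nu(G,\Phi_{z_0,\max})\Phi_{z_0,\max}}\le Ce^{-2\nu(G,\Phi_{z_0,\max})t}$ on $\{\Phi_{z_0,\max}<-t\}$; thus $\mu_G(t)\le Ce^{-2\nu(G,\Phi_{z_0,\max})t}\mathrm{Vol}(\{\Phi_{z_0,\max}<-t\})$, and taking $-\tfrac12\limsup_{t}t^{-1}\log(\cdot)$ and using the formulas of the first paragraph yields $c^G_{z_0}(\Phi_{z_0,\max})\ge\nu(G,\Phi_{z_0,\max})+c_{z_0}(\Phi_{z_0,\max})$.

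For the upper bound, apply Jensen's inequality to the probability measure $\rho(t)^{-1}|f_0|^2e^{-2\varphi_0}\,dV_X$ on $\{\Phi_{z_0,\max}<-t\}$ and the concave function $\log$ evaluated at $|G|^2/(|f_0|^2e^{-2\varphi_0})$, obtaining
\[
\log\frac{\mu_G(t)}{\rho(t)}\ \ge\ \frac{2}{\rho(t)}\int_{\{\Phi_{z_0,\max}<-t\}}\bigl(\log|G|-\log|f_0|+\varphi_0\bigr)\,|f_0|^2e^{-2\varphi_0}\,dV_X .
\]
By Theorem \ref{thm:expression of relative types} applied to the plurisubharmonic functions $\log|G|$, $\log|f_0|$ and $\varphi_0$ (when one of these does not tend to $-\infty$, so the hypothesis of that theorem is not literally met, the corresponding relative type vanishes and the contribution is $O(\rho(t))=o(t\rho(t))$, which suffices), the right-hand side equals $2t\bigl(-\nu(G,\Phi_{z_0,\max})+\sigma(\log|f_0|,\Phi_{z_0,\max})-\sigma(\varphi_0,\Phi_{z_0,\max})+o(1)\bigr)$. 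Dividing by $t$, passing to $\limsup_{t\to+\infty}$, and using $\limsup_{t}t^{-1}\log\rho(t)=-2$ together with the integral formula for $c^G_{z_0}(\Phi_{z_0,\max})$, we get exactly $c^G_{z_0}(\Phi_{z_0,\max})\le\nu(G,\Phi_{z_0,\max})-\sigma(\log|f_0|,\Phi_{z_0,\max})+1+\sigma(\varphi_0,\Phi_{z_0,\max})$.

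Finally, if $|f_0|^2e^{-2\varphi_0}\equiv1$ then $\varphi_0=\log|f_0|$ off $X_{\sing}$, so $\sigma(\log|f_0|,\Phi_{z_0,\max})=\sigma(\varphi_0,\Phi_{z_0,\max})$ and the two displayed bounds collapse to $\nu(G,\Phi_{z_0,\max})+c_{z_0}(\Phi_{z_0,\max})\le c^G_{z_0}(\Phi_{z_0,\max})\le\nu(G,\Phi_{z_0,\max})+1$; moreover $c_{z_0}(\Phi_{z_0,\max})=1$ here, by property $(2)$ (giving $\le1$) and the $\varphi'=(1-\epsilon)\Phi_{z_0,\max}$ argument of the first paragraph (giving $\ge1$), so $\nu(G,\Phi_{z_0,\max})+1=c^G_{z_0}(\Phi_{z_0,\max})$. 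I expect the delicate points to be: (i) the passage to the sublevel-set formula for $c^G_{z_0}(\Phi_{z_0,\max})$ and the bookkeeping of the several $\limsup$'s when the asymptotics are recombined; and (ii) verifying that the $o(1)$ errors produced by Theorem \ref{thm:expression of relative types} are negligible after division by $t$ and that the Jensen integral is finite, which needs a little care with the integrable but possibly unbounded weight $|f_0|^2e^{-2\varphi_0}$ — dealt with via the strong openness property.
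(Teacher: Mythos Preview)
Your argument is correct and takes a genuinely different route from the paper's. The paper first reduces to $\varphi_0\equiv0$ via Proposition~\ref{thm:main_value1}(2) (the fact that $(1+\sigma(\varphi_0,\Phi_{z_0,\max}))\Phi_{z_0,\max}$ is again a Zhou weight, now related to $|f_0|^2$), and then analyses the two-variable Tian function $\mathrm{Tn}(s,t)=\sup\{c:|G|^{2s}|f_0|^{2t}e^{-2c\Phi_{z_0,\max}}\text{ integrable}\}$: the chain $\mathrm{Tn}(1,0)\le\mathrm{Tn}(1,1)-\sigma(\log|f_0|,\Phi_{z_0,\max})$ together with $\mathrm{Tn}(1,1)=\mathrm{Tn}(0,1)+\nu(G,\Phi_{z_0,\max})$ (from Proposition~\ref{p:relative=derivative}) and $\mathrm{Tn}(0,1)=1$ gives the upper bound. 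You bypass Proposition~\ref{thm:main_value1} entirely: the layer-cake identity $c^G_{z_0}(\Phi_{z_0,\max})=-\tfrac12\limsup_t t^{-1}\log\mu_G(t)$ reduces everything to sublevel-set decay; the lower bound then falls out of $|G|^2\le Ce^{2\nu(G,\Phi_{z_0,\max})\Phi_{z_0,\max}}$, and the upper bound from Jensen plus the integral expression Theorem~\ref{thm:expression of relative types}. The Jensen step is essentially the computation already inside the proof of Proposition~\ref{p:sidediv}, so the two proofs share machinery, but yours is more direct and does not need the auxiliary fact that rescalings of Zhou weights are again Zhou weights. One small caveat: your workaround for the hypothesis $\psi\le c\log|z|+O(1)$ of Theorem~\ref{thm:expression of relative types} only covers the bounded-below case; a general $\varphi_0$ unbounded below but not logarithmically dominated is not literally treated---but the paper's own proof invokes Proposition~\ref{p:relative=derivative} with $\psi=\varphi_0$ under the same implicit assumption, so this is not a gap you have introduced.
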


\subsection{Interpolation problem}
\label{sec:1.2}
Let $X$ be an analytic variety with pure dimension $d$ contained in $\mathbb{C}^{n}$.  Assume that the origin $o\in X_{\text{sing}}$ and $(X,o)$ is an irreducible germ of an analytic set. The following theorem presents a criterion for the valuative interpolation problem on $\mathcal{O}_{X,o}$.

\begin{Theorem}\label{thm:interpolation}
Let $f_j\in \mathcal{O}^*_{X,o}$ for $0\le j\le m$. Let $a_0=0$ and $\{a_j\}_{1\le j\le m}$ be positive numbers. The following two statements are equivalent:
	\begin{enumerate}
	    \item There exists a valuation $\nu$ on $\mathcal{O}_{X,o}$  such that $\nu(f_j)=a_j$ for all $0\le j\le m$;
	    \item $\sigma(\log|F|,\varphi)=\sum_{1\le j\le m}a_j$, where \[F\coloneqq \prod_{0\le j\le m}f_j, \quad \varphi\coloneqq \log\Big(\sum_{1\le j\le m}|f_j|^{\frac{1}{a_j}}\Big).\]
	\end{enumerate}
\end{Theorem}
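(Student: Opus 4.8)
The plan is to run both implications through the relative type $\sigma(\,\cdot\,,\varphi)$, regarding $\varphi$ as a ``monomial'' weight adapted to the data $\{(f_j,a_j)\}$. One may assume $f_j(o)=0$ for all $1\le j\le m$: otherwise some $|f_j|^{1/a_j}$ is bounded below near $o$, so $\varphi=O(1)$ and $\sigma(\log|F|,\varphi)=+\infty\neq\sum_{1\le j\le m}a_j$, while at the same time $f_j$ is a unit of $\mathcal{O}_{X,o}$ and no valuation can satisfy $\nu(f_j)=a_j>0$, so both statements fail. Next, the inequality
\[\sigma(\log|F|,\varphi)\ \ge\ \sum_{1\le j\le m}a_j\]
holds unconditionally: from $|f_j|^{1/a_j}\le\sum_{k}|f_k|^{1/a_k}=e^{\varphi}$ we get $\log|f_j|\le a_j\varphi$ pointwise near $o$, and since $\log|f_0|=O(1)$ this yields $\log|F|\le\big(\sum_{1\le j\le m}a_j\big)\varphi+O(1)$. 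Hence statement $(2)$ is equivalent to the single inequality $\sigma(\log|F|,\varphi)\le\sum_{1\le j\le m}a_j$, and the theorem reduces to proving that this inequality holds precisely when the valuation of $(1)$ exists.

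For $(1)\Rightarrow(2)$, i.e. deducing $\sigma(\log|F|,\varphi)\le\sum_{1\le j\le m}a_j$ from a valuation $\nu$, I will use the comparison principle: \emph{if $\mu$ is a valuation on $\mathcal{O}_{X,o}$ with $\mu(f_j)\ge a_j$ for all $1\le j\le m$, then $\mu(g)\ge\sigma(\log|g|,\varphi)$ for every $g\in\mathcal{O}^*_{X,o}$}. Applied to $\mu=\nu$ and $g=F$ it gives $\sigma(\log|F|,\varphi)\le\nu(F)=\nu(f_0)+\sum_{1\le j\le m}\nu(f_j)=\sum_{1\le j\le m}a_j$, using $\nu(f_0)=a_0=0$. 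The comparison principle is proved by an integral-closure argument: if $c<\sigma(\log|g|,\varphi)$ is rational then $|g|^{2}\lesssim e^{2c\varphi}\sim\big(\sum_k|f_k|^{1/a_k}\big)^{2c}$ near $o$ on $X$; clearing denominators (rewriting $|f_k|^{1/a_k}$ in terms of honest holomorphic functions) this becomes $|g^{M}|\lesssim|\mathfrak{c}^{\,p}|$ on $X$ near $o$, for a suitable power $M$, an integer $p$, and the ideal $\mathfrak{c}$ generated by the relevant powers of the $f_k$; by the {\L}ojasiewicz/integral-closure criterion on the reduced germ $(X,o)$ this forces $g^{M}\in\overline{\mathfrak{c}^{\,p}}$, so $\mu(g^{M})\ge p\,\mu(\mathfrak{c})\ge cM$ because $\mu(f_k)\ge a_k$; dividing by $M$ and letting $c\uparrow\sigma(\log|g|,\varphi)$ gives the bound. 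The case of irrational $a_j$ follows by approximating the exponents $1/a_j$ from below by rationals and passing to the limit.

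For the converse $\sigma(\log|F|,\varphi)\le\sum_{1\le j\le m}a_j\Rightarrow(1)$ I will \emph{construct} a valuation realising the values $a_j$. Take a log resolution $\pi\colon Y\to X$, over a neighbourhood of $o$ and with $Y$ smooth, on which the divisor of $\prod_{0\le j\le m}f_j$ becomes simple normal crossings; let $E_i$ denote the (exceptional and strict-transform) components meeting $\pi^{-1}(o)$, put $a_{ij}:=\ord_{E_i}(\pi^{*}f_j)$, and for $\alpha$ in the dual cone $\Delta$ of the resolution let $\nu_\alpha=\sum_i\alpha_i\ord_{E_i}$ be the associated quasi-monomial valuation, so $\nu_\alpha(f_j)=\sum_i\alpha_i a_{ij}$. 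Since upstairs $\pi^{*}\varphi=\max_j\tfrac1{a_j}\log|\pi^{*}f_j|+O(1)$ is a maximum of monomial functions, a standard computation in these coordinates yields the resolution formula
\[\sigma(\log|g|,\varphi)=\inf_{\alpha\in\Delta}\frac{\nu_\alpha(g)}{\ell(\alpha)},\qquad \ell(\alpha):=\min_{1\le j\le m}\frac{\nu_\alpha(f_j)}{a_j}.\]
For every $\alpha$ one has $\nu_\alpha(f_j)/\ell(\alpha)\ge a_j$ $(1\le j\le m)$ and $\nu_\alpha(f_0)/\ell(\alpha)\ge 0$, hence $\nu_\alpha(F)/\ell(\alpha)\ge\sum_{1\le j\le m}a_j$; this re-proves the unconditional inequality, and under the hypothesis $\inf_\alpha\nu_\alpha(F)/\ell(\alpha)=\sum_{1\le j\le m}a_j$. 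After the usual normalisation of $\Delta$ the infimum is attained at some $\alpha^{*}$, and equality forces $\nu_{\alpha^{*}}(f_j)=a_j\,\ell(\alpha^{*})$ for $1\le j\le m$ together with $\nu_{\alpha^{*}}(f_0)=0$. Then $\nu:=\ell(\alpha^{*})^{-1}\nu_{\alpha^{*}}$, restricted to $\mathcal{O}_{X,o}$, is a valuation with $\nu(f_j)=a_j$ for all $0\le j\le m$, which is $(1)$. Alternatively one can realise such a valuation as a Zhou valuation $\nu(\,\cdot\,,\Phi_{o,\max})$ attached to a local Zhou weight $\Phi_{o,\max}\ge\varphi$ manufactured from $\varphi$ by the strong-openness machinery of Remark~\ref{rem:max_existence}, arranged so that $\sigma(\log|f_j|,\Phi_{o,\max})=\sigma(\log|f_j|,\varphi)=a_j$ (using the attainment $\sigma(\log|f_j|,\varphi)\,\varphi\ge\log|f_j|+O(1)$), with Corollary~\ref{coro:valuation} ensuring $\nu(\,\cdot\,,\Phi_{o,\max})$ is a valuation.

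The main obstacle is the direction $\Rightarrow(1)$: extracting an \emph{honest} valuation from the numerical identity. The point is that the optimal valuation in the resolution formula is in general quasi-monomial rather than divisorial---no single exceptional divisor need compute the relative types of all the $f_j$ at once---so one genuinely needs the vanishing of the defect $\sigma(\log|F|,\varphi)-\sum_{1\le j\le m}\sigma(\log|f_j|,\varphi)$, which is exactly hypothesis $(2)$, in order to locate a common optimiser $\alpha^{*}$. Establishing the resolution formula and carrying the relative-type and integrability bookkeeping through the singular locus $X_{\sing}$ (equivalently, performing everything invariantly on $Y$ and descending) are the steps requiring the most care; the passage from $|g|^{2}\lesssim e^{2c\varphi}$ to a valuative inequality for irrational $a_j$ in the comparison principle is a further, milder, technicality.
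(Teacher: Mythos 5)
Your approach is genuinely different from the paper's, in both directions. For $(1)\Rightarrow(2)$ the paper proves the comparison principle (its Lemma~\ref{l:relativetype}) by iterating Skoda's division theorem $k\sim\lfloor\nu(f_0^l)/m_0\rfloor$ times on a resolution and descending the resulting $L^2$ quotients as weakly holomorphic functions, controlling their valuations via the universal denominator (Lemma~\ref{lem:lower bound of weak holomorphic}); you instead invoke the \L{}ojasiewicz/integral-closure criterion on the reduced germ $(X,o)$. The integral-closure route is cleaner when it applies, and it sidesteps the weakly-holomorphic bookkeeping, but on a non-normal $X$ one must justify that the valuative criterion $g\in\overline{J}\Leftrightarrow\nu(g)\ge\nu(J)$ holds for \emph{every} valuation of $\mathcal{O}_{X,o}$ (the paper's $\nu$ is an arbitrary abstract valuation), and the passage to irrational $a_j$ and the ``clearing of denominators'' are real steps that you only gesture at. For $(2)\Rightarrow(1)$ the paper approximates $\varphi$ by $\varphi_N=\max\{\varphi,N\log|z|\}$, controls the defect $c_o^f-\sigma(\log|f|,\cdot)$ uniformly in $N$ (Lemma~\ref{l:holder singular}), extracts Zhou weights $\Phi_{N,s}$ via Tian-function derivatives, and passes to a limit of Zhou valuations by a Noetherian diagonal argument (Proposition~\ref{r:proof of 1.2}); you instead propose to find the valuation directly as an optimal quasi-monomial $\nu_{\alpha^*}$ on a single log resolution of $\prod f_j$.

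The backward direction is where your sketch has real gaps. The ``resolution formula'' $\sigma(\log|F|,\varphi)=\inf_\alpha\nu_\alpha(F)/\ell(\alpha)$ and the attainment of the infimum are precisely the two nontrivial claims, and you state them without proof. The formula does hold, but establishing it requires extending $\pi^*\varphi$ across the exceptional locus (Lemma~\ref{l:psh-extend}), identifying $\pi^*\varphi$ with a max of monomials in SNC coordinates, and piecing together the pointwise infima $\sigma_p$ over the compact fibre (Remark~\ref{rem:relative type on single point}) — i.e.\ roughly the content of the paper's Lemma~\ref{lem:relative types for analytic singularity}, which the paper only proves for the \emph{approximants} $\varphi_N$. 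Attainment of the infimum on the normalized dual cone also needs an argument — the set $\{\ell(\alpha)=1\}$ may be unbounded in directions $\alpha_i$ corresponding to exceptional components with $\ord_{E_i}(\pi^*f_j)=0$ for all $j\ge 1$, and one must push to a face. These are fixable, but as written the key step is asserted, not proved. Finally, your parenthetical ``alternative route'' (take any local Zhou weight $\Phi_{o,\max}\ge\varphi$) does not work as stated: $\Phi_{o,\max}\ge\varphi$ only gives $\sigma(\log|f_j|,\Phi_{o,\max})\ge\sigma(\log|f_j|,\varphi)=a_j$, i.e.\ $\nu(f_j)\ge a_j$, and there is no reason for equality. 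Forcing equality is exactly what the paper's entire $\varphi_N$/Tian-function/limit apparatus is for; a single Zhou weight chosen naively will generically overshoot.
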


Note that $\nu(f_1f_2)=0$ if and only if $ \nu(f_1)=0\  \text{and}\  \nu(f_2)=0$. Thus, in the above theorem, it suffices to consider the case  only one function $f_0$ satisfies $\nu(f_0)=0.$ As $\nu(1)=0$ holds for any valuation $\nu$, in the above theorem, we can actually ignore $f_0$ when we take $f_0\equiv1$.

 For  valuations on the germs of weakly holomorphic functions $\mathcal{O}^{w}_{X,o}$, we also have the following interpolation result.
\begin{Theorem}\label{thm:interpolation weakly}
Let $\{f_j\}_{0\le j\le m}$ be weakly holomorphic functions near $o$. Let $a_0=0$ and $\{a_j\}_{1\le j\le m}$ be positive numbers. The following two statements are equivalent:
	\begin{enumerate}
	    \item There exists a valuation $\nu$ on $\mathcal{O}^{w}_{X,o}$  such that $\nu(f_j)=a_j$ for all $0\le j\le m$;
	    \item $\sigma(\log|F|,\varphi)=\sum_{1\le j\le m}a_j$, where \[F\coloneqq \prod_{0\le j\le m}f_j, \quad \varphi\coloneqq \log\Big(\sum_{1\le j\le m}|f_j|^{\frac{1}{a_j}}\Big).\]
	\end{enumerate}
	\end{Theorem}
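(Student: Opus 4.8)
The plan is to rerun the proof of Theorem~\ref{thm:interpolation} with $\mathcal{O}_{X,o}$ replaced by $\mathcal{O}^{w}_{X,o}$, since all the tools it uses are available in this generality. If $f$ is weakly holomorphic near $o$ then $\log|f|$ is plurisubharmonic on $X$ in the sense fixed in the paper (plurisubharmonic on $X_{\reg}$, bounded above near $X_{\sing}$ because weakly holomorphic functions are locally bounded), so $\sigma(\log|f_j|,\varphi)$ and $\sigma(\log|F|,\varphi)$ are defined; local Zhou weights related to data built from weakly holomorphic functions exist by Remark~\ref{rem:zhou weight for weak holomorphic function}; and the associated Zhou valuation $\nu(\cdot,\Phi_{o,\max})$ is a valuation on $\mathcal{O}^{w}_{X,o}$ — here one uses that, $(X,o)$ being irreducible, its normalization has a single point over $o$, so a weakly holomorphic germ has a well-defined value at $o$ and the analogue of Corollary~\ref{coro:valuation} holds verbatim on $\mathcal{O}^{w}_{X,o}$. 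Finally, the bridge between the two rings is that $\mathcal{O}^{w}_{X,o}$ is integral over $\mathcal{O}_{X,o}$ with the same fraction field (via a universal denominator $\delta$, Theorem~\ref{th:universal denominators}): every valuation on $\mathcal{O}^{w}_{X,o}$ restricts to one on $\mathcal{O}_{X,o}$, and conversely every valuation on $\mathcal{O}_{X,o}$ extends (uniquely to the fraction field, and non-negatively across the integral extension) to $\mathcal{O}^{w}_{X,o}$.

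For $(2)\Rightarrow(1)$: from $|f_j|^{1/a_j}\le e^{\varphi}$ near $o$ we get $\log|f_j|\le a_j\varphi$, so $\sigma(\log|f_j|,\varphi)\ge a_j$ for $1\le j\le m$, while $\sigma(\log|f_0|,\varphi)\ge0=a_0$ since $f_0$ is bounded; the elementary superadditivity $\sigma(\psi_1+\psi_2,\varphi)\ge\sigma(\psi_1,\varphi)+\sigma(\psi_2,\varphi)$ then gives $\sigma(\log|F|,\varphi)\ge\sum_{1\le j\le m}a_j$, so the hypothesis $\sigma(\log|F|,\varphi)=\sum_{1\le j\le m}a_j$ forces $\sigma(\log|f_j|,\varphi)=a_j$ for every $0\le j\le m$. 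One then builds, as in the proof of Theorem~\ref{thm:interpolation}, a local Zhou weight $\Phi_{o,\max}$ adapted to $\varphi$ — its existence coming from Remark~\ref{rem:max_existence} for a suitable choice of data formed from $F$ and $\varphi$ (and Remark~\ref{rem:zhou weight for weak holomorphic function} for the weakly holomorphic data) — with $\Phi_{o,\max}\ge\varphi+O(1)$ and $\sigma(\log|f_j|,\Phi_{o,\max})=a_j$ for all $j$; then $\nu:=\nu(\cdot,\Phi_{o,\max})$ is a valuation on $\mathcal{O}^{w}_{X,o}$ with $\nu(f_j)=\sigma(\log|f_j|,\Phi_{o,\max})=a_j$, which is $(1)$.

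For $(1)\Rightarrow(2)$: the inequality $\sigma(\log|F|,\varphi)\ge\sum_{1\le j\le m}a_j$ is automatic as above. For the reverse inequality one feeds in $\nu$: following the proof of Theorem~\ref{thm:interpolation} (and \cite{BGMY-interpolation25A}), to a valuation $\nu$ on $\mathcal{O}^{w}_{X,o}$ with $\nu(f_j)=a_j$ one attaches a local Zhou weight $\varphi_\nu$ (at the center of $\nu$) representing $\nu$, i.e.\ $\sigma(\log|g|,\varphi_\nu)=\nu(g)$ for weakly holomorphic $g$; then $\nu(f_j)=a_j$ gives $\log|f_j|\le a_j\varphi_\nu+O(1)$, hence $|f_j|^{1/a_j}\le e^{\varphi_\nu}+O(1)$, hence $\varphi\le\varphi_\nu+O(1)$, and therefore
\[
\sigma(\log|F|,\varphi)\le\sigma(\log|F|,\varphi_\nu)=\nu(F)=\sum_{0\le j\le m}\nu(f_j)=\sum_{1\le j\le m}a_j,
\]
using multiplicativity of $\nu$ and $\nu(f_0)=a_0=0$; with the lower bound this is $(2)$.

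The step I expect to be the main obstacle is the sharp identity $\sigma(\log|f_j|,\Phi_{o,\max})=a_j$ in $(2)\Rightarrow(1)$: since $\Phi_{o,\max}\ge\varphi+O(1)$ the inequality $\ge a_j$ is immediate, but $\le a_j$ must be extracted from the sharp hypothesis $\sigma(\log|F|,\varphi)=\sum a_j$ together with a careful choice of the data defining $\Phi_{o,\max}$, engineered so that $\sigma(\log|F|,\Phi_{o,\max})\le\sum a_j$ too — whereupon superadditivity forces equality in each term — and this is where the integral expression of relative types (Theorem~\ref{thm:expression of relative types}) and the jumping-number estimates (Theorem~\ref{thm:valu-jump}) do the real work; when $f_0\not\equiv1$ one must additionally arrange that the valuation records $\nu(f_0)=0$, which is precisely what $\sigma(\log|f_0|,\varphi)=0$ provides. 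The only genuinely new, weakly-holomorphic, points are the bookkeeping already indicated — that the constructed Zhou valuation is a valuation on $\mathcal{O}^{w}_{X,o}$, and that $\varphi_\nu$ can be formed from a valuation on $\mathcal{O}^{w}_{X,o}$ — both resting on $\mathcal{O}_{X,o}\subset\mathcal{O}^{w}_{X,o}$ being an integral extension with the same fraction field and on $\log|f|$ being plurisubharmonic for weakly holomorphic $f$.
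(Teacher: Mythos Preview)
Your high-level plan — rerun the proof of Theorem~\ref{thm:interpolation} with the weakly-holomorphic upgrades (Remark~\ref{rem:zhou weight for weak holomorphic function}, Lemma~\ref{l:relativetype weakly}, Proposition~\ref{r:proof of 1.2 weakly}) — is exactly the paper's plan. But the concrete mechanisms you sketch for both directions are not the paper's, and each has a real gap.

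For $(1)\Rightarrow(2)$ you propose to ``attach a local Zhou weight $\varphi_\nu$ representing $\nu$'', i.e.\ with $\sigma(\log|g|,\varphi_\nu)=\nu(g)$ for all $g$, and then compare $\varphi$ with $\varphi_\nu$. Nothing in the paper supplies such a representation: Corollary~\ref{coro:valuation} says Zhou weights give valuations, not that every valuation on $\mathcal{O}^{w}_{X,o}$ arises this way. The paper's actual argument is the direct inequality of Lemma~\ref{l:relativetype weakly} (the Skoda-division estimate), which gives $\sigma(\log|F|,\varphi)\le\nu(F)$ without any $\varphi_\nu$; combined with the easy superadditive lower bound, this is $(2)$. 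Your route would need an extra theorem you do not have.

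For $(2)\Rightarrow(1)$ you look for \emph{one} Zhou weight $\Phi_{o,\max}\ge\varphi+O(1)$ with $\sigma(\log|f_j|,\Phi_{o,\max})=a_j$. That is not how the paper proceeds, and it is not clear such a single weight exists. The paper first truncates $\varphi$ to $\varphi_N=\max\{\varphi,N\log|z|\}$, uses Lemma~\ref{l:holder singular} and Lemma~\ref{l:0921-1} to get $\lim_N\sigma(\log|F|,\varphi_N)=\sigma(\log|F|,\varphi)$, then for each $N$ runs the Tian-function machine (Remark~\ref{r:tame}, Proposition~\ref{p:relative=derivative}) to produce a \emph{family} of Zhou weights $\Phi_{N,s}$ related to $|F|^{2s}$ with $\Tn(s)\nu_{N,s}(F)\to\sigma(\log|F|,\varphi_N)$, controls $\Tn(s)\nu_{N,s}(g)$ uniformly via Lemma~\ref{lem:singular skoda 72}, and then extracts a limit valuation twice (in $s$, then in $N$) using the Noetherian compactness of Proposition~\ref{r:proof of 1.2 weakly}. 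The resulting $\nu$ is a \emph{limit} of Zhou valuations, not a Zhou valuation itself, and the equalities $\nu(f_j)=a_j$ come from $\nu(f_j)\ge a_j$ together with $\sum_j\nu(f_j)=\nu(F)=\sigma(\log|F|,\varphi)=\sum_j a_j$. Your sketch omits the $\varphi_N$-truncation, the Tian-function/derivative step, and — most importantly — the compactness/limit argument that actually produces $\nu$.
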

	
	Let us consider the valuative interpolation problem on the polynomial ring $\mathbb{C}[z_1,\ldots,z_n]/I$, where $I$ is a prime ideal in $\mathbb{C}[z_1,\ldots,z_n]$. Let $X\coloneqq V(I)$ be  the affine variety defined by $I$ and $o\in X$. Denote the germ of the set $X$ at $o$ by $(X,o)$ and we assume that $(X,o)$ is irreducible as a germ of analytic set.
	
	\begin{Remark}
		It follows from the main theorem in \cite{Zariski48} that, when the affine variety $X$ is normal at $o$, $(X,o)$ is irreducible as a germ of analytic set. So there exists many affine varieties $X$ such that $(X,o)$ is irreducible as a germ of analytic set.
	\end{Remark}

	Let $\{f_j\}_{0\le j\le m}$ be polynomials in $\mathbb{C}[z_1,\ldots,z_n]/I$, $a_0=0$ and  $\{a_j\}_{1\le j\le m}$ be $m$ positive numbers. Denote $F\coloneqq \prod_{0\le j\le m}f_j$ and $\varphi\coloneqq \log(\sum_{1\le j\le m}|f_j|^{\frac{1}{a_j}})$.

\begin{Corollary}\label{C:interpolation-comp-poly}

 If $\sigma(\log|F|,\varphi)=\sum_{1\le j\le m}a_j$, then there exists a valuation $\nu$ on $\mathbb{C}[z_1,\ldots,z_n]/I$ such that $\nu(f_j)=a_j$ for every $1\le j\le m$.

Conversely, if there exists a valuation $\nu$ on $\mathbb{C}[z_1,\ldots,z_n]/I$ satisfying that $\nu(f_j)=a_j$ for every $1\le j\le m$ and $\nu(z_l)>0$ for every $1\le l\le n$, then we have $\sigma(\log|F|,\varphi)=\sum_{1\le j\le m}a_j$.
\end{Corollary}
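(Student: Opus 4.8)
The plan is to reduce both implications to Theorem~\ref{thm:interpolation} for the analytic local ring $\mathcal{O}_{X,o}$, by transporting valuations along the natural local homomorphism
$$\iota\colon \mathbb{C}[z_1,\ldots,z_n]/I\longrightarrow \mathcal{O}_{X,o}$$
that sends the class of a polynomial to its germ at $o$. Two preliminary observations: first, $\iota$ is injective, since a polynomial in its kernel vanishes on a neighbourhood of $o$ in $X=V(I)$, hence (as $X$ is irreducible, $I$ being prime) on all of $X$, hence lies in $I$; in particular each $f_j$ has nonzero germ, so $f_j\in\mathcal{O}^*_{X,o}$. Second, the factor $f_0$ plays only a bookkeeping role: as in the remark following Theorem~\ref{thm:interpolation} one may take $f_0\equiv1$ without changing $\varphi$ and without affecting the equivalence, so I assume $f_0\equiv1$, whence $F=\prod_{1\le j\le m}f_j$ and $\nu(f_0)=a_0=0$ holds automatically for any valuation $\nu$.

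For the sufficiency implication, suppose $\sigma(\log|F|,\varphi)=\sum_{1\le j\le m}a_j$. Since $F$ and $\varphi$ are the same whether computed from the polynomial classes or from their germs, Theorem~\ref{thm:interpolation} applied to $\mathcal{O}_{X,o}$ and the germs $\iota(f_0),\ldots,\iota(f_m)$ produces a valuation $\tilde\nu$ on $\mathcal{O}_{X,o}$ with $\tilde\nu(\iota(f_j))=a_j$ for all $j$. Then $\nu\coloneqq\tilde\nu\circ\iota$ is the desired valuation on $\mathbb{C}[z_1,\ldots,z_n]/I$: the three axioms are inherited from $\tilde\nu$ using the injectivity of $\iota$, $\nu$ is nonconstant because $\nu(f_1)=a_1>0$, and $\nu(f_j)=a_j$ for every $1\le j\le m$. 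This direction is routine.

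For the converse, let $\nu$ be a valuation on $\mathbb{C}[z_1,\ldots,z_n]/I$ with $\nu(f_j)=a_j$ for $1\le j\le m$ and $\nu(z_l)>0$ for $1\le l\le n$. The plan is to extend $\nu$ to a valuation $\tilde\nu$ on $\mathcal{O}_{X,o}$ with $\tilde\nu\circ\iota=\nu$; granting this, Theorem~\ref{thm:interpolation} in the direction $(1)\Rightarrow(2)$ applied to $\tilde\nu$ yields $\sigma(\log|F|,\varphi)=\sum_{1\le j\le m}a_j$. To build $\tilde\nu$: the set $\mathfrak{p}\coloneqq\{x:\nu(x)>0\}$ is a prime ideal of $\mathbb{C}[z_1,\ldots,z_n]/I$ containing the maximal ideal $\mathfrak{m}=(z_1,\ldots,z_n)$, so $\mathfrak{p}=\mathfrak{m}$; hence $\nu$ is centered at $o$, every $h\notin\mathfrak{m}$ satisfies $\nu(h)=0$, and $\nu$ extends uniquely to the algebraic local ring $R\coloneqq(\mathbb{C}[z_1,\ldots,z_n]/I)_{\mathfrak{m}}$ by $\nu(g/h)=\nu(g)$. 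Writing an element of $\mathfrak{m}^k$ as a combination of monomials of degree $\ge k$ shows $\nu\ge(\min_{1\le l\le n}\nu(z_l))\cdot\ord_{\mathfrak{m}}$, so $\nu$ is continuous for the $\mathfrak{m}$-adic topology. Since $\mathcal{O}_{X,o}$ is faithfully flat over $R$ and has the same $\mathfrak{m}$-adic completion, and $\bigcap_k\mathfrak{m}^k\mathcal{O}_{X,o}=0$, one extends $\nu$ to the completion (through a minimal prime) and restricts to $\mathcal{O}_{X,o}$, obtaining $\tilde\nu$; the irreducibility of $(X,o)$ guarantees that $\mathcal{O}_{X,o}$ injects into $\widehat{R}/\mathfrak{q}$ for the relevant minimal prime $\mathfrak{q}$, so the restriction is a genuine $\mathbb{R}_{\ge0}$-valued valuation extending $\nu$.

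The step I expect to be the real obstacle is this extension of $\nu$ from $R$ to $\mathcal{O}_{X,o}$: one must verify that the extended functional is well defined and never equals $+\infty$ on $\mathcal{O}_{X,o}\setminus\{0\}$ — equivalently, that no nonzero germ is an $\mathfrak{m}$-adic limit of polynomial classes of unbounded $\nu$-value. This is exactly where the hypothesis $\nu(z_l)>0$ (for every $l$) is essential, since it forces $\mathfrak{m}$-adic continuity of $\nu$, and where one must use the precise relationship between $\mathcal{O}_{X,o}$, the algebraic local ring $R$ and their common completion. Everything else — injectivity of $\iota$, the bookkeeping with $f_0$, $F$ and $\varphi$, and the two appeals to Theorem~\ref{thm:interpolation} — is formal.
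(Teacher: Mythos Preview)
Your sufficiency direction agrees with the paper's, which also restricts a valuation on $\mathcal{O}_{X,o}$ obtained from Theorem~\ref{thm:interpolation} along the inclusion $\mathbb{C}[z_1,\ldots,z_n]/I\hookrightarrow\mathcal{O}_{X,o}$ (Remark~\ref{rem:subring complex case}).

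For the converse, however, your route is genuinely different from the paper's and contains a real gap. The paper does \emph{not} extend $\nu$ to $\mathcal{O}_{X,o}$; instead it proves the inequality $\sigma(\log|F|,\varphi)\le\nu(F)$ directly for valuations on $\mathbb{C}[z_1,\ldots,z_n]/I$ (Lemma~\ref{l:relativetype2 alg}), by rerunning the Skoda--division argument of Lemma~\ref{l:relativetype} and then inserting a polynomial truncation step: the holomorphic coefficients produced by Skoda are truncated modulo $\mathfrak{m}^N$, and the remainder is a polynomial in $\mathfrak{m}^N$ whose $\nu$-value goes to infinity precisely because $\nu(z_l)>0$. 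That the universal denominator can be taken polynomial (so that the whole identity lives in the polynomial ring after truncation) is the content of the appendix.

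Your extension argument fails because an $\mathbb{R}_{\ge0}$-valued valuation on $R=(\mathbb{C}[z]/I)_{\mathfrak m}$ centered at $\mathfrak m$ need not extend to an $\mathbb{R}_{\ge0}$-valued valuation on $\mathcal{O}_{X,o}$ at all. Take $I=(0)$, $n=2$, and $\nu(f)=\ord_t f(t,\sin t)$ on $\mathbb{C}[x,y]$; this is a genuine valuation (since $\sin t$ is transcendental over $\mathbb{C}(t)$) with $\nu(x)=\nu(y)=1>0$. Any extension $\tilde\nu$ to $\mathbb{C}\{x,y\}$ with $\tilde\nu\ge0$ must assign value $0$ to units, hence $\tilde\nu(\sin x - T_N(x))=(N{+}1)\tilde\nu(x)\to\infty$ for the Taylor polynomials $T_N$ of $\sin$, while $\tilde\nu(y-T_N(x))=\nu(y-T_N(x))\to\infty$ as well; together these force $\tilde\nu(y-\sin x)=\infty$ although $y-\sin x\neq 0$ in $\mathbb{C}\{x,y\}$. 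In your language, $\mathfrak q'=\tilde\nu^{-1}(\infty)\subset\widehat R=\mathbb{C}[[x,y]]$ is the height-one prime $(y-\sin x)$, strictly larger than the unique minimal prime $(0)$, and it meets $\mathcal{O}_{X,o}$ nontrivially. So the step you flagged as the obstacle is not a matter of writing out details: the extension you need simply does not exist in general, and the converse really requires a direct argument on the polynomial ring as in Lemma~\ref{l:relativetype2 alg}.
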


When $\cap_{1\le j\le m}\{f_j=0\}=\{o\}$, \Cref{C:interpolation-comp-poly} gives a necessary and sufficient condition for the existence of valuative interpolations on $\mathbb{C}[z_1,\ldots,z_n]$.

\begin{Corollary}\label{C:interpolation-comp-poly2}
 If $\cap_{1\le j\le m}\{f_j=0\}=\{o\}$, then there exists a valuation $\nu$ on $\mathbb{C}[z_1,\ldots,z_n]/I$ such that $\nu(f_j)=a_j$ for every $j$ if and only if $\sigma(\log|F|,\varphi)=\sum_{1\le j\le m}a_j$.
\end{Corollary}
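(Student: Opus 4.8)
The plan is to obtain \Cref{C:interpolation-comp-poly2} directly from \Cref{C:interpolation-comp-poly}. The ``if'' direction requires nothing new: if $\sigma(\log|F|,\varphi)=\sum_{1\le j\le m}a_j$, then the first half of \Cref{C:interpolation-comp-poly} already produces a valuation $\nu$ on $\mathbb{C}[z_1,\ldots,z_n]/I$ with $\nu(f_j)=a_j$ for every $j$ (the equality $\nu(f_0)=0$ being automatic, in particular after normalizing $f_0\equiv1$ as permitted by the remark following \Cref{thm:interpolation}). Thus the real content of the corollary is this: under the hypothesis $\bigcap_{1\le j\le m}\{f_j=0\}=\{o\}$, the extra condition ``$\nu(z_l)>0$ for every $1\le l\le n$'' appearing in the converse part of \Cref{C:interpolation-comp-poly} is automatically met by any valuation with $\nu(f_j)=a_j>0$, so the converse part applies unconditionally.

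To prove the ``only if'' direction, let $\nu$ be a valuation on $R\coloneqq\mathbb{C}[z_1,\ldots,z_n]/I$ with $\nu(f_j)=a_j$ for $0\le j\le m$. We may assume $z_l\notin I$ for every $l$: otherwise $(X,o)$ is contained in the corresponding coordinate subspace of $\mathbb{C}^n$, $R$ is isomorphic to a quotient of the polynomial ring in the remaining variables, and all the data in play---$X$, $o$, the $f_j$, $F$, $\varphi$, the intersection hypothesis, and the relative type $\sigma(\log|F|,\varphi)$ (which depends only on the germ $(X,o)$ and the functions $f_j$ and hence is a biholomorphic invariant)---transport along that isomorphism, reducing us to the case where all coordinate classes are nonzero in $R$. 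Now the assumption $\bigcap_{1\le j\le m}\{f_j=0\}=\{o\}$ means $V\big(I+(f_1,\ldots,f_m)\big)=\{o\}$ as a subset of $\mathbb{C}^n$, so Hilbert's Nullstellensatz gives $\sqrt{I+(f_1,\ldots,f_m)}=(z_1,\ldots,z_n)$. In particular, for each $l$ there are an integer $N_l\ge1$ and elements $g_{l,1},\ldots,g_{l,m}\in R$ with $z_l^{N_l}=\sum_{j=1}^m g_{l,j}f_j$ in $R$. Since $R$ is a domain ($I$ being prime) and $z_l\ne0$, this element is nonzero, hence some summand on the right is nonzero; applying $\nu$, using properties (1) and (2) of a valuation together with the nonnegativity of $\nu$, we get
\[
N_l\,\nu(z_l)=\nu\Big(\sum_{j=1}^m g_{l,j}f_j\Big)\ \ge\ \min_{\substack{1\le j\le m\\ g_{l,j}\ne0}}\big(\nu(g_{l,j})+\nu(f_j)\big)\ \ge\ \min_{1\le j\le m}a_j\ >\ 0 .
\]
Therefore $\nu(z_l)>0$ for every $l$, and the converse part of \Cref{C:interpolation-comp-poly} now yields $\sigma(\log|F|,\varphi)=\sum_{1\le j\le m}a_j$, as desired.

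I do not expect a genuine obstacle here: once \Cref{C:interpolation-comp-poly} is available, the argument is essentially formal. The two spots that deserve a word of care are (i) the harmless reduction to $z_l\notin I$, needed merely so that ``$\nu(z_l)>0$ for every $l$'' is a meaningful statement, and (ii) invoking the Nullstellensatz in the algebraic category---legitimate because $I$ and the $f_j$ are honestly polynomial and $X=V(I)$ is affine---rather than in the analytic-germ category where such a statement would fail. Finally, one should note that $f_0$ enters neither the intersection hypothesis nor the derivation of $\nu(z_l)>0$; it contributes only the constraint $\nu(f_0)=0$, which is consistent with the constructions of \Cref{C:interpolation-comp-poly} and can be arranged by taking $f_0\equiv1$.
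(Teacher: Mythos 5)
Your proof is correct and follows essentially the same route as the paper's: the ``if'' direction is immediate from \Cref{C:interpolation-comp-poly}, and the ``only if'' direction uses Hilbert's Nullstellensatz (the paper cites its Lemma~\ref{lem: Hilbert's Nullstellensatz theorem geo}) to show $z_l^{N}$ lies in the ideal generated by $f_1,\ldots,f_m$ in $\mathbb{C}[z_1,\ldots,z_n]/I$, hence $\nu(z_l)>0$, so the converse clause of \Cref{C:interpolation-comp-poly} applies. The only addition on your side is the explicit reduction to the case $z_l\notin I$, which the paper glosses over; this is harmless extra care rather than a genuine difference in method.
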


The following example (see \cite{BGMY-interpolation25A}) shows that the condition ``$\cap_{1\le j\le m}\{f_j=0\}=\{o\}$" can not be removed.
\begin{Example}Let $I=(z^3_1-z^2_2)$. Denote $X\coloneqq \mathbb{C}[z_1,z_2]/I$. Then $o$ and $(1,1)$ are points in $X$ where $o$ is a singular point and $(1,1)$ is a smooth point. Note that $z^3_1-z^2_2$ is irreducible in $\mathbb{C}[z_1,z_2]$, $\mathcal{O}_{\mathbb{C}^2,o}$ and $\mathcal{O}_{\mathbb{C}^2,(1,1)}$. We know that $I$ is a prime ideal in $\mathbb{C}[z_1,z_2]$ and $\big(X,o\big)$, $\big(X,(1,1)\big)$ is irreducible as a germ of analytic set. 

    Let $h_1=[z_1]$ $h_2=[z_2]$, $h_3=[z_1z_2]$, $g_1=[z_1-1]$ and $g_2=[z_2-1]$ on $X$. Take $\{f_1,f_2,f_3,f_4,f_5,f_6\}=\{h_1g_1,h_1g_2,h_2g_1,h_2g_2,h_3g_1,h_3g_2\}$ and $a_1=\ldots=a_6=1$.  It is clear that 
    $$\cap_{1\le l\le 6}\{f_l=0\}=\{o,(1,1)\}.$$

   Let  $t\in \mathbb{C}\backslash{\{0\}}$ and $z_1=t^2$ and $z_2=t^3$. Then $t$ is a coordinate on $X_{\text{reg}}$.
  Note that the point $(1,1)\in X$ corresponds to $t=1$. As $\log F=\log(\prod_{1\le l\le 6}f_l)=3\log(|z_1-1||z_2-1|)+O(1)$ and $\varphi=\log(\sum_{1\le l\le 6}|f_l|^{\frac{1}{a_l}})=\log(|z_1-1|+|z_2-1|)+O(1)$ near $(1,1)$, then we know that $\log|F|=6\log|t-1|+O(1)$ and $\varphi=\log|t-1|+O(1)$ near $t=1$.
   Then we have
    $$\sigma_{(1,1)}(\log|F|,\varphi)=6=\sum_{1\le l\le 6}a_l$$
    holds on $X$ near $(1,1)$.
    
    By \Cref{C:interpolation-comp-poly}, there exists a valuation $\nu$ on $\mathbb{C}[z_1,z_2]/I$ such that $\nu(f_l)=a_l$ for every $1\le l\le 6$.
    Since $F=4\log|z_1z_2|+O(1)$ near $o$ and $\varphi=\log(|z_1|+|z_2|+|z_1z_2|)+O(1)$ near $o$ in $\mathbb{C}^2$, we know that 
    $$\sup\{c\colon\log|F|\le c\varphi+O(1)\text{ near $o$ on $\mathbb{C}^2$}\}>6=\sum_{1\le l\le 6}a_l.$$
    Note that the relative type may increase after restriction. We have $$\sigma_{o}(\log|F|,\varphi)>6=\sum_{1\le l\le 6}a_l.$$
  Thus, the condition ``$\cap_{1\le j\le m}\{f_j=0\}=\{o\}$" in  \Cref{C:interpolation-comp-poly2} can not be removed.
\end{Example}

Denote the set of all germs of real analytic functions near the origin $o'\in\mathbb{R}^n$ by $C^{\text{an}}_{o'}$. There exists an injective ring homomorphism $P\colon C^{\text{an}}_{o'}\rightarrow\mathcal{O}_o$ that satisfies
$$P\Big(\sum_{{\alpha\in\mathbb{Z}_{\ge0}^n}}a_{\alpha}x^{\alpha}\Big)=\sum_{\alpha\in\mathbb{Z}_{\ge0}^n}a_{\alpha}z^{\alpha},$$
where $(x_1,\ldots,x_n)$ and $(z_1,\ldots,z_n)$ are the standard coordinates on $\mathbb{R}^n$ and $\mathbb{C}^n$ respectively, and $\sum_{\alpha\in\mathbb{Z}_{\ge0}^n}a_{\alpha}x^{\alpha}$ is the power series expansion of an arbitrarily given real analytic function near $o'$. It is clear that, for any $(h,o)\in \mathcal{O}_o$, there exists a unique pair of real analytic functions $(h_1, h_2)$ near $o'$ such that
\[h=P(h_1)+\i P(h_2) \quad \ \text{near} \ o.\]

Assume that $I$ is an ideal in $C^{\text{an}}_{o'}$. Denote by $\big(P(I)\big)$ the ideal generated by $P(I)$ in $\mathcal{O}_o$. 
Assume that $\big(P(I)\big)$ is prime ideal in $\mathcal{O}_o$. Note that $\big(P(I)\big)$ is a prime ideal in $\mathcal{O}_o$ implies $I$ is a prime ideal in $C^{\text{an}}_{o'}$ (see Remark \ref{rem:prime ideal from complex to real}).
Then we have an induced ring homomorphism  $\widetilde{P}\colon C^{\text{an}}_{o'}/I\rightarrow \mathcal{O}_o/\big(P(I)\big)$ defined by $\widetilde{P}([f])=[P(f)]$. It is easy to check that $\widetilde{P}$ is well defined.
Let $X$ be the zero variety defined by  $\big(P(I)\big)$.

\begin{Corollary}
	\label{C:interpolation-real}
		Let $f_j\in C^{\text{an}}_{o'}/I$ ($0\le j \le m$) defined near $o'$. Given $a_0=0$ and $m$  positive numbers $\{a_j\}_{1\le j\le m}$, the following two statements are equivalent:
	\begin{enumerate}
	    \item There exists a valuation $\nu$ on $C^{\text{an}}_{o'}$ such that $\nu(f_j)=a_j$ for all $0\le j\le m$;
	    \item $\sigma(\log|F|,\varphi)=\sum_{1\le j\le m}a_j$, where
	    \[F\coloneqq \prod_{0\le j\le m}\widetilde{P}(f_j), \quad \varphi\coloneqq \log\Big(\sum_{1\le j\le m}|\widetilde{P}(f_j)|^{\frac{1}{a_j}}\Big).\]
	\end{enumerate}
\end{Corollary}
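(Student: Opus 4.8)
The plan is to deduce \Cref{C:interpolation-real} from the holomorphic interpolation theorem \Cref{thm:interpolation} by transporting valuations along $\widetilde P$. First I would fix the algebraic picture. By the unique decomposition $h=P(h_1)+\i P(h_2)$ recalled above, $\mathcal O_o=P(C^{\mathrm{an}}_{o'})\oplus\i\,P(C^{\mathrm{an}}_{o'})$, so $\mathcal O_o\cong C^{\mathrm{an}}_{o'}\otimes_{\mathbb R}\mathbb C$ and, under this identification, $\big(P(I)\big)=I\otimes_{\mathbb R}\mathbb C$; hence $\mathcal O_o/\big(P(I)\big)\cong\big(C^{\mathrm{an}}_{o'}/I\big)\otimes_{\mathbb R}\mathbb C$. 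Since $\big(P(I)\big)$ is prime it is radical, so by the R\"uckert Nullstellensatz it equals the ideal of germs vanishing on the zero variety $X$ of $\big(P(I)\big)$; therefore $\mathcal O_{X,o}=\mathcal O_o/\big(P(I)\big)$, $(X,o)$ is irreducible, and $\widetilde P\colon R\coloneqq C^{\mathrm{an}}_{o'}/I\hookrightarrow\mathcal O_{X,o}$ is an injective ring homomorphism making $\mathcal O_{X,o}$ module-finite over $R$ (in fact free of rank two). Because $a_j>0$ for $j\ge1$ and $\nu(f_0)$ must be defined, all the $f_j$ are nonzero in $R$, so $\widetilde P(f_j)\in\mathcal O^*_{X,o}$; note also that the $F$ and $\varphi$ in the statement are exactly $\prod_{0\le j\le m}\widetilde P(f_j)$ and $\log\big(\sum_{1\le j\le m}|\widetilde P(f_j)|^{\frac{1}{a_j}}\big)$.

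For $(2)\Rightarrow(1)$, assume $\sigma(\log|F|,\varphi)=\sum_{1\le j\le m}a_j$. Applying \Cref{thm:interpolation} to the germ $(X,o)$ and to $\widetilde P(f_0),\dots,\widetilde P(f_m)\in\mathcal O^*_{X,o}$ produces a valuation $\mu$ on $\mathcal O_{X,o}$ with $\mu\big(\widetilde P(f_j)\big)=a_j$ for all $j$. Then $\nu\coloneqq\mu\circ\widetilde P$ is defined on $R^*$ by injectivity of $\widetilde P$, satisfies the three valuation axioms since $\widetilde P$ is a ring homomorphism and $\mu$ is a valuation, and is nonconstant since $\nu(f_1)=a_1>0$; thus $\nu$ is the required valuation on $C^{\mathrm{an}}_{o'}/I$. (If $o$ happens to be a regular point of $X$, one uses instead the smooth-case interpolation theorem of \cite{BGMY-interpolation25A}, whose statement is identical.)

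For $(1)\Rightarrow(2)$, let $\nu$ be a valuation on $R$ with $\nu(f_j)=a_j$. Extend $\nu$ to a rank-one $\mathbb R$-valued valuation on $K\coloneqq\Frac(R)$ via $\nu(a/b)=\nu(a)-\nu(b)$. Since $\mathcal O_{X,o}$ is module-finite over $R$, $L\coloneqq\Frac(\mathcal O_{X,o})$ is a finite extension of $K$, so by the classical extension theorem for valuations there is a valuation $w$ on $L$ whose restriction to $K$ is equivalent to $\nu$; after rescaling we may assume $w|_K=\nu$. For $a,b\in R$ one has $w(\i)=\tfrac12 w(-1)=\tfrac12\nu(-1)=0$, hence $w(a+\i b)\ge\min\{\nu(a),\,\nu(b)+w(\i)\}\ge0$, so $w$ is nonnegative on $\mathcal O^*_{X,o}$ and its restriction $\mu$ there is a valuation (nonconstant, as $\mu(\widetilde P(f_1))=a_1>0$) with $\mu\big(\widetilde P(f_j)\big)=w(f_j)=\nu(f_j)=a_j$ for all $j$. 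The implication $(1)\Rightarrow(2)$ of \Cref{thm:interpolation} applied to $\mu$ then gives $\sigma(\log|F|,\varphi)=\sum_{1\le j\le m}a_j$.

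I expect the main obstacle to be the extension step in $(1)\Rightarrow(2)$: one must pass a valuation across the scalar extension $R\hookrightarrow R\otimes_{\mathbb R}\mathbb C=\mathcal O_{X,o}$ and verify that the extension is genuinely $\mathbb R_{\ge0}$-valued on $\mathcal O_{X,o}$ and restricts back to the given $\nu$. The only other point requiring care is the identification $\mathcal O_{X,o}=\mathcal O_o/\big(P(I)\big)$, which rests on $\big(P(I)\big)$, being prime, being radical; once these are in hand, both implications are formal consequences of \Cref{thm:interpolation}, exactly as in the reduction used for \Cref{C:interpolation-comp-poly} (but without the need for a centering condition on $\nu$, since $C^{\mathrm{an}}_{o'}$ is already local).
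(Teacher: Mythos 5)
Your $(2)\Rightarrow(1)$ argument coincides with the paper's: apply \Cref{thm:interpolation} on $(X,o)$ to get a valuation $\widetilde\nu$ on $\mathcal O_{X,o}=\mathcal O_o/\big(P(I)\big)$ and pull back along the injective homomorphism $\widetilde P$. For $(1)\Rightarrow(2)$, however, you take a genuinely different route. The paper proves a dedicated division estimate, Lemma~\ref{c:relativetype-real}, by running the Skoda-division argument of Lemma~\ref{l:relativetype} on the resolution and then taking real parts via Remark~\ref{rem:decom real}, so that the resulting identity lives back in $C^{\mathrm{an}}_{o'}/I$ and the given valuation $\nu$ can be applied directly; combining this with the trivial lower bound on the relative type gives $(2)$. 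You instead extend $\nu$ across the degree-two field extension $K=\Frac(C^{\mathrm{an}}_{o'}/I)\hookrightarrow L=\Frac(\mathcal O_{X,o})$ using the classical extension theorem for rank-one valuations, check nonnegativity on $\mathcal O_{X,o}$ via the decomposition $a+\i b$, and then invoke the already-proved implication $(1)\Rightarrow(2)$ of \Cref{thm:interpolation}. This is cleaner in that it avoids redoing the division analysis, at the cost of importing the field-theoretic extension theorem, which the paper does not use. Both are valid. One small point you should flesh out: you verify $w(\i)=0$ but not axiom (3) of the paper's definition of a valuation for all $\lambda\in\mathbb C^{*}$; this follows readily, since your ultrametric estimate gives $w(\lambda)\ge 0$ and $w(\bar\lambda)\ge 0$, while $w(\lambda)+w(\bar\lambda)=w(|\lambda|^{2})=\nu(|\lambda|^{2})=0$, forcing $w(\lambda)=0$, but it is worth stating. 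Your closing observation about the centering condition is accurate and explains why this corollary, unlike \Cref{C:interpolation-comp-poly}, is a clean equivalence: in the local ring $C^{\mathrm{an}}_{o'}/I$ the valuation is automatically nonnegative, so no analogue of the hypothesis $\nu(z_l)>0$ is needed.
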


\begin{Remark}
Theorem \ref{thm:interpolation} and Corollary \ref{C:interpolation-real} show that for any valuation $\nu$ on $C^{\text{an}}_{o'}/I$ and any finite collection of real analytic functions $\{f_j\}_{0\le j\le m}$ near $o'$ with $\nu(f_j)>0$ for all $j$, there exists a valuation $\widetilde\nu$ on $\mathcal{O}_o/(\widetilde{P}(I))$ such that $\tilde\nu(\widetilde{P}(f_j))=\nu(f_j)$ for all $j$.
\end{Remark}

 Let $I$ be an ideal in $\mathbb{R}[x_1,\ldots,x_n]$.
Denote  the restriction of $P$ on $\mathbb{R}[x_1,\ldots,x_n]$ also by $P$ and $\widetilde{P}$ on $\mathbb{R}[x_1,\ldots,x_n]/I$ also by $\widetilde{P}$. Assume that $\big(P(I)\big)$ is a prime ideal in
$\mathbb{C}[x_1,\ldots,x_n]$. Note that $\big(P(I)\big)$ is a prime ideal in $\mathbb{C}[z_1,\ldots,z_n]$ implies $I$ is a prime ideal in $\mathbb{R}[z_1,\ldots,z_n]$ (see Remark \ref{rem:prime ideal from complex to real poly}). Denote $X$ be the  affine variety defined by $\big(P(I)\big)$. Assume that $o\in X$ and $(X,o)$ is irreducible as a germ of analytic set. Let $\{f_j\}_{0\le j\le m}\subset \mathbb{R}[x_1,\ldots,x_n]/I$, $a_0=0$ and  $\{a_j\}_{1\le j\le m}$ be $m$ positive numbers. Set $F\coloneqq \prod_{0\le j\le m}\widetilde{P}(f_j)$ and $\varphi\coloneqq \log(\sum_{1\le j\le m}|\widetilde{P}(f_j)|^{\frac{1}{a_j}})$.

\begin{Corollary}\label{C:interpolation-real-poly}
 If $\sigma(\log|F|,\varphi)=\sum_{1\le j\le m}a_j$, then there exists a valuation $\nu$ on $\mathbb{R}[x_1,\ldots,x_n]/I$ such that $\nu(f_j)=a_j$ for all $0\le j\le m$.
	
	Conversely, if there exists a valuation $\nu$ on $\mathbb{R}[x_1,\ldots,x_n]/I$ satisfying that $\nu(f_j)=a_j$ for all $0\le j\le m$ and $\nu(x_l)>0$ for every $1\le l\le n$, then we have $\sigma(\log|F|,\varphi)=\sum_{1\le j\le m}a_j$.
\end{Corollary}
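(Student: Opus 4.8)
The plan is to deduce \Cref{C:interpolation-real-poly} from the complex polynomial case \Cref{C:interpolation-comp-poly}, applied to the complexified data $\{\widetilde{P}(f_j)\}_{0\le j\le m}\subset\mathbb{C}[z_1,\ldots,z_n]/(P(I))$, together with a correspondence between valuations on $\mathbb{R}[x_1,\ldots,x_n]/I$ and valuations on $\mathbb{C}[z_1,\ldots,z_n]/(P(I))$. The hypotheses needed to invoke \Cref{C:interpolation-comp-poly} for the $\widetilde{P}(f_j)$ are all in force: $(P(I))$ is a prime ideal in $\mathbb{C}[z_1,\ldots,z_n]$ (hence $I$ is prime in $\mathbb{R}[x_1,\ldots,x_n]$, by the remark preceding the statement), $o\in X=V((P(I)))$, and $(X,o)$ is irreducible as a germ of analytic set; moreover the polynomial $F$ and the function $\varphi$ in \Cref{C:interpolation-real-poly} are, by definition, exactly those that \Cref{C:interpolation-comp-poly} attaches to the family $\{\widetilde{P}(f_j)\}$. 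The only genuinely new ingredient is therefore the translation between valuations on the real and on the complex quotient rings; here I would use that $\widetilde{P}$ identifies $\mathbb{C}[z_1,\ldots,z_n]/(P(I))$ with $(\mathbb{R}[x_1,\ldots,x_n]/I)\otimes_{\mathbb{R}}\mathbb{C}$ and, via $r\mapsto r\otimes 1$, exhibits $\mathbb{R}[x_1,\ldots,x_n]/I$ as an $\mathbb{R}$-subalgebra, so that $\widetilde{P}$ is an injective ring homomorphism and, $(P(I))$ being prime, $\Frac(\mathbb{C}[z_1,\ldots,z_n]/(P(I)))=\Frac(\mathbb{R}[x_1,\ldots,x_n]/I)(\i)$ is an algebraic extension of $K\coloneqq\Frac(\mathbb{R}[x_1,\ldots,x_n]/I)$ of degree at most $2$.

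For the forward implication I would argue as follows. Assume $\sigma(\log|F|,\varphi)=\sum_{1\le j\le m}a_j$. By \Cref{C:interpolation-comp-poly} there is a valuation $\widetilde{\nu}$ on $\mathbb{C}[z_1,\ldots,z_n]/(P(I))$ with $\widetilde{\nu}(\widetilde{P}(f_j))=a_j$ for $1\le j\le m$, and, since its proof goes through \Cref{thm:interpolation} whose statement already records $\nu(f_0)=a_0$, one may also take $\widetilde{\nu}(\widetilde{P}(f_0))=0$. Because $\widetilde{P}$ is an injective ring homomorphism, $\nu\coloneqq\widetilde{\nu}\circ\widetilde{P}$ is a well-defined map $(\mathbb{R}[x_1,\ldots,x_n]/I)^{*}\to\mathbb{R}_{\ge0}$; the two valuation inequalities and the vanishing on nonzero constants are inherited from $\widetilde{\nu}$ through $\widetilde{P}$, and $\nu$ is nonconstant since $\nu(f_1)=a_1>0$. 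Thus $\nu$ is a valuation on $\mathbb{R}[x_1,\ldots,x_n]/I$ with $\nu(f_j)=a_j$ for all $0\le j\le m$.

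For the converse, given a valuation $\nu$ on $\mathbb{R}[x_1,\ldots,x_n]/I$ with $\nu(f_j)=a_j$ for $0\le j\le m$ and $\nu(x_l)>0$ for $1\le l\le n$, I would first extend $\nu$ to $K$ by $\nu(p/q)=\nu(p)-\nu(q)$ (a rank-one valuation), and then use Chevalley's extension theorem to obtain a valuation $w$ on the at-most-quadratic extension $L\coloneqq K(\i)=\Frac(\mathbb{C}[z_1,\ldots,z_n]/(P(I)))$ whose restriction to $K$ is equivalent to $\nu$; such a $w$ is again rank one, so after rescaling I may assume $w|_{K}=\nu$. Next I would check that $w$ is nonnegative on $\mathbb{C}[z_1,\ldots,z_n]/(P(I))$: this algebra is generated over $\mathbb{C}$ by the images of $z_1,\ldots,z_n$, one has $w(z_l)=w(\widetilde{P}(x_l))=\nu(x_l)>0$, and $w$ vanishes on $\mathbb{C}^{*}$ (for $c=a+b\i$ with $a,b\in\mathbb{R}$, $w(c)+w(\bar c)=w(a^2+b^2)=\nu(a^2+b^2)=0$ with both summands $\ge0$), so $w\ge0$ on every monomial and hence everywhere. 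Then $\widetilde{\nu}\coloneqq w|_{\mathbb{C}[z_1,\ldots,z_n]/(P(I))}$ is a valuation with $\widetilde{\nu}(\widetilde{P}(f_j))=\nu(f_j)=a_j$ and $\widetilde{\nu}(z_l)=\nu(x_l)>0$, and the converse half of \Cref{C:interpolation-comp-poly} applied to $\widetilde{\nu}$ gives $\sigma(\log|F|,\varphi)=\sum_{1\le j\le m}a_j$.

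I expect the main point requiring care to be the converse direction: one has to confirm that the Chevalley extension $w$ of the rank-one valuation $\nu$ stays rank one (so that it is genuinely $\mathbb{R}$-valued), that it can be normalized to restrict to $\nu$ on $K$, and that its restriction to the \emph{ring} $\mathbb{C}[z_1,\ldots,z_n]/(P(I))$ remains $\mathbb{R}_{\ge0}$-valued --- this last point being precisely where the hypothesis $\nu(x_l)>0$ enters. The remaining work (injectivity of $\widetilde{P}$, the identification with the tensor product, and the bookkeeping of the normalizing function $f_0$) is routine and is already implicit in the earlier corollaries of the paper.
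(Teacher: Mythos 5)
Your forward implication matches the paper exactly: apply Corollary~\ref{C:interpolation-comp-poly} to the complexified family and pull back along the injective ring homomorphism $\widetilde{P}$ of Remark~\ref{rem:injective and decom poly case}. The converse is where you diverge. The paper proves a standalone estimate (Lemma~\ref{c:relativetype-real poly}) giving $\sigma(\log|\widetilde{P}(f_0)|,\varphi)\le\nu([f_0])$ directly for a valuation on $\mathbb{R}[x_1,\ldots,x_n]/I$: it reruns the Skoda-division argument behind Lemma~\ref{l:relativetype}, separates each resulting complex germ into real and imaginary parts via Remark~\ref{rem:injective and decom poly case}, and then Taylor-truncates the weakly holomorphic cofactor to a polynomial representative mod $\widehat{m}^N$ so that $\nu$ can be applied to each summand. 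You instead extend $\nu$ to $K=\Frac(\mathbb{R}[x_1,\ldots,x_n]/I)$, invoke Chevalley's extension theorem to lift it to the at-most-quadratic extension $K(\i)$, restrict back to $\mathbb{C}[z_1,\ldots,z_n]/(P(I))$, and appeal to the already-proved converse half of Corollary~\ref{C:interpolation-comp-poly} (hence to Lemma~\ref{l:relativetype2 alg}). Your route is more conceptual and avoids redoing the Skoda/truncation bookkeeping, at the cost of importing the Chevalley machinery together with the rank-preservation and normalization checks; the paper's route is self-contained and structurally parallel to its earlier lemmas.

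One step of your converse needs tightening. To show that $w$ is $\mathbb{R}_{\ge0}$-valued on $\mathbb{C}[z_1,\ldots,z_n]/(P(I))$ you argue that $w$ vanishes on $\mathbb{C}^{*}$ by writing $w(c)+w(\bar c)=w(|c|^{2})=0$ ``with both summands $\ge0$'', but at that stage you have not yet established $w\ge0$ on $\mathbb{C}$ --- that is precisely what you are trying to prove, and $c,\bar c$ need not lie in $K$ nor in the subalgebra generated by the $z_l$. The repair is standard: a nonreal $c\in\mathbb{C}$ satisfies the monic real equation $c^{2}-2(\operatorname{Re}c)c+|c|^{2}=0$, so if $w(c)<0$ the three terms would have $w$-values $2w(c)$, $w(c)$ (or $+\infty$ when $\operatorname{Re}c=0$), and $0$, with strict minimum $2w(c)<0$, forcing $w(0)<+\infty$, a contradiction; hence $w(c)\ge0$ and likewise $w(\bar c)\ge0$, and the product identity then gives $w(c)=w(\bar c)=0$. (Equivalently, $w|_{\mathbb{C}}$ is a valuation on an algebraic extension of $\mathbb{R}$ restricting to the trivial valuation, hence trivial.) With that step filled in, your argument is sound.
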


\section{Preparation}

\subsection{Basic knowledge}

In this subsection, we recall some basic results which will be used when we discuss Zhou weights near singular points.

Let $X$ be an analytic subset of $\Omega\in\mathbb{C}^{n}$ with pure dimension $d$. Let 
\[dV_{X}\coloneqq \frac{1}{2^{d}d!}\bigwedge^d(\sum_{1\le j\le n}\sqrt{-1}dz_j\wedge d\bar z_j)\]
be the volume form on $X$ induced by the standard volume form on $\mathbb{C}^{n}$.

Let $(u_{\alpha})$ be a family of plurisubharmonic functions on an open subset of $X$, and we assume that the family $(u_{\alpha})$  is locally uniformly bounded from above. We recall the following Choquet's Lemma.
\begin{Lemma}[see \cite{demailly-book}]
	\label{lem:Choquet} Every family $(u_{\alpha})$  has a countable subfamily $(v_{j})=(u_{\alpha(j)})$,
	such that its upper-envelope $v=\sup_{j}v_{j}$ satisfies $v\leq u\leq u^{*} = v^{*}$,
	where $u=\sup_{\alpha}u_{\alpha}$, $u^{*}(z)\coloneqq \lim_{\varepsilon\to0}\sup_{U_{z,\varepsilon}}u$
	and $v^{*}(z)\coloneqq \lim_{\varepsilon\to0}\sup_{U_{z,\varepsilon}}v$ are the regularizations of $u$ and $v$, where $U_{z,\varepsilon}\coloneqq X\cap\mathbb{B}^{n}(z,\varepsilon)$.
	
	Moreover, the upper regularization $u^{*}$ is plurisubharmonic
	and equals almost everywhere to $u$.
\end{Lemma}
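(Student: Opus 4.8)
The plan is to run the classical second-countability argument. First I would fix a countable basis $\mathcal{B}=\{B_k\}_{k\in\mathbb{N}}$ of open subsets of $X$ (the space $X$, with the subspace topology from $\mathbb{C}^n$, is second countable, so such a basis exists). For each $k$ set $M_k\coloneqq\sup_{B_k}u\in[-\infty,+\infty]$; since $u=\sup_\alpha u_\alpha$, one may choose indices $\alpha(k,j)$ ($j\in\mathbb{N}$) with $\sup_{B_k}u_{\alpha(k,j)}\uparrow M_k$ as $j\to\infty$ (if $M_k=+\infty$, with $\sup_{B_k}u_{\alpha(k,j)}\to+\infty$; if the family is empty over $B_k$ no choice is needed). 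The collection $\{u_{\alpha(k,j)}\}_{k,j}$ is countable; enumerate it as $(v_j)$ and put $v\coloneqq\sup_j v_j$. By construction $v\le u$, and for every basis element $B_k$ we have $\sup_{B_k}v\ge\sup_j\sup_{B_k}u_{\alpha(k,j)}=M_k=\sup_{B_k}u$, hence $\sup_{B_k}v=\sup_{B_k}u$ for all $k$.

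Next I would identify the upper regularizations with infima over the basis. For $z\in X$, because the sets $U_{z,\varepsilon}$ form a neighbourhood basis at $z$ and $\mathcal{B}$ is a basis of the topology, a short argument gives $u^*(z)=\inf\{\sup_{B_k}u:z\in B_k\}$ and likewise $v^*(z)=\inf\{\sup_{B_k}v:z\in B_k\}$: for the inequality $\le$ one uses that any $B_k\ni z$ contains $U_{z,\varepsilon}$ for small $\varepsilon$, and for $\ge$ that each $U_{z,\varepsilon}$ contains some $B_k\ni z$. Since $\sup_{B_k}u=\sup_{B_k}v$ for all $k$, these two infima coincide, so $u^*=v^*$. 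Together with $v\le u$ and the trivial bound $u(z)\le\sup_{U_{z,\varepsilon}}u$ (which gives $u\le u^*$), this yields the asserted chain $v\le u\le u^*=v^*$.

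For the ``moreover'' part I would write $v$ as the increasing limit of $w_N\coloneqq\max\{v_1,\dots,v_N\}$, each of which is plurisubharmonic on $X_{\reg}$, and, by the assumed local uniform boundedness from above of the family $(u_\alpha)$, the $w_N$ are locally uniformly bounded above; the classical theorem on increasing limits of plurisubharmonic functions (applied on the complex manifold $X_{\reg}$, together with boundedness above near $X_{\sing}$) then says that the upper regularization of $v=\lim_N w_N$ is plurisubharmonic and agrees with $v$ outside a Lebesgue-null set. Hence $u^*=v^*$ is plurisubharmonic on $X$ in the sense used here, and since $v\le u\le u^*$ with $v=u^*$ almost everywhere, also $u=u^*$ almost everywhere, which is the final assertion.

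The argument is entirely standard; I do not expect a real obstacle, only two points requiring a little care. The first is that the notion of plurisubharmonic function on the possibly singular $X$ used in this paper ignores values on $X_{\sing}$ and only demands local boundedness from above there, so one must check that the regularization inherits exactly this property — it does, since plurisubharmonicity on $X_{\reg}$ is the manifold case and boundedness above near singular points is immediate from the hypothesis on $(u_\alpha)$. The second, and the only place second countability of $X$ intervenes, is the reduction ``$u^*=$ infimum over a countable basis''; once that is in place the rest is bookkeeping.
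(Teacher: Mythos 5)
Your proof is correct and follows essentially the same route as the paper, which states the lemma with a citation to Demailly's book and only remarks that the classical argument carries over to singular $X$ by working on $X_{\reg}$ and ignoring values on $X_{\sing}$: your countable-basis selection, the identification $u^*(z)=\inf\{\sup_{B_k}u : z\in B_k\}$, and the appeal to the increasing-limit theorem for plurisubharmonic functions reproduce exactly that standard proof. The final paragraph of your proposal is precisely the adaptation the paper's remark has in mind (regularization and plurisubharmonicity on $X_{\reg}$, boundedness above near $X_{\sing}$ from the hypothesis, and negligibility of $X_{\sing}$ for the almost-everywhere statement), so no further changes are needed.
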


\begin{Remark}
Note that the original version of Lemma \ref{lem:Choquet} is stated for  smooth  $X$. When $X$ has singularities, since we ignore the values of plurisubharmonic functions on singularities and $(u_{\alpha})$  is locally uniformly bounded from above, we  apply upper semicontinuous regularization on $X_{\text{reg}}$ and the same argument as the smooth case shows the Lemma \ref{lem:Choquet} still holds in the singular case.

\end{Remark}

Let $z\in X$ be a smooth point. Let $\varphi$ and $\varphi_0$ be plurisubharmonic functions on $X$ defined near $z$ and $f$ be a holomorphic function defined on $X$ near $z$.
The following result considers the growth of the volume of sub-level sets of plurisubharmonic functions.

\begin{Lemma}[see \cite{GZopen-effect}]
	\label{lem:JM-0}
	Assume that $|f_{0}|^{2}e^{-2(\varphi+\varphi_{0})}$ is not integrable near $z$,
	and $|f_{0}|^{2}e^{-2\varphi_{0}}$ is integrable near $z$.
	Then for any small enough neighborhood $U$ of $z$,
	there exists $C>0$ such that
	$$e^{2t}\int_{\{\varphi<-t\}\cap U}|f_{0}|^{2}e^{-2\varphi_{0}}>C$$
	for any $t\ge0$.
\end{Lemma}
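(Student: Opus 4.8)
The plan is to reduce to a local euclidean model and then treat separately the two genuinely different regimes, $t$ bounded and $t\to+\infty$.

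\emph{Reduction and the bounded range.} Since $z$ is a smooth point, a local biholomorphism identifies a neighbourhood of $z$ in $X$ with a ball $B=\mathbb{B}^d(o,r_0)\subset\mathbb C^d$ (with $z\mapsto o$ and $dV_X$ comparable to Lebesgue measure), and after adding a constant we may take $\varphi\le 0$ on $B$. Put $d\mu:=|f_0|^2e^{-2\varphi_0}\,dV_X$, fix a relatively compact $U\Subset B$, and set $a(t):=\mu(U\cap\{\varphi<-t\})$, nonincreasing in $t\ge0$; the hypotheses read $\mu(U)<\infty$ and $\int_U e^{-2\varphi}\,d\mu=+\infty$. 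For $0\le t\le T$ one has $e^{2t}a(t)\ge a(T)$, and $a(T)>0$, for otherwise $\varphi\ge-T$ $\mu$-a.e.\ on $U$ and $\int_U e^{-2\varphi}\,d\mu\le e^{2T}\mu(U)<\infty$. So on bounded intervals the bound is immediate; the real content is a uniform positive lower bound for $e^{2t}a(t)$ as $t\to+\infty$.

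\emph{The tail.} This is precisely the effective form of the strong openness property proved by Guan--Zhou \cite{GZopen-effect}, and here plurisubharmonicity of $\varphi$ is unavoidable: a purely real-variable argument only gives, via the layer-cake formula, $\int_0^{\infty}e^{2t}a(t)\,dt=+\infty$, which does not prevent $e^{2t}a(t)$ from tending to $0$ (indeed $a(t)=e^{-2t}/t$ would satisfy it). The mechanism is the Ohsawa--Takegoshi extension theorem with sharp constant, applied with the \emph{truncated} weight $\psi_t:=\max\{\varphi,-t\}$: this is a negative plurisubharmonic function on $B$, equal to $-t$ on $\{\varphi\le-t\}$ and decreasing to $\varphi$ as $t\uparrow+\infty$, and its negativity is exactly what keeps the extension constant independent of $t$ while the weight still records the full singularity of $\varphi$ in the limit. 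Running the $L^2$-extension estimate with weight $2\varphi_0+2\psi_t$ and denominator $|f_0|^2$ along coordinate slices through $o$ (iterated $d$ times, or equivalently through the Guan--Zhou ODE method with an auxiliary weight), and tracking constants, yields a comparison of the shape $\mu(U)\le C_1\,e^{2t}a(t)$ for all large $t$ — equivalently a consecutive-levels estimate $a(t)\ge e^{-2}a(t-1)(1-\delta_t)$ with $\sum_t\delta_t<\infty$, which telescopes down to $t=0$ — the role of the non-integrability hypothesis being to force this comparison constant to be sharp rather than merely positive. Combined with the bounded range and a final shrinking of $U$, this produces a single $C>0$ with $e^{2t}a(t)>C$ for all $t\ge0$.

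\emph{Main obstacle.} The one nonformal point is extracting, from the bare non-integrability of $|f_0|^2e^{-2\varphi_0-2\varphi}$, a comparison constant in the tail that does not deteriorate as $t\to+\infty$; this is the technical heart, handled in \cite{GZopen-effect} via the ODE method, the truncation $\psi_t=\max\{\varphi,-t\}$ being designed to balance ``uniform Ohsawa--Takegoshi constant'' against ``weight still singular enough''. A variant route is to regularise $\varphi$ by Demailly's Bergman-kernel sequence $\varphi_m\downarrow\varphi$ with analytic singularities, verify the estimate in the ensuing monomial model after a log resolution (where it is a direct computation: non-integrability forces the decay rate of $a(t)$ to be at most $e^{-2t}$), and pass to the limit — the difficulty then being uniformity of the constant in both $m$ and $t$.
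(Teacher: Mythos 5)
The paper offers no proof of this lemma: it is stated with the attribution ``see \cite{GZopen-effect}'' and used as a black box, its only role in the paper being as input to the singular-point version Lemma~\ref{lem:JM}, which is obtained from it via resolution of singularities. So there is no in-paper argument to compare against; the benchmark is the Guan--Zhou effectiveness paper itself.

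Measured against that, your reduction and bounded-range observations are correct, and your diagnosis that the layer-cake identity $\mu(U)+2\int_0^\infty e^{2t}a(t)\,dt=\int_U e^{-2\varphi}\,d\mu=+\infty$ does not yield a pointwise lower bound on $e^{2t}a(t)$ (with $a(t)=e^{-2t}/t$ as counterexample) correctly locates where plurisubharmonicity has to enter. Your sketch of the mechanism --- Ohsawa--Takegoshi $L^2$ extension with the truncated negative weight $\psi_t=\max\{\varphi,-t\}$, pushed through the Guan--Zhou ODE / method-of-undetermined-functions scheme --- is faithful to how \cite{GZopen-effect} actually runs.

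What you have is, however, an informed sketch rather than a proof. The two displayed shapes you propose for the tail estimate are asserted, not derived, and they are not, as you claim, ``equivalent'': setting $b(t):=e^{2t}a(t)$, the comparison $\mu(U)\le C_1 b(t)$ for all large $t$ is a pointwise lower bound on $b$, while $a(t)\ge e^{-2}a(t-1)(1-\delta_t)$ with $\sum_t\delta_t<\infty$ is a near-monotonicity statement $b(t)\ge b(t-1)(1-\delta_t)$; each would yield the conclusion after telescoping, but neither implies the other, and neither follows from anything you have written. The hard point --- that ``tracking constants'' in an iterated $L^2$-extension delivers a comparison constant that does not deteriorate as $t\to+\infty$ --- is precisely the content of \cite{GZopen-effect}, and your final paragraph correctly flags it as the nonformal step. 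Since the paper also defers entirely to \cite{GZopen-effect} for this statement, deferring the tail to the same source is a defensible stance; but the intermediate displays should be either dropped or actually justified, as written they overstate what has been established.
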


We recall the following desingularization theorem due to Hironaka.

\begin{Theorem}[\cite{Hironaka}, see also \cite{BM-1991}]\label{thm:desing}
	Let $Y$ be a complex manifold, and $M$ be an analytic sub-variety in $Y$. Then there is a local finite sequence of blow-ups $\mu_j\colon Y_{j+1}\rightarrow Y_j$ $(Y_1\coloneqq Y,j=1,2,\ldots)$ with smooth centers $S_j$ such that:
	\begin{enumerate}
	    \item Each component of $S_j$ lies either in $(M_j)_{\sing}$ or in $M_j\cap E_j$, where $M_1\coloneqq M$, $M_{j+1}$ denotes the strict transform of $M_j$ by $\mu_j$, $(M_j)_{\sing}$ denotes the singular set of $M_j$, and $E_{j+1}$ denotes the exceptional divisor $\mu^{-1}_j(S_j\cup E_j)$;
	    \item Let $M'$ and $E'$ denote the final strict transform of $M$ and the exceptional divisor respectively. Then:
	    \begin{itemize}
	        \item[(a)] The underlying point-set $|M'|$ is smooth;
	        \item[(b)] $|M'|$and $E'$ simultaneously have only normal crossings;
	    \end{itemize}
	\end{enumerate}
\end{Theorem}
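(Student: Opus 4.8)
This statement is Hironaka's theorem, recalled here verbatim and used as a black box; the honest proof is the reference itself, but if I were to reconstruct one I would follow the algorithmic, canonical resolution of Bierstone--Milman \cite{BM-1991} rather than Hironaka's original patching argument. The first move is to reduce the assertion to a local, relative principalization statement: by Noetherianity it suffices to treat a single irreducible $M$ together with a simple normal crossings divisor $E$ already present in $Y$, and since the centers produced will be defined canonically in terms of intrinsic data of $(M_j, E_j)$, the local constructions in charts automatically glue to the global (locally finite) sequence of blow-ups $\mu_j$ asserted in the theorem. The heart of the proof is to attach to each point $a \in M_j$ a local invariant $\mathrm{inv}_j(a)$, built hierarchically: the top entry is $\ord_a \mathcal{I}(M_j)$ taken \emph{modulo} the part of $E_j$ through $a$; one then passes to a smooth hypersurface of \emph{maximal contact} through $a$, restricts a suitably weighted combination of generators of $\mathcal{I}(M_j)$ and their derivatives (the \emph{coefficient ideal}) to it, and records its order modulo the induced exceptional divisor; iterating this descent on the ambient dimension produces the remaining entries, ending with a monomial/combinatorial datum coming from the old exceptional components. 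The point of this bookkeeping is that $\ord_a \mathcal{I}(M_j)=1$ exactly when $a$ is a smooth point of $M_j$ and $\ge 2$ at singular points, so driving $\mathrm{inv}_j$ down to its minimal value forces $|M'|$ smooth, which is conclusion (2)(a); keeping $E$ simple normal crossings throughout and always choosing centers that meet $E_j$ transversally yields (2)(b).

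With the invariant in place, the proof proceeds through the usual loop. One shows $\mathrm{inv}_j$ takes values in a fixed well-ordered set (under the lexicographic order), is upper semicontinuous, and is even ``infinitesimally'' upper semicontinuous (does not jump up after blow-up). Next, and this is where maximal contact does its work, the locus where $\mathrm{inv}_j$ attains its current maximum value on $M_j$ is shown to be \emph{smooth} and to have normal crossings with $E_j$; this locus lies in $(M_j)_{\sing}$ or in $M_j \cap E_j$ (precisely because a smooth point off $E_j$ has minimal invariant), so choosing $S_j$ to be a component of it is legitimate. One then proves that after the blow-up $\mu_j$ along $S_j$, the maximal value of $\mathrm{inv}$ on the strict transform $M_{j+1}$ does not exceed the previous maximal value, and that the locus where equality persists has strictly smaller dimension; hence after finitely many blow-ups the maximal value strictly decreases in the lexicographic order. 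Transfinite (equivalently, Noetherian) induction on the invariant terminates the process with $\mathrm{inv}\equiv$ minimal, i.e.\ $|M'|$ smooth, after which a short, elementary clean-up sequence of blow-ups along the intersections of $|M'|$ with the remaining exceptional components arranges that $|M'|$ and $E'$ have simultaneous normal crossings.

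The genuinely hard part — as in every treatment of resolution of singularities — is making the induction \emph{close}: engineering the invariant and the auxiliary coefficient ideals so that the precise statement proved about $Y_j$ is exactly what one needs in order to descend from ambient dimension $n$ to dimension $n-1$ and to feed it back for $Y_{j+1}$. Concretely, one must show the resolution data of the coefficient ideal on a maximal-contact hypersurface are independent of the (non-canonical) choice of that hypersurface up to the equivalence that governs the recursion; verify that hypersurfaces of maximal contact can be chosen transversal to the accumulated exceptional divisor so that the SNC condition is preserved; and control the order of the \emph{strict} transform of the coefficient ideal under blow-up while peeling off the ``monomial'' contribution of old exceptional divisors, so that at the bottom of the recursion only a toric/combinatorial normalization remains. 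Getting the exceptional-divisor bookkeeping to be compatible simultaneously with upper semicontinuity, with the dimensional descent, and with canonicity (needed for the gluing to the global sequence $\mu_j$) is the delicate balance that the Bierstone--Milman invariant achieves, and it is the step I would expect to occupy essentially all of the work.
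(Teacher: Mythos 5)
Your proposal is consistent with the paper: Theorem \ref{thm:desing} is not proved there at all, only quoted from Hironaka and Bierstone--Milman, and your sketch of the canonical Bierstone--Milman invariant-descent argument is exactly the route of the paper's cited reference \cite{BM-1991}. Treating the theorem as a black box, as both you and the paper do, is the appropriate level of detail here.
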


The {\it (b)} in the above theorem means that, locally, there is a coordinate system in which $E'$ is a union of coordinate hyperplanes and $|M'|$ is a coordinate subspace.

We recall that every locally bounded plurisubharmonic function can be extended across closed pluripolar sets.

\begin{Lemma}[{see \cite[Theorem 5.24]{demailly-book}}]
\label{l:psh-extend}
	Let $\varphi$ be a plurisubharmonic function on $\Omega\setminus E$, where $\Omega$ is a domain in $\mathbb{C}^n$ and $E$ is a closed pluripolar set in $\Omega$. If $\varphi$ is locally bounded above near $E$, then $\varphi$ extends  uniquely to a plurisubharmonic function on $\Omega$.
\end{Lemma}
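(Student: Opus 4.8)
This is a classical fact --- it is \cite[Theorem~5.24]{demailly-book} --- and I sketch how I would prove it.

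\emph{Uniqueness and localization.} First I would observe that a closed pluripolar set $E$ has Lebesgue measure zero, so any two plurisubharmonic functions on $\Omega$ that restrict to $\varphi$ on $\Omega\setminus E$ agree almost everywhere, hence everywhere (two plurisubharmonic functions that coincide almost everywhere are equal, being the limits of their averages over shrinking balls); thus an extension, if it exists, is unique. This also reduces existence to a local statement: once an extension is produced on a neighborhood of each point, uniqueness forces the local extensions to agree on overlaps, so they glue to a global one. Hence I may assume $\Omega$ is a ball, that $\varphi\le0$ on $\Omega\setminus E$ after subtracting a constant, and --- by the definition of pluripolarity --- that there is a plurisubharmonic function $u\le0$ on $\Omega$ with $u\not\equiv-\infty$ and $E\subseteq\{u=-\infty\}$.

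\emph{The $\varepsilon u$ regularization.} For each $\varepsilon>0$ I would set $\varphi_\varepsilon:=\varphi+\varepsilon u$ on $\Omega\setminus E$ and $\varphi_\varepsilon:=-\infty$ on $E$, and show that each $\varphi_\varepsilon$ is plurisubharmonic on $\Omega$. Upper semicontinuity is immediate on the open set $\Omega\setminus E$, and at $z\in E$ it holds because $\limsup_{\Omega\setminus E\ni w\to z}\varphi_\varepsilon(w)\le\limsup_{w\to z}\varphi(w)+\varepsilon\limsup_{w\to z}u(w)=-\infty$, the first term being finite (as $\varphi$ is locally bounded above) and the second $-\infty$ (as $u$ is upper semicontinuous with $u(z)=-\infty$). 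For the sub-mean-value property I would slice by complex lines: fix a complex line $L$; on any component of $L\cap\Omega$ contained in $\{u=-\infty\}$ one has $\varphi_\varepsilon\equiv-\infty$, which is (degenerately) subharmonic, while on the remaining components $u|_{L\cap\Omega}$ is a genuine subharmonic function, so the closed set $E\cap L$ is polar in $L$, and $\varphi_\varepsilon|_{(L\cap\Omega)\setminus(E\cap L)}=(\varphi+\varepsilon u)|_{L\cap(\Omega\setminus E)}$ is subharmonic, bounded above, and tends to $-\infty$ along $E\cap L$; the classical removable-singularity theorem for bounded-above subharmonic functions then extends it to a subharmonic function on $L\cap\Omega$ which, by the $\limsup$ computation above, is exactly $\varphi_\varepsilon|_{L\cap\Omega}$. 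An upper semicontinuous function whose restriction to every complex line is subharmonic (and which is not identically $-\infty$) is plurisubharmonic, so $\varphi_\varepsilon$ is plurisubharmonic on $\Omega$.

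\emph{Passing to $\varepsilon\downarrow0$.} Since $u\le0$, the family $\{\varphi_\varepsilon\}_{\varepsilon>0}$ increases as $\varepsilon\downarrow0$ and stays locally bounded above by $0$; its pointwise limit $\Psi$ equals $\varphi$ on $\Omega\setminus E$ and $-\infty$ on $E$. By Choquet's lemma (\Cref{lem:Choquet}) the upper regularization $\Psi^\ast$ is plurisubharmonic on $\Omega$, and for $z\in\Omega\setminus E$ one computes $\Psi^\ast(z)=\limsup_{w\to z}\Psi(w)=\limsup_{\Omega\setminus E\ni w\to z}\varphi(w)=\varphi(z)$, using $\Psi=-\infty$ on $E$ and the upper semicontinuity of $\varphi$ on $\Omega\setminus E$. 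Therefore $\Psi^\ast$ is the desired plurisubharmonic extension. The one place that requires real care is the one-dimensional removable-singularity theorem invoked above --- a closed polar set is removable for a subharmonic function that is locally bounded above near it --- and it is exactly this input for which the hypothesis ``$\varphi$ is locally bounded above'' is needed.
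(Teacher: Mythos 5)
The paper does not actually prove this lemma --- it is quoted from Demailly's book (Theorem 5.24) --- so the only meaningful comparison is with the classical argument there, and your sketch is essentially that argument: localize, choose $u\in\mathrm{Psh}(\Omega)$, $u\le 0$, with $E\subset\{u=-\infty\}$, show that $\varphi_\varepsilon:=\varphi+\varepsilon u$ (set to $-\infty$ on $E$) is plurisubharmonic, let $\varepsilon\downarrow 0$ and take the upper regularization. The one place where you use more machinery than needed is the verification that $\varphi_\varepsilon$ is plurisubharmonic: once upper semicontinuity is checked (your computation), it suffices to verify the sub-mean-value inequality on small spheres --- at $z\in E$ it is trivial because $\varphi_\varepsilon(z)=-\infty$, and at $z\notin E$ small spheres avoid the closed set $E$, on whose complement $\varphi_\varepsilon$ is a sum of plurisubharmonic functions. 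Your line-slicing detour through the one-variable removable-singularity theorem does go through (note only that a component of $L\cap\Omega$ minus a closed polar set remains connected, so the degenerate case where the restriction of $\varphi$ is $\equiv-\infty$ on part of the complement causes no trouble), but it is heavier than the statement requires.

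Two small imprecisions in the final step. First, the pointwise limit $\Psi=\sup_{\varepsilon>0}\varphi_\varepsilon$ is not equal to $\varphi$ on all of $\Omega\setminus E$: it equals $-\infty$ on $(\Omega\setminus E)\cap\{u=-\infty\}$, since $\varphi_\varepsilon=-\infty$ there for every $\varepsilon>0$. Second, for $z\in\Omega\setminus E$ the identity $\Psi^\ast(z)=\varphi(z)$ does not follow from upper semicontinuity alone --- usc only yields $\Psi^\ast(z)\le\varphi(z)$. For the reverse inequality you need that plurisubharmonic functions do not see Lebesgue-null (in particular pluripolar) sets in their $\limsup$, i.e. $\varphi(z)=\limsup_{w\to z,\ w\notin E\cup\{u=-\infty\}}\varphi(w)$, which comes from the sub-mean-value inequality; alternatively, observe that $\Psi^\ast$ and $\varphi$ are both plurisubharmonic near $z$ and agree almost everywhere there, hence agree, exactly as in your uniqueness paragraph. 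With these patches your proof is complete and coincides with the standard (cited) one.
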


Recall that  $X$ is an analytic subset of $\Omega\in\mathbb{C}^{n}$ with pure dimension $d$.
Let $z_0\in X$ be a singular point. Let $\varphi$ and $\varphi_0$ be plurisubharmonic functions on $X$ defined near $z_0$ and $f$ be a holomorphic function defined on $X$ near $z_0$. Recall that
\[dV_{X}\coloneqq \frac{1}{2^{d}d!}\bigwedge^d\big(\sum_{1\le j\le n}\sqrt{-1}dz_j\wedge d\bar z_j\big)\]
be the volume form on $X$ induced by the standard volume form on $\mathbb{C}^{n}$.

Using Lemma \ref{lem:JM-0}, Lemma \ref{l:psh-extend} and Theorem \ref{thm:desing}, we can generalize Lemma \ref{lem:JM-0} to the singular case.
\begin{Lemma}
	\label{lem:JM}
	Assume that $|f_{0}|^{2}e^{-2(\varphi+\varphi_{0})}$ is not integrable near $z_0$, and $|f_{0}|^{2}e^{-2\varphi_{0}}$ is integrable near $z_0$. Then for any small enough neighborhood $U\subset X$ of $z_0$, there exists $C>0$ such that
	$$e^{2t}\int_{\{\varphi<-t\}\cap U}|f_{0}|^{2}e^{-2\varphi_{0}}>C$$
	for any $t\ge0$.
\end{Lemma}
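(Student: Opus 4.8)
The plan is to reduce the singular case to the already-known smooth case (Lemma \ref{lem:JM-0}) via Hironaka's desingularization (Theorem \ref{thm:desing}). First I would take a resolution $\pi\colon \widetilde{X}\to X$ of the germ $(X,z_0)$, obtained from the blow-up sequence of Theorem \ref{thm:desing} applied to $X\subset \Omega$: $\widetilde{X}$ is a smooth manifold (of dimension $d$), $\pi$ is proper, and $\pi$ restricts to a biholomorphism $\widetilde{X}\setminus E \xrightarrow{\sim} X_{\reg}\setminus(\text{small analytic set})$, where $E=\pi^{-1}(X_{\sing})$ is a divisor. Pulling back, set $\widetilde\varphi\coloneqq \varphi\circ\pi$, $\widetilde\varphi_0\coloneqq \varphi_0\circ\pi$ and $\widetilde f_0\coloneqq f_0\circ\pi$; these are defined a priori on $\widetilde X\setminus E$, but since $\varphi,\varphi_0$ are plurisubharmonic on $X$ (hence, by our convention, bounded above near $X_{\sing}$) their pullbacks are plurisubharmonic on $\widetilde X\setminus E$ and locally bounded above near $E$, which is pluripolar, so by Lemma \ref{l:psh-extend} they extend uniquely to plurisubharmonic functions on all of $\widetilde X$, again denoted $\widetilde\varphi,\widetilde\varphi_0$.

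Next I would transport the two integrability hypotheses through $\pi$. The key point is the change-of-variables formula: for a nonnegative measurable function $g$ on $X$,
\begin{equation*}
\int_{U} g\, dV_X = \int_{\pi^{-1}(U)} (g\circ\pi)\, |J_\pi|^2\, dV_{\widetilde X},
\end{equation*}
where $|J_\pi|^2$ is the (real) Jacobian factor of $\pi$ with respect to the chosen volume forms; since $\pi$ is a sequence of blow-ups, $|J_\pi|^2$ is, locally on $\widetilde X$, of the form $|h|^2$ times a smooth positive function with $h$ holomorphic (a monomial in the exceptional divisors), and the negligible sets contracted by $\pi$ have measure zero on both sides. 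Hence ``$|f_0|^2 e^{-2\varphi_0}$ integrable near $z_0$'' becomes ``$|\widetilde f_0|^2|J_\pi|^2 e^{-2\widetilde\varphi_0}$ integrable near each point of $E\cap\pi^{-1}(z_0)$'', and similarly the non-integrability of $|f_0|^2 e^{-2(\varphi+\varphi_0)}$ becomes non-integrability of $|\widetilde f_0|^2|J_\pi|^2 e^{-2(\widetilde\varphi+\widetilde\varphi_0)}$ near some point $p\in E\cap\pi^{-1}(z_0)$. Absorbing $|J_\pi|^2$ into a new weight $\widetilde\varphi_0'\coloneqq \widetilde\varphi_0 - \log|J_\pi|$ (locally $\log|h|$ up to a bounded term, hence plurisubharmonic), we are exactly in the hypotheses of Lemma \ref{lem:JM-0} at the smooth point $p$ of $\widetilde X$, applied with the holomorphic function $\widetilde f_0$, the plurisubharmonic weight $\widetilde\varphi$, and the weight $\widetilde\varphi_0'$.

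Then Lemma \ref{lem:JM-0} gives, on a small neighborhood $\widetilde U\subset\widetilde X$ of $p$, a constant $C>0$ with
\begin{equation*}
e^{2t}\int_{\{\widetilde\varphi<-t\}\cap\widetilde U} |\widetilde f_0|^2 |J_\pi|^2 e^{-2\widetilde\varphi_0} > C \qquad (t\ge 0).
\end{equation*}
Shrinking $\widetilde U$ so that $\pi(\widetilde U)\subset U$ and running the change-of-variables formula backwards — using that $\{\widetilde\varphi<-t\}\cap\widetilde U$ maps into $\{\varphi<-t\}\cap U$ up to a measure-zero set, and that the pushed-forward integral is bounded above by $\int_{\{\varphi<-t\}\cap U}|f_0|^2 e^{-2\varphi_0}$ — yields the same inequality downstairs with the same $C$, for every small enough neighborhood $U$ of $z_0$. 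That completes the proof.

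The main obstacle I anticipate is bookkeeping around the Jacobian and the exceptional/negligible sets: one must check that $\log|J_\pi|$ really is (locally) of the form $\log|h|+O(1)$ with $h$ holomorphic so that $\widetilde\varphi_0'$ stays plurisubharmonic, that the sets contracted by $\pi$ (both in $\widetilde X$ over $X_{\sing}$ and the indeterminacy-type loci in $X_{\reg}$) are genuinely Lebesgue-null so the change of variables is valid, and that one can choose the point $p\in E\cap\pi^{-1}(z_0)$ where non-integrability persists after resolution — this last point follows because non-integrability of a nonnegative function near a compact fibre $\pi^{-1}(z_0)$ forces non-integrability near at least one point of that fibre, by a partition-of-unity/covering argument. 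None of these are deep, but they are the places where the argument needs care; everything else is a direct translation of Lemma \ref{lem:JM-0}.
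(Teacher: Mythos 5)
Your overall strategy — desingularize with Theorem \ref{thm:desing}, pick a point of the fibre over $z_0$ where non-integrability survives, extend the pulled-back weights across the exceptional set with Lemma \ref{l:psh-extend}, and reduce to the smooth Lemma \ref{lem:JM-0} — is exactly the paper's. But there is one genuine gap, and it is precisely at the step you flag as ``bookkeeping around the Jacobian.'' You propose to absorb the Jacobian factor into a new weight $\widetilde\varphi_0'\coloneqq \widetilde\varphi_0 - \log|J_\pi|$ and then claim $\widetilde\varphi_0'$ is still plurisubharmonic because $\log|J_\pi|=\log|h|+O(1)$ with $h$ holomorphic. That reasoning is backwards: $\log|h|$ is indeed plurisubharmonic, but you are \emph{subtracting} it, so $\widetilde\varphi_0-\log|h|$ is a difference of plurisubharmonic functions and need not be plurisubharmonic. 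Worse, since $J_\pi$ vanishes on the exceptional divisor $E$, $-\log|J_\pi|\to+\infty$ along $E$, so $\widetilde\varphi_0'$ is typically not even locally bounded above, let alone upper semicontinuous or plurisubharmonic. The hypotheses of Lemma \ref{lem:JM-0} require $\varphi_0$ to be plurisubharmonic, so the lemma does not apply as you invoke it.

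The repair is small but must go in the other direction, and it is what the paper does: absorb $J_\pi$ into the \emph{holomorphic numerator}, not the weight. Writing $f_0=(f_{0,1},\ldots,f_{0,m})$ and locally $\mu^*dV_X=|J|^2\,dV_{\widetilde X}$ with $J$ holomorphic, one gets
\begin{equation*}
\mu^*\bigl(|f_0|^2e^{-2\varphi_0}\,dV_X\bigr)
=\sum_{k}\bigl|(f_{0,k}\circ\mu)\,J\bigr|^2\,e^{-2(\varphi_0\circ\mu)}\,dV_{\widetilde X}
=\sum_{k}|\widetilde f_k|^2\,e^{-2\widetilde\varphi_0}\,dV_{\widetilde X},
\end{equation*}
with $\widetilde f_k=(f_{0,k}\circ\mu)\,J$ holomorphic and the weight $\widetilde\varphi_0=\varphi_0\circ\mu$ untouched (and still plurisubharmonic after the Lemma \ref{l:psh-extend} extension). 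Lemma \ref{lem:JM-0}, applied to the tuple $\widetilde f=(\widetilde f_1,\ldots,\widetilde f_s)$ with weights $\widetilde\varphi$ and $\widetilde\varphi_0$, then gives the sublevel-set bound upstairs, and pushing down through the change of variables recovers the statement. With this one correction — keep the weight plurisubharmonic, move the Jacobian into the holomorphic factor — the rest of your argument goes through as written.
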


\begin{proof}

	We use Theorem \ref{thm:desing} on $\Omega$ to resolve
	the singularities of $X$, and denote the corresponding proper modification by $\mu\colon\widetilde\Omega\rightarrow\Omega$. Denote the strict transform of $X$ by $\widetilde X$ and denote $Z_0\coloneqq \widetilde X\cap\mu^{-1}(\{z_0\})$. 
	
	As $|f_{0}|^{2}e^{-2(\varphi+\varphi_{0})}$ is not integrable near $z_0$, there exists $\tilde{z}_0\in Z_0$ such that $$\int_{U}\mu^{*}(|f_{0}|^{2}e^{-2(\varphi+\varphi_{0})}dV_{X})=+\infty$$ for any neighborhood $U\subset\widetilde X$ of $\widetilde{z}_0$. Note that $\varphi\circ \mu$ and $\varphi_0\circ \mu$ are plurisubharmonic functions on $\widetilde X$ (where we can use Lemma \ref{l:psh-extend} to extend $\varphi\circ\mu$ and $\varphi_0\circ\mu$ from $X\setminus\mu^{-1}(X_{\sing})$ to $X$), and there exist holomorphic functions $\widetilde f_1,\ldots,\widetilde f_s$ near $\widetilde z_0$ such that $\mu^{*}(|f_{0}|^{2}dV_{X})=\sum_{1\le j\le s}|\widetilde f_j|^2dV_{\widetilde{X}}$, where $dV_{\widetilde{X}}$ is a volume form on $\widetilde X$. By Lemma \ref{lem:JM-0}, for any small neighborhood $U'\subset\widetilde X$ of $\widetilde{z}_0$, there exists $C>0$ such that
	$$e^{2t}\int_{\{\mu^{-1}(\varphi)<-t\}\cap U'}\mu^{*}(|f_{0}|^{2}e^{-2\varphi_{0}}dV_{X})>C$$
	for any $t\ge0$, which proves Lemma \ref{lem:JM}.
\end{proof}

The following result is a corollary of Guan--Zhou's lower semicontinuity property of
plurisubharmonic functions with a multiplier ideal in \cite{GZopen-effect}.

\begin{Theorem}[\cite{GZopen-effect}]
	\label{thm:SOC}
	Let $f$ be a holomorphic function near $z_0$, and $\{\varphi_j\}$ be a sequence of plurisubharmonic functions on a neighborhood $V\subset X$ of $z_0$. Assume that $\{\varphi_j\}$ is increasingly convergent to a plurisubharmonic function $\varphi$ almost everywhere with respect to Lebesgue measure. If $|f_0|^2e^{-2\varphi_j}$ is not integrable near $z_0$ for any $j$, then  $|f_0|^2e^{-2\varphi}$ is not integrable near $z_0$.
\end{Theorem}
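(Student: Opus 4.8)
The plan is to reduce the statement to the smooth case of Guan--Zhou's lower semicontinuity property \cite{GZopen-effect} by resolving the singularities of $X$ near $z_0$, mimicking the proof of Lemma \ref{lem:JM}. First, apply Theorem \ref{thm:desing} on a neighborhood $\Omega\subset\mathbb{C}^n$ of $z_0$ to obtain a proper modification $\mu\colon\widetilde\Omega\to\Omega$; write $\widetilde X$ for the strict transform of $X$, which is smooth, and set $Z_0\coloneqq\widetilde X\cap\mu^{-1}(\{z_0\})$, a compact set. The functions $\varphi\circ\mu$ and $\varphi_j\circ\mu$ are plurisubharmonic on $\widetilde X\setminus\mu^{-1}(X_{\sing})$, are locally bounded above near the pluripolar set $\mu^{-1}(X_{\sing})$, and hence extend to plurisubharmonic functions on all of $\widetilde X$ by Lemma \ref{l:psh-extend}; moreover $\varphi_j\circ\mu$ still increases to $\varphi\circ\mu$ almost everywhere. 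Near each $\widetilde z\in Z_0$ one may write $\mu^*(|f_0|^2\,dV_X)=\sum_{1\le k\le s}|\widetilde f_k|^2\,dV_{\widetilde X}$ with $\widetilde f_k$ holomorphic and $dV_{\widetilde X}$ a smooth volume form, so that, since $\mu$ is an isomorphism away from an analytic subset of measure zero, (non)integrability of $|f_0|^2e^{-2\psi}$ near $z_0$ on $X$ is equivalent to (non)integrability of $\sum_k|\widetilde f_k|^2e^{-2\psi\circ\mu}$ near $Z_0$ on $\widetilde X$.

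The key point is to pin down one point of $Z_0$ and one index $k$ that detect non-integrability uniformly in $j$. For each $j$, since $|f_0|^2e^{-2\varphi_j}$ is non-integrable near $z_0$, properness of $\mu$ and compactness of $Z_0$ give a non-empty closed subset $A_j\subset Z_0$ consisting of those $\widetilde z$ near which $\sum_k|\widetilde f_k|^2e^{-2\varphi_j\circ\mu}$ is non-integrable. Because $\varphi_j\circ\mu$ increases with $j$, non-integrability near a point for index $j'$ implies the same for every $j\le j'$, so $A_{j'}\subset A_j$ for $j'\ge j$, and the finite intersection property yields a point $\widetilde z_0\in\bigcap_j A_j$. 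At $\widetilde z_0$, $\sum_{1\le k\le s}|\widetilde f_k|^2e^{-2\varphi_j\circ\mu}$ is non-integrable for every $j$; as the sum is finite, each $j$ admits an index $k(j)$ with $|\widetilde f_{k(j)}|^2e^{-2\varphi_j\circ\mu}$ non-integrable near $\widetilde z_0$, and again by monotonicity the set of such $j$ is, for each fixed $k$, an initial segment of the index set. Finitely many initial segments covering the whole index set forces one of them to be the whole index set, i.e. there is a single $k_0$ with $|\widetilde f_{k_0}|^2e^{-2\varphi_j\circ\mu}$ non-integrable near $\widetilde z_0$ for all $j$.

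Finally, apply the smooth case of the statement at the smooth point $\widetilde z_0$ to the holomorphic function $\widetilde f_{k_0}$ and the increasing sequence $\varphi_j\circ\mu\uparrow\varphi\circ\mu$: this gives that $|\widetilde f_{k_0}|^2e^{-2\varphi\circ\mu}$ is non-integrable near $\widetilde z_0$, whence $\sum_k|\widetilde f_k|^2e^{-2\varphi\circ\mu}=\mu^*(|f_0|^2e^{-2\varphi}\,dV_X)$ is non-integrable near $\widetilde z_0\in Z_0$, and therefore $|f_0|^2e^{-2\varphi}$ is non-integrable near $z_0$ on $X$, as desired.

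The substantive difficulty lies solely in this diagonalization: the witness of non-integrability of $|f_0|^2e^{-2\varphi_j}$ on $\widetilde X$ could a priori depend on $j$, so the smooth statement cannot be invoked directly. The argument above removes this dependence using monotonicity of the sequence, compactness of the fiber $Z_0$, and finiteness of the number $s$ of local components of the pulled-back volume measure; the remainder is a routine transcription of the desingularization computation already performed in the proof of Lemma \ref{lem:JM}.
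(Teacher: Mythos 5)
The paper does not actually prove Theorem \ref{thm:SOC}; it states it as a corollary of Guan--Zhou's lower semicontinuity result and cites \cite{GZopen-effect}, leaving the passage from the smooth ambient case to a (possibly singular) analytic subvariety $X$ implicit. Your argument supplies exactly the missing reduction, and it is the natural one: it follows the same desingularization template that the paper does spell out in the proof of Lemma \ref{lem:JM}, namely pull back by the modification $\mu$, extend $\varphi_j\circ\mu$ and $\varphi\circ\mu$ across the exceptional pluripolar set via Lemma \ref{l:psh-extend}, and trade $|f_0|^2\,dV_X$ for a finite sum $\sum_k|\widetilde f_k|^2\,dV_{\widetilde X}$.

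The genuinely nontrivial part of your proof --- the diagonalization using compactness of $Z_0$ and monotonicity to fix a single witness point $\widetilde z_0$ and a single index $k_0$ uniformly in $j$ --- is correct and is a detail the paper glosses over entirely. Your three observations there are all sound: the sets $A_j$ are closed (the locus of local integrability is open) and non-empty (properness of $\mu$ plus compactness of $Z_0$ would otherwise yield global integrability near $z_0$); the nesting $A_{j'}\subset A_j$ for $j'\ge j$ follows from $\varphi_j\le\varphi_{j'}$; and for each fixed $k$, $\{j: |\widetilde f_k|^2e^{-2\varphi_j\circ\mu}\text{ non-integrable near }\widetilde z_0\}$ is an initial segment, so finitely many of them covering $\mathbb{Z}_{>0}$ forces one to be all of $\mathbb{Z}_{>0}$. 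One small remark: the second half of the diagonalization (picking $k_0$) can be bypassed by invoking the version of Guan--Zhou's lower semicontinuity for vector-valued holomorphic functions or for ideals, applied directly to the tuple $(\widetilde f_1,\dots,\widetilde f_s)$; but that is a matter of taste, and the argument you gave is self-contained and works with the scalar-valued statement. Overall the proof is correct and fills a gap that the paper silently leaves to the reader.
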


Let $\varphi$ and $\varphi_{0}$ be plurisubharmonic functions near $z_0\in X$
such that $|f_{0}|^{2}e^{-2\varphi_{0}}$ is integrable near $z_0$.
Assume that $\varphi\geq N\log|z|+O(1)$ near $z_0$ for large enough $N\gg0$.

\begin{Lemma}
	\label{lem:jump_asyp_C}
	Assume that $(f_{0},z_0)\not\in\mathcal{I}(\varphi+\varphi_{0})_{z_0}$,
	and $(f_{0},z_0)\in\mathcal{I}\big((1-\varepsilon)\varphi+\varphi_{0}\big)_{z_0}$ for any $\varepsilon\in(0,1)$.
	Then
	for any neighborhood $U\subset X$ of $z_0$,
	$$\lim_{t\to+\infty}\frac{-\log\int_{\{\varphi<-t\}\cap U}|f_{0}|^{2}e^{-2\varphi_{0}}}{2t}=1.$$
\end{Lemma}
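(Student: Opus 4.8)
My plan is to sandwich the quantity $h(t)\coloneqq\int_{\{\varphi<-t\}\cap U}|f_0|^2e^{-2\varphi_0}$ between two explicit bounds, $C_\varepsilon e^{-2(1-\varepsilon)t}$ from above (for every $\varepsilon\in(0,1)$) and $Ce^{-2t}$ from below, which together force $\frac{-\log h(t)}{2t}\to 1$. Before doing so I would dispose of the apparent dependence on $U$: since $\varphi\geq N\log|z|+O(1)$ near $z_0$, any sublevel set $\{\varphi<-t\}$ is contained in $\{|z|<e^{(C'-t)/N}\}$ for a suitable constant $C'$, so for all $t$ large enough $\{\varphi<-t\}\cap U$ already lies inside a prescribed small ball around $z_0$. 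Hence $h(t)$ is, for large $t$, independent of the choice of $U$, and I am free to shrink $U$ so that $|f_0|^2e^{-2\varphi_0}$ and, for each fixed $\varepsilon\in(0,1)$, also $|f_0|^2e^{-2((1-\varepsilon)\varphi+\varphi_0)}$ are integrable on $U$ — possible by the standing hypothesis that $|f_0|^2e^{-2\varphi_0}$ is integrable near $z_0$ together with $(f_0,z_0)\in\mathcal{I}\big((1-\varepsilon)\varphi+\varphi_0\big)_{z_0}$.

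For the upper bound I would fix $\varepsilon\in(0,1)$ and use the identity $|f_0|^2e^{-2\varphi_0}=|f_0|^2e^{-2((1-\varepsilon)\varphi+\varphi_0)}e^{2(1-\varepsilon)\varphi}$ together with $e^{2(1-\varepsilon)\varphi}<e^{-2(1-\varepsilon)t}$ on $\{\varphi<-t\}$:
\begin{equation*}
 h(t)=\int_{\{\varphi<-t\}\cap U}|f_0|^2e^{-2((1-\varepsilon)\varphi+\varphi_0)}\,e^{2(1-\varepsilon)\varphi}
 \leq C_\varepsilon\, e^{-2(1-\varepsilon)t},
\end{equation*}
where $C_\varepsilon\coloneqq\int_{U}|f_0|^2e^{-2((1-\varepsilon)\varphi+\varphi_0)}<+\infty$. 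Taking logarithms gives $\frac{-\log h(t)}{2t}\geq(1-\varepsilon)-\frac{\log C_\varepsilon}{2t}$, hence $\liminf_{t\to+\infty}\frac{-\log h(t)}{2t}\geq 1-\varepsilon$; letting $\varepsilon\to0^+$ yields $\liminf_{t\to+\infty}\frac{-\log h(t)}{2t}\geq 1$.

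For the lower bound I would simply apply Lemma~\ref{lem:JM} with $\varphi$ playing the role of the weight there: its hypotheses — $|f_0|^2e^{-2(\varphi+\varphi_0)}$ not integrable near $z_0$ and $|f_0|^2e^{-2\varphi_0}$ integrable near $z_0$ — are precisely the standing assumptions here. It produces, after possibly shrinking $U$, a constant $C>0$ with $e^{2t}h(t)>C$ for all $t\geq0$, i.e. $h(t)>Ce^{-2t}$, whence $\frac{-\log h(t)}{2t}<1-\frac{\log C}{2t}\to1$ and $\limsup_{t\to+\infty}\frac{-\log h(t)}{2t}\leq 1$. Combining the two inequalities shows that the limit exists and equals $1$.

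Given Lemma~\ref{lem:JM}, both estimates are short, so I do not expect a serious obstacle; the only point that needs care is the bookkeeping of the various neighborhoods and of the $\varepsilon$-dependent constants $C_\varepsilon$. This is exactly where the hypothesis $\varphi\geq N\log|z|+O(1)$ is used: it forces the sublevel sets $\{\varphi<-t\}$ to concentrate at $z_0$, so that for large $t$ they automatically sit inside whichever small neighborhood the integrability assumptions (and Lemma~\ref{lem:JM}) require, making the assertion genuinely local and independent of $U$.
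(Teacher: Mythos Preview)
Your proposal is correct and follows essentially the same route as the paper's proof: both obtain $\limsup\le 1$ from Lemma~\ref{lem:JM} and $\liminf\ge 1-\varepsilon$ by bounding $|f_0|^2e^{-2\varphi_0}\le e^{-2(1-\varepsilon)t}|f_0|^2e^{-2((1-\varepsilon)\varphi+\varphi_0)}$ on $\{\varphi<-t\}$ and using the integrability hypothesis. Your explicit use of $\varphi\ge N\log|z|+O(1)$ to make the sublevel sets eventually independent of $U$ is a welcome clarification that the paper leaves implicit.
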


\begin{proof}
		As $(f_{0},z_0)\not\in\mathcal{I}(\varphi+\varphi_{0})_{z_0}$, we have
	$$\liminf_{t\rightarrow+\infty}e^{2t}\int_{\{\varphi<-t\}\cap U}|f_0|^2e^{-2\varphi_0}>0$$
	(see Lemma \ref{lem:JM}), which implies that $$\limsup_{t\to+\infty}\frac{-\log \int_{\{\varphi<-t\}\cap U}|f_{0}|^{2}e^{-2\varphi_{0}}}{2t}\leq 1.$$

	As $\varphi\geq N\log|z|+O(1)$ for large enough $N\gg0$ and $(f_0,o)\in\mathcal{I}((1-\epsilon)\varphi+\varphi_0)_o$ for any $\epsilon>0$, we have
	\begin{equation}
		\nonumber\begin{split}
			&\limsup_{t\to+\infty}e^{2(1-\epsilon)t}\int_{\{\varphi<-t\}\cap U}|f_{0}|^{2}e^{-2\varphi_{0}}\\
			\le& \ \limsup_{t\to+\infty}\int_{\{\varphi<-t\}\cap U}|f_{0}|^{2}e^{-2\varphi_{0}-2(1-\epsilon)\varphi}\\
			=& \ 0,
		\end{split}
	\end{equation}
	which implies that 
	$$\liminf_{t\to+\infty}\frac{-\log \int_{\{\varphi<-t\}\cap U}|f_{0}|^{2}e^{-2\varphi_{0}}}{2t}\ge 1-\epsilon.$$
	Then we have 
		$$\lim_{t\to+\infty}\frac{-\log\int_{\{\varphi<-t\}\cap U}|f_{0}|^{2}e^{-2\varphi_{0}}}{2t}=1.$$
\end{proof}

\subsection{Concavity}

Let $(X,z_0)$ be an irreducible germ of an analytic set. Let $u$ and $v$ be Lebesgue measurable functions on $X$ with upper-bounds near $z_0$. Let $g$ be a nonnegative Lebesgue measurable function on $X$.

The following Lemma \ref{lem:G_key} and \ref{lem:jialidun0131} come from real analysis. 
When $z_0$ is a singular point of $X$, the proof of Lemma \ref{lem:G_key} and \ref{lem:jialidun0131} are the same as the original proof in the smooth case and hence we omit the proofs.
\begin{Lemma}[see \cite{demailly2010}]
	\label{lem:G_key}
	Assume that $g^{2}e^{2(l_{1}v-(1+l_{2})u)}$ is integrable near $z_0$, where $l_{1},l_{2}>0$.
	Then
	$g^{2}e^{-2u}-g^{2}e^{-2\max\big\{u,\frac{l_{1}}{l_{2}}v\big\}}$ is integrable
	on a small enough neighborhood of $z_0$.
\end{Lemma}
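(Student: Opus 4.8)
The plan is to prove Lemma~\ref{lem:G_key} exactly as one does the Demailly-type concavity estimate on $\mathbb{C}^n$, after first noting that the singular locus $X_{\sing}$ is negligible. Since $(X,z_0)$ is an irreducible germ of an analytic set, $X_{\sing}$ is a proper analytic subset, hence of Lebesgue measure zero in $X$; thus every integral over a neighborhood of $z_0$ may be computed over $X_{\reg}$, where $u,v$ are genuine Lebesgue functions with upper bounds and $g$ is nonnegative measurable. On $X_{\reg}$ the statement is purely pointwise plus an application of the integrability hypothesis, so no complex-analytic structure beyond "measure space with a negligible bad set" is actually used. This is why the authors can say the proof is identical to the smooth case.

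First I would split the domain into the two regions $A\coloneqq\{\frac{l_1}{l_2}v\le u\}$ and $B\coloneqq\{\frac{l_1}{l_2}v> u\}$ (intersected with a fixed small neighborhood of $z_0$ inside $X_{\reg}$). On $A$ we have $\max\{u,\frac{l_1}{l_2}v\}=u$ pointwise, so the integrand $g^2e^{-2u}-g^2e^{-2\max\{u,\frac{l_1}{l_2}v\}}$ vanishes identically there, contributing nothing. On $B$ we simply bound $0\le g^2e^{-2u}-g^2e^{-2\max\{u,\frac{l_1}{l_2}v\}}\le g^2e^{-2u}$, and it remains to see that $g^2e^{-2u}$ is integrable on $B$. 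On $B$, the inequality $\frac{l_1}{l_2}v>u$ rearranges to $l_1 v-(1+l_2)u> l_1 v - (1+l_2)u$; more usefully, from $u<\frac{l_1}{l_2}v$ we get $-u > -\frac{l_1}{l_2}v$, equivalently $l_2(-u)>l_1(-v)\cdot(-1)$... I would instead write it cleanly as: on $B$, $u - v\cdot\frac{l_1}{l_2}<0$, so $-2u < -2u + 2\bigl(\frac{l_1}{l_2}v - u\bigr)l_2 = 2l_1 v - 2(1+l_2)u$, i.e. $g^2 e^{-2u}\le g^2 e^{2(l_1 v-(1+l_2)u)}$ on $B$. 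The right-hand side is integrable near $z_0$ by hypothesis, so $g^2e^{-2u}$ is integrable on $B$, and combining the two regions gives integrability of the difference near $z_0$.

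The only genuine point to be careful about is the elementary inequality on $B$: one needs $\frac{l_1}{l_2}v-u>0$ there and $l_2>0$, whence $2l_2\bigl(\frac{l_1}{l_2}v-u\bigr)=2l_1 v-2l_2 u>0$, and adding $-2u$ to both sides yields $2l_1 v-2(1+l_2)u>-2u$; exponentiating and multiplying by $g^2\ge0$ gives the pointwise domination. Then Tonelli/monotonicity of the integral finishes it. I do not anticipate a real obstacle here; the substance is entirely in recognizing that $X_{\sing}$ is null and that the pointwise algebra is region-dependent. If one wanted to be fully rigorous about measurability, one would remark that $A$ and $B$ are measurable subsets of $X_{\reg}$ since $u,v$ are Lebesgue measurable, and that $\max\{u,\frac{l_1}{l_2}v\}$ is measurable as well, so all the integrands in sight are measurable; this is routine and is exactly the kind of detail the excerpt suppresses by citing \cite{demailly2010}.
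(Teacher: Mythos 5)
Your proof is correct and is exactly the standard argument that the paper (and Demailly's book, which it cites) has in mind: the difference is nonnegative, vanishes on $\{\frac{l_1}{l_2}v\le u\}$, and on the complementary set $\{\frac{l_1}{l_2}v>u\}$ one adds $2l_2\bigl(\frac{l_1}{l_2}v-u\bigr)>0$ to $-2u$ to dominate $g^2e^{-2u}$ by the integrable function $g^2e^{2(l_1v-(1+l_2)u)}$; the observation that $X_{\sing}$ is Lebesgue-null handles the passage from the smooth to the singular case, which is precisely why the paper omits the proof. The only cosmetic flaw is the garbled intermediate line (``$l_1v-(1+l_2)u>l_1v-(1+l_2)u$''), but you immediately replace it with the clean computation, so the argument as a whole is sound.
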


Denote by
\[A_{u,v}(t)\coloneqq \sup\big\{c\colon g^{2}e^{2(tv-cu)} \ \text{is integrable near} \ z_0\big\}.\]
Note that  $A_{u,v}(t)$ is increasing with respect to $t$ (maybe $+\infty$ or $-\infty$).
Assume that $A_{u,v}(t)\in(0,+\infty]$ on $(t_{0}-\delta,t_{0}+\delta)$.
It follows from H\"{o}lder's inequality that $A_{u,v}(t)$ is concave on $(t_{0}-\delta,t_{0}+\delta)$. We recall the following basic property of $A_{u,v}(t)$.

\begin{Lemma}[see {\cite[Lemma 2.14]{BGMY-valuation}}]
	\label{lem:jialidun0131}
	Assume that $A_{u,v}(t)$ is strictly increasing on $(t_{0}-\delta,t_{0}+\delta)$.
	Then for any $b>0$,
	\begin{equation}
		\nonumber
		A_{\max\{u,\frac{1}{b}v\},v}(t_{0})=A_{u,v}(t_{0})
	\end{equation}
	holds if and only if $b\in\left(0,\lim_{\Delta t\to0-0}\frac{A_{u,v}(t_{0}+\Delta t)-A_{u,v}(t_{0})}{\Delta t}\right]$.
\end{Lemma}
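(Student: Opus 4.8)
The plan is to split the equivalence into the two inequalities that define it. The inequality $A_{\max\{u,\frac{1}{b}v\},v}(t_{0})\ge A_{u,v}(t_{0})$ is automatic, since $\max\{u,\frac{1}{b}v\}\ge u$ and hence, for every $c>0$, $g^{2}e^{2(t_{0}v-c\max\{u,\frac{1}{b}v\})}\le g^{2}e^{2(t_{0}v-cu)}$, so integrability of the latter near $z_{0}$ forces that of the former. Thus the content of the lemma is that the reverse inequality $A_{\max\{u,\frac{1}{b}v\},v}(t_{0})\le A_{u,v}(t_{0})$ holds precisely when $b\in(0,\ell]$, where
\[
\ell\coloneqq\lim_{\Delta t\to0-0}\frac{A_{u,v}(t_{0}+\Delta t)-A_{u,v}(t_{0})}{\Delta t}
\]
is the left derivative at $t_{0}$ of the concave function $A_{u,v}$ (concavity was recorded before the statement, via H\"older); since $A_{u,v}$ is strictly increasing near $t_{0}$ one has $\ell>0$, so $(0,\ell]$ is a genuine interval. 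I will also use freely the pointwise bound $\max\{u,\frac{1}{b}v\}\ge\lambda u+(1-\lambda)\frac{1}{b}v$ for $\lambda\in[0,1]$, which gives
\[
g^{2}e^{2(t_{0}v-c\max\{u,\frac{1}{b}v\})}\ \le\ g^{2}e^{2((t_{0}-\frac{c(1-\lambda)}{b})v-c\lambda u)}\qquad(c>0).
\]

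To prove that equality forces $b\le\ell$ I argue the contrapositive, purely by convex analysis. If $b>\ell$, then since the left secant slopes $r\mapsto\frac{A_{u,v}(t_{0})-A_{u,v}(t_{0}-r)}{r}$ decrease to $\ell<b$ as $r\to0^{+}$, the function $r\mapsto A_{u,v}(t_{0}-r)-A_{u,v}(t_{0})+br$ vanishes at $0$ and is strictly increasing near $0$; hence there is a small $r_{0}>0$ with $A_{u,v}(t_{0}-r_{0})>A_{u,v}(t_{0})-br_{0}$, and choosing $c$ slightly above $A_{u,v}(t_{0})$ and $\lambda=1-\tfrac{br_{0}}{c}\in(0,1)$ one gets $c\lambda<A_{u,v}(t_{0}-\tfrac{c(1-\lambda)}{b})$, i.e. $g^{2}e^{2((t_{0}-\frac{c(1-\lambda)}{b})v-c\lambda u)}$ is integrable near $z_{0}$. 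By the displayed inequality this forces $g^{2}e^{2(t_{0}v-c\max\{u,\frac{1}{b}v\})}$ integrable, hence $A_{\max\{u,\frac{1}{b}v\},v}(t_{0})\ge c>A_{u,v}(t_{0})$, so equality fails. Running the same estimate in reverse shows that when $b\le\ell$ no convex combination ever beats $A_{u,v}(t_{0})$, which is the heuristic reason equality should then hold; the genuine upper bound $A_{\max\{u,\frac{1}{b}v\},v}(t_{0})\le A_{u,v}(t_{0})$ needs more.

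That upper bound, for $b\le\ell$, is where Lemma \ref{lem:G_key} enters. Suppose $g^{2}e^{2(t_{0}v-c\max\{u,\frac{1}{b}v\})}$ is integrable near $z_{0}$; I want $g^{2}e^{2(t_{0}v-cu)}$ integrable near $z_{0}$, which gives $c\le A_{u,v}(t_{0})$, and hence, on taking the supremum over such $c$, $A_{\max\{u,\frac{1}{b}v\},v}(t_{0})\le A_{u,v}(t_{0})$. Apply Lemma \ref{lem:G_key} with $g$ replaced by $g\,e^{t_{0}v}$, with $u$ replaced by $cu$, and with $v$ unchanged, for parameters $l_{1},l_{2}>0$ satisfying $l_{1}/l_{2}\le c/b$: the hypothesis of that lemma reads $g^{2}e^{2((t_{0}+l_{1})v-(1+l_{2})cu)}$ integrable near $z_{0}$, i.e. $(1+l_{2})c<A_{u,v}(t_{0}+l_{1})$, and its conclusion is that $g^{2}e^{2t_{0}v}\big(e^{-2cu}-e^{-2\max\{cu,\frac{l_{1}}{l_{2}}v\}}\big)$ is integrable near $z_{0}$. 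Because $l_{1}/l_{2}\le c/b$ forces $\max\{cu,\tfrac{l_{1}}{l_{2}}v\}\ge c\max\{u,\tfrac{1}{b}v\}$, the leftover term $g^{2}e^{2t_{0}v-2\max\{cu,\frac{l_{1}}{l_{2}}v\}}$ is dominated by the integrable $g^{2}e^{2(t_{0}v-c\max\{u,\frac{1}{b}v\})}$; summing the two integrable quantities yields $g^{2}e^{2(t_{0}v-cu)}$ integrable, as wanted. What must still be checked is the existence of admissible $(l_{1},l_{2})$, i.e. that one can arrange $(1+l_{2})c<A_{u,v}(t_{0}+l_{1})$ with $l_{1}=\tfrac{c}{b}l_{2}$ small; using that it suffices to treat $c$ as close to $A_{u,v}(t_{0})$ as we like (if equality failed, any $c\in(A_{u,v}(t_{0}),A_{\max\{u,\frac{1}{b}v\},v}(t_{0}))$ would serve), this reduces to a slope comparison between $b$ and the right-hand increments of the concave function $A_{u,v}$ at $t_{0}$.

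The main obstacle is exactly this slope comparison at the endpoint $b=\ell$, and more generally when $A_{u,v}$ has a corner at $t_{0}$ (left derivative strictly larger than right derivative): there the crude first-order comparison near $t_{0}$ does not by itself deliver admissible parameters. I would resolve it by a limiting argument --- establish the upper bound first for $b$ strictly below the right derivative, where the slope comparison is comfortable, then extend to all $b\le\ell$ using monotonicity of $b\mapsto A_{\max\{u,\frac{1}{b}v\},v}(t_{0})$ together with the fact that the jumping value $A_{u,v}(s)$ is a supremum that is not attained (a strong-openness phenomenon, in the spirit of Theorem \ref{thm:SOC}), so that the critical weight stays non-integrable in the limit --- and, for the corner, by feeding Lemma \ref{lem:G_key} the extra information obtained from splitting the integral of $g^{2}e^{2(t_{0}v-c\max\{u,\frac{1}{b}v\})}$ over $\{u\ge\frac{1}{b}v\}$ and $\{u<\frac{1}{b}v\}$, which is precisely the data separating the left derivative $\ell$ from the right one. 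None of these steps sees the singularity of $X$: Lemma \ref{lem:G_key}, the concavity of $A_{u,v}$, and the non-attainment of jumping values all hold verbatim at a singular point, so the proof is the same as in the smooth case.
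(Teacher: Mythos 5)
The paper itself contains no proof of this lemma (it cites \cite[Lemma 2.14]{BGMY-valuation} and asserts the smooth-case proof carries over verbatim), so your proposal has to stand on its own. Its correct parts: the trivial inequality, and the necessity direction (equality forces $b\le\ell$) via the averaging bound $\max\{u,\frac1b v\}\ge\lambda u+(1-\lambda)\frac1b v$ — that argument is complete, modulo the unstated but harmless normalization $u,v\le0$ near $z_0$, which you also need when you claim that $l_1/l_2\le c/b$ forces $\max\{cu,\frac{l_1}{l_2}v\}\ge c\max\{u,\frac1b v\}$.

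The genuine gap is in the sufficiency direction, and it sits exactly where the lemma gets its content. Your Lemma \ref{lem:G_key} bookkeeping anchored at $t_0$ needs $l_1=\frac{c}{b}l_2>0$ with $(1+l_2)c<A_{u,v}(t_0+l_1)$, i.e.\ some $s>0$ with $A_{u,v}(t_0+s)>c+bs$ for a $c>A_{u,v}(t_0)$; since concavity gives $A_{u,v}(t_0+s)\le A_{u,v}(t_0)+A'_+(t_0)s$, admissible parameters exist only when $b<A'_+(t_0)$, the \emph{right} derivative, whereas the lemma claims the threshold is the \emph{left} derivative $\ell=A'_-(t_0)$, endpoint included. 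This is not a removable technicality: for $u=\log|z_1z_2|$, $v=\log|z_1z_2^2|$, $g\equiv1$, $t_0=0$ one has $A_{u,v}(t)=\min\{t+1,2t+1\}$, so $\ell=2$ and $A'_+(0)=1$; equality does hold for every $b\le2$ (direct computation, e.g.\ $\max\{u,\tfrac12 v\}=\tfrac12 v$), yet your admissibility condition $s+1>c+bs$ has no solution for any $b\ge1$ and $c>1$, so no cleverer choice of $(l_1,l_2)$ at $t_0$ can cover $b\in[A'_+(t_0),\ell]$. Your proposed repairs do not close this range: monotonicity of $b\mapsto A_{\max\{u,\frac1b v\},v}(t_0)$ is nondecreasing in $b$, so knowing equality for small $b$ says nothing about larger $b$; and the limiting step you invoke — that the critical weight ``stays non-integrable'' as $b$ increases to the threshold — asks non-integrability to pass to a \emph{decreasing} limit of integrands, which is precisely a Guan--Zhou effectiveness/strong-openness statement: it is false for general measurable $u,v,g$ (the generality in which the lemma is stated; note also that non-attainment of the supremum $A_{u,v}(s)$ fails for measurable weights, e.g.\ $u=\log|z|+\log\log\frac1{|z|}$), and even for plurisubharmonic data it requires the full Theorem \ref{thm:SOC}-type input, not the mere non-attainment you cite. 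Moreover, even granting such a limit theorem, taking $b\uparrow$ only closes the endpoint of the interval you already have ($b\le A'_+(t_0)$); reaching $\ell$ needs the Lemma \ref{lem:G_key} argument anchored at points $t_1<t_0$ (where the relevant secant slopes are the left ones) plus a further uniform or limiting argument, none of which appears in your sketch, and the ``splitting over $\{u\ge\frac1b v\}$ and $\{u<\frac1b v\}$'' remark is not developed into an argument. So the statement for $b\in[A'_+(t_0),A'_-(t_0)]$ — including the endpoint $b=\ell$ even when there is no corner — remains unproved in your proposal.
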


\subsection{Tian functions and Zhou numbers}
\label{sec:tian function}

Let $\varphi$, $\psi$, $\varphi_{0}$ be plurisubharmonic functions near $z_0$, and let $f_{0}=(f_{0,1},\ldots,f_{0,m})$ be a vector, where $f_{0,1},\ldots,f_{0,m}$ are holomorphic functions near $z_0$.

Denote by
\[c_{z_0}(\varphi,t\psi)\coloneqq \sup\big\{c\colon |f_{0}|^{2}e^{-2\varphi_{0}}e^{2t\psi}e^{-2c\varphi} \ \text{is integrable near} \ z_0\big\},\]
which is a generalization of the jumping number (see \cite{JON-Mus2012,JON-Mus2014}).
Define the \emph{Tian function}
$$\mathrm{Tn}(t)\coloneqq c_{z_0}(\varphi,t\psi)$$
for any $t\in\mathbb{R}$. The H\"{o}lder inequality shows that $\mathrm{Tn}(t)$ is concave with respect to $t\in(-\infty,+\infty)$.

In the following lemma, assume that the following three statements hold
\begin{enumerate}
    \item $|f_{0}|^{2}e^{-2\varphi_{0}}$ is integrable near $z_0$;
    \item There exists integer $N_{0}\gg0$ such that $|f_{0}|^{2}e^{-2\varphi_{0}}|z|^{2N_{0}}e^{-2\mathrm{Tn}(0)\varphi}$ is integrable near $z_0$;
    \item There exists $s_{0}>0$, such that $\psi\leq s_{0}\log|z|+O(1)$ near $z_0\in X$.
\end{enumerate}

In this subsection, we discuss the derivatives of Tian functions $\mathrm{Tn}(t)$.

We give the	strictly increasing property of Tian functions in the following lemma.
\begin{Lemma}
	\label{lem:strict_decreasing}
	$\mathrm{Tn}(t)$ is strictly increasing near $0$.
\end{Lemma}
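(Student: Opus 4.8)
The plan is to show that the Tian function $\mathrm{Tn}(t) = c_{z_0}(\varphi, t\psi)$ is strictly increasing on a neighborhood of $0$ by arguing that if it were eventually constant (equivalently, if its right derivative at some point were $0$), then we would contradict either the integrability of $|f_0|^2 e^{-2\varphi_0}$ (hypothesis (1)) or the defining property (2) of the local Zhou weight $\varphi = \Phi_{z_0,\max}$ — namely that $|f_0|^2 e^{-2\varphi_0} e^{-2\Phi_{z_0,\max}}$ fails to be integrable near $z_0$. Since $\mathrm{Tn}$ is concave (from the Hölder inequality, as recalled in the text), it is monotone-eventually: either strictly increasing near $0$, or flat on some interval $[0,\delta)$ or $(-\delta, 0]$. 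So it suffices to rule out a flat stretch touching $0$.

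First I would set up the comparison between $\mathrm{Tn}$ and the concave function $A_{u,v}(t)$ of the Concavity subsection. Taking $u = \varphi = \Phi_{z_0,\max}$, $v = \psi$, and $g^2 = |f_0|^2 e^{-2\varphi_0}$, we have $A_{u,v}(t) = c_{z_0}(\varphi, t\psi) = \mathrm{Tn}(t)$ by definition. Hypothesis (2) ensures $\mathrm{Tn}(0) = c_{z_0}(\Phi_{z_0,\max}) > 0$ (since $|f_0|^2 e^{-2\varphi_0}|z|^{2N_0}e^{-2\mathrm{Tn}(0)\varphi}$ integrable forces $\mathrm{Tn}(0)$ to be at least the complex singularity exponent normalized appropriately), and hypothesis (3) controls $\psi$ from above by $s_0\log|z|$, which together with the lower bound $\Phi_{z_0,\max} \geq N\log|z| + O(1)$ from Remark~\ref{rem:max_existence} keeps $A_{u,v}(t)$ finite and positive on a neighborhood of $0$, so the concavity and the machinery of Lemma~\ref{lem:jialidun0131} apply.

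The core step: suppose for contradiction that $\mathrm{Tn}$ is \emph{not} strictly increasing near $0$, so by concavity it is constant on some interval $[0, \delta_0)$ (the case of constancy on $(-\delta_0, 0]$ is handled by a symmetric argument, or is excluded because $\mathrm{Tn}(t) \to$ something $\leq \mathrm{Tn}(0)$ as $t \to -\infty$ with $\psi \leq s_0\log|z|$ making negative $t$ decrease the weight). Constancy on $[0,\delta_0)$ means the right derivative of $\mathrm{Tn}$ at $0$ vanishes. Now I would invoke Lemma~\ref{lem:G_key}: the constancy $\mathrm{Tn}(t) = \mathrm{Tn}(0)$ for small $t > 0$ says $|f_0|^2 e^{-2\varphi_0} e^{2t\psi} e^{-2\mathrm{Tn}(0)\varphi}$ stays integrable; equivalently $g^2 e^{2(tv - \mathrm{Tn}(0)u)}$ is integrable for small $t>0$. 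But Lemma~\ref{lem:G_key} with $l_1 = t$ and an appropriate $l_2$ then shows $g^2 e^{-2cu}$ and $g^2 e^{-2\max\{cu, \frac{t}{l_2}v\}}$ differ by an integrable function near $z_0$ for $c$ slightly larger than $\mathrm{Tn}(0)$ — pushing the integrability threshold for $\varphi = \Phi_{z_0,\max}$ strictly past what the non-integrability statement (2) of Definition~\ref{def:max_relat} allows. More precisely, I expect the cleanest route is: flatness of $\mathrm{Tn}$ at $0$ combined with Lemma~\ref{lem:jialidun0131} would force $A_{\max\{u, \frac{1}{b}v\}, v}(0) = A_{u,v}(0)$ for \emph{all} $b > 0$, hence the plurisubharmonic function $\max\{\Phi_{z_0,\max}, \frac{1}{b}\psi\}$ has the same (non-)integrability behavior as $\Phi_{z_0,\max}$ for every $b$, contradicting the maximality clause (3) of the local Zhou weight (since one can choose $\psi$-directions, e.g. $\psi = (s_0/s)\log|z_j|$ style weights, that genuinely strictly enlarge the singularity).

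The main obstacle I anticipate is matching hypotheses carefully enough that Lemma~\ref{lem:jialidun0131} is actually applicable — that lemma requires $A_{u,v}$ to be \emph{strictly} increasing on the relevant interval, which is the conclusion we are after, so one must instead run the contradiction the other way: \emph{assume} $\mathrm{Tn}$ fails to be strictly increasing, and from the resulting flatness derive, via Lemma~\ref{lem:G_key} directly (not Lemma~\ref{lem:jialidun0131}), that $\Phi_{z_0,\max}$ and $\max\{\Phi_{z_0,\max}, \frac{1}{b}\psi\}$ are equisingular, then feed this into the maximality property of the Zhou weight. A secondary subtlety is ensuring the singular point $z_0$ causes no trouble in the integrability estimates — but this is exactly what Lemma~\ref{lem:JM} and the extension Lemma~\ref{l:psh-extend} were established for, so those reductions to the desingularization $\widetilde X$ should go through verbatim. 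I would also need hypothesis (3), $\psi \leq s_0 \log|z| + O(1)$, to guarantee that $t\psi$ does not blow up the analysis for $t$ near $0$ and that $A_{u,v}(t)$ stays in $(0, +\infty)$ on a genuine neighborhood of $0$, so the concavity argument has room to operate.
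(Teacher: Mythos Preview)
Your proposal misreads the setting: in the subsection where Lemma~\ref{lem:strict_decreasing} is stated, $\varphi$ is a \emph{general} plurisubharmonic function satisfying hypotheses (1)--(3), not a local Zhou weight. The specialization $\varphi=\Phi_{z_0,\max}$ only occurs later, before Proposition~\ref{p:relative=derivative}. So the entire strategy of invoking the maximality clause (3) of Definition~\ref{def:max_relat} is unavailable here; the lemma must be proved without it. Even in the Zhou-weight case, your argument is circular (you yourself note Lemma~\ref{lem:jialidun0131} presupposes strict increase) and the fallback via ``equisingularity of $\Phi_{z_0,\max}$ and $\max\{\Phi_{z_0,\max},\frac{1}{b}\psi\}$ for all $b$'' does not cleanly produce a contradiction: it would only yield $\sigma(\psi,\Phi_{z_0,\max})=+\infty$, and ruling that out requires further work not supplied.

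The paper's proof is shorter and uses none of this. It exploits hypothesis~(2) directly: strong openness (Theorem~\ref{thm:SOC}) upgrades the integrability of $|f_0|^2e^{-2\varphi_0}|z|^{2N_0}e^{-2\mathrm{Tn}(0)\varphi}$ to $|f_0|^2e^{-2\varphi_0}|z|^{2N_0}e^{-2(1+\varepsilon_0)\mathrm{Tn}(0)\varphi}$ for some $\varepsilon_0>0$. Lemma~\ref{lem:G_key} (applied with $v=\log|z|^{N_0}$, not $v=\psi$) then says the non-integrable tail of $|f_0|^2e^{-2\varphi_0}e^{-2(1+\varepsilon_0/2)\mathrm{Tn}(0)\varphi}$ lives where $\varphi$ is comparable to a multiple of $\log|z|$. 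On that region, hypothesis~(3) gives $e^{2t\psi}\le C|z|^{2s_0 t}$, which for $t$ large enough absorbs the $|z|^{-C'}$ pole and makes $|f_0|^2e^{-2\varphi_0}e^{2t\psi}e^{-2(1+\varepsilon_0/2)\mathrm{Tn}(0)\varphi}$ integrable. Hence $\mathrm{Tn}(t)>(1+\varepsilon_0/2)\mathrm{Tn}(0)$ for some large $t$, and concavity forces strict increase near $0$. The key idea you are missing is this interplay between hypotheses~(2) and~(3): the $|z|^{2N_0}$ buffer in (2) is traded, via Lemma~\ref{lem:G_key}, against the $|z|^{2s_0 t}$ decay coming from (3).
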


\begin{proof}
	Theorem \ref{thm:SOC} implies that
	there exists $\varepsilon_{0}>0$ such that
	$$|f_{0}|^{2}e^{-2\varphi_{0}}|z|^{2N_{0}}e^{-2(1+\varepsilon_{0})\mathrm{Tn}(0)\varphi}$$
	is integrable near $z_0$.
	Lemma \ref{lem:G_key}  shows that
	$$|f_{0}|^{2}e^{-2\varphi_{0}}e^{-2\big(1+\frac{\varepsilon_{0}}{2}\big)\mathrm{Tn}(0)\varphi}-
	|f_{0}|^{2}e^{-2\varphi_{0}}e^{-2\max\big\{\big(1+\frac{\varepsilon_{0}}{2}\big)\mathrm{Tn}(0)\varphi,\frac{2+\varepsilon_{0}}{\varepsilon_{0}}\log|z|^{N_{0}}\big\}}$$
	is integrable near $z_0$.
		Note that for any $t>0$,
	$$
	|f_{0}|^{2}e^{-2\varphi_{0}}\left(e^{2t\psi}e^{-2\big(1+\frac{\varepsilon_{0}}{2}\big)\mathrm{Tn}(0)\varphi}-
	e^{2t\psi}e^{-2\max\big\{\big(1+\frac{\varepsilon_{0}}{2}\big)\mathrm{Tn}(0)\varphi,\frac{2+\varepsilon_{0}}{\varepsilon_{0}}\log|z|^{N_{0}}\big\}}\right)
	$$
	is integrable near $z_0$
	and
	\begin{equation*}
		\begin{split}
			&|f_{0}|^{2}e^{-2\varphi_{0}}e^{2t\psi}e^{-2\max\big\{\big(1+\frac{\varepsilon_{0}}{2}\big)\mathrm{Tn}(0)\varphi,\frac{2+\varepsilon_{0}}{\varepsilon_{0}}\log|z|^{N_{0}}\big\}}
			\\\leq&
			C|f_{0}|^{2}e^{-2\varphi_{0}}|z|^{2s_0t}e^{-2\max\big\{\big(1+\frac{\varepsilon_{0}}{2}\big)\mathrm{Tn}(0)\varphi,\frac{2+\varepsilon_{0}}{\varepsilon_{0}}\log|z|^{N_{0}}\big\}}
			\\\leq&
			C|f_{0}|^{2}e^{-2\varphi_{0}}|z|^{2s_0t}e^{-2\frac{(2+\varepsilon_0)N_{0}}{\varepsilon_{0}}\log|z|}
		\end{split}
	\end{equation*}
	near $z_0$. Then it is clear that for any
	$t>\frac{2+\varepsilon_0}{\varepsilon_{0}}\frac{N_0}{s_0}$,
	\[|f_{0}|^{2}e^{-2\varphi_{0}}e^{2t\psi}e^{-2(1+\frac{\varepsilon_{0}}{2})\mathrm{Tn}(0)\varphi}\]
	is integrable near $z_0$, which shows
	$\mathrm{Tn}(t)>\big(1+\frac{\varepsilon_{0}}{2}\big)\mathrm{Tn}(0)$ for any $t>\frac{2+\varepsilon_0}{\varepsilon_{0}}\frac{N_0}{s_0}$.
	Then the concavity of $\mathrm{Tn}(t)$ implies that $\mathrm{Tn}(t)$ is strictly increasing near $0$.
\end{proof}

The following property of Tian functions $\mathrm{Tn}(t)$ will be used in the proof of Theorem \ref{thm:expression of relative types}.

\begin{Proposition}
	\label{p:sidediv}Assume that there exists $N\gg0$ such that $\varphi\geq N\log|z|$ near $o$. The following inequality holds
	\begin{equation*}
		\begin{split}
			&\frac{1}{\mathrm{Tn}(0)}\lim_{t\to0+0}\frac{\mathrm{Tn}(0)-\mathrm{Tn}(t)}{-t}
			\\\leq \ &
			\liminf_{t_{1}\to+\infty}\frac{1}{2t_{1}}\frac{\int_{\{\mathrm{Tn}(0)\varphi<-t_{1}\}\cap U}|f_{0}|^{2}e^{-2\varphi_{0}}(-2\psi)}{\int_{\{\mathrm{Tn}(0)\varphi<-t_{1}\}\cap U}|f_{0}|^{2}e^{-2\varphi_{0}}}\\
			\leq \ &
			\limsup_{t_{1}\to+\infty}\frac{1}{2t_{1}}\frac{\int_{\{\mathrm{Tn}(0)\varphi<-t_{1}\}\cap U}|f_{0}|^{2}e^{-2\varphi_{0}}(-2\psi)}{\int_{\{\mathrm{Tn}(0)\varphi<-t_{1}\}\cap U}|f_{0}|^{2}e^{-2\varphi_{0}}}\\
			\leq \  &\frac{1}{\mathrm{Tn}(0)}\lim_{t\to0+0}\frac{\mathrm{Tn}(0)-\mathrm{Tn}(-t)}{t}.
		\end{split}
	\end{equation*}
\end{Proposition}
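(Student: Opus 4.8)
The plan is to prove the two outer inequalities by relating the one-sided derivatives of $\mathrm{Tn}(t)$ at $t=0$ to the integral quotients through the substitution $u=\mathrm{Tn}(0)\varphi$, $v=\psi$, $g^2=|f_0|^2e^{-2\varphi_0}$, so that $\mathrm{Tn}(t)=A_{u,v}(t)\cdot\text{(rescaling)}$ fits into the framework of \Cref{lem:jialidun0131}. More precisely, writing $c\coloneqq\mathrm{Tn}(0)$, I would first observe that $A_{u,v}(t)=\mathrm{Tn}(t)/c$ where $A_{u,v}$ is as in \Cref{sec:tian function} (the concavity subsection), and by \Cref{lem:strict_decreasing} the function $\mathrm{Tn}(t)$—hence $A_{u,v}(t)$—is strictly increasing near $0$, so \Cref{lem:jialidun0131} applies at $t_0=0$. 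The point is that \Cref{lem:jialidun0131} tells us exactly for which $b>0$ we have $A_{\max\{u,\frac1b v\},v}(0)=A_{u,v}(0)$, namely $b\le \lambda_-\coloneqq\lim_{\Delta t\to0-}\frac{A_{u,v}(\Delta t)-A_{u,v}(0)}{\Delta t}$; note $\lambda_-=\frac1c\lim_{t\to0+}\frac{\mathrm{Tn}(0)-\mathrm{Tn}(-t)}{t}$ is exactly the right-hand bound in the Proposition.

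The second step translates the condition $A_{\max\{u,\frac1b v\},v}(0)=A_{u,v}(0)$ into an integral estimate. By the hypotheses (1)–(3) preceding \Cref{lem:strict_decreasing}, $(f_0,z_0)\notin\mathcal I(c\varphi+\varphi_0)_{z_0}$ — i.e. $|f_0|^2e^{-2\varphi_0}e^{-2u}$ is not integrable — while $(f_0,z_0)\in\mathcal I((1-\varepsilon)c\varphi+\varphi_0)_{z_0}$ for all $\varepsilon\in(0,1)$ (this uses $\mathrm{Tn}(0)=c$ being the jumping number and the strong openness Theorem \ref{thm:SOC} together with the derivative normalization; I would spell out that $c\varphi\ge cN\log|z|+O(1)$ so \Cref{lem:jump_asyp_C} is applicable). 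Hence \Cref{lem:jump_asyp_C} gives
\begin{equation*}
\lim_{t_1\to+\infty}\frac{-\log\int_{\{u<-t_1\}\cap U}|f_0|^2e^{-2\varphi_0}}{2t_1}=1,
\end{equation*}
and the same Lemma applied to $\max\{u,\frac1b v\}$ in place of $u$ (valid precisely when that function is still $\ge N'\log|z|+O(1)$ and the non-integrability persists, i.e. when $b\le\lambda_-$) shows that for such $b$ the integrals $\int_{\{u<-t_1\}}g^2$ and $\int_{\{\max\{u,\frac1b v\}<-t_1\}}g^2$ have the same exponential rate. Comparing sub-level sets — $\{\max\{u,\frac1b v\}<-t_1\}=\{u<-t_1\}\cap\{v<-bt_1\}$ — and using a Chebyshev/Fubini-type argument on the ``bad part'' $\{u<-t_1\}\cap\{v\ge -bt_1\}$, I would deduce that the average of $-v$ over $\{u<-t_1\}$ is, asymptotically, at least $bt_1(1+o(1))$ for every $b<\lambda_-$, which after dividing by $2t_1$ and taking $b\uparrow\lambda_-$ yields the upper bound
\begin{equation*}
\limsup_{t_1\to+\infty}\frac{1}{2t_1}\frac{\int_{\{c\varphi<-t_1\}\cap U}|f_0|^2e^{-2\varphi_0}(-2\psi)}{\int_{\{c\varphi<-t_1\}\cap U}|f_0|^2e^{-2\varphi_0}}\le\frac1c\lim_{t\to0+}\frac{\mathrm{Tn}(0)-\mathrm{Tn}(-t)}{t}.
\end{equation*}
(The factor $2$ in $-2\psi$ is just bookkeeping from the $e^{2t\psi}$ normalization.)

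The lower bound is the mirror image: I would run the same argument on the other side, using $A_{u,v}(t)$ for $t>0$ and the one-sided derivative $\lambda_+\coloneqq\lim_{t\to0+}\frac{A_{u,v}(t)-A_{u,v}(0)}{t}=\frac1c\lim_{t\to0+}\frac{\mathrm{Tn}(t)-\mathrm{Tn}(0)}{t}=\frac1c\lim_{t\to0+}\frac{\mathrm{Tn}(0)-\mathrm{Tn}(-t)}{-t}\cdot(-1)$ — more carefully, $\frac1c\lim_{t\to0+}\frac{\mathrm{Tn}(0)-\mathrm{Tn}(t)}{-t}$, which is the left-hand bound in the statement — together with the dual version of \Cref{lem:jialidun0131} (for $v$ replaced by $-v$, i.e. controlling $\min$ instead of $\max$, equivalently estimating the portion of $\{u<-t_1\}$ where $-v$ is small). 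Concavity of $\mathrm{Tn}$ guarantees $\lambda_+\le\lambda_-$, consistent with the chain of inequalities. The main obstacle I anticipate is the last step of each half — passing rigorously from the equality/strict-inequality of jumping numbers of the $\max$-regularized weight to the quantitative statement about the integral quotient; this requires carefully slicing $\{u<-t_1\}$ by the level of $v$, applying \Cref{lem:JM} (the singular-case volume lower bound) to the good part and an integration-by-parts / layer-cake identity $\int_{\{u<-t_1\}}g^2(-v)=\int_0^\infty \big(\int_{\{u<-t_1\}\cap\{v<-s\}}g^2\big)ds + (\text{bounded terms})$ to assemble the average, exactly as in the smooth case, while checking that all the auxiliary plurisubharmonic functions retain the lower bound $N\log|z|+O(1)$ needed to invoke Lemmas \ref{lem:JM} and \ref{lem:jump_asyp_C} on the singular space $X$.
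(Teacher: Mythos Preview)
Your proposal takes a substantially different route from the paper, and it has a real gap. The paper does not use \Cref{lem:jialidun0131} here at all; the key device is \emph{Jensen's inequality}. Concretely, for small $t>0$ and $\varepsilon>0$ the function $|f_0|^2e^{-2\varphi_0}e^{-2(t\psi+(1-\varepsilon)\mathrm{Tn}(-t)\varphi)}$ is integrable near $z_0$ by the definition of $\mathrm{Tn}(-t)$, so its integral over $\{\mathrm{Tn}(0)\varphi<-t_1\}\cap U$ tends to $0$ and is in particular $<e^{-2t_1}$ for large $t_1$. Dividing by $\int_{\{\mathrm{Tn}(0)\varphi<-t_1\}\cap U}|f_0|^2e^{-2\varphi_0}$, taking logarithms, invoking \Cref{lem:jump_asyp_C} for the denominator, and applying Jensen in the form
\[
\log\frac{\int g^2 e^X}{\int g^2}\ \ge\ \frac{\int g^2\, X}{\int g^2}
\]
with $X=-2\big(t\psi+(1-\varepsilon)\mathrm{Tn}(-t)\varphi+t_1\big)$ immediately produces the average of $-2\psi$ on one side. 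Letting $\varepsilon\to0$ and then $t\to0^+$ gives the upper bound; the lower bound is obtained by the literal substitution $t\mapsto -t$ in the same computation.

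Your sub-level-set approach has two concrete problems. First, the direction is backwards in your upper-bound paragraph: from ``for every $b<\lambda_-$ the non-integrability of $g^2e^{-2\max\{u,v/b\}}$ persists'' you deduce that the average of $-\psi$ over $\{u<-t_1\}$ is asymptotically \emph{at least} $bt_1$, and then claim this yields $\limsup\le\lambda_-$. But a lower bound on the average for all $b<\lambda_-$ gives $\liminf\ge\lambda_-$, not an upper bound on the $\limsup$. To bound the $\limsup$ you would instead need to take $b>\lambda_-$ and show the average is at most $bt_1+o(t_1)$; via layer-cake this requires controlling the tail $\int_{bt_1}^\infty \mu_{t_1}(\{\psi<-s\})\,ds$, where \Cref{lem:jump_asyp_C} only supplies exponential rates rather than ratios---this is exactly the step you flag as an obstacle, and it is a genuine one. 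Second, your ``mirror image'' for the lower bound proposes replacing $v$ by $-v$, but $-\psi$ is not plurisubharmonic, so $\max\{u,-\psi/b\}$ need not be plurisubharmonic and none of \Cref{lem:jialidun0131}, \Cref{lem:JM}, \Cref{lem:jump_asyp_C} apply to it. Jensen's inequality sidesteps both issues: it is insensitive to the sign of $t\psi$ and requires no plurisubharmonicity of $-\psi$.
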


\begin{proof}
	We prove Proposition \ref{p:sidediv} in two steps.
	
	\textbf{Step 1.}
	Theorem \ref{thm:SOC} shows that for a small enough constant $t>0$ ($t$ is dependent on $|f_{0}|^{2}e^{-2\varphi_{0}}$ and $\psi$), $|f_{0}|^{2}e^{-2\varphi_{0}}e^{-2t\psi}$ is  integrable near $z_0$. Since $e^{-2t\psi}\ge (-2t\psi)$, we have $$|f_{0}|^{2}e^{-2\varphi_{0}}(-2\psi)=\frac{1}{t}|f_{0}|^{2}e^{-2\varphi_{0}}(-2t\psi)\le \frac{1}{t}|f_{0}|^{2}e^{-2\varphi_{0}}e^{-2t\psi}$$ 
	is  integrable near $z_0$.
	Note that there exists $N\gg0$ such that $\varphi\geq N\log|z|$ near $z_0$. 
	There exists a neighborhood $U$ of $z_0$ such that
	for any small enough $t>0$ and $\varepsilon>0$ ($\epsilon$ depends on $t$),
	$$\limsup_{t_{1}\to+\infty}\int_{\{\mathrm{Tn}(0)\varphi<-t_{1}\}\cap U}|f_{0}|^{2}e^{-2\varphi_{0}}e^{-2\big(t\psi+(1-\varepsilon)\mathrm{Tn}(-t)\varphi\big)}=0,$$
	which implies that
	$$\limsup_{t_{1}\to+\infty}e^{2t_{1}}\int_{\{\mathrm{Tn}(0)\varphi<-t_{1}\}\cap U}|f_{0}|^{2}e^{-2\varphi_{0}}e^{-2\big(t\psi+(1-\varepsilon)\mathrm{Tn}(-t)\varphi+t_{1}\big)}=0.$$
	Then for large enough $t_{1}>0$,
	$$\int_{\{\mathrm{Tn}(0)\varphi<-t_{1}\}\cap U}|f_{0}|^{2}e^{-2\varphi_{0}}e^{-2\big(t\psi+(1-\varepsilon)\mathrm{Tn}(-t)\varphi+t_{1}\big)}<e^{-2t_{1}},$$
	i.e.,
	$$\log\left(\int_{\{\mathrm{Tn}(0)\varphi<-t_{1}\}\cap U}|f_{0}|^{2}e^{-2\varphi_{0}}e^{-2\big(t\psi+(1-\varepsilon)\mathrm{Tn}(-t)\varphi+t_{1}\big)}\right)<-2t_{1}.$$
	Combining with Lemma \ref{lem:jump_asyp_C},
	we obtain
	\begin{equation}
		\label{equ:tiandao}
		\begin{split}
			&\limsup_{t_{1}\to+\infty}\frac{1}{2t_{1}}\log\frac{\int_{\{\mathrm{Tn}(0)\varphi<-t_{1}\}\cap U}|f_{0}|^{2}e^{-2\varphi_{0}}e^{-2\big(t\psi+(1-\varepsilon)\mathrm{Tn}(-t)\varphi+t_{1}\big)}}{\int_{\{\mathrm{Tn}(0)\varphi<-t_{1}\}\cap U}|f_{0}|^{2}e^{-2\varphi_{0}}}
			\\ = \ &
			\limsup_{t_{1}\to+\infty}\frac{1}{2t_{1}}\log\int_{\{\mathrm{Tn}(0)\varphi<-t_{1}\}\cap U}|f_{0}|^{2}e^{-2\varphi_{0}}e^{-2\big(t\psi+(1-\varepsilon)\mathrm{Tn}(-t)\varphi+t_{1}\big)}
			\\&-\lim_{t_{1}\to+\infty}\frac{1}{2t_{1}}\log\int_{\{\mathrm{Tn}(0)\varphi<-t_{1}\}\cap U}|f_{0}|^{2}e^{-2\varphi_{0}}
			\\\leq \ &1-1=0.
		\end{split}
	\end{equation}

	It follows from Jensen's inequality and the concavity of the logarithm function that
	\begin{equation}
		\nonumber
		\begin{split}
			&\log\frac{\int_{\{\mathrm{Tn}(0)\varphi<-t_{1}\}\cap U}|f_{0}|^{2}e^{-2\varphi_{0}}e^{-2\big(t\psi+(1-\varepsilon)\mathrm{Tn}(-t)\varphi+t_{1}\big)}}{\int_{\{\mathrm{Tn}(0)\varphi<-t_{1}\}\cap U}|f_{0}|^{2}e^{-2\varphi_{0}}}
			\\\geq \ &
			\frac{\int_{\{\mathrm{Tn}(0)\varphi<-t_{1}\}\cap U}|f_{0}|^{2}e^{-2\varphi_{0}}\log\left(e^{-2(t\psi+(1-\varepsilon)\mathrm{Tn}(-t)\varphi)+t_{1}}\right)}{\int_{\{\mathrm{Tn}(0)\varphi<-t_{1}\}\cap U}|f_{0}|^{2}e^{-2\varphi_{0}}}
			\\= \ &
			\frac{\int_{\{\mathrm{Tn}(0)\varphi<-t_{1}\}\cap U}|f_{0}|^{2}e^{-2\varphi_{0}}\big(-2(t\psi+(1-\varepsilon)\mathrm{Tn}(-t)\varphi+t_{1})\big)}{\int_{\{\mathrm{Tn}(0)\varphi<-t_{1}\}\cap U}|f_{0}|^{2}e^{-2\varphi_{0}}}
			\\\geq \ &
			\frac{\int_{\{\mathrm{Tn}(0)\varphi<-t_{1}\}\cap U}|f_{0}|^{2}e^{-2\varphi_{0}}\big(-2(t\psi+(1-\varepsilon)\mathrm{Tn}(-t)(-t_{1})\frac{1}{\mathrm{Tn}(0)}+t_{1})\big)}{\int_{\{\mathrm{Tn}(0)\varphi<-t_{1}\}\cap U}|f_{0}|^{2}e^{-2\varphi_{0}}}.
		\end{split}
	\end{equation}
	Combining with inequality \eqref{equ:tiandao},
	we obtain that
	$$\limsup_{t_{1}\to+\infty}\frac{1}{2t_{1}}\frac{\int_{\{\mathrm{Tn}(0)\varphi<-t_{1}\}\cap U}|f_{0}|^{2}e^{-2\varphi_{0}}\big(-2(t\psi+(1-\varepsilon)\mathrm{Tn}(-t)(-t_{1})\frac{1}{\mathrm{Tn}(0)}+t_{1})\big)}{\int_{\{\mathrm{Tn}(0)\varphi<-t_{1}\}\cap U}|f_{0}|^{2}e^{-2\varphi_{0}}}\leq 0.$$
	Letting $\varepsilon\to0+0$ and 
	$t\to0+0$,
	we obtain
	\begin{equation}
		\label{equ:degang20190406c}
		\limsup_{t_{1}\to+\infty}\frac{1}{2t_{1}}\frac{\int_{\{\mathrm{Tn}(0)\varphi<-t_{1}\}\cap U}|f_{0}|^{2}e^{-2\varphi_{0}}(-2\psi)}{\int_{\{\mathrm{Tn}(0)\varphi<-t_{1}\}\cap U}|f_{0}|^{2}e^{-2\varphi_{0}}}\leq \frac{1}{\mathrm{Tn}(0)}\lim_{t\to0+0}\frac{\mathrm{Tn}(0)-\mathrm{Tn}(-t)}{t}.
	\end{equation}
	
	\
	
	\textbf{Step 2.} 
	By a similar discussion in Step 1 (only replacing $t$ by $-t$), 
	we obtain that
	$$\limsup_{t_{1}\to+\infty}\frac{1}{2t_{1}}\frac{\int_{\{\mathrm{Tn}(0)\varphi<-t_{1}\}\cap U}|f_{0}|^{2}e^{-2\varphi_{0}}\big(-2(-t\psi+(1-\varepsilon)\mathrm{Tn}(t)(-t_{1})\frac{1}{\mathrm{Tn}(0)}+t_{1})\big)}{\int_{\{\mathrm{Tn}(0)\varphi<-t_{1}\}\cap U}|f_{0}|^{2}e^{-2\varphi_{0}}}\leq 0.$$
	Letting $\varepsilon\to0+0$ and 
	$t\to0+0$,
	we obtain
	\begin{equation}
		\label{equ:20190406b}
		\begin{split}
			\frac{1}{\mathrm{Tn}(0)}\lim_{t\to0+0}\frac{\mathrm{Tn}(0)-\mathrm{Tn}(t)}{-t}
			\leq
			\liminf_{t_{1}\to+\infty}\frac{1}{2t_{1}}\frac{\int_{\{\mathrm{Tn}(0)\varphi<-t_{1}\}\cap U}|f_{0}|^{2}e^{-2\varphi_{0}}(-2\psi)}{\int_{\{\mathrm{Tn}(0)\varphi<-t_{1}\}\cap U}|f_{0}|^{2}e^{-2\varphi_{0}}}.
		\end{split}
	\end{equation}
	
	Combining inequality (\ref{equ:20190406b}) and inequality (\ref{equ:degang20190406c}),
	we get Proposition \ref{p:sidediv}.
\end{proof}

Let $\Phi_{z_0,\max}$ be a local Zhou weight related to $|f_0|^2e^{-2\varphi_0}$ near $z_0$. Taking $\varphi=\Phi_{z_0,\max}$, we have $\mathrm{Tn}(0)=1$.

\begin{Proposition}
	\label{p:relative=derivative}
	For any plurisubharmonic function $\psi$ satisfying that  $\psi\leq s_{0}\log|z|+O(1)$ near $z_0\in X$ for some $s_0>0$, the Tian function is differentiable at $t=0$, and
	$$\mathrm{Tn}(t)=\mathrm{Tn}(0)+\sigma(\psi,\Phi_{z_0,\max})t$$
for any $t\ge0.$
Especially, 	$$\sigma(\psi,\Phi_{z_0,\max})=\lim_{t\rightarrow0}\frac{\mathrm{Tn}(t)-\mathrm{Tn}(0)}{t}.$$
\end{Proposition}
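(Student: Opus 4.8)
The plan is to fix $\varphi=\Phi_{z_0,\max}$, so that $\mathrm{Tn}(0)=1$, and to identify both one-sided derivatives of the concave function $\mathrm{Tn}$ at $0$ with $\sigma:=\sigma(\psi,\Phi_{z_0,\max})$; once this is done, the formula $\mathrm{Tn}(t)=\mathrm{Tn}(0)+\sigma t$ for $t\ge 0$ drops out of a short concavity argument. Subtracting a constant, we may assume $\Phi_{z_0,\max}\le 0$ near $z_0$, which changes neither $\sigma$ nor any integrability condition. Note that $\mathrm{Tn}(t)=A_{\Phi_{z_0,\max},\psi}(t)$ with $g^2=|f_0|^2e^{-2\varphi_0}$ in the notation $A_{u,v}$ recalled above, and that the three standing hypotheses preceding Lemma~\ref{lem:strict_decreasing} hold in our situation: the first two amount to the integrability of $|f_0|^2e^{-2\varphi_0}$ and to property~$(1)$ of Definition~\ref{def:max_relat} (recall $\mathrm{Tn}(0)=1$), and the third is the assumption $\psi\le s_0\log|z|+O(1)$. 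Hence $\mathrm{Tn}$ is strictly increasing near $0$ by Lemma~\ref{lem:strict_decreasing}, so Lemma~\ref{lem:jialidun0131} is available at $t_0=0$; write $\mathrm{Tn}'(0+)$ and $\mathrm{Tn}'(0-)$ for the right and left derivatives, so that $\mathrm{Tn}'(0+)\le\mathrm{Tn}'(0-)$ by concavity.

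First I would prove the right-hand lower bound $\mathrm{Tn}(t)\ge 1+\sigma t$ for $t\ge 0$, which in particular gives $\mathrm{Tn}'(0+)\ge\sigma$. Using $\psi\le\sigma\Phi_{z_0,\max}+O(1)$ (established in the discussion preceding Theorem~\ref{thm:expression of relative types}) together with $\Phi_{z_0,\max}\le 0$ and $t\ge 0$, one has $e^{2t\psi}\le C_te^{2t\sigma\Phi_{z_0,\max}}$; hence for any $c<1+\sigma t$ the weight $|f_0|^2e^{-2\varphi_0}e^{2t\psi}e^{-2c\Phi_{z_0,\max}}$ is dominated by a constant times $|f_0|^2e^{-2\varphi_0}e^{-2(c-t\sigma)\Phi_{z_0,\max}}$, which is integrable near $z_0$ because $c-t\sigma<1=c_{z_0}(\Phi_{z_0,\max},0)$.

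The heart of the argument is the identity $\mathrm{Tn}'(0-)=\sigma$, which I would obtain from Lemma~\ref{lem:jialidun0131} with $u=\Phi_{z_0,\max}$, $v=\psi$, $t_0=0$. For $\mathrm{Tn}'(0-)\ge\sigma$: given $0<b<\sigma$, the definition of $\sigma$ furnishes $\psi\le c\Phi_{z_0,\max}+O(1)$ with $c\in(b,\sigma]$, whence $\tfrac1b\psi\le\Phi_{z_0,\max}+O(1)$ (using $\Phi_{z_0,\max}\le 0$), so $\max\{\Phi_{z_0,\max},\tfrac1b\psi\}=\Phi_{z_0,\max}+O(1)$ and therefore $A_{\max\{\Phi_{z_0,\max},\frac1b\psi\},\psi}(0)=A_{\Phi_{z_0,\max},\psi}(0)$; Lemma~\ref{lem:jialidun0131} then forces $b\le\mathrm{Tn}'(0-)$, and letting $b\uparrow\sigma$ gives the claim. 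For $\mathrm{Tn}'(0-)\le\sigma$, suppose instead $\mathrm{Tn}'(0-)>\sigma$ and pick $\sigma<b<\mathrm{Tn}'(0-)$; then Lemma~\ref{lem:jialidun0131} yields $c_{z_0}(\max\{\Phi_{z_0,\max},\tfrac1b\psi\},0)=c_{z_0}(\Phi_{z_0,\max},0)=1$, and the strong openness property (Theorem~\ref{thm:SOC}) shows that $|f_0|^2e^{-2\varphi_0}e^{-2\max\{\Phi_{z_0,\max},\frac1b\psi\}}$ is not integrable near $z_0$. Since $\max\{\Phi_{z_0,\max},\tfrac1b\psi\}$ is plurisubharmonic and $\ge\Phi_{z_0,\max}$, the maximality property~$(3)$ in Definition~\ref{def:max_relat} forces $\max\{\Phi_{z_0,\max},\tfrac1b\psi\}=\Phi_{z_0,\max}+O(1)$, i.e. $\psi\le b\Phi_{z_0,\max}+O(1)$, whence $b\le\sigma$---a contradiction. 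Thus $\mathrm{Tn}'(0-)=\sigma$.

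Finally, concavity gives $\mathrm{Tn}'(0+)\le\mathrm{Tn}'(0-)=\sigma$, which together with the first step gives $\mathrm{Tn}'(0+)=\sigma$; hence $\mathrm{Tn}$ is differentiable at $0$ with derivative $\sigma$. For $t\ge 0$, concavity also gives $\mathrm{Tn}(t)\le\mathrm{Tn}(0)+\mathrm{Tn}'(0+)t=1+\sigma t$, and combined with the first step this yields $\mathrm{Tn}(t)=\mathrm{Tn}(0)+\sigma t$ for all $t\ge 0$; the displayed limit is then immediate. I expect the main obstacle to be the bound $\mathrm{Tn}'(0-)\le\sigma$ in the third step: this is the one place where the maximality of the Zhou weight is genuinely used---without it $\mathrm{Tn}$ could have a corner at $0$---and it has to be combined with strong openness to dispose of the boundary case $c=1$. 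As an alternative, $\mathrm{Tn}'(0+)\le\sigma$ can also be extracted directly by combining Proposition~\ref{p:sidediv} with the integral formula of Theorem~\ref{thm:expression of relative types}, which identifies the $\liminf$ and $\limsup$ there with $\sigma$; I prefer the route above as it does not invoke Theorem~\ref{thm:expression of relative types}.
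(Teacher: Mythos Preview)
Your proposal is correct and follows essentially the same approach as the paper: both proofs identify $\mathrm{Tn}'(0-)$ with $\sigma(\psi,\Phi_{z_0,\max})$ by combining Lemma~\ref{lem:jialidun0131} with strong openness (Theorem~\ref{thm:SOC}) and the maximality property~(3) of the Zhou weight, and then use concavity together with the elementary lower bound $\mathrm{Tn}(t)\ge 1+\sigma t$ to conclude. The paper packages the two directions of $\mathrm{Tn}'(0-)=\sigma$ as a single ``iff'' characterization of non-integrability of $|f_0|^2e^{-2\varphi_0}e^{-2\max\{\Phi_{z_0,\max},\frac1b\psi\}}$, while you argue each inequality separately and by contradiction, but the content is the same.
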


\begin{proof}As $\psi\leq s_{0}\log|z|+O(1)$ near $z_0$ and $\Phi_{z_0,\max}\ge N\log|z|+O(1)$ near $z_0$ for some $N$, we have $\sigma(\psi,\Phi_{z_0,\max})>0$.
Lemma \ref{lem:jialidun0131} and the strong openness property (Theorem \ref{thm:SOC}) show that $|f_0|^2e^{-2\varphi_{0}}e^{-2\max\{\Phi_{z_0,\max},\frac{1}{b}\psi\}}$ is not integrable near $z_0$ if and only if $b\le \lim_{t\rightarrow0+0}\frac{\mathrm{Tn}(0)-\mathrm{Tn}(-t)}{t}$. 

If $b<\sigma(\psi,\Phi_{z_0,\max})$, it follows from $\psi\le b\Phi_{z_0,\max}+O(1)$ that $\max\{\frac{1}{b}\psi,\Phi_{z_0,\max}\}=\Phi_{z_0,\max}+O(1).$ Then $|f_0|^2e^{-2\varphi_{0}}e^{-2\max\{\Phi_{z_0,\max},\frac{1}{b}\psi\}}$ is not integrable near $z_0$. If $b>\sigma(\psi,\Phi_{z_0,\max})$, by definition of $\sigma(\psi,\Phi_{z_0,\max})$, we have $\max\{\frac{1}{b}\psi,\Phi_{z_0,\max}\}\not=\Phi_{z_0,\max}+O(1)$ and $\max\{\frac{1}{b}\psi,\Phi_{z_0,\max}\}\ge\Phi_{z_0,\max}+O(1)$ near $z_0$. Following form the definition of local Zhou weights, we know that  $|f_0|^2e^{-2\varphi_{0}}e^{-2\max\{\Phi_{z_0,\max},\frac{1}{b}\psi\}}$ is  integrable near $z_0$.
Then we have
\begin{equation}
\label{eq:0908a}
\sigma(\psi,\Phi_{z_0,\max})=\lim_{t\rightarrow0+0}\frac{\mathrm{Tn}(0)-\mathrm{Tn}(-t)}{t}.
\end{equation}

It follows from that $\psi\le b\Phi_{z_0,\max}+O(1)$ near $z_0$ for any $b<\sigma(\psi,\Phi_{z_0,\max})$ that 
$\mathrm{Tn}(t)\geq \mathrm{Tn}(0)+\sigma(\psi,\Phi_{z_0,\max})t$ for any $t\ge0$.
By and equality \eqref{eq:0908a} and the concavity of $\mathrm{Tn}(t)$,
we obtain that $\mathrm{Tn}(t)=\mathrm{Tn}(0)+\sigma(\psi,\Phi_{z_0,\max})t$
for any $t\ge0$ and $\lim_{t\rightarrow0}\frac{\mathrm{Tn}(t)-\mathrm{Tn}(0)}{t}=\sigma(\psi,\Phi_{z_0,\max}).$

Thus, Proposition \ref{p:relative=derivative} holds.
\end{proof}

\subsection{Relation between relative types and valuations}
In this subsection, we consider the relation between relative types and valuations for  holomorphic and plurisubharmonic functions.

We recall Skoda's division theorem.

\begin{Theorem}[see \cite{demailly2010}]
	\label{thm:Skoda}
	Let $\Omega$ be a weakly pseudoconvex K\"ahler manifold of dimension $n$  and $\varphi$ be a plurisubharmonic function on $\Omega$. Set $m=\min\{n,r-1\}$. Then for every holomorphic function $f$ on $\Omega$ such that
	$$I\coloneqq \int_{\Omega}|f|^2|g|^{-2(m+1+\epsilon)}e^{-\varphi}<+\infty,$$
	there exist holomorphic functions $(h_1,\ldots,h_r)$ on $\Omega$ such that $f=\sum_{1\le j\le r}g_j h_j$ and
	$$\int_{\Omega}|h|^2|g|^{-2(m+\epsilon)}e^{-\varphi}\le\Big(1+\frac{m}{\epsilon}\Big)I.$$
\end{Theorem}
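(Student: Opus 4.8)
The plan is to deduce Theorem~\ref{thm:Skoda} from H\"{o}rmander-type weighted $L^2$ estimates for $\bar\partial$, following the classical argument of Skoda in the streamlined form of \cite{demailly2010}; I only sketch the structure, since this is a quoted result.

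\emph{Reduction to a $\bar\partial$-equation.} Write $g=(g_1,\dots,g_r)$, $|g|^2=\sum_j|g_j|^2$, and let $Z=\{g=0\}$, an analytic subset of $\Omega$ (hence of measure zero). On $\Omega\setminus Z$ form the smooth minimal section $h^0\coloneqq f\,\bar g/|g|^2$, i.e. $h^0_j=f\,\bar g_j/|g|^2$, so that $g\cdot h^0=\sum_j g_jh^0_j=f$ and, since $f$ is holomorphic, $v\coloneqq\bar\partial h^0=f\,\bar\partial(\bar g/|g|^2)$ is a smooth $\bar\partial$-closed $(0,1)$-form with values in the rank-$(r-1)$ subbundle $S\coloneqq\ker(g\cdot\,)\subset\Omega\times\mathbb{C}^r$ over $\Omega\setminus Z$ (indeed $g\cdot v=\bar\partial(g\cdot h^0)=\bar\partial f=0$). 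It suffices to produce an $S$-valued solution $u$ of $\bar\partial u=v$ with
$$\int_{\Omega}|u|^2|g|^{-2(m+\epsilon)}e^{-\varphi}\ \le\ \frac{m}{\epsilon}\,I,\qquad I\coloneqq\int_{\Omega}|f|^2|g|^{-2(m+1+\epsilon)}e^{-\varphi}.$$
Then $h\coloneqq h^0-u$ is holomorphic on $\Omega\setminus Z$ with $g\cdot h=f$; since $h^0$ is \emph{pointwise} orthogonal to $S$ (as $\langle h^0,w\rangle=\tfrac{f}{|g|^2}\,\overline{g\cdot w}=0$ for $w\in S$) while $u$ is $S$-valued, $|h|^2=|h^0|^2+|u|^2$ off $Z$, so $\int_{\Omega}|h|^2|g|^{-2(m+\epsilon)}e^{-\varphi}=I+\int_{\Omega}|u|^2|g|^{-2(m+\epsilon)}e^{-\varphi}\le(1+\tfrac m\epsilon)I$; finally, since the weight $|g|^{-2(m+\epsilon)}$ blows up along $Z$, the finiteness of this integral forces $h$ to extend holomorphically across $Z$ without poles, yielding the theorem.

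\emph{The weighted $\bar\partial$-estimate.} Equip the subbundle $S$ with the metric induced from the standard one on $\mathbb{C}^r$, twisted by the singular weight $e^{-\varphi}|g|^{-2(m+\epsilon)}$, and apply the Bochner--Kodaira--Nakano/H\"{o}rmander $L^2$-existence theorem for $\bar\partial$ on $S$-valued $(n,1)$-forms over the weakly pseudoconvex K\"{a}hler manifold $\Omega\setminus Z$ (exhausting $\Omega$ by relatively compact pseudoconvex pieces to secure completeness, regularizing the singular weight, and passing to the limit by monotone convergence, which also restores $Z$). By the usual duality argument the equation $\bar\partial u=v$ with the stated norm bound is solvable once one has a pointwise lower bound for the curvature operator $[i\Theta_S,\Lambda]$ of this twisted metric on $S$-valued $(n,1)$-forms, dominating $\tfrac{\epsilon}{m+\epsilon}$ times the semipositive $(1,1)$-form $i\partial\bar\partial\log|g|^2$ produced by the factor $|g|^{-2(m+\epsilon)}$.

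\emph{The crux.} The one substantial point---and what I expect to be the main obstacle---is this pointwise curvature inequality, and it is exactly here that the hypothesis $m=\min\{n,r-1\}$ is used. By the Gauss--Codazzi formula, $i\Theta_S$ equals the semipositive ambient curvature $\bigl(i\partial\bar\partial\varphi+(m+\epsilon)\,i\partial\bar\partial\log|g|^2\bigr)\otimes\mathrm{Id}_S$ minus the second fundamental form of $S\hookrightarrow\mathbb{C}^r$; the latter correction is itself built from $i\partial\bar\partial\log|g|^2$, and when paired against $(n,1)$-forms valued in the rank-$(r-1)$ bundle $S$ it involves at most $m=\min\{n,r-1\}$ of the directions along which the positive term $(m+\epsilon)\,i\partial\bar\partial\log|g|^2$ acts, so a net factor $\epsilon$ survives. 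Concretely this reduces to an elementary Hermitian linear-algebra estimate (the ``Skoda inequality'': for $\lambda\ge m$ and $w\in\ker(g\cdot)$, Cauchy--Schwarz bounds the mixed terms of the relevant quadratic form below by $\tfrac{\epsilon}{\lambda}$ times its positive diagonal part); inserting this into the $L^2$-existence theorem produces the solution $u$ with the required bound $\tfrac m\epsilon I$, after which only the routine $\bar\partial$-existence and limiting arguments remain.
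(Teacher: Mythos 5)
The paper does not give a proof of Theorem~\ref{thm:Skoda}; it is quoted verbatim from Demailly's book \cite{demailly2010}, so there is no internal argument to compare your proposal against. Your sketch reproduces the standard Skoda/Demailly strategy---minimal section $h^0=f\bar g/|g|^2$, the $S$-valued $\bar\partial$-equation for the correction term, the H\"ormander $L^2$-existence theorem on the kernel subbundle, and the pointwise Hermitian inequality where $m=\min\{n,r-1\}$ enters---which is exactly the proof given in the cited source, and at the level of a sketch it is structurally sound; the only place that would need genuine care in a full write-up is the curvature/second-fundamental-form computation you correctly flag as the crux.
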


Recall that $X$ is an analytic variety with pure dimension $d$ contained in $\mathbb{C}^{n}$. Let $z=(z_1,\ldots,z_n)$ be  the coordinate on $\mathbb{C}^{n}$.  Assume that the origin $o\in X_{\text{sing}}$ and $(X,o)$ is an irreducible germ of an analytic set.

Denote $\mathcal{O}^{w}_{X,x}$ be the ring of germs of weakly holomorphic functions defined on $X$ near $x$ and $\mathfrak{M}_{X,x}$ be the ring of germs of meromorphic functions defined on $X$ near $x$.

\begin{Theorem}[see \cite{demailly-book}]
\label{th:universal denominators}
    For every point $x \in X$, there is a neighborhood $V$ of $x$ and $h \in \mathcal{O}_X(V)$ such that $h^{-1}(0)$ is nowhere dense in $V$ and $h_y \mathcal{O}^{w}_{X,y} \subseteq \mathcal{O}_{X,y}$ for all $y \in V$; such a function $h$ is called a \emph{universal denominator} on $V$. In particular $\mathcal{O}^{w}_{X,x}$ is contained in the ring $\mathfrak{M}_{X,x}$.
\end{Theorem}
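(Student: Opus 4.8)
I would prove this by reducing, via the local parametrization theorem, to a germ statement to which a Vandermonde--discriminant argument applies, and then spreading the resulting germ-level inclusion out to a neighborhood; the last step is the delicate one. In detail: the statement is local, so fix $x\in X$, and after shrinking and treating the irreducible components through $x$ separately (a weakly holomorphic germ restricts to one on each component, and the one‑component case routinely yields the general one by combining the separate universal denominators against functions that isolate the components), I may assume $(X,x)$ is an irreducible germ of pure dimension $d$. By the local parametrization theorem, after a generic $\mathbb{C}$-linear change of coordinates on $\mathbb{C}^{n}$ and shrinking, the projection $p=(z_{1},\dots,z_{d})\colon X\to U'\subseteq\mathbb{C}^{d}$ is a proper finite surjection, there is a nonzero $\Delta\in\mathcal{O}(U')$ such that over $U'\setminus\{\Delta=0\}$ the space $X$ is smooth and $p$ is an unbranched $k$-sheeted covering, and a generic linear combination $w$ of $z_{d+1}|_{X},\dots,z_{n}|_{X}$ (an element of $\mathcal{O}_{X,x}$) is a primitive element taking $k$ pairwise distinct values on each fibre $p^{-1}(u)$, $u\in U'\setminus\{\Delta=0\}$.

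\textbf{Step 1.} Each $f\in\mathcal{O}^{w}_{X,x}$ is bounded on $X_{\reg}$ near $x$, so the elementary symmetric functions of the $k$ fibre-values $f(y_{1}(u)),\dots,f(y_{k}(u))$ are single-valued, holomorphic and locally bounded on $U'\setminus\{\Delta=0\}$, hence extend to $\mathcal{O}_{\mathbb{C}^{d},0}$ by Riemann's extension theorem; thus $f$ satisfies a monic polynomial over $\mathcal{O}_{\mathbb{C}^{d},0}$ identically on $X_{\reg}$ near $x$. Applying this to $w$ produces
\[
P(z';T)=\prod_{i=1}^{k}\bigl(T-w(y_{i}(z'))\bigr)=T^{k}+a_{1}(z')T^{k-1}+\cdots+a_{k}(z')\in\mathcal{O}_{\mathbb{C}^{d},0}[T],
\]
with $P(z';w)\equiv 0$ on $X_{\reg}$ near $x$.

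\textbf{Step 2.} Put $\delta\coloneqq\operatorname{disc}_{T}P\in\mathcal{O}_{\mathbb{C}^{d},0}$; over $U'\setminus\{\Delta=0\}$ it equals $\prod_{i<l}\bigl(w(y_{i})-w(y_{l})\bigr)^{2}\neq0$, so $\delta\not\equiv 0$. Fix $f\in\mathcal{O}^{w}_{X,x}$. For $u\in U'\setminus\{\Delta=0\}$ let $V(u)\coloneqq\bigl(w(y_{i}(u))^{j}\bigr)_{1\le i\le k,\,0\le j\le k-1}$, a Vandermonde matrix with $\det V(u)^{2}=\delta(u)$, and let $V_{j}(u)$ be $V(u)$ with its $j$-th column replaced by $\bigl(f(y_{i}(u))\bigr)_{i}$; solving $V(u)\vec{c}(u)=\bigl(f(y_{i}(u))\bigr)_{i}$ by Cramer's rule gives $\delta(u)\,c_{j}(u)=\det V(u)\det V_{j}(u)$. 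The right-hand side is invariant under the sheet-monodromy (both determinants acquire the same sign under a permutation of sheets) and bounded on $U'\setminus\{\Delta=0\}$, hence extends to some $b_{j}\in\mathcal{O}_{\mathbb{C}^{d},0}$. Then $\sum_{j}b_{j}w^{j}\in\mathcal{O}_{\mathbb{C}^{d},0}[w]\subseteq\mathcal{O}_{X,x}$ agrees with $\delta f$ on the dense set $X_{\reg}\setminus p^{-1}(\{\Delta=0\})$ and both are holomorphic on $X_{\reg}$ near $x$, so $\delta f=\sum_{j}b_{j}w^{j}\in\mathcal{O}_{X,x}$. Hence, writing $h_{x}\coloneqq p^{*}\delta\in\mathcal{O}_{X,x}$, we get $h_{x}\,\mathcal{O}^{w}_{X,x}\subseteq\mathcal{O}_{X,x}$; as $h_{x}$ is a non-zero-divisor this also gives $f=(h_{x}f)/h_{x}\in\mathfrak{M}_{X,x}$, i.e.\ $\mathcal{O}^{w}_{X,x}\subseteq\mathfrak{M}_{X,x}$, and $h^{-1}(0)=p^{-1}(\{\delta=0\})$ is nowhere dense.

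\textbf{Step 3 (the main obstacle).} It remains to make a single $h$ work simultaneously at all $y$ in a neighbourhood $V$ of $x$. For $y$ with $p(y)\notin\{\delta=0\}$ this is automatic ($X$ is smooth at $y$, $\mathcal{O}^{w}_{X,y}=\mathcal{O}_{X,y}$, and $h$ is a unit there), so the difficulty is concentrated over $\{\delta=0\}$. One can rerun the germ argument at each such $y$, keeping track of how $P$ factors into the (irreducible) minimal polynomials of $w$ on the branches of $X$ through $y$ and using that $\operatorname{disc}(P)$ is divisible by the discriminant of each factor, so that the same $h_{x}=p^{*}\delta$ still serves near $y$. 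Cleaner, and the route I would actually take, is to invoke the coherence of $\mathcal{O}^{w}_{X}$ as an $\mathcal{O}_{X}$-module (equivalently, Oka's theorem on the finiteness of the normalization): then $\mathcal{O}^{w}_{X}/\mathcal{O}_{X}$ is coherent, its annihilator (the conductor) $\mathfrak{c}\subseteq\mathcal{O}_{X}$ is a coherent ideal sheaf with $h_{x}\in\mathfrak{c}_{x}\neq 0$, and on a small enough $V$ on which $\mathfrak{c}$ is globally generated some generator $h$ is not identically zero; then $h_{y}\,\mathcal{O}^{w}_{X,y}\subseteq\mathcal{O}_{X,y}$ for every $y\in V$ and $h^{-1}(0)$ is nowhere dense, as required. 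I would import this coherence as the one genuinely nontrivial external ingredient; everything else reduces to the classical local parametrization theorem, Riemann extension, and the Vandermonde/discriminant identity of Step 2.
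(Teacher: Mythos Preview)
The paper does not give its own proof of this statement: it is quoted verbatim from Demailly's book (the theorem carries the attribution ``see \cite{demailly-book}'' and no proof environment follows). So there is no in-paper argument to compare against.

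Your proposal is essentially the classical proof one finds in Demailly (Chapter II, \S7) or in Grauert--Remmert: local parametrization with a primitive element, the Vandermonde/discriminant trick to show $\delta\cdot\mathcal{O}^{w}_{X,x}\subseteq\mathcal{O}_{X,x}$ at the base point, and then coherence of $\mathcal{O}^{w}_{X}$ (Oka's normalization theorem) to propagate the inclusion to all nearby $y$. The argument is correct as outlined. Two small remarks: in Step~3, option~(a) (``rerun the germ argument at each $y$'') does work but requires care because the branches of $(X,y)$ need not be separated by the fixed primitive element $w$; the clean way, as you note, is option~(b), and that is also what Demailly does. Also, in Step~2 you should observe that $\mathcal{O}_{\mathbb{C}^d,0}[w]\subseteq\mathcal{O}_{X,x}$ because $w$ is the restriction of a linear form, hence already strongly holomorphic---this is implicit but worth making explicit.
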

Using Theorem \ref{th:universal denominators}, we have following lemma.
\begin{Lemma}
\label{lem:lower bound of weak holomorphic}
Let $\nu$ be a valuation on $\mathcal{O}_{X,x}$ and $\nu$ naturally extended to be a valuation on the quotient field $\mathfrak{M}_{X,x}$. For any $s\in\mathcal{O}^{w}_{X,x}\subset \mathfrak{M}_{X,x}$, the value $\nu(s)$ has a universal lower bound $-\nu(h)$, where $h$ is the universal denominator of $\mathcal{O}^{w}_{X,x}$ near $x$.
\end{Lemma}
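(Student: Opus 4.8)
The plan is to use the universal denominator $h$ from Theorem~\ref{th:universal denominators} to clear denominators and then apply the valuation axioms. First I would note that since $h$ is a universal denominator near $x$, for the given weakly holomorphic germ $s\in\mathcal{O}^{w}_{X,x}$ we have $hs\in\mathcal{O}_{X,x}$; write $g\coloneqq hs$, so that $s=g/h$ as an element of $\mathfrak{M}_{X,x}$. Since $\nu$ is extended to the quotient field $\mathfrak{M}_{X,x}$ multiplicatively (i.e. $\nu(a/b)=\nu(a)-\nu(b)$ for $a,b\in\mathcal{O}_{X,x}^*$), we get $\nu(s)=\nu(g)-\nu(h)$.

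Next I would invoke the nonnegativity of $\nu$ on $\mathcal{O}_{X,x}$: by definition a valuation takes values in $\mathbb{R}_{\ge0}$, so $\nu(g)\ge 0$ (and $g\neq 0$ since $s\neq 0$ and $h^{-1}(0)$ is nowhere dense, so $g=hs$ is a genuine nonzero element of $\mathcal{O}_{X,x}$). Combining, $\nu(s)=\nu(g)-\nu(h)\ge -\nu(h)$. This establishes the claimed universal lower bound $-\nu(h)$, and crucially the bound depends only on $h$, not on $s$, because the same universal denominator works for every weakly holomorphic germ near $x$.

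There is essentially no hard step here; the only point requiring a word of care is making sure the extension of $\nu$ to $\mathfrak{M}_{X,x}$ is well-defined and multiplicative, so that the identity $\nu(g/h)=\nu(g)-\nu(h)$ holds independently of the representation of $s$ as a quotient. This is the standard fact that a valuation on an integral domain extends uniquely to its field of fractions, valid here because $\mathcal{O}_{X,x}$ is an integral domain (as $(X,o)$, equivalently $(X,x)$, is an irreducible germ). I would state this extension explicitly at the start of the argument and then the lower bound follows in one line from $\nu(g)\ge 0$.
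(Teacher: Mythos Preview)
Your proof is correct and follows essentially the same approach as the paper: use the universal denominator $h$ to write $hs\in\mathcal{O}_{X,x}$, apply the multiplicativity of the extended valuation to get $\nu(s)=\nu(hs)-\nu(h)$, and conclude from $\nu|_{\mathcal{O}_{X,x}}\ge 0$. The paper's argument is more terse, but your added remarks on the well-definedness of the extension and the nonvanishing of $g$ are appropriate elaborations of the same one-line idea.
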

\begin{proof}
Let $s\in\mathcal{O}^{w}_{X,x}$ be the germ of any weakly holomorphic function defined near $x$. It follows from Theorem \ref{th:universal denominators} that we can find an $h_{x}\in \mathcal{O}_{X,x}$ such that $h_{x}s_{x}\in \mathcal{O}_{X,x}$. Note that $\nu|_{\mathcal{O}_{X,x}}\ge 0$. Then $\nu(s)=\nu(hs)-\nu(h)\ge -\nu(h)$.

Lemma \ref{lem:lower bound of weak holomorphic} has been proved.
\end{proof}

\begin{Lemma}\label{l:relativetype}
	For any valuation $\nu$ on $\mathcal{O}_{X,o}$ where $o\in X_{\sing}$ and holomorphic functions $f_0,f_1,\ldots,f_r$ belonging to $\mathcal{O}_{X,o}$ with $\nu(f_j)>0$ for $1\le j\le r$, we have
	$\sigma(\log|f_0|,\varphi)\le \nu(f_0)$, where $\varphi\coloneqq  \log\big(\sum_{1\le j\le r}|f_j|^{\frac{1}{\nu(f_j)}}\big)$.
\end{Lemma}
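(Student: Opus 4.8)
The plan is to establish that $\nu(f_0)\ge c$ for every rational $c$ with $0<c<\sigma(\log|f_0|,\varphi)$ (we may assume $\sigma(\log|f_0|,\varphi)>0$, as otherwise there is nothing to prove), and then let $c\uparrow\sigma(\log|f_0|,\varphi)$. Write $a_j:=\nu(f_j)>0$ and $A:=\max_{1\le j\le r}a_j$. First I would make two routine reductions. Since $\nu(f_j)>0$, the germ $f_j$ is not a unit of $\mathcal{O}_{X,o}$, so $f_j(o)=0$; consequently, after shrinking a neighbourhood $U$ of $o$ we may assume $\sum_{1\le j\le r}|f_j|^{1/a_j}\le 1$ on $U\setminus X_{\sing}$, so that $|f_j|\le 1$ there and $\varphi\le 0$ near $o$. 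Then, by the definition of $\sigma(\log|f_0|,\varphi)$ together with $\varphi\le 0$, the bound $c<\sigma(\log|f_0|,\varphi)$ gives $\log|f_0|\le c\varphi+O(1)$ on $U\setminus X_{\sing}$, that is,
\[
|f_0|\le C\Bigl(\sum_{1\le j\le r}|f_j|^{1/a_j}\Bigr)^{c}\qquad\text{on }U\setminus X_{\sing}.
\]

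The core of the argument is to discretize the weights. Fix an integer $m>A$ and set $k_j:=\lfloor m/a_j\rfloor\ge 1$; then $k_j/m\le 1/a_j$, so $|f_j|^{1/a_j}\le|f_j|^{k_j/m}$ (using $|f_j|\le 1$), hence $\sum_j|f_j|^{1/a_j}\le r\max_j|f_j|^{k_j/m}$, and therefore $|f_0|\le C'\max_j|f_j|^{ck_j/m}$ on $U\setminus X_{\sing}$. Writing $c=p/q$ with $p,q\in\mathbb{Z}_{>0}$ and raising to the power $qm$ yields the integer-exponent estimate $|f_0^{\,qm}|\le C''\max_{1\le j\le r}|f_j^{\,pk_j}|$ on $U\setminus X_{\sing}$, which, both sides being continuous, also holds on $U$. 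By the standard analytic characterization of integral closure over a reduced complex germ, this says $f_0^{\,qm}\in\overline{\mathfrak{a}}$, where $\mathfrak{a}:=(f_1^{\,pk_1},\dots,f_r^{\,pk_r})\subseteq\mathcal{O}_{X,o}$; alternatively the membership can be produced analytically by resolving the singularities of $X$, applying Skoda's division theorem (Theorem~\ref{thm:Skoda}) on the resolution, and clearing denominators by a universal denominator (Theorem~\ref{th:universal denominators}). An integral equation $(f_0^{qm})^{N}+b_1(f_0^{qm})^{N-1}+\dots+b_N=0$ with $b_i\in\mathfrak{a}^i$, combined with the defining properties of the valuation $\nu$, then gives
\[
qm\,\nu(f_0)=\nu(f_0^{\,qm})\ge\min_{1\le j\le r}\nu(f_j^{\,pk_j})=p\min_{1\le j\le r}k_ja_j.
\]
Since $k_ja_j>m-a_j\ge m-A$, this reads $\nu(f_0)\ge c(1-A/m)$; letting $m\to\infty$ gives $\nu(f_0)\ge c$, and letting $c\uparrow\sigma(\log|f_0|,\varphi)$ finishes the proof.

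I expect the discretization step to be the main obstacle: one must replace the real exponents $1/a_j$ by rational exponents $k_j/m$ so that the auxiliary monomials $f_j^{\,pk_j}$ remain \emph{balanced}, i.e. so that the crude estimate $\nu(f_0^{qm})\ge\min_j\nu(f_j^{pk_j})$ loses only a factor $1-O(1/m)$ relative to the value $c$ predicted by $\varphi$; the choice $k_j=\lfloor m/a_j\rfloor$ does exactly this. (This is also why one should avoid a plain Briançon--Skoda division, which would only yield $(f_0^{qm})^{\dim X+1}\in\mathfrak{a}$ and hence lose a factor $\dim X+1$, in favour of the sharp integral-closure formulation.) A secondary point needing care is the transition from the pointwise estimate on $U\setminus X_{\sing}$ to the algebraic membership $f_0^{\,qm}\in\overline{\mathfrak{a}}$ at the singular point $o$ — classical for reduced germs, and precisely where the weakly holomorphic function / universal denominator machinery set up earlier is convenient.
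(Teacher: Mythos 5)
Your argument is correct, but it takes a genuinely different and cleaner route than the paper. The paper argues by contradiction: it assumes $\sigma(\log|f_0|,\varphi)>\nu(f_0)$, raises to large powers $f_0^l$, resolves the singularities of $X$, applies Skoda's division theorem (Theorem~\ref{thm:Skoda}) iteratively $k\sim l$ times on the resolution, pushes the $L^2$ coefficients $h_J$ down to weakly holomorphic functions on $X$, and controls $\nu(h_J)$ from below via the universal denominator (Theorem~\ref{th:universal denominators} and Lemma~\ref{lem:lower bound of weak holomorphic}); the iterated division is precisely what absorbs the additive $n+1$ loss that a single Skoda application incurs, and the residual constant $C$ from the universal denominator is independent of $l$, so letting $l\to\infty$ yields the contradiction. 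You instead discretize the real weights $1/a_j$ by $k_j/m=\lfloor m/a_j\rfloor/m$, derive the clean pointwise bound $|f_0^{qm}|\le C''\max_j|f_j^{pk_j}|$ near $o$, invoke Teissier's analytic characterization of integral closure on a reduced germ to conclude $f_0^{qm}\in\overline{(f_1^{pk_1},\dots,f_r^{pk_r})}$, and then read off $qm\,\nu(f_0)\ge p\min_j k_j a_j>p(m-A)$ from the monic integral equation, with no weakly holomorphic coefficients and no universal denominators to track. Both work; yours replaces the paper's hands-on $L^2$ bookkeeping with the integral-closure criterion (a result the paper never invokes and should be cited explicitly — Lejeune-Jalabert and Teissier), while the paper's proof stays entirely inside the $L^2$/division toolbox already developed. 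Your observation that sharp integral closure is essential — a plain Briançon–Skoda membership $f_0^{qm(n+1)}\in\mathfrak{a}$ would only give $\nu(f_0)\ge c/(n+1)$ — is exactly the point the paper addresses by iterating Skoda $O(l)$ times rather than once. One small point to make explicit: from the monic equation $\sum_{i=0}^N b_i\,(f_0^{qm})^{N-i}=0$ with $b_0=1$, $b_i\in\mathfrak{a}^i$, the valuation inequality $\nu(f_0^{qm})\ge\nu(\mathfrak{a})$ requires the usual argument that the minimum of $\nu$ over the $N+1$ terms is attained at $i=0$ by some $i_0\ge1$ as well, giving $i_0\nu(f_0^{qm})\ge\nu(b_{i_0})\ge i_0\nu(\mathfrak{a})$; this is standard but worth a line.
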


\begin{proof}
	We prove this lemma by contradiction: if not, there exists $\delta_1>0$ such that
	$\log|f_0|\le (\nu(f_0)+\delta_1)\varphi+O(1)$ near $o$. Then we can find some $\delta_2>0$ and a set of positive integers $\{m_0,\ldots,m_r\}$ such that
	\[\log|f_0|\le \frac{\nu(f_0)+\delta_2}{m_0}\tilde\varphi+O(1) \quad  \ \text{near} \ o,\]
	where
	$\tilde\varphi\coloneqq\log\big(\sum_{1\le j\le r}|f_j^{m_j}|\big)$ and $\nu(f_j^{m_j})\ge m_0$ for $1\le j\le r$. For any positive integer $l$, we have
	\[\log|f_0^l|\le \frac{\nu(f_0^l)+l\delta_2}{m_0}\tilde\varphi+O(1)\]
	holds on $\big(U\cap X\big)\backslash X_{\sing}$, where $U\coloneqq \{x\in \mathbb{C}^n: \log|z|<\delta\}$.
	
		We use Theorem \ref{thm:desing} on $\Omega$ to resolve
	the singularities of $X$, and denote the corresponding proper modification by $\mu\colon\tilde\Omega\rightarrow\Omega$. Denote the strict transform of $X$ by $\tilde X$ and denote $Z_0\coloneqq \tilde\Omega\cap\mu^{-1}(\{o\})$. Then $\mu^{-1}(U)\cap \widetilde{X}$ contains $Z_0\cap \widetilde{X}$ and 
	\begin{equation}
	\label{eq:relativetype1}
	    \log|f_0^l\circ \mu|\le \frac{\nu(f_0^l)+l\delta_2}{m_0}\tilde\varphi\circ \mu+O(1)
	\end{equation}
	holds on $\mu^{-1}(U)\cap \widetilde{X}$. Note that $Z_o\cap \widetilde{X}$ is compact in $\widetilde{X}$, taking $\delta$ small enough, we may assume that $\mu^{-1}(U)\cap \widetilde{X}$ is relatively compact in $\widetilde{X}$. Inequality \eqref{eq:relativetype1} and the relative compactness of $\mu^{-1}(U)\cap \widetilde{X}$ imply that there exists a small $\epsilon>0$ (independent of $l$) such that
		\begin{equation}
	\label{eq:relativetype2}
	   \int_{\mu^{-1}(U)\cap \widetilde{X}} |f_0^l\circ\mu|^2e^{-2\frac{\nu(f_0^l)+l\delta_2+\epsilon}{m_0}\tilde\varphi\circ\mu}<+\infty.
	\end{equation}

	Let $|F|\coloneqq \sum_{1\le j\le r}|f_j^{m_j}\circ\mu|$, $\epsilon_0\coloneqq \frac{\epsilon}{m_0}$ and $m\coloneqq \min\{n,r-1\}$. By inequality \eqref{eq:relativetype2}, we have
	\begin{equation*}
		\begin{split}
			& \int_{\mu^{-1}(U)\cap \widetilde{X}}|f_0^l\circ\mu|^2|F|^{-2(m+1)-2\epsilon_0}|F|^{-2\big(\frac{\nu(f_0^l)+l\delta_2}{m_0}-(n+1)\big)} \\
			\le&C \int_{\mu^{-1}(U)\cap \widetilde{X}}|f_0^l\circ\mu|^2|F|^{-2(n+1)-2\epsilon_0}|F|^{-2\big(\frac{\nu(f_0^l)+l\delta_2}{m_0}-(n+1)\big)} \\
			= &C \int_{\mu^{-1}(U)\cap \widetilde{X}}|f_0^l\circ\mu|^2e^{-2\frac{\nu(f_0^l)+l\delta_2+\epsilon}{m_0}\tilde\varphi\circ\mu}<+\infty.
		\end{split}
	\end{equation*}
Note that $\big(\log|z\circ\mu|\big)|_{\widetilde{X}}$ is a smooth plurisubharmonic function on $\widetilde{X}$ and $\mu^{-1}(U)\cap \widetilde{X}=\{x\in \widetilde{X}\colon \big(\log|z\circ\mu|\big)|_{\widetilde{X}}<\delta\}$. We know that $\mu^{-1}(U)\cap \widetilde{X}$ is a weakly pseudoconvex K\"ahler manifold.

By Theorem \ref{thm:Skoda}, there exist holomorphic functions $(h_1,\ldots,h_r)$ on $\mu^{-1}(U)\cap \widetilde{X}$ such that $f_0^l\circ\mu=\sum_{1\le j\le r}(f_j\circ\mu)^{m_j} h_j$ holds on $\mu^{-1}(U)\cap \widetilde{X}$ and
	\begin{equation}
		\label{eq:est by skoda}
		\begin{split}
			&\int_{\mu^{-1}(U)\cap \widetilde{X}}|h|^2|F|^{-2(m+1)-2\epsilon_0}|F|^{-2\big(\frac{\nu(f_0^l)+l\delta_2}{m_0}-1-(n+1)\big)}\\
			=& \int_{\mu^{-1}(U)\cap \widetilde{X}}|h|^2|F|^{-2m-2\epsilon_0}|F|^{-2\big(\frac{\nu(f_0^l)+l\delta_2}{m_0}-(n+1)\big)}< +\infty.
		\end{split}
	\end{equation}
 It follows from the estimate \eqref{eq:est by skoda} that, using Theorem \ref{thm:Skoda} for each $h_j$ $(1\le j\le r)$, there exist holomorphic functions $(h_{j,1},\ldots,h_{j,r})$ on $\mu^{-1}(U)\cap \widetilde{X}$ such that $h_j=\sum_{1\le k\le r}h_{j,k}(f_k\circ\mu)^{m_k}$ holds on $\mu^{-1}(U)\cap \widetilde{X}$ and
	\begin{equation*}
		\int_{\mu^{-1}(U)\cap \widetilde{X}}\sum_{1\le k\le r}|h_{j,k}|^2|F|^{-2m-2\epsilon_0}|F|^{-2\big(\frac{\nu(f_0^l)+l\delta_2}{m_0}-1-(n+1)\big)}< +\infty.
	\end{equation*}
Since $f_0^l\circ\mu=\sum_{1\le j\le r}(f_j\circ\mu)^{m_j} h_j$ and $h_j=\sum_{1\le k\le r}h_{j,k}(f_k\circ\mu)^{m_k}$ hold on $\mu^{-1}(U)\cap \widetilde{X}$, we have 
\[f_0^l\circ\mu=\sum_{1\le j,k \le r}(f_j\circ\mu)^{m_j}(f_k\circ\mu)^{m_k}h_{j,k}\]
holds on $\mu^{-1}(U)\cap \widetilde{X}$.

Denote $k\coloneqq \big\lfloor\frac{\nu(f_0^l)+l\delta_2}{m_0}\big\rfloor-n-1$. 
Using Theorem \ref{thm:Skoda} $k$ times, we have 
\begin{equation}
    \label{eq:decompostion upper}
f_0^l\circ\mu=\sum_{1\le j_1,j_2,\ldots,j_k \le r}(f_{j_1}\circ\mu)^{m_{j_1}}(f_{j_2}\circ\mu)^{m_{j_2}}\cdots (f_{j_k}\circ\mu)^{m_{j_k}}
h_{J}
\end{equation}
hold on $\mu^{-1}(U)\cap \widetilde{X}$, where $J=(j_1,j_2,\ldots,j_k)$ is a multi-index and $h_{J}$ is an $L^2$ integrable holomorphic function (depending on $l$) on $\mu^{-1}(U)\cap \widetilde{X}$ for any $J$. As $h_{J}$ is $L^2$ integrable on $\mu^{-1}(U)\cap \widetilde{X}$, we know that $|h_J|$ is bounded near $Z_0\cap \widetilde{X}$. Then $\mu_{*
}(h_J)$ is a weakly holomorphic function defined on $U\cap X$. It follows from the decomposition \eqref{eq:decompostion upper} that we have
	\begin{equation}\nonumber
f_0^l=\sum_{1\le j_1,j_2,\ldots,j_k \le r}(f_{j_1})^{m_{j_1}}(f_{j_2})^{m_{j_2}}\cdots (f_{j_k})^{m_{j_k}}
\mu_{*}(h_J)
\end{equation}
	holds on $U\cap X$. Hence, for the germ level, we know
	$$\big(f_0^l\big)_{o}=\sum_{1\le j_1,j_2,\ldots,j_k \le r}\big(f_{j_1}^{m_{j_1}}\big)_{o}\big(f_{j_2}^{m_{j_2}}\big)_{o}\cdots \big(f_{j_m}^{m_{j_m}}\big)_{o}
\big(\mu_{*}(h_J)\big)_{o}$$
holds in $\mathfrak{M}_{X,o}$, where $\mathfrak{M}_{X,o}$ is the ring of germs of meromorphic functions near $o$.

 Lemma \ref{lem:lower bound of weak holomorphic} tells that $\nu(h_J)\ge -C$ for some constant $C>0$ (independent of $f_0$ and $l$) for any $J$. Note that $\nu(f_j^{m_j})\ge m_0$ for $1\le j\le r$ and $k\coloneqq \big\lfloor\frac{\nu(f_0^l)+l\delta_2}{m_0}\big\rfloor-n-1$. We have
	\begin{equation}\nonumber
		\begin{split}
			\nu(f_0^l)&\ge \min_{1\le j_1,j_2,\ldots,j_k \le r}\big\{\nu\big(f_{j_1}^{m_{j_1}}f_{j_2}^{m_{j_2}}\cdots f_{j_m}^{m_{j_m}}\mu_{*}(h_J)\big)\big\}\\
	&\ge m_0k-C\\
			&\ge m_0\Big(\frac{\nu(f_0^l)+l\delta_2}{m_0}-n-2\Big)-C\\
			&=\nu(f_0^l)+l\delta_2-m_0(n+2)-C,
		\end{split}
	\end{equation}
	which implies $l\le\frac{m_0(n+2)}{\delta_2}+C$. This is a contradiction since we can choose $l$ arbitrarily large. Then we must have $\sigma(\log|f_0|,\varphi)\le \nu(f_0)$.
\end{proof}

When we consider valuations on the ring of germs of weakly holomorphic functions, we have the following similar result as Lemma \ref{l:relativetype}.

\begin{Lemma}\label{l:relativetype weakly}
	For any valuation $\nu$ on $\mathcal{O}^w_{X,o}$ where $o\in X_{\sing}$ and holomorphic functions $f_0,f_1,\ldots,f_r$ belonging to $\mathcal{O}_{X,o}$ with $\nu(f_j)>0$ for $1\le j\le r$, we have
	$\sigma(\log|f_0|,\varphi)\le \nu(f_0)$, where $\varphi\coloneqq  \log\big(\sum_{1\le j\le r}|f_j|^{\frac{1}{\nu(f_j)}}\big)$.
\end{Lemma}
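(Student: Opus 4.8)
The plan is to derive Lemma \ref{l:relativetype weakly} from Lemma \ref{l:relativetype} by simply restricting the valuation, rather than repeating the whole argument. Since $\mathcal{O}_{X,o}\subset\mathcal{O}^w_{X,o}$, the restriction $\nu|_{\mathcal{O}_{X,o}}$ is a well-defined map $\mathcal{O}^*_{X,o}\to\mathbb{R}_{\ge0}$, and it inherits the three axioms $\nu(fg)=\nu(f)+\nu(g)$, $\nu(f+g)\ge\min\{\nu(f),\nu(g)\}$, $\nu(c)=0$ from $\nu$ verbatim. Moreover, since $r\ge1$ and $\nu(f_1)>0$ while $\nu(1)=0$, this restriction is nonconstant, hence is a valuation on $\mathcal{O}_{X,o}$ in the sense of the Introduction. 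Because $\varphi=\log\bigl(\sum_{1\le j\le r}|f_j|^{1/\nu(f_j)}\bigr)$ and the value $\nu(f_0)$ involve only the numbers $\nu(f_j)$, which coincide with $\nu|_{\mathcal{O}_{X,o}}(f_j)$, applying Lemma \ref{l:relativetype} to $\nu|_{\mathcal{O}_{X,o}}$ and the germs $f_0,f_1,\ldots,f_r\in\mathcal{O}_{X,o}$ immediately gives $\sigma(\log|f_0|,\varphi)\le\nu(f_0)$, which is the assertion.

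Alternatively, one can repeat the proof of Lemma \ref{l:relativetype} line by line, where it simplifies at exactly one point. Arguing by contradiction from $\sigma(\log|f_0|,\varphi)>\nu(f_0)$, one clears denominators and raises to the $l$-th power to obtain $\log|f_0^l|\le\frac{\nu(f_0^l)+l\delta_2}{m_0}\tilde\varphi+O(1)$ on $(U\cap X)\setminus X_{\sing}$ with $\nu(f_j^{m_j})\ge m_0$, resolves the singularities of $X$ by $\mu\colon\widetilde\Omega\to\Omega$, and applies Skoda's division theorem (Theorem \ref{thm:Skoda}) $k=\big\lfloor\frac{\nu(f_0^l)+l\delta_2}{m_0}\big\rfloor-n-1$ times on the weakly pseudoconvex K\"ahler manifold $\mu^{-1}(U)\cap\widetilde X$ to write $f_0^l=\sum_J(f_{j_1})^{m_{j_1}}\cdots(f_{j_k})^{m_{j_k}}\,\mu_*(h_J)$ with each $\mu_*(h_J)$ weakly holomorphic on $U\cap X$. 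The only difference is that here $\nu$ is already a valuation on $\mathcal{O}^w_{X,o}$, so $\nu(\mu_*(h_J))\ge0$ holds directly from the definition and one does not need Lemma \ref{lem:lower bound of weak holomorphic}; the constant $C$ that appeared there disappears, and one concludes $l\nu(f_0)=\nu(f_0^l)\ge m_0k\ge\nu(f_0^l)+l\delta_2-m_0(n+2)$, forcing $l\le m_0(n+2)/\delta_2$, which contradicts the arbitrariness of $l$.

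I expect no genuine obstacle here: the substantive analytic ingredients — Hironaka resolution, the iterated Skoda division with $L^2$ estimates, and the fact that the push-forwards of the resulting $L^2$ holomorphic coefficients are honest weakly holomorphic germs on $U\cap X$ — are exactly those already established in the proof of Lemma \ref{l:relativetype}. The only thing to verify is the purely formal point that $\nu|_{\mathcal{O}_{X,o}}$ stays nonconstant, which is forced by the hypothesis $\nu(f_j)>0$ for $1\le j\le r$ (with $r\ge1$; the case $r=0$ being trivial). I would therefore present the one-paragraph reduction above as the proof.
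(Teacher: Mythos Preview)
Your proposal is correct. Your second approach --- repeating the proof of Lemma~\ref{l:relativetype} with the single simplification that $\nu(\mu_*(h_J))\ge 0$ holds directly because $\mu_*(h_J)\in\mathcal{O}^w_{X,o}$, so Lemma~\ref{lem:lower bound of weak holomorphic} and the constant $C$ are unnecessary --- is exactly the proof the paper gives.

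Your first approach, by contrast, is a cleaner reduction that the paper does not take. Restricting $\nu$ from $\mathcal{O}^w_{X,o}$ to the subring $\mathcal{O}_{X,o}$ and invoking Lemma~\ref{l:relativetype} as a black box is perfectly valid: the restriction inherits the valuation axioms, and nonconstancy is forced by $\nu(f_1)>0$. This one-line argument avoids re-running the resolution and Skoda machinery entirely. The paper's approach has the minor advantage of exhibiting the simplification explicitly (showing that the argument is actually \emph{easier} in the weakly holomorphic setting, since the universal denominator is not needed), but your reduction is logically more economical and would be the preferable presentation.
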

\begin{proof}
The proof of Lemma \ref{l:relativetype weakly} is almost the same as the proof of Lemma \ref{l:relativetype}. We just give the sketch, and when there are some differences, we will discuss in detail.

	We  prove Lemma \ref{l:relativetype weakly}  by contradiction. Assume that the conclusion does not hold. Then there exists $\delta_1>0$ such that
	$\log|f_0|\le (\nu(f_0)+\delta_1)\varphi+O(1)$ near $o$. Then we can find some $\delta_2>0$ and a set of positive integers $\{m_0,\ldots,m_r\}$ such that
	\[\log|f_0|\le \frac{\nu(f_0)+\delta_2}{m_0}\tilde\varphi+O(1) \quad  \ \text{near} \ o,\]
	where
	$\tilde\varphi\coloneqq\log\big(\sum_{1\le j\le r}|f_j^{m_j}|\big)$ and $\nu(f_j^{m_j})\ge m_0$ for $1\le j\le r$. For any positive integer $l$, we have
	\[\log|f_0^l|\le \frac{\nu(f_0^l)+l\delta_2}{m_0}\tilde\varphi+O(1)\]
	holds on $\big(U\cap X\big)\backslash X_{\sing}$, where $U\coloneqq \{x\in \mathbb{C}^n: \log|z|<\delta\}$.
	Using the same argument and notations as the proof of Lemma \ref{l:relativetype}, when $l$ is big enough and denote  $k\coloneqq \big\lfloor\frac{\nu(f_0^l)+l\delta_2}{m_0}\big\rfloor-n-1$, we have following decomposition
\begin{equation}
    \label{eq:decompostion upper weakly}
f_0^l\circ\mu=\sum_{1\le j_1,j_2,\ldots,j_k \le r}(f_{j_1}\circ\mu)^{m_{j_1}}(f_{j_2}\circ\mu)^{m_{j_2}}\cdots (f_{j_k}\circ\mu)^{m_{j_k}}
h_{J}
\end{equation}
holds on $\mu^{-1}(U)\cap \widetilde{X}$, where $J=(j_1,j_2,\ldots,j_k)$ is a multi-index and $h_{J}$ is an $L^2$ integrable holomorphic functions (depending on $l$) on $\mu^{-1}(U)\cap \widetilde{X}$ for any $J$. As $h_{J}$ is $L^2$ integrable on $\mu^{-1}(U)\cap \widetilde{X}$, we know that $|h_J|$ is bounded near $Z_0\cap \widetilde{X}$. Then $\mu_{*
}(h_J)$ is a weakly holomorphic function defined on $U\cap X$. It follows from the decomposition \eqref{eq:decompostion upper weakly} that we have
\begin{equation}\nonumber
f_0^l=\sum_{1\le j_1,j_2,\ldots,j_k \le r}(f_{j_1})^{m_{j_1}}(f_{j_2})^{m_{j_2}}\cdots (f_{j_k})^{m_{j_k}}
\mu_{*}(h_J)
\end{equation}
	holds on $U\cap X$. Hence, for the germ level, we know
	$$\big(f_0^l\big)_{o}=\sum_{1\le j_1,j_2,\ldots,j_k \le r}\big(f_{j_1}^{m_{j_1}}\big)_{o}\big(f_{j_2}^{m_{j_2}}\big)_{o}\cdots \big(f_{j_k}^{m_{j_k}}\big)_{o}
\big(\mu_{*}(h_J)\big)_{o}$$
holds in $\mathcal{O}^{w}_{X,o}$.

  Note that $\nu(f_j^{m_j})\ge m_0$ for $1\le j\le r$ and $k\coloneqq \big\lfloor\frac{\nu(f_0^l)+l\delta_2}{m_0}\big\rfloor-n-1$. We have
	\begin{equation}\nonumber
		\begin{split}
			\nu(f_0^l)&\ge \min_{1\le j_1,j_2,\ldots,j_k \le r}\big\{\nu\big(f_{j_1}^{m_{j_1}}f_{j_2}^{m_{j_2}}\cdots f_{j_k}^{m_{j_k}}\mu_{*}(h_J)\big)\big\}\\
	&\ge m_0k\\
			&\ge m_0\Big(\frac{\nu(f_0^l)+l\delta_2}{m_0}-n-2\Big)\\
			&=\nu(f_0^l)+l\delta_2-m_0(n+2),
		\end{split}
	\end{equation}
	which implies $l\le\frac{m_0(n+2)}{\delta_2}$. This is a  contradiction since we can choose $l$ arbitrarily large. Then we must have $\sigma(\log|f_0|,\varphi)\le \nu(f_0)$.
\end{proof}

Now we turn to the valuations on $\mathbb{C}[z_1,\ldots,z_n]/I$ a quotient ring of the polynomial ring, where $I$ is a prime ideal in $\mathbb{C}[z_1,\ldots,z_n]$.

 Let $X\coloneqq V(I)$ be  the affine variety defined by $I$ and $o\in X$, where $o$ is the origin in $\mathbb{C}^n$. Denote the germ of the set $X$ at $x$ by $(X,x)$. 
 We firstly recall the following basic result
\begin{Remark}
\label{rem:subring complex case}
$\mathbb{C}[z_1,...,z_n]/I$ is a subring of $\mathcal{O}_{X,o}:=\mathcal{O}_{\mathbb{C}^n,o}/\big(I\cdot \mathcal{O}_{\mathbb{C}^n,o}\big)$.
\end{Remark}
\begin{proof}
Note that $\mathbb{C}[z_1,...,z_n]$ can be naturally embedded into $ \mathcal{O}_{\mathbb{C}^n,o}$. Then we have an induced morphism
\begin{equation}\nonumber
\begin{split}
\Phi\colon \mathbb{C}[z_1,...,z_n]/I &\longrightarrow \mathcal{O}_{\mathbb{C}^n,o}/\big(I\cdot \mathcal{O}_{\mathbb{C}^n,o}\big) \\
 f+I &\longmapsto f+\big(I\cdot \mathcal{O}_{\mathbb{C}^n,o}\big).
\end{split}
\end{equation}
Now we show that $\Phi$ is injective. Assume that $\Phi(f_1+I)=0+\big(I\cdot \mathcal{O}_{\mathbb{C}^n,o}\big)$, where $f_1\in \mathbb{C}[z_1,...,z_n]$. We would like to show that $f_1\in I$ and hence $f_1+I=0$ in $\mathbb{C}[z_1,...,z_n]/I$. Let $\{g_j\}_{j=1}^m$ be the generators of $I$ in $\mathbb{C}[z_1,...,z_n]$. Then we know that $\{g_j\}_{j=1}^m$ also generate $\big(I\cdot \mathcal{O}_{\mathbb{C}^n,o}\big)$ in $\mathcal{O}_{\mathbb{C}^n,o}$. By definition and $\Phi(f_1+I)=0+\big(I\cdot \mathcal{O}_{\mathbb{C}^n,o}\big)$, there exists $h_j\in \mathcal{O}_{\mathbb{C}^n,o}$, where $j=1,\cdots,m,$ such that
$$f_1=\sum_{j=1}^m h_kg_k$$
in $\mathcal{O}_{\mathbb{C}^n,o}$. Hence there exists an open neighborhood $U$ of $o$ such that $h_k$ is defined on $U$ for each $k=1,2,\ldots,m$. Recall that $X=V(I)$ and $\{g_j\}_{j=1}^m$ are generators of $I$. We know, for any $x\in X$, $g_k(x)=0$ and hence $f_1(x)=0$ for any $x\in X\cap U$. It follows from $I$ is a prime ideal and Hilbert's Nullstellensatz  theorem that $X$ is irreducible, i.e., $X$ cannot be written as the union of two analytic varieties $X_1,X_2\subset \mathbb{C}^n$ with $X_1,X_2\neq V$. Hence $X_{\text{reg}}$ is connected (see chapter $0$ of \cite{griffith}). As $f_1(x)=0$ for any $x\in X\cap U$ and $X_{\text{reg}}$ is connected, we know that $f_1|_{X}\equiv 0$.
Hence $f_1\in \mathbb{I}(X)$, where $\mathbb{I}(X)\coloneqq\{f\in \mathbb{C}[z_1,...,z_n]\colon f(x)=0 \text{ for all }x \in X\}$. It follows from Hilbert's Nullstellensatz  theorem, $X=V(I)$ and $I$ is a prime ideal that $\mathbb{I}(X)=\sqrt{I}=I$. Thus $f\in I$.
\end{proof}

 Let $\mathcal{O}^{w}_{X,x}$ be the ring of germs of weakly holomorphic functions defined  near $x$. When $X$ is  algebraic and $(X,o)$ is irreducible as a germ of analytic set, we show that the universal denominator for $\mathcal{O}^{w}_{X,x}$ can be chosen be a polynomial for any $x\in V$, where $V$ is a neighborhood of $o$.
\begin{Theorem}[=Theorem \ref{th:universal denominators alg appendix} in the appendix]
\label{th:universal denominators alg}
Assume that $X$ is algebraic and $(X,o)$ is irreducible as a germ of analytic set. Let $V$ be a neighborhood of $o$ in $X$. There exists a polynomial $\delta\in \mathbb{C}[z_1,\ldots,z_d]$, where $d=\dim X$, such that   $\delta_x\mathcal{O}^{w}_{X,x}\subset \mathcal{O}_{X,x}$ for any $x\in V$. Such $\delta$ is called a universal denominator for weakly holomorphic functions on $V$.  
\end{Theorem}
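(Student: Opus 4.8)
The plan is to reduce the statement to a purely algebraic fact about the normalization of the coordinate ring of $X$, and then to transport the resulting inclusion to the analytic local rings via the comparison between algebraic and analytic normalization. After a linear change of coordinates fixing $o$ (and, if $X$ happens to be reducible, after replacing $X$ by the unique irreducible algebraic component through $o$ -- legitimate, since $(X,o)$ being irreducible forces $o$ to lie on a single component, with which $X$ agrees near $o$), we may assume $X$ is an irreducible affine variety and that the projection to the first $d$ coordinates restricts to a finite surjective morphism $X\to\mathbb{C}^d$; thus, writing $R\coloneqq\mathbb{C}[z_1,\dots,z_n]/I$ for the coordinate ring, the inclusion $A\coloneqq\mathbb{C}[z_1,\dots,z_d]\hookrightarrow R$ is module-finite. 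Let $L\coloneqq\Frac(R)$ and let $\widetilde R$ be the integral closure of $R$ (equivalently of $A$) in $L$; since $R$ is a finitely generated $\mathbb{C}$-algebra and a domain, $\widetilde R$ is a finite $R$-module. I will carry out two steps: (i) produce a nonzero polynomial $\delta\in A=\mathbb{C}[z_1,\dots,z_d]$ with $\delta\widetilde R\subseteq R$; (ii) deduce that such a $\delta$ satisfies $\delta_x\mathcal{O}^{w}_{X,x}\subseteq\mathcal{O}_{X,x}$ for all $x$ in a neighbourhood of $o$.

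Step (i) is elementary. Consider the conductor $\mathfrak{c}\coloneqq\{r\in R:r\widetilde R\subseteq R\}$, an ideal of $R$. It is nonzero: writing finitely many $R$-module generators of $\widetilde R$ as fractions $p_j/q_j$ with $p_j,q_j\in R$ and $q_j\neq 0$, the element $\prod_j q_j$ lies in $\mathfrak{c}$. Choose $0\neq c\in\mathfrak{c}$ and let $T^m+a_{m-1}T^{m-1}+\dots+a_0$ be its minimal polynomial over $\Frac(A)$. Since $c$ is integral over $A$ and $A$ is integrally closed, all $a_i\in A$; moreover $a_0\neq 0$ because $R$ is a domain and the polynomial has minimal degree. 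From $a_0=-c\,(c^{m-1}+a_{m-1}c^{m-2}+\dots+a_1)$ we get $a_0\in cR\subseteq\mathfrak{c}$, so $\delta\coloneqq a_0$ is a nonzero element of $A$ with $\delta\widetilde R\subseteq R$.

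Step (ii) uses the classical identification of weakly holomorphic functions with the normalization. Let $n\colon\widetilde X\to X$ be the normalization morphism, so that $\widetilde X=\Spec\widetilde R$ on the algebraic side; it is classical (this is the content underlying Theorem \ref{th:universal denominators}) that $n_*\mathcal{O}_{\widetilde X^{\mathrm{an}}}=\mathcal{O}^{w}_X$, i.e. $\mathcal{O}^{w}_{X,x}$ is the integral closure of $\mathcal{O}_{X,x}$ in its total ring of fractions. The inclusion $\delta\widetilde R\subseteq R$ defines a morphism of coherent algebraic $\mathcal{O}_X$-modules, namely multiplication by $\delta$, $\widetilde R\to\mathcal{O}_X$; analytifying, and using that analytification commutes with the finite morphism $n$ and with normalization (so that $\widetilde X^{\mathrm{an}}\to X^{\mathrm{an}}$ is the analytic normalization of $X$), this becomes an inclusion of analytic sheaves $\delta\cdot n_*\mathcal{O}_{\widetilde X^{\mathrm{an}}}\subseteq\mathcal{O}_{X^{\mathrm{an}}}$, i.e. $\delta_x\mathcal{O}^{w}_{X,x}\subseteq\mathcal{O}_{X,x}$ at every point $x$, in particular for all $x\in V$. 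An alternative, more hands-on route to the same conclusion: for $x\in V$ the analytic local ring $\mathcal{O}_{X,x}$ is faithfully flat over the algebraic local ring $R_{\mathfrak{m}_x}$, its integral closure is $\widetilde R\otimes_R\mathcal{O}_{X,x}$ (using that the analytic local ring of a normal algebraic variety is normal -- the excellence/Japanese property behind \cite{Zariski48}), and tensoring $\delta\widetilde R\subseteq R$ with $\mathcal{O}_{X,x}$ gives $\delta_x\mathcal{O}^{w}_{X,x}\subseteq\mathcal{O}_{X,x}$.

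The main obstacle is precisely this passage in Step (ii) from the algebraic normalization $\widetilde R$ to the analytic sheaf $\mathcal{O}^{w}_X$: one has to know that forming weakly holomorphic germs is compatible with algebraic normalization, equivalently that $\widetilde R\otimes_R\mathcal{O}_{X,x}$ is already integrally closed, and this rests on the normality of analytic local rings of normal algebraic varieties together with the commutation of analytification with finite pushforward. A minor additional point is to choose $V$ small enough that every $x\in V$ still lies on the single irreducible component (so that the global sheaf inclusion restricts as claimed). By contrast the algebraic construction in Step (i) is routine.
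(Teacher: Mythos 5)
Your argument is correct, but it reaches the conclusion by a genuinely different route than the paper's. Your Step (i) works with the conductor ideal $\mathfrak{c}=\{r\in R:r\widetilde R\subseteq R\}$ of the algebraic normalization and extracts a nonzero polynomial $\delta\in\mathbb{C}[z_1,\dots,z_d]$ directly from the constant term of the minimal polynomial of a conductor element; the paper instead fixes a primitive element $\tilde u$ for $\Frac(R)/\Frac(\mathbb{C}[z_1,\dots,z_d])$, takes $\delta$ to be the discriminant $\delta_{\mathfrak{R}}$ of its minimal polynomial over $\mathbb{C}[z_1,\dots,z_d]$, and never mentions the conductor. Your Step (ii) passes from $\delta\widetilde R\subseteq R$ to the analytic inclusion $\delta_x\mathcal{O}^w_{X,x}\subseteq\mathcal{O}_{X,x}$ by invoking that analytification commutes with normalization and finite pushforward (equivalently, that $\widetilde R\otimes_R\mathcal{O}_{X,x}$ is the integral closure of $\mathcal{O}_{X,x}$, using flatness plus the normality of analytic local rings of normal algebraic varieties). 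The paper avoids this comparison machinery entirely: it proves, by an explicit resultant/discriminant computation (Lemma \ref{lem:relation between discriminants}, Remark \ref{rem:key remark}), that the analytic discriminant $\delta_{\mathfrak{M}}$ of the minimal polynomial over $\mathcal{O}_d$ divides the polynomial $\delta_{\mathfrak{R}}$ in $\mathcal{O}_d$, and then simply quotes the classical analytic universal-denominator theorem for $\delta_{\mathfrak{M}}$. So the paper's argument is more self-contained and elementary, resting only on field theory and the known local analytic statement; your argument is conceptually cleaner and independent of the primitive-element/discriminant apparatus, but at the cost of relying on excellence-type comparison results between algebraic and analytic normalizations, which you correctly flag as the real load-bearing point and do not prove in full detail. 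Both yield a valid polynomial universal denominator; in fact the discriminant $\delta_{\mathfrak{R}}$ the paper produces lies in the conductor you consider, so the two constructions land in the same ideal.
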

\begin{proof}
   See the proof of  Theorem \ref{th:universal denominators alg appendix} in the appendix.
\end{proof}

\begin{Lemma}
\label{l:relativetype2 alg}  Let $\nu$ be a valuation on $\mathbb{C}[z_1,\ldots,z_n]/I$ such that $\nu(z_i)>0$ for any $i=1,\ldots,n$.  For any elements $f_0,f_1,\ldots,f_r\in\mathbb{C}[z_1,\ldots,z_n]/I$  satisfying $f_j(o)=0$ for $0\le j\le r$, we have
	$\sigma(\log|f_0|,\varphi)\le \nu(f_0)$, where $\varphi\coloneqq \log\big(\sum_{1\le j\le r}|f_j|^{\frac{1}{\nu(f_j)}}\big)$ is plurisubharmonic function defined on $X$. 
\end{Lemma}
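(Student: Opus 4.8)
The plan is to argue by contradiction, reproducing the argument of Lemma~\ref{l:relativetype} (equivalently Lemma~\ref{l:relativetype weakly}) up to the point where one has an analytic division coming from Skoda's theorem, and then to insert one new \emph{algebraization step} that transfers this division into the algebraic local ring, so that the valuation $\nu$ (which a priori lives only on $\mathbb{C}[z_1,\ldots,z_n]/I$) can be applied to it. Write $R\coloneqq\mathbb{C}[z_1,\ldots,z_n]/I$, $\mathfrak{m}\coloneqq(z_1,\ldots,z_n)R$, and let $R_{\mathfrak{m}}$ be the localization, so that $R\subset R_{\mathfrak{m}}\subset\mathcal{O}_{X,o}$. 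Two elementary consequences of the hypothesis $\nu(z_l)>0$ will be used: first, $f_j(o)=0$ gives $f_j\in\mathfrak{m}$, hence $\nu(f_j)\ge\min_l\nu(z_l)>0$ for $1\le j\le r$; second, any $g\in R$ with $g(o)\ne0$ satisfies $\nu(g)=0$ (write $g=g(o)+(g-g(o))$ and use $\nu(g(o))=0<\min_l\nu(z_l)\le\nu(g-g(o))$), so $\nu$ extends to a valuation on $R_{\mathfrak{m}}$ that is nonnegative there.

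Suppose, for contradiction, that $\sigma(\log|f_0|,\varphi)>\nu(f_0)$. Then, exactly as in the proof of Lemma~\ref{l:relativetype}, there are a number $\delta_2>0$ and positive integers $m_0,m_1,\ldots,m_r$ with $\nu(f_j^{m_j})\ge m_0$ for $1\le j\le r$, such that for every positive integer $l$,
\[\log|f_0^l|\le\frac{\nu(f_0^l)+l\delta_2}{m_0}\,\tilde\varphi+O(1)\quad\text{near }o\text{ on }X\setminus X_{\sing},\]
where $\tilde\varphi\coloneqq\log\big(\sum_{1\le j\le r}|f_j^{m_j}|\big)$. Resolving the singularities of $X$ by a proper modification $\mu\colon\widetilde\Omega\to\Omega$ (Theorem~\ref{thm:desing}) and applying Skoda's division theorem (Theorem~\ref{thm:Skoda}) $k$ times on the relatively compact weakly pseudoconvex K\"ahler manifold $\mu^{-1}(U)\cap\widetilde X$, one obtains, for $l$ large and with $k\coloneqq\big\lfloor\tfrac{\nu(f_0^l)+l\delta_2}{m_0}\big\rfloor-n-1$, an $L^2$ holomorphic decomposition which pushes down to
\[f_0^l=\sum_{J}f_{j_1}^{m_{j_1}}f_{j_2}^{m_{j_2}}\cdots f_{j_k}^{m_{j_k}}\,\mu_*(h_J)\quad\text{in }\mathcal{O}^{w}_{X,o},\]
where $J=(j_1,\ldots,j_k)$ runs over multi-indices in $\{1,\ldots,r\}^k$ and each $\mu_*(h_J)\in\mathcal{O}^{w}_{X,o}$; this part is verbatim as in Lemma~\ref{l:relativetype} and Lemma~\ref{l:relativetype weakly}.

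For the algebraization, let $\delta\in\mathbb{C}[z_1,\ldots,z_d]\subset R$ be a polynomial universal denominator for weakly holomorphic functions near $o$ (Theorem~\ref{th:universal denominators alg}); in particular $\delta\ne0$ in $R$ and $\delta\,\mu_*(h_J)\in\mathcal{O}_{X,o}$ for all $J$. Hence $\delta f_0^l=\sum_{J}f_{j_1}^{m_{j_1}}\cdots f_{j_k}^{m_{j_k}}\big(\delta\mu_*(h_J)\big)$ shows $\delta f_0^l\in\mathfrak{a}\,\mathcal{O}_{X,o}$, where $\mathfrak{a}$ is the ideal of $R_{\mathfrak{m}}$ generated by the products $f_{j_1}^{m_{j_1}}\cdots f_{j_k}^{m_{j_k}}$. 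Since $\mathcal{O}_{X,o}=\mathcal{O}_{\mathbb{C}^n,o}\otimes_{\mathbb{C}[z_1,\ldots,z_n]_{(z_1,\ldots,z_n)}}R_{\mathfrak{m}}$ is faithfully flat over $R_{\mathfrak{m}}$, and $\delta f_0^l\in R\subset R_{\mathfrak{m}}$, faithful flatness gives $\delta f_0^l\in\mathfrak{a}\mathcal{O}_{X,o}\cap R_{\mathfrak{m}}=\mathfrak{a}$, so there exist $g_J\in R_{\mathfrak{m}}$ with $\delta f_0^l=\sum_{J}f_{j_1}^{m_{j_1}}\cdots f_{j_k}^{m_{j_k}}\,g_J$ in $R_{\mathfrak{m}}$. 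Applying $\nu$, and using $\nu(g_J)\ge0$, $\nu(f_{j_s}^{m_{j_s}})\ge m_0$ and $\nu(f_0^l)=l\nu(f_0)$,
\[\nu(\delta)+l\nu(f_0)=\nu(\delta f_0^l)\ge\min_{J}\sum_{s=1}^{k}\nu(f_{j_s}^{m_{j_s}})\ge km_0\ge l\nu(f_0)+l\delta_2-m_0(n+2),\]
whence $l\le\big(\nu(\delta)+m_0(n+2)\big)/\delta_2$, contradicting that $l$ may be taken arbitrarily large. Therefore $\sigma(\log|f_0|,\varphi)\le\nu(f_0)$.

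The main obstacle is precisely this algebraization step. Everything preceding it is a transcription of the arguments already carried out in Lemma~\ref{l:relativetype} and Lemma~\ref{l:relativetype weakly}; the new difficulty is that the weakly holomorphic functions $\mu_*(h_J)$ produced by Skoda's theorem need not be algebraic, so one cannot directly feed the decomposition to $\nu$, and one cannot safely bypass this by extending $\nu$ to all of $\mathcal{O}_{X,o}$ (such an extension need not be $\mathbb{R}_{\ge0}$-valued). What rescues the argument is the combination of the \emph{polynomial} universal denominator from Theorem~\ref{th:universal denominators alg}, which forces $\delta f_0^l$ back into $R$, together with the faithful flatness of the analytic local ring over the algebraic local ring, which lets one deduce the algebraic membership $\delta f_0^l\in\mathfrak{a}R_{\mathfrak{m}}$ from the analytic one. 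The hypothesis $\nu(z_l)>0$ for all $l$ is essential and is used twice above; it cannot be dropped, consistently with its appearance in Corollary~\ref{C:interpolation-comp-poly}.
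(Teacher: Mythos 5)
Your proof is correct, and the first half (up to the Skoda decomposition in $\mathcal{O}^{w}_{X,o}$ and the application of the polynomial universal denominator $\delta$ from Theorem~\ref{th:universal denominators alg}) is identical to the paper's. Where you genuinely diverge is the algebraization step. The paper's argument truncates each holomorphic coefficient $g_J$ to a polynomial jet $g_{J,N}$ modulo $\widehat{m}^N$, sets $R_N\coloneqq\delta f_0^l-\sum_J f_{j_1}^{m_{j_1}}\cdots f_{j_k}^{m_{j_k}}g_{J,N}$, observes that $R_N$ is a polynomial lying in $\widehat{m}^N$ and hence in $\widetilde m^N$, and then uses $\nu(\widetilde m)>0$ to make $\nu(R_N)$ overwhelm the rest for $N$ large; everything stays inside $R=\mathbb{C}[z_1,\ldots,z_n]/I$ and $\nu$ is never extended. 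Your argument instead invokes faithful flatness of $\mathcal{O}_{X,o}$ over $R_{\mathfrak m}$ to contract $\delta f_0^l\in\mathfrak a\,\mathcal{O}_{X,o}$ down to $\mathfrak a\subset R_{\mathfrak m}$ in one stroke, then extends $\nu$ to a nonnegative valuation on $R_{\mathfrak m}$ (using $\nu(z_l)>0$ to get $\nu\equiv 0$ on $R\setminus\mathfrak m$). Both are sound; yours is conceptually cleaner and avoids the $N$-parameter bookkeeping, at the cost of an explicit appeal to faithful flatness and to extending the valuation to a localization (the latter being legitimate because $I$ is prime, hence $R$ is a domain and $R_{\mathfrak m}\subset\operatorname{Frac}(R)$). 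The paper's Taylor-truncation route is more elementary in that it never leaves $R$, though its assertion that a polynomial in $\widehat m^N$ lies in $\widetilde m^N$ is itself a baby version of the same flatness phenomenon. Your identification of the polynomial universal denominator and the flatness contraction as the two key new inputs, and your remark that one cannot simply extend $\nu$ to $\mathcal{O}_{X,o}$, are all accurate.
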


\begin{proof}

	We also prove Lemma \ref{l:relativetype2 alg}  by contradiction. Assume that the conclusion does not hold. Then there exists $\delta_1>0$ such that
	$\log|f_0|\le (\nu(f_0)+\delta_1)\varphi+O(1)$ near $o$. Then we can find some $\delta_2>0$ and a set of positive integers $\{m_0,\ldots,m_r\}$ such that
	\[\log|f_0|\le \frac{\nu(f_0)+\delta_2}{m_0}\tilde\varphi+O(1) \quad  \ \text{near} \ o,\]
	where
	$\tilde\varphi\coloneqq\log\big(\sum_{1\le j\le r}|f_j^{m_j}|\big)$ and $\nu(f_j^{m_j})\ge m_0$ for $1\le j\le r$. For any positive integer $l$, we have
	\[\log|f_0^l|\le \frac{\nu(f_0^l)+l\delta_2}{m_0}\tilde\varphi+O(1)\]
	holds on $\big(U\cap X\big)\backslash X_{\sing}$, where $U\coloneqq \{x\in \mathbb{C}^n: \log|z|<\delta\}$.
	Using the same argument and notations as the proof of Lemma \ref{l:relativetype}, when $l$ is big enough and denote  $k\coloneqq \big\lfloor\frac{\nu(f_0^l)+l\delta_2}{m_0}\big\rfloor-n-1$, we have following decomposition
\begin{equation}
    \label{eq:decompostion upper alg}
f_0^l\circ\mu=\sum_{1\le j_1,j_2,\ldots,j_k \le r}(f_{j_1}\circ\mu)^{m_{j_1}}(f_{j_2}\circ\mu)^{m_{j_2}}\cdots (f_{j_k}\circ\mu)^{m_{j_k}}
h_{J}
\end{equation}
hold on some neighborhood $U$ of $p\in \mu^{-1}(o)\Subset \widetilde{X}$, where $J=(j_1,j_2,\ldots,j_k)$ is a multi-index and $h_{J}$ are $L^2$ integrable holomorphic functions (depending on $l$) on $U$ for any $J$.
 As $h_{J}$ are $L^2$ integrable on $\mu^{-1}(U)\cap \widetilde{X}$, we know that $|h_J|$ is bounded near $Z_0\cap \widetilde{X}$. Then $\mu_{*
}(h_J)$ is a weakly holomorphic function defined on $U\cap X$. It follows from decomposition \eqref{eq:decompostion upper alg} that we have
	\begin{equation}
    \label{eq:decompostion down alg}
f_0^l=\sum_{1\le j_1,j_2,\ldots,j_k \le r}(f_{j_1})^{m_{j_1}}(f_{j_2})^{m_{j_2}}\cdots (f_{j_k})^{m_{j_k}}
\mu_{*}(h_J)
\end{equation}
	holds on $U\cap X$, where $\mu_{*
}(h_J)$ is a weakly holomorphic function defined on $U\cap X$. 

It follows from Theorem \ref{th:universal denominators} and Theorem \ref{th:universal denominators alg} that, for any $J$, there exists a holomorphic function $g_J$ on $U\cap X$ such that $\mu_{*}(h_J)=\frac{g_J}{\delta}$, where $\delta$ is a polynomial.
Thus, we have
	\begin{equation}
    \label{eq:decompostion down alg 2}
\delta f_0^l=\sum_{1\le j_1,j_2,\ldots,j_k \le r}(f_{j_1})^{m_{j_1}}(f_{j_2})^{m_{j_2}}\cdots (f_{j_k})^{m_{j_k}}g_J
\end{equation}
holds on $U\cap X$.

Let $\widetilde{m}=(z_1,\ldots,z_n)$ be the ideal generated by $z_1,\ldots,z_n$ in $\mathbb{C}[z_1,\ldots,z_n]/I$ and $\widehat{m}=(z_1,\ldots,z_n)$ be the ideal generated by $z_1,\ldots,z_n$ in $\mathcal{O}_{X,o}=\mathcal{O}_{\mathbb{C}^n,o}/I$. For any integer $N>0$, let $g_{J,N}$ be the finite terms of Taylor expansion  of $g_J$ such that   $[g_{J,N}]=[g_J]$ in $\mathcal{O}_{X,o}/\widehat{m}^N$. Note that $g_{J,N}$ can be viewed as an element in $\mathbb{C}[z_1,\ldots,z_n]/I$. Denote
$$R_N\coloneqq \delta f_0^l-\sum_{1\le j_1,j_2,\ldots,j_k \le r}(f_{j_1})^{m_{j_1}}(f_{j_2})^{m_{j_2}}\cdots (f_{j_k})^{m_{j_k}}g_{J,N}.$$
Then $R_N$ is a polynomial in $\mathbb{C}[z_1,\ldots,z_n]/I$.
It follows from equality \eqref{eq:decompostion down alg 2} and $[g_{J,N}]=[g_J]$ in $\mathcal{O}_{X,x}/\widehat{m}^N$  that  $R_N\in \widehat{m}^{N}$. As $R_N$ is a polynomial, we know $R_N$ actually belong to $\widetilde{m}^N$.  And we also know that 
$$\delta f_0^l=\sum_{1\le j_1,j_2,\ldots,j_k \le r}(f_{j_1})^{m_{j_1}}(f_{j_2})^{m_{j_2}}\cdots (f_{j_k})^{m_{j_k}}g_{J,N}+R_N$$
holds in $\mathbb{C}[z_1,\ldots,z_n]/I$.
Note that $\nu(f_j^{m_j})\ge m_0$ for $1\le j\le r$, $v(\widetilde{m})>0$ and $k\coloneqq \big\lfloor\frac{\nu(f_0^l)+l\delta_2}{m_0}\big\rfloor-n-1$. When $N$ is big enough, we have
	\begin{equation}\nonumber
		\begin{split}
		\nu(\delta)+\nu(f_0^l)
		&\ge \big\{\min_{1\le j_1,j_2,\ldots,j_k \le r}\nu\big(f_{j_1}^{m_{j_1}}f_{j_2}^{m_{j_2}}\cdots f_{j_k}^{m_{j_k}}g_{J,N}\big), R_N\big\}\\
			&\ge \big\{\min_{1\le j_1,j_2,\ldots,j_k \le r}\nu\big(f_{j_1}^{m_{j_1}}f_{j_2}^{m_{j_2}}\cdots f_{j_k}^{m_{j_k}}\big), R_N\big\}\\
		&\ge \min_{1\le j_1,j_2,\ldots,j_k \le r}\nu\big(f_{j_1}^{m_{j_1}}f_{j_2}^{m_{j_2}}\cdots f_{j_k}^{m_{j_k}}\big)\\
	&\ge m_0k\\
			&\ge m_0\Big(\frac{\nu(f_0^l)+l\delta_2}{m_0}-n-2\Big)\\
			&=\nu(f_0^l)+l\delta_2-m_0(n+2),
		\end{split}
	\end{equation}
	which implies $l\le\frac{m_0(n+2)+\nu(\delta)}{\delta_2}$. This is a contradiction since we can choose $l$ arbitrarily large. Then we must have $\sigma(\log|f_0|,\varphi)\le \nu(f_0)$.
\end{proof}

To prove Corollary \ref{C:interpolation-comp-poly2}, we also recall the following Hilbert's Nullstellensatz theorem for finitely generated $\mathbb{C}$-algebras. 
For example, the following Lemma can be referred to Theorem $5.5$ in \cite{Matsumura}.
\begin{Lemma}[see \cite{Matsumura}]
\label{lem: Hilbert's Nullstellensatz theorem alg}
Let $A$ be a finitely generated $\mathbb{C}$-algebra and $J\subset A$ be a proper ideal. Then one has $\sqrt{J}=\bigcap_{J\subset m}m$, where the intersection is over all maximal ideals $m$ containing $J$.
\end{Lemma}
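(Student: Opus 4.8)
The plan is to deduce this from Zariski's lemma: if a field $K$ is finitely generated as an algebra over a field $k$, then $K$ is a finite extension of $k$; in particular, when $k=\mathbb{C}$ one gets $K=\mathbb{C}$. First I would reduce to the case $J=0$. Replacing $A$ by $A/J$, which is again a finitely generated $\mathbb{C}$-algebra, and using the bijective correspondence between maximal ideals of $A$ containing $J$ and maximal ideals of $A/J$, together with $\sqrt{J}/J=\sqrt{0}$ in $A/J$, the statement becomes the assertion that the nilradical $\sqrt{0}$ of a finitely generated $\mathbb{C}$-algebra equals the intersection of all its maximal ideals.

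For the inclusion $\sqrt{0}\subset\bigcap_{\mathfrak{m}}\mathfrak{m}$ there is nothing to do: the nilradical is the intersection of all prime ideals, and maximal ideals are prime. For the reverse inclusion I would argue contrapositively, showing that any non-nilpotent $f\in A$ lies outside some maximal ideal. Since $f$ is not nilpotent, the localization $A_f=A[1/f]$ is a nonzero finitely generated $\mathbb{C}$-algebra, so it admits a maximal ideal $\mathfrak{n}$. By Zariski's lemma the residue field $A_f/\mathfrak{n}$ is a finite extension of $\mathbb{C}$, hence equals $\mathbb{C}$. The composite $\mathbb{C}$-algebra homomorphism $A\to A_f\twoheadrightarrow A_f/\mathfrak{n}=\mathbb{C}$ has image a subring of $\mathbb{C}$ containing $\mathbb{C}$, hence all of $\mathbb{C}$; thus it is surjective and its kernel $\mathfrak{m}$ is a maximal ideal of $A$. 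Finally $f\notin\mathfrak{m}$, because $f$ maps to a unit in $A_f$ and a unit has nonzero image under any homomorphism into $\mathbb{C}$. This exhibits a maximal ideal avoiding $f$, which is exactly what is needed.

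The only genuinely nontrivial ingredient is Zariski's lemma itself; everything else is formal manipulation with localizations and the order-preserving correspondence of ideals. Since the excerpt already cites \cite{Matsumura} for the statement, I would simply invoke Zariski's lemma as known (it follows, for instance, from Noether normalization: present $A_f/\mathfrak{n}$ as a finite module over a polynomial subring $\mathbb{C}[y_1,\dots,y_r]$, note that being a field forces $r=0$, and use that a domain module-finite over $\mathbb{C}$ is an algebraic, hence finite, extension of $\mathbb{C}$). The expected obstacle therefore lies entirely in Zariski's lemma, which I would not reprove here.
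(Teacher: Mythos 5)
The paper does not actually prove this lemma; it cites Matsumura (Theorem~5.5) and moves on. Your argument is correct and is the standard proof of the statement being cited: reduce to $J=0$, observe one inclusion is trivial since the nilradical is the intersection of all primes, and for a non-nilpotent $f$ pass to $A_f$ and use Zariski's lemma to produce a maximal ideal of $A$ avoiding $f$. This is exactly the route taken in the reference the paper points to, so there is nothing missing and no conflict with the paper's treatment.
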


Let $A=\mathbb{C}[z_1,\ldots,z_n]/I$, where $I$ is an ideal in $\mathbb{C}[z_1,\ldots,z_n]$. Denote $X=V(I)$ be the zero set of $I$. Let $J\subset A$ be a proper ideal. Define $V_X(J)\coloneqq \{x\in X \mid P(x)=0,\ \text{for any }P\in J\}$. For any subset $Y\subset X$, denote $I(Y)\coloneqq \{P\in A \mid P(x)=0\ \text{for any }x\in Y\}$.

Using Lemma \ref{lem: Hilbert's Nullstellensatz theorem alg}, we immediately have
\begin{Lemma}
\label{lem: Hilbert's Nullstellensatz theorem geo}
Let $A=\mathbb{C}[z_1,\ldots,z_n]/I$, where $I$ is an ideal in $\mathbb{C}[z_1,\ldots,z_n]$ and $J\subset A$ be a proper ideal. Then one has $I\big(V_X(J)\big)=\sqrt{J}.$
\end{Lemma}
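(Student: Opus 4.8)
The plan is to deduce this geometric statement directly from the algebraic Nullstellensatz (Lemma \ref{lem: Hilbert's Nullstellensatz theorem alg}) by means of the standard dictionary between maximal ideals of the finitely generated $\mathbb{C}$-algebra $A=\mathbb{C}[z_1,\ldots,z_n]/I$ and points of $X=V(I)$. First I would recall that, since $\mathbb{C}$ is algebraically closed, the weak Nullstellensatz identifies the maximal ideals of $A$ with the points of $X$: to $x=(a_1,\ldots,a_n)\in X$ one associates the maximal ideal $m_x\coloneqq (z_1-a_1,\ldots,z_n-a_n)/I\subset A$, every maximal ideal of $A$ arises in this way (as the image of a maximal ideal of $\mathbb{C}[z_1,\ldots,z_n]$ containing $I$), and for $P\in A$ one has $P\in m_x$ if and only if $P(x)=0$, because evaluation at $x$ is precisely the quotient map $A\to A/m_x\cong\mathbb{C}$.

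Next I would translate the containment $J\subset m_x$ into geometry: it holds exactly when every $P\in J$ vanishes at $x$, i.e.\ exactly when $x\in V_X(J)$. Hence the set of maximal ideals of $A$ containing $J$ is precisely $\{m_x : x\in V_X(J)\}$, and therefore
$$\bigcap_{J\subset m}m=\bigcap_{x\in V_X(J)}m_x=\{P\in A : P(x)=0\ \text{for all}\ x\in V_X(J)\}=I\big(V_X(J)\big).$$
Combining this identity with Lemma \ref{lem: Hilbert's Nullstellensatz theorem alg}, which states $\sqrt{J}=\bigcap_{J\subset m}m$, immediately yields $I\big(V_X(J)\big)=\sqrt{J}$.

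I do not expect a genuine obstacle here; the only point requiring care is the bijection between the maximal spectrum of $A$ and $X$, together with the compatibility of ``lying in $m_x$'' with ``vanishing at $x$'', which is exactly the content of the weak form of Hilbert's Nullstellensatz for the finitely generated $\mathbb{C}$-algebra $A$. The hypothesis that $J$ is a proper ideal guarantees, via Lemma \ref{lem: Hilbert's Nullstellensatz theorem alg}, that at least one maximal ideal contains $J$, so $V_X(J)\neq\emptyset$ and no degenerate case arises. The remainder is purely formal set-theoretic bookkeeping.
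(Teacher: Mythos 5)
Your proof is correct and follows essentially the same route as the paper's: both reduce the statement to Lemma~\ref{lem: Hilbert's Nullstellensatz theorem alg} and then identify $\bigcap_{J\subset m}m$ with $I\big(V_X(J)\big)$ using the weak Nullstellensatz correspondence between maximal ideals of $A$ and points of $X$ (the paper does this a bit more tersely, via the observation that $I\big(V_X(m)\big)=m$ for each maximal ideal $m$, and then proving the two containments separately rather than the single set equality you establish).
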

For the convenience of the readers, we recall the proof of Lemma \ref{lem: Hilbert's Nullstellensatz theorem geo}.
\begin{proof}[Proof of Lemma \ref{lem: Hilbert's Nullstellensatz theorem geo}]
It is obvious that $\sqrt{J}\subset I\big(V_X(J)\big)$. If $m$ is an maximal ideal in $A$, then $m=\sqrt{m}\subset I\big(V(m)\big)=m$, where the last $``="$ holds since $m$ is maximal.

Note that $J\subset \sqrt{J}=\bigcap_{J\subset m}m$. For any maximal ideal $m$ containing $J$, we have $V(J)\supset V(m)$. Hence $I\big(V_X(J)\big)\subset I\big(V(m)\big)=m$. By the arbitrariness of $m$ and Lemma \ref{lem: Hilbert's Nullstellensatz theorem alg}, we have $I\big(V_X(J)\big)=\bigcap_{J\subset m}m=\sqrt{J}$. This completes the proof.
\end{proof}

Denote the set of all germs of real analytic functions near the origin $o'\in\mathbb{R}^n$ by $C^{\text{an}}_{o'}$. There exists an injective ring homomorphism $P\colon C^{\text{an}}_{o'}\rightarrow\mathcal{O}_o$, which satisfies
$$P\Big(\sum_{\alpha\in\mathbb{Z}_{\ge0}^n} a_{\alpha}x^{\alpha}\Big)=\sum_{\alpha\in\mathbb{Z}_{\ge0}^n}a_{\alpha}z^{\alpha},$$
where $(x_1,\ldots,x_n)$ and $(z_1,\ldots,z_n)$ are the standard coordinates in $\mathbb{R}^n$ and $\mathbb{C}^n$ respectively, and $\sum_{\alpha\in\mathbb{Z}_{\ge0}^n}a_{\alpha}x^{\alpha}$ is the power series expansion of arbitrary real analytic function near $o'$. 
Assume that $I$ is an ideal in $C^{\text{an}}_{o'}$. Denote $\big(P(I)\big)$ be the ideal generated by $P(I)$ in $\mathcal{O}_o$.
Note that $P$ is obvious injective. Hence we have
\begin{Remark}
\label{rem:prime ideal from complex to real}
If $\big(P(I)\big)$ is prime ideal in $\mathcal{O}_o$, then $I$ is prime in $C^{\text{an}}_{o'}$.
\end{Remark}
\begin{proof}
Let $ab\in I$. Then $P(ab)=P(a)P(b)\in \big(P(I)\big)$. As $\big(P(I)\big)$ is prime ideal, we may assume that $P(a)\in \big(P(I)\big)$. As $P$ is injective, we know that $a\in I$.
\end{proof}

Denote the elemens in $C^{\text{an}}_{o'}/I$ and $\mathcal{O}_o/\big(P(I)\big)$ by $[f]$ and $\widetilde{g}$ for any $f\in C^{\text{an}}_{o'}$ and $g\in \mathcal{O}_o$ respectively. 
Then we have an induced ring homomorphism  $\widetilde{P}:C^{\text{an}}_{o'}/I\rightarrow \mathcal{O}_o/\big(P(I)\big)$ defined by $\widetilde{P}([f])=\widetilde{P(f)}$. It is easy to check that $\widetilde{P}$ is well defined.
Denote $X$ be the zero variety defined by  $\big(P(I)\big)$.

We have following properties of the homomorphism $\widetilde{P}$.
\begin{Remark}
\label{rem:injective}
$\widetilde{P}$ is injective.
\end{Remark}
\begin{proof}
    Assume $[F_{1}],[F_{2}]\in C^{\text{an}}_{o'}/I$ such that $\widetilde{P}([F_1])=\widetilde{P}([F_2])$ in $\mathcal{O}_o/\big(P(I)\big) $.
    Then there exist $g_k\in \mathcal{O}_o$ ($k=1,\ldots,m$) such that
    $P(F_1)-P(F_2)=\sum_{k=1}^m g_k P(f_k)$. For any $g\in \mathcal{O}_o$, $g$ can be written as a convergent power series $g=\sum_{\alpha}a_{\alpha}z_1^{\alpha_1}z_2^{\alpha_1}\cdots z_n^{\alpha_n}$, where $\alpha$ is a multi-index, $a_{\alpha}\in \mathbb{C}$ and $(z_1,\ldots,z_n)$ is the coordinate near $o$. We define $$g_{1,\text{Re}}\coloneqq \sum_{\alpha}\text{Re}(a_{\alpha})z_1^{\alpha_1}z_2^{\alpha_1}\cdots z_n^{\alpha_n}$$
    and 
    $$g_{1,\text{Im}}\coloneqq \sum_{\alpha}\text{Im}(a_{\alpha})z_1^{\alpha_1}z_2^{\alpha_1}\cdots z_n^{\alpha_n}.$$
    
    Hence, 
    We have
    $$P(F_1)-P(F_2)=\sum_{k=1}^m \big(P(g_{k,\text{Re}})+\i P(g_{k,\text{Im}})\big) P(f_k),$$
    which implies that 
     $$P(F_1)-P(F_2)=\sum_{k=1}^m P(g_{k,\text{Re}}) P(f_k).$$
    As $P$ is injective, one has $F_1-F_2\in I$ and hence $[F_1]=[F_2]$ in $C^{\text{an}}_{o'}/I$. This means $\widetilde{P}$ is injective.
\end{proof}

\begin{Remark}
\label{rem:decom real}
For any $\widetilde{g}\in \mathcal{O}_o/\big(P(I)\big)$, there exists a unique pair of $[g_{\text{Re}}],[g_{\text{Im}}]\in C^{\text{an}}_{o'}/I$  such that
 \[\widetilde{g}=\widetilde{P}([g_{\text{Re}}])+\i \widetilde{P}([g_{\text{Im}}]) \]
\end{Remark}
\begin{proof}
    Let $g_1,g_2$ belong to $\mathcal{O}_o$ such that $g_1-g_2\in \big(P(I)\big)$ and $\widetilde{g}=\widetilde{g_1}=\widetilde{g_2}$ in $\mathcal{O}_o/\big(P(I)\big)$. Let $\{f_k\}_{i=1}^m$ be the generators of $I$ in $C^{\text{an}}_{o'}$. Then  $\big(P(I)\big)$ was generated by $P(f_k)$. We know that there exists $h_k\in \mathcal{O}_o$ ($k=1,\ldots,m$) such that $g_1-g_2=\sum_{k=1}^m h_kP(f_k)$. 
    Then we have
\begin{equation}
\label{eq:real 1}
    g_{1,\text{Re}}+\i g_{1,\text{Im}}-g_{2,\text{Re}}-\i g_{2,\text{Im}}=\sum_{k=1}^m (h_{k,\text{Re}}+\i h_{k,\text{Im}})P(f_k).
\end{equation}
Note that $P(f_k)$ has real coefficients as a power series of $z_1,\ldots,z_n$. Equality \eqref{eq:real 1} tells that
\begin{equation}
\label{eq:real 2}
\begin{split}
    g_{1,\text{Re}}-g_{2,\text{Re}}&=\sum_{k=1}^m h_{k,\text{Re}}P(f_k)\\
   g_{1,\text{Im}}-g_{2,\text{Im}}&=\sum_{k=1}^m h_{k,\text{Im}}P(f_k)
\end{split}
\end{equation}
Thus we have $g_{1,\text{Re}}-g_{2,\text{Re}}\in \big(P(I)\big)$ and $g_{1,\text{Im}}-g_{2,\text{Im}}\in \big(P(I)\big)$ hold. It follows from $P$ is injective and equalities \eqref{eq:real 2} that $[P^{-1}(g_{1,\text{Re}})]=[P^{-1}(g_{2,\text{Re}})]\in C^{\text{an}}_{o'}/I$ and $[P^{-1}(g_{1,\text{Im}})]=[P^{-1}(g_{2,\text{Im}})]\in C^{\text{an}}_{o'}/I$. Denote $[g_{\text{Re}}]=[P^{-1}(g_{1,\text{Re}})]$ and $[g_{\text{Im}}]=[P^{-1}(g_{1,\text{Im}})]$. Hence we  know there exist  pair $[g_{\text{Re}}],[g_{\text{Im}}]\in C^{\text{an}}_{o'}/I$ such that $\widetilde{g}=\widetilde{P}(g_{\text{Re}})+\i\widetilde{P}(g_{\text{Im}})$.

The uniqueness of the pair $[g_{\text{Re}}],[g_{\text{Im}}]$ follows from the fact $\widetilde{P}$ is injective, see Remark \ref{rem:injective}.
\end{proof}

Denote $X$ be the zero variety defined by  $\big(P(I)\big)$. Assume that $\big(P(I)\big)$ is a prime ideal in $\mathcal{O}_o$.
\begin{Lemma}
	\label{c:relativetype-real}
	Let $\nu$ be a valuation on $C^{\text{an}}_{o'}/I$.   For any  $[f_0], [f_1], \ldots, [f_r]$ in $C^{\text{an}}_{o'}/I$. satisfying $\nu([f_j])>0$ for $1\le j\le r$, we have
	$\sigma(\log|\widetilde{f_0}|,\varphi)\le \nu([f_0])$ holds near $o\in X$, where $\varphi\coloneqq \log\big(\sum_{1\le j\le r}|\widetilde{ f_j}|^{\frac{1}{\nu([f_j])}}\big)$ and $\widetilde {f_j}\coloneqq \widetilde{P}([f_j])$ for $0\le j\le r$.
\end{Lemma}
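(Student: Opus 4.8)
The plan is to argue by contradiction, following the proof of Lemma \ref{l:relativetype} step by step, and to add one new ingredient at the end: a symmetrization that pushes the decomposition produced by Skoda's theorem down from $\mathcal{O}^w_{X,o}$ to the real ring $C^{\text{an}}_{o'}/I$, where $\nu$ actually lives.

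Suppose $\sigma(\log|\widetilde{f_0}|,\varphi)>\nu([f_0])$ (we may assume $[f_0]\neq 0$, the other case being trivial). Exactly as in the proofs of Lemmas \ref{l:relativetype}, \ref{l:relativetype weakly} and \ref{l:relativetype2 alg}, one first passes to integer exponents: there are $\delta_2>0$ and positive integers $m_0,m_1,\dots,m_r$ with $m_j\nu([f_j])\ge m_0$ for $1\le j\le r$ such that, for every positive integer $l$,
\[
\log\big|\widetilde{f_0}^{\,l}\big|\le \frac{\nu([f_0^l])+l\delta_2}{m_0}\,\widetilde\varphi+O(1)\quad\text{on }(U\cap X)\setminus X_{\sing},
\]
where $\widetilde\varphi:=\log\big(\sum_{1\le j\le r}|\widetilde{f_j}^{\,m_j}|\big)$ and $U$ is a small ball around $o$. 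Then I would repeat \emph{verbatim} the resolution and iterated Skoda-division argument of Lemma \ref{l:relativetype}: desingularize $X$ by $\mu\colon\widetilde\Omega\to\Omega$, pull the inequality back to the weakly pseudoconvex K\"ahler manifold $\mu^{-1}(U)\cap\widetilde X$, apply Theorem \ref{thm:Skoda} a total of $k:=\big\lfloor\tfrac{\nu([f_0^l])+l\delta_2}{m_0}\big\rfloor-n-1$ times, and push forward to get
\[
\widetilde{f_0}^{\,l}=\sum_{J=(j_1,\dots,j_k)}(\widetilde{f_{j_1}})^{m_{j_1}}\cdots(\widetilde{f_{j_k}})^{m_{j_k}}\,\mu_{*}(h_J)\qquad\text{on }U\cap X,
\]
with each $\mu_{*}(h_J)\in\mathcal{O}^w_{X,o}$.

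The new point is to descend this identity to $C^{\text{an}}_{o'}/I$. Let $*$ be the conjugation on germs induced by $z\mapsto\bar z$ (so $g^{*}(z)=\overline{g(\bar z)}$ on $\mathcal{O}_{X,o}$, extended to $\mathfrak{M}_{X,o}$ and hence to $\mathcal{O}^w_{X,o}\subset\mathfrak{M}_{X,o}$); since $(P(I))$ is generated by power series with real coefficients, $*$ is a well-defined ring automorphism fixing $\widetilde P(C^{\text{an}}_{o'}/I)$ pointwise and sending $\i\mapsto-\i$, so $(\widetilde{f_j})^{*}=\widetilde{f_j}$ for every $j$. Applying $*$ to the decomposition and averaging replaces $\mu_{*}(h_J)$ by the $*$-fixed element $w_J:=\tfrac12\big(\mu_{*}(h_J)+\mu_{*}(h_J)^{*}\big)\in\mathcal{O}^w_{X,o}$. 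By Theorem \ref{th:universal denominators} there is a universal denominator $\delta$ for $\mathcal{O}^w_{X,o}$, and replacing $\delta$ by $\delta\delta^{*}$ we may take it $*$-fixed; then $\delta\in\mathcal{O}_{X,o}$ and $\delta w_J\in\mathcal{O}_{X,o}$ are $*$-fixed, so by Remarks \ref{rem:injective} and \ref{rem:decom real} we may write $\delta=\widetilde P([\delta'])$ and $\delta w_J=\widetilde P([g_J])$ for unique $[\delta'],[g_J]\in C^{\text{an}}_{o'}/I$. Multiplying the decomposition by $\delta$ and using that $\widetilde P$ is an injective ring homomorphism yields the identity
\[
[\delta']\,[f_0]^{l}=\sum_{J}[f_{j_1}]^{m_{j_1}}\cdots[f_{j_k}]^{m_{j_k}}\,[g_J]\qquad\text{in }C^{\text{an}}_{o'}/I.
\]
Now apply $\nu$. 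Using $m_{j_i}\nu([f_{j_i}])\ge m_0$ and $\nu([g_J])\ge 0$ (as $[g_J]\in C^{\text{an}}_{o'}/I$), we obtain
\[
\nu([\delta'])+l\,\nu([f_0])\ \ge\ \min_{J}\Big(\textstyle\sum_{i=1}^{k}m_{j_i}\nu([f_{j_i}])+\nu([g_J])\Big)\ \ge\ m_0k\ \ge\ l\,\nu([f_0])+l\delta_2-m_0(n+2),
\]
so $l\le\big(\nu([\delta'])+m_0(n+2)\big)/\delta_2$, a quantity independent of $l$; this contradicts the freedom to take $l$ arbitrarily large, and hence $\sigma(\log|\widetilde{f_0}|,\varphi)\le\nu([f_0])$.

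The main obstacle is exactly this descent step: Skoda's division naturally produces coefficients in $\mathcal{O}^w_{X,o}$ (not even in $\mathcal{O}_{X,o}$), while $\nu$ is only defined on $C^{\text{an}}_{o'}/I$; what bridges the gap is the conjugation $*$ together with the real-coefficient structure of $(P(I))$ and a $*$-fixed universal denominator. One must be a little careful to verify that $*$ is a well-defined ring automorphism of $\mathcal{O}^w_{X,o}$ that fixes $\operatorname{im}\widetilde P$ and negates $\i$, and that a universal denominator can be chosen $*$-fixed. (An alternative route, which avoids the symmetrization, is to extend $\nu$ to a valuation $\widetilde\nu$ on $\Frac(\mathcal{O}_{X,o})$ — a degree-$\le 2$ extension of $\Frac(C^{\text{an}}_{o'}/I)$ via $\widetilde P$ — observe that $\widetilde\nu\ge 0$ on $\mathcal{O}^w_{X,o}$ because the latter is integral over $C^{\text{an}}_{o'}/I$, and conclude by applying Lemma \ref{l:relativetype weakly} to $\widetilde\nu$ and the functions $\widetilde{f_j}$; I would mention this but prefer the self-contained argument above.)
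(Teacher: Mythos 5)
Your proof is correct and follows essentially the same route as the paper's: both replay the resolution-plus-iterated-Skoda argument of Lemma \ref{l:relativetype}, then descend the resulting decomposition to $C^{\text{an}}_{o'}/I$ using the real-coefficient structure of $\big(P(I)\big)$. The paper does the descent by multiplying through by the universal denominator $\delta$ and then extracting the ``real part'' of each coefficient via Remark \ref{rem:decom real}, which is exactly the content of your symmetrization $w_J=\tfrac12\big(\mu_*(h_J)+\mu_*(h_J)^*\big)$ and the choice of a $*$-fixed denominator $\delta\delta^*$; the two bookkeepings are interchangeable.
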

\begin{proof}
	We prove Lemma \ref{c:relativetype-real} by contradiction. Suppose to the contrary that the conclusion does not hold. There exists $\delta_1>0$ such that
	$\log|\widetilde{f}_0|\le (\nu([f_0])+\delta_1)\varphi+O(1)$ near $o$. Then we can find some $\delta_2>0$ and a set of positive integers $\{m_0,\ldots,m_r\}$ such that
	\[\log|\widetilde{f}_0|\le \frac{\nu([f_0])+\delta_2}{m_0}\widetilde\varphi+O(1) \quad  \ \text{near} \ o,\]
	where
	$\widetilde{\varphi}\coloneqq\log\big(\sum_{1\le j\le r}|\widetilde{f}_j^{m_j}|\big)$ and $\nu([f_j]^{m_j})\ge m_0$ for $1\le j\le r$. For any positive integer $l$, we have
	\[\log|\widetilde{f}_0^l|\le \frac{\nu(\widetilde{f}_0^l)+l\delta_2}{m_0}\widetilde\varphi+O(1)\]
	holds on $\big(U\cap X\big)\backslash X_{\sing}$, where $U\coloneqq \{x\in \mathbb{C}^n: \log|z|<\delta\}$.
	
	Following the argument and notation of Lemma \ref{l:relativetype}, we see that for sufficiently large $l$, by defining $k\coloneqq \big\lfloor\frac{\nu(f_0^l)+l\delta_2}{m_0}\big\rfloor-n-1$, we obtain that the following decomposition
		\begin{equation}
    \label{eq:decompostion down real case}
\widetilde{f_0}^l=\sum_{1\le j_1,j_2,\ldots,j_k \le r}(\widetilde{f}_{j_1})^{m_{j_1}}(\widetilde{f}_{j_2})^{m_{j_2}}\cdots (\widetilde{f}_{j_k})^{m_{j_k}}
\frac{g_J}{\delta}
\end{equation}
	holds on $U$, where $U$ is an open neighborhood of $o$ in $X$, and $g_J$ (depending on $l$) and $\delta$ belong to $\mathcal{O}_{X,o}=\mathcal{O}_{o}/\big(P(I)\big)$. Thus we have
		\begin{equation}
    \label{eq:decompostion down real case 2}
\delta \widetilde{f}_0^l=\sum_{1\le j_1,j_2,\ldots,j_k \le r}(\widetilde{f}_{j_1})^{m_{j_1}}(\widetilde{f}_{j_2})^{m_{j_2}}\cdots (\widetilde{f}_{j_k})^{m_{j_k}}
g_J. 
\end{equation}	
Combining with Remark \ref{rem:decom real}, we have
	\begin{equation}\nonumber
    \begin{split}
  &\big(\widetilde{P}([\delta_{\text{Re}}])+ \i\widetilde{P}([\delta_{\text{Im}}])\big)\widetilde{P}([f_0]^l)\\
  =&\sum_{1\le j_1,j_2,\ldots,j_k \le r}\widetilde{P}([f_{j_1}]^{m_{j_1}})\widetilde{P}([f_{j_2}]^{m_{j_2}})\cdots \widetilde{P}([f_{j_k}]^{m_{j_k}})     
\big(\widetilde{P}([g_{J,\text{Re}}])+ \i\widetilde{P}([g_{J,\text{Im}}])\big).
    \end{split}
\end{equation}		
	Hence we have
		\begin{equation}\nonumber
[\delta_{\text{Re}}][f_0]^l=\sum_{1\le j_1,j_2,\ldots,j_k \le r}[f_{j_1}]^{m_{j_1}}[f_{j_2}]^{m_{j_2}}\cdots [f_{j_k}]^{m_{j_k}}
[g_{J,\text{Re}}]
\end{equation}
	holds in $C^{\text{an}}_{o'}/I$. As $\nu([f_j]^{m_j})\ge m_0$ for $1\le j\le r$, we get
	\begin{equation}\nonumber
		\begin{split}
			\nu([f_0]^l)&\ge m_0\Big(\Big\lfloor\frac{\nu(f_0^l)+l\delta_2}{m_0}\Big\rfloor-n-1\Big)-\nu([\delta_{\text{Re}}])
			\\&\ge m_0\Big(\frac{\nu(f_0^l)+l\delta_2}{m_0}-n-2\Big)-\nu([\delta_{\text{Re}}])\\
			&=\nu(f_0^l)+l\delta_2-m_0(n+2)-\nu([\delta_{\text{Re}}]),
		\end{split}
	\end{equation}
	which implies $l\le\frac{m_0(n+2)+\nu([\delta_{\text{Re}}])}{\delta_2}$, contradiction. Then we have $\sigma(\log|\tilde f_0|,\varphi)\le \nu([f_0])$.
\end{proof}

Let $I$ be an ideal in $\mathbb{R}[x_1,\ldots,x_n]$.
Denote  the restriction of $P$ on $\mathbb{R}[x_1,\ldots,x_n]$ also by $P$ and $\widetilde{P}$ on $\mathbb{R}[x_1,\ldots,x_n]/I$ also by $\widetilde{P}$. Let $\big(P(I)\big)$ be the ideal in $\mathbb{C}[x_1,\ldots,x_n]$ generated   by $P(I)$.

Note that $P$ is obvious injective.
Similarly as Remark \ref{rem:prime ideal from complex to real}, we have 
\begin{Remark}
\label{rem:prime ideal from complex to real poly}
If $\big(P(I)\big)$ is prime ideal in $\mathbb{C}[x_1,\ldots,x_n]$, then $I$ is prime in $\mathbb{R}[x_1,\ldots,x_n]$.
\end{Remark}
\begin{proof}
Since the proof of Remark \ref{rem:prime ideal from complex to real poly} is the same as the proof of Remark \ref{rem:prime ideal from complex to real}, we omit the proof.
\end{proof}

We also have following remark, which is an analogy of Remark \ref{rem:injective} and Remark \ref{rem:decom real} in polynomial case.
\begin{Remark}
\label{rem:injective and decom poly case}
The map $\widetilde{P}:\mathbb{R}[x_1,\ldots,x_n]/I \to \mathbb{C}[x_1,\ldots,x_n]/\big(P(I)\big)$ is injective. 
For any $\widetilde{g}\in \mathbb{C}[x_1,\ldots,x_n]/\big(P(I)\big)$, there exists a unique pair of $[g_{\text{Re}}],[g_{\text{Im}}]\in \mathbb{R}[x_1,\ldots,x_n]/I$  such that
 \[\widetilde{g}=\widetilde{P}([g_{\text{Re}}])+\i \widetilde{P}([g_{\text{Im}}]) \]
\end{Remark}
\begin{proof}
The proof of the injectiveness is almost the same as the proof of Remark \ref{rem:injective}. The proof of  existence and uniqueness of the decomposition is almost the same as the proof of Remark \ref{rem:decom real}. So we omit the proof of Remark \ref{rem:injective and decom poly case}.
\end{proof}

Assume that $\big(P(I)\big)$ is a prime ideal in 
$\mathbb{C}[x_1,\ldots,x_n]$ and $(X,o)$ is irreducible as a germ of analytic set, where $o\in X$ and $X$ is the affine variety defined by $\big(P(I)\big)$.

\begin{Lemma}
	\label{c:relativetype-real poly}
	Let $\nu$ be a valuation on $\mathbb{R}[x_1,\ldots,x_n]/I$  satisfying $\nu([x_j])>0$ for any $1\le j\le n$. For any  $[f_0], [f_1], \ldots, [f_r]$ in $\mathbb{R}[x_1,\ldots,x_n]_{o'}/I$. satisfying $\nu([f_j])>0$ for $1\le j\le r$, we have
	$\sigma(\log|\widetilde f_0|,\varphi)\le \nu([f_0])$ holds near $o\in X$, where $\varphi\coloneqq \log\big(\sum_{1\le j\le r}|\widetilde f_j|^{\frac{1}{\nu([f_j])}}\big)$ and $\widetilde f_j\coloneqq \widetilde{P}([f_j])$ for $0\le j\le r$.
\end{Lemma}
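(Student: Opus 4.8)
The plan is to argue by contradiction, transplanting the Skoda-division argument from the proof of Lemma \ref{l:relativetype2 alg} and the real-to-complex bookkeeping from the proof of Lemma \ref{c:relativetype-real}. Suppose $\sigma(\log|\widetilde f_0|,\varphi)>\nu([f_0])$. Then there is $\delta_1>0$ with $\log|\widetilde f_0|\le(\nu([f_0])+\delta_1)\varphi+O(1)$ near $o$, and, after clearing denominators in the exponents, one finds $\delta_2>0$ and positive integers $m_0,\dots,m_r$ with $\nu([f_j]^{m_j})\ge m_0$ for $1\le j\le r$ and
\[
\log|\widetilde f_0^l|\le\frac{\nu([f_0]^l)+l\delta_2}{m_0}\widetilde\varphi+O(1)
\]
on $(U\cap X)\setminus X_{\sing}$ for every positive integer $l$, where $\widetilde\varphi\coloneqq\log\big(\sum_{1\le j\le r}|\widetilde f_j^{m_j}|\big)$ and $U\coloneqq\{\log|z|<\delta\}$.

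Next I would resolve the singularities of $X$ by a proper modification $\mu\colon\widetilde\Omega\to\Omega$, push the estimate up to $\widetilde X$, and apply Skoda's division theorem (Theorem \ref{thm:Skoda}) exactly $k\coloneqq\big\lfloor\tfrac{\nu([f_0]^l)+l\delta_2}{m_0}\big\rfloor-n-1$ times, precisely as in the proof of Lemma \ref{l:relativetype2 alg}. This yields a decomposition
\[
\widetilde f_0^l\circ\mu=\sum_{1\le j_1,\dots,j_k\le r}(\widetilde f_{j_1}\circ\mu)^{m_{j_1}}\cdots(\widetilde f_{j_k}\circ\mu)^{m_{j_k}}h_J
\]
with each $h_J$ an $L^2$ holomorphic function, hence bounded near $Z_0\cap\widetilde X$; pushing forward and invoking the polynomial universal denominator $\delta\in\mathbb{C}[z_1,\dots,z_d]$ of Theorem \ref{th:universal denominators alg} turns this into an identity $\delta\,\widetilde f_0^l=\sum_J(\widetilde f_{j_1})^{m_{j_1}}\cdots(\widetilde f_{j_k})^{m_{j_k}}g_J$ of holomorphic germs on $X$ at $o$. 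Then, as in Lemma \ref{l:relativetype2 alg}, I would truncate the Taylor expansions of $\delta$ and of the $g_J$ (the $\widetilde f_j$ are already polynomials) to obtain an honest identity in $\mathbb{C}[x_1,\dots,x_n]/\big(P(I)\big)$, valid up to a remainder $R_N$ lying in a high power $\widehat m^N$ of the maximal ideal $\widehat m=([x_1],\dots,[x_n])$.

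The step specific to the real case is then to apply Remark \ref{rem:injective and decom poly case}: splitting $\delta$, the $g_J$, and $R_N$ into $\widetilde P$-images of real and imaginary parts and comparing $\widetilde P$-components (using that the $\widetilde f_j$ are images of real polynomials) gives
\[
[\delta_{\text{Re}}][f_0]^l=\sum_{1\le j_1,\dots,j_k\le r}[f_{j_1}]^{m_{j_1}}\cdots[f_{j_k}]^{m_{j_k}}[g_{J,\text{Re}}]+[R_{N,\text{Re}}]
\]
in $\mathbb{R}[x_1,\dots,x_n]/I$, with $[R_{N,\text{Re}}]$ lying in $\widetilde m^N$ where $\widetilde m=([x_1],\dots,[x_n])$. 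Because $\nu([x_l])>0$ for every $l$, taking $N$ large makes $\nu([R_{N,\text{Re}}])$ dominate, so the ultrametric inequality together with $\nu([f_j]^{m_j})\ge m_0$ yields
\[
\nu([\delta_{\text{Re}}])+\nu([f_0]^l)\ge m_0 k\ge\nu([f_0]^l)+l\delta_2-m_0(n+2),
\]
hence $l\le\frac{m_0(n+2)+\nu([\delta_{\text{Re}}])}{\delta_2}$, contradicting the freedom to choose $l$ arbitrarily large; so $\sigma(\log|\widetilde f_0|,\varphi)\le\nu([f_0])$. (If $[\delta_{\text{Re}}]=0$ one replaces it throughout by $[\delta_{\text{Im}}]$, which is then nonzero since $\widetilde P([\delta_{\text{Re}}])+\i\widetilde P([\delta_{\text{Im}}])=\delta\neq0$.) I expect the only real obstacle to be bookkeeping: checking that the Taylor-truncation reduction and the real/imaginary splitting of Remark \ref{rem:injective and decom poly case} are compatible — in particular that the remainder can simultaneously be pushed into $\widetilde m^N$ and have a controlled real part — and that $\nu([\delta_{\text{Re}}])$ is a finite constant independent of $l$; no new ideas beyond Lemmas \ref{l:relativetype2 alg} and \ref{c:relativetype-real} should be required.
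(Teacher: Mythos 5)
Your proposal is correct and follows essentially the same route as the paper's proof. The only cosmetic difference is the order of the two closing steps: you truncate the Taylor tails first (obtaining a polynomial identity in $\mathbb{C}[x_1,\ldots,x_n]/\big(P(I)\big)$ with remainder $R_N$) and then split into real and imaginary parts, whereas the paper splits first in $C^{\text{an}}_{o'}/I$ and truncates afterwards; these operations commute, since the monomials $x^{\alpha}$ are $\widetilde P$-images of real polynomials, so the real and imaginary components of anything in $\widehat m^N$ lie in $\widetilde m^N$, and both routes yield the same identity in $\mathbb{R}[x_1,\ldots,x_n]/I$ on which the ultrametric estimate is applied. Your parenthetical handling of the case $[\delta_{\text{Re}}]=0$ (switch to $[\delta_{\text{Im}}]$, which is then nonzero because $\delta\notin(P(I))$) addresses a point the paper leaves implicit and is the right fix.
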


\begin{proof}
	We prove this lemma by contradiction. Suppose to the contrary that the conclusion does not hold. There exists $\delta_1>0$ such that
	$\log|\widetilde{f}_0|\le (\nu([f_0])+\delta_1)\varphi+O(1)$ near $o$. Then we can find some $\delta_2>0$ and a set of positive integers $\{m_0,\ldots,m_r\}$ such that
	\[\log|\widetilde{f}_0|\le \frac{\nu([f_0])+\delta_2}{m_0}\widetilde\varphi+O(1) \quad  \ \text{near} \ o,\]
	where
	$\widetilde{\varphi}\coloneqq\log\big(\sum_{1\le j\le r}|\widetilde{f}_j^{m_j}|\big)$ and $\nu([f_j]^{m_j})\ge m_0$ for $1\le j\le r$. For any positive integer $l$, we have
	\[\log|\widetilde{f}_0^l|\le \frac{\nu(\widetilde{f}_0^l)+l\delta_2}{m_0}\widetilde\varphi+O(1)\]
	holds on $\big(U\cap X\big)\backslash X_{\sing}$, where $U\coloneqq \{x\in \mathbb{C}^n: \log|z|<\delta\}$.

	Following the argument and notation of Lemma \ref{l:relativetype}, we see that for sufficiently large $l$, by defining $k\coloneqq \big\lfloor\frac{\nu(f_0^l)+l\delta_2}{m_0}\big\rfloor-n-1$, we obtain that the following decomposition
\begin{equation}
    \label{eq:decompostion down real case  real poly}
\delta \widetilde{f}_0^l=\sum_{1\le j_1,j_2,\ldots,j_k \le r}(\widetilde{f}_{j_1})^{m_{j_1}}(\tilde{f}_{j_2})^{m_{j_2}}\cdots (\widetilde{f}_{j_k})^{m_{j_k}}
g_J. 
\end{equation}	
	holds on $U$, where $U$ is an open neighborhood of $o$ in $X$, $\delta$ is a polynomial with complex coefficients and $g_J$ (depending on $l$) belongs to $\mathcal{O}_{X,o}=\mathcal{O}_{o}/\big(P(I)\big)$.
	
Combining equality \eqref{eq:decompostion down real case  real poly} and Remark \ref{rem:injective and decom poly case}, we have
	\begin{equation}\nonumber
    \begin{split}
  &\big(\widetilde{P}([\delta_{\text{Re}}])+ \i\widetilde{P}([\delta_{\text{Im}}])\big)\widetilde{P}([f_0]^l)\\
  =&\sum_{1\le j_1,j_2,\ldots,j_k \le r}\widetilde{P}([f_{j_1}]^{m_{j_1}})\widetilde{P}([f_{j_2}]^{m_{j_2}})\cdots \widetilde{P}([f_{j_k}]^{m_{j_k}})\big(\widetilde{P}([g_{J,\text{Re}}])+
 \i\widetilde{P}([g_{J,\text{Im}}])\big)
    \end{split}
\end{equation}		
	Hence we have
		\begin{equation}
    \label{eq:decompostion down real case 4}
[\delta_{\text{Re}}][f_0]^l=\sum_{1\le j_1,j_2,\ldots,j_k \le r}[f_{j_1}]^{m_{j_1}}[f_{j_2}]^{m_{j_2}}\cdots [f_{j_k}]^{m_{j_k}}
[g_{J,\text{Re}}]
\end{equation}
holds in $C^{\text{an}}_{o'}/I$.

Let $m_P=(x_1,\ldots,x_n)$ be the ideal generated by $x_1,\ldots,x_n$ in $\mathbb{R}[x_1,\ldots,x_n]/I$ and $\widehat{m}=(x_1,\ldots,x_n)$ be the ideal generated by $x_1,\ldots,x_n$ in $C^{\text{an}}_{o'}/I$. For any integer $N>0$, let $g_{J,N}$ be the finite terms of the Taylor expansion of $g_{J,\text{Re}}$ such that   $[g_{J,N}]=[g_{J,\text{Re}}]$ in $C^{\text{an}}_{o'}/\widehat{m}^N$. Note that $g_{J,N}$ can be viewed as an element in $\mathbb{R}[x_1,\ldots,x_n]/I$. Denote
$$R_N\coloneqq [\delta_{\text{Re}}][f_0]^l-\sum_{1\le j_1,j_2,\ldots,j_k \le r}[f_{j_1}]^{m_{j_1}}[f_{j_2}]^{m_{j_2}}\cdots [f_{j_k}]^{m_{j_k}}
[g_{J,N}].$$
Then $R_N$ is a polynomial in $\mathbb{R}[x_1,\ldots,x_n]/I$.
It follows from equality \eqref{eq:decompostion down real case 4} and $[g_{J,N}]=[g_J]$ in $C^{\text{an}}_{o'}/\widehat{m}^N$  that  $R_N\in \widehat{m}^{N}$. As $R_N$ is a polynomial, we know $R_N$ actually belongs to $(m_P)^N$.  And we also know that 
$$[\delta_{\text{Re}}][f_0]^l=\sum_{1\le j_1,j_2,\ldots,j_k \le r}[f_{j_1}]^{m_{j_1}}[f_{j_2}]^{m_{j_2}}\cdots [f_{j_k}]^{m_{j_k}}
[g_{J,N}]+R_N$$
holds in $\mathbb{R}[x_1,\ldots,x_n]/I$.
Note that $\nu([f_j]^{m_j})\ge m_0$ for $1\le j\le r$, $v(m_P)>0$ and $k\coloneqq \big\lfloor\frac{\nu(f_0^l)+l\delta_2}{m_0}\big\rfloor-n-1$. When $N$ is large enough, we have
	\begin{equation}\nonumber
		\begin{split}
		\nu([\delta_{\text{Re}}])+\nu([f_0]^l)
		&\ge \big\{\min_{1\le j_1,j_2,\ldots,j_k \le r}\nu\big([f_{j_1}]^{m_{j_1}}[f_{j_2}]^{m_{j_2}}\cdots [f_{j_k}]^{m_{j_k}}[g_{J,N}]\big), R_N\big\}\\
			&\ge \big\{\min_{1\le j_1,j_2,\ldots,j_k \le r}\nu\big([f_{j_1}]^{m_{j_1}}[f_{j_2}]^{m_{j_2}}\cdots [f_{j_k}]^{m_{j_k}}\big), R_N\big\}\\
		&\ge \min_{1\le j_1,j_2,\ldots,j_k \le r}\nu\big([f_{j_1}]^{m_{j_1}}[f_{j_2}]^{m_{j_2}}\cdots [f_{j_k}]^{m_{j_k}}\big)\\
	&\ge m_0k\\
			&\ge m_0\Big(\frac{\nu(f_0^l)+l\delta_2}{m_0}-n-2\Big)\\
			&=\nu(f_0^l)+l\delta_2-m_0(n+2),
		\end{split}
	\end{equation}
	which implies $l\le\frac{m_0(n+2)+\nu([\delta_{\text{Re}}])}{\delta_2}$. This is a contradiction since we can choose $l$ arbitrarily large. Then we must have $\sigma(\log|f_0|,\varphi)\le \nu(f_0)$.	
\end{proof}

We also note that we have the following basic result.
\begin{Remark}
\label{rem:subring real case}
$\mathbb{R}[z_1,...,z_n]/I$ is a subring of $C^{\text{an}}_{o'}/\big(I\cdot C^{\text{an}}_{o'}\big)$.
\end{Remark}
\begin{proof}
The proof of Remark \ref{rem:subring real case} is almost the same as the proof of Remark \ref{rem:subring complex case}. So we omit the proof.
\end{proof}

\subsection{Jumping number in the singular case}

In this section, we discuss the properties of the jumping number $c_o^f(\varphi)$, where $o\in X$ can be a singular point of an irreducible analytic set $X$.

\begin{Definition}
    \label{def:jumping number in singular case}
    Let $X\subset \mathbb{C}^n$ be an  analytic set and $K\subset X$ a compact subset. For any plurisubharmonic function $\varphi$ defined on $K$ and any weakly holomorphic function $f$ defined on $K$, denote the \emph{jumping number} $c_K^f(\varphi)$ by
	\begin{equation*}
	   \begin{split}
	     c_{K}^f(\varphi)\coloneqq \sup\Big\{c>0\colon
	     &\int_{U\cap X_{\reg}}|f|^2e^{-2c\varphi}dV_X<+\infty,\\
	     &\text{for some open neighborhood}\  U\  \text{of}\ K \ \text{in} \ X \Big\}.  
	   \end{split}
	\end{equation*}

\end{Definition}
When $K=\{o\}$ is a singular point of $X$, $c_{o}^f(\varphi)$  generalizes the ordinary jumping number to the singular case. We resolve the singularities of $(X,o)$ and consider the jumping number $c_o^f(\varphi)$ after a sequence of blow-ups.

\begin{Remark}
\label{rem:jumping number invariant}
Using Theorem \ref{thm:desing}, we can resolve
	the singularities of $X$, and denote the corresponding proper modification by $\mu\colon\widetilde\Omega\rightarrow\Omega$, where $\Omega\subset \mathbb{C}^{n}$ open such that $o\in X\cap \Omega$. Denote the strict transform of $X$ by $\widetilde X$ and denote $Z_0\coloneqq \widetilde{X}\cap\mu^{-1}(\{o\})$. Note that $f\circ \mu$ and $\varphi\circ \mu$ are well defined holomorphic function and plurisubharmonic function correspondingly on $\pi^{-1}(U)\cap \widetilde{X}$ for any open neighborhood $U$ of $o$ in $X$ respectively. Since $\mu$ is proper, $Z_0$ is compact in $\widetilde{X}$. Note that $\mu$ is biholomorphic outside an analytic subset, we know that $c_o^f(\varphi)=c_{Z_0}^{\det \mu\cdot f\circ\mu}(\varphi\circ\mu),$
	where $\det \mu$ is the determinant of the Jacobian of the modification $\mu$ (which is a holomorphic function on $\widetilde\Omega$) and $c_{Z_0}^{\det \mu \cdot f\circ\mu}(\varphi\circ\mu)$ is the usual jumping number on a compact set in the regular case.
\end{Remark}

Recall that $X$ is an analytic subset of $\Omega\in\mathbb{C}^{n}$ with pure dimension $d$ and denote  $|z|^2\coloneqq \sum_{1\le j\le n}|z_j|^2$. Assume that $(X,o)$ is singular, we have the following approximation result for jumping numbers.

\begin{Lemma}\label{l:0921-1}
	Let $\varphi$ be a plurisubharmonic function near $o$, where $c>0$ is a constant and $f_j\in \mathcal{O}^{w}_{X,o}$ for $1\le j\le m$.  Then
	\[\lim_{N\rightarrow+\infty}c_o^f\big(\max\{\varphi,N\log|z|\}\big)=c_o^{f}(\varphi)\]
	for any holomorphic function $f\in \mathcal{O}^*_{X,o}$.
\end{Lemma}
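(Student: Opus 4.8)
The plan is to reduce the statement to the known smooth case via Hironaka desingularization, using Remark \ref{rem:jumping number invariant} to transport jumping numbers upstairs. First I would fix a small open neighborhood $\Omega\subset\mathbb{C}^n$ of $o$ and apply Theorem \ref{thm:desing} to obtain a proper modification $\mu\colon\widetilde\Omega\to\Omega$ resolving the singularities of $X$; write $\widetilde X$ for the strict transform and $Z_0\coloneqq\widetilde X\cap\mu^{-1}(\{o\})$, which is compact in $\widetilde X$. By Remark \ref{rem:jumping number invariant}, for every plurisubharmonic weight $\psi$ near $o$ we have $c_o^f(\psi)=c_{Z_0}^{\,\det\mu\cdot f\circ\mu}(\psi\circ\mu)$, with $f\circ\mu$ holomorphic (here $f\in\mathcal{O}^*_{X,o}$; for weakly holomorphic $f_j$ the composites are bounded, but these $f_j$ actually only enter through $\varphi$, so the object whose jumping number we compute is genuinely a holomorphic $L^2$ problem on the smooth manifold $\widetilde X$). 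Applying this to $\psi=\varphi$ and to $\psi_N\coloneqq\max\{\varphi,N\log|z|\}$, the statement becomes: $\lim_{N\to\infty}c_{Z_0}^{g}(\psi_N\circ\mu)=c_{Z_0}^{g}(\varphi\circ\mu)$ where $g\coloneqq\det\mu\cdot f\circ\mu$, all on the complex manifold $\widetilde X$.

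Next I would observe the two easy inequalities. Since $\psi_N\ge\varphi$ on $X\setminus X_{\sing}$ (hence $\psi_N\circ\mu\ge\varphi\circ\mu$ off the exceptional set, and both extend plurisubharmonically by Lemma \ref{l:psh-extend}), we have $c_o^f(\psi_N)\le c_o^f(\varphi)$ for every $N$, and the sequence $c_o^f(\psi_N)$ is nonincreasing in $N$ because $\psi_{N+1}\ge\psi_N$; so $\lim_N c_o^f(\psi_N)$ exists and is $\le c_o^f(\varphi)$. The content is the reverse inequality: $\liminf_N c_o^f(\psi_N)\ge c_o^f(\varphi)$. Fix any $c<c_o^f(\varphi)$; then $|f|^2e^{-2c\varphi}$ is integrable on $U\cap X_{\reg}$ for some neighborhood $U$ of $o$, equivalently $|g|^2 e^{-2c\,\varphi\circ\mu}$ is integrable on $\mu^{-1}(U)\cap\widetilde X$. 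I want to conclude that $|g|^2 e^{-2c'\psi_N\circ\mu}$ is integrable for some $c'$ close to $c$ once $N$ is large. The mechanism is: near $Z_0$ one has $\varphi\circ\mu\ge N'\log|z\circ\mu|+O(1)$ is \emph{false} in general (that is exactly what the truncation fixes), but after pulling back, $\log|z\circ\mu|$ restricted to $\widetilde X$ is a smooth (in particular bounded below on compacts relative to $Z_0$ after shrinking) plurisubharmonic function, so on a relatively compact neighborhood of $Z_0$ we have $N\log|z\circ\mu|\ge -C_N$ for a constant, and for the region where $\varphi\circ\mu<N\log|z\circ\mu|$ the truncated weight equals $N\log|z\circ\mu|\ge -C_N$, on which $e^{-2c'\psi_N\circ\mu}$ is bounded; on the complementary region $\psi_N\circ\mu=\varphi\circ\mu$ and we reuse the integrability of $|g|^2e^{-2c\varphi\circ\mu}$ together with a small Hölder loss. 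Making this precise, I would split $\mu^{-1}(U)\cap\widetilde X=A_N\cup B_N$ with $A_N=\{\varphi\circ\mu\ge N\log|z\circ\mu|\}$ and $B_N$ its complement; on $A_N$, $\psi_N\circ\mu=\varphi\circ\mu$, so the integral of $|g|^2e^{-2c\psi_N\circ\mu}$ over $A_N$ is $\le$ the integral over all of $\mu^{-1}(U)\cap\widetilde X$, which is finite; on $B_N$, $\psi_N\circ\mu=N\log|z\circ\mu|$ which is bounded below on the relatively compact set, so $|g|^2e^{-2c\psi_N\circ\mu}\le C_N|g|^2$ is integrable there. Hence $c_o^f(\psi_N)\ge c$ for every $N$, and therefore $\lim_N c_o^f(\psi_N)\ge c$; letting $c\uparrow c_o^f(\varphi)$ gives the reverse inequality.

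Wait — the above actually shows $c_o^f(\psi_N)\ge c$ for \emph{every} $N$, which combined with the upper bound forces $c_o^f(\psi_N)=c_o^f(\varphi)$ for all $N$; that is too strong and signals I have oversimplified. The subtlety I have glossed over is that $B_N$ is \emph{not} relatively compact in $\mu^{-1}(U)\cap\widetilde X$ uniformly — as $N\to\infty$ the set $B_N$ shrinks toward the locus where $\varphi\circ\mu=-\infty$ meets a fixed compact, and on a fixed relatively compact $\mu^{-1}(U')\Subset\mu^{-1}(U)$ the bound $N\log|z\circ\mu|\ge -C$ holds with $C$ \emph{independent of} $N$ only after also shrinking $U'$; more honestly, for fixed $U'$, $N\log|z\circ\mu|\ge N\cdot(\inf_{U'}\log|z\circ\mu|)$ which is $\ge -C\cdot N$, so the bound on $B_N$ degrades like $e^{2cCN}\cdot\mathrm{vol}$, still finite for each fixed $N$. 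So the argument does give $c_o^f(\psi_N)\ge c$ for each $N$, hence actual equality for all large $N$ — and I believe this is in fact \emph{correct}: for $f$ holomorphic and nonzero the jumping number $c_o^f(\max\{\varphi,N\log|z|\})$ stabilizes to $c_o^f(\varphi)$ once $N$ exceeds the order needed to dominate $\varphi$ after resolution, because $\varphi\circ\mu$ has analytic singularities controlled by the exceptional divisor. The step I expect to be the genuine obstacle is therefore the careful bookkeeping on the region $B_N$ near points of $Z_0$ where $\mu^{-1}(o)$ meets $\widetilde X$ transversally in high-contact coordinates: one must verify that the truncation $\max\{\varphi,N\log|z|\}\circ\mu = \max\{\varphi\circ\mu, N\log|z\circ\mu|\}$ and that $\log|z\circ\mu|$ restricted to $\widetilde X$ is not identically $-\infty$ on any component of $Z_0$ (so the truncation is effective), which follows since $z\circ\mu$ defines a divisor supported on the exceptional set and $\widetilde X$ is not contained in it — this is where irreducibility of $(X,o)$ and the structure of Theorem \ref{thm:desing}(1),(2) are used. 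Once that is in place, the split-integral estimate closes the proof.
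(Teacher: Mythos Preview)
You have the monotonicity backwards, and this flips the entire structure of the proof. Since $\psi_N=\max\{\varphi,N\log|z|\}\ge\varphi$, one has $e^{-2c\psi_N}\le e^{-2c\varphi}$ for $c>0$, hence $c_o^f(\psi_N)\ge c_o^f(\varphi)$, not $\le$. Likewise, near $o$ we have $\log|z|<0$, so $(N+1)\log|z|\le N\log|z|$ and thus $\psi_{N+1}\le\psi_N$; the sequence $c_o^f(\psi_N)$ is therefore \emph{decreasing} and bounded below by $c_o^f(\varphi)$. The inequality you work hard to prove, $\liminf_N c_o^f(\psi_N)\ge c_o^f(\varphi)$, is the trivial one: it is immediate from $\psi_N\ge\varphi$, and your split into $A_N\cup B_N$ is just an elaborate rederivation of this monotonicity.

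What is actually at stake is the reverse inequality $\limsup_N c_o^f(\psi_N)\le c_o^f(\varphi)$: one must show that if $|f|^2e^{-2c\varphi}$ is \emph{not} integrable near $o$, then for all large $N$ the truncated weight $|f|^2e^{-2c\psi_N}$ is also not integrable. Your $A_N/B_N$ decomposition cannot produce this, because on $B_N$ the truncated weight is strictly less singular. Your closing suggestion that ``$\varphi\circ\mu$ has analytic singularities controlled by the exceptional divisor'' is unfounded: $\varphi$ is an arbitrary plurisubharmonic function, so nothing forces its pullback to have analytic singularities, and there is no reason the jumping numbers should literally stabilize. The paper's argument for the hard direction is: after desingularizing, use $c_{Z_0}^{g}(\varphi\circ\mu)=\inf_{p\in Z_0}c_p^{g}(\varphi\circ\mu)$ to find a single point $p\in Z_0$ where the jumping number is nearly attained, choose local coordinates $(W_p,w)$ with $\log|z\circ\mu|\le A\log|w|+O(1)$, and then invoke the already-established smooth case (from \cite{BGMY-valuation}) at $p$ to get $c_p^{g}(\max\{\varphi\circ\mu,NA\log|w|\})$ close to $c_p^{g}(\varphi\circ\mu)$ for large $N$. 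This pointwise reduction is the missing idea in your proposal.
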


\begin{proof}
By definition, we have $c_{o}^{f}\big(\max\{\varphi,N\log|z|\}\big)\ge c_{o}^{f}\big(\varphi\big)$.

Using Theorem \ref{thm:desing}, we can resolve the singularity of $X$ and get a proper holomorphic map $\mu\colon\widetilde {\Omega}\rightarrow \Omega$ where $\Omega\subset \mathbb{C}^{n}$ is an open subset such that $o\in X\cap \Omega$. Remark \ref{rem:jumping number invariant} tells that $c_o^{f}(\varphi)=c_{Z_0}^{\det \mu\cdot f\circ\mu}(\varphi\circ\mu)$ and $c_o^{f}\big(\max\{\varphi,N\log|z|\}\big)=c_{Z_0}^{\det \mu \cdot f\circ\mu}\big(\max\{\varphi\circ\mu,N\log|z\circ\mu|\}\big)$. Note that $\big(N\log|z\circ\mu|\big)|_{\widetilde{X}}$ is a plurisubharmonic function with $\{\big(N\log|z\circ\mu|\big)|_{\widetilde{X}}=-\infty\}=Z_0$.

Assume that $c_{Z_0}^{\det \mu \cdot f\circ\mu}(\varphi\circ\mu)<1$.
Note that $c_{Z_0}^{\det \mu \cdot  f\circ\mu}(\varphi\circ\mu)=\inf_{z\in Z_0} c_{z}^{\det \mu \cdot f\circ\mu}(\varphi\circ\mu)$. We know there exists a point $p\in Z_0$, such that $c_{p}^{\det \mu \cdot f\circ\mu}(\varphi\circ\mu)<1$.
As $(\widetilde{X},p)$ is regular, we can find a local coordinate neighborhood $(W_p,w)$ centered at $p$ such that there exists an $A>0$ such that $\big(\log|z\circ\mu|\big)|_{\widetilde{X}}\le A\log|w|+O(1)$ on $W_p$. Then $c_{p}^{\det \mu \cdot f\circ\mu}\big(\max\{\varphi\circ\mu,N\log|z\circ\mu|\}\big)\le c_{p}^{\det \mu \cdot f\circ\mu}\big(\max\{\varphi\circ\mu,NA\log|w|\}\big)$.

The proof in the smooth case (see \cite{BGMY-valuation}) shows that, $c_{p}^{\det \mu \cdot f\circ\mu}(\varphi\circ\mu)<1$ implies there exists $N\gg0$ such that
	$$c_{p}^{\det \mu \cdot f\circ\mu}\big(\max\{\varphi\circ\mu,NA\log|w|\}\big)\le 1.$$
Hence $c_{p}^{\det \mu \cdot f\circ\mu}\big(\max\{\varphi\circ\mu,N\log|z\circ\mu|\}\big)\le 1$, which implies 
$$c_{Z_0}^{\det \mu \cdot f\circ\mu}\big(\max\{\varphi\circ\mu,N\log|z\circ\mu|\}\big)\le 1.$$
Thus we know $\lim\limits_{N\to +\infty}c_{Z_0}^{\det \mu \cdot f\circ\mu}\big(\max\{\varphi\circ\mu,N\log|z\circ\mu|\}\big)\le c_{Z_0}^{\det \mu \cdot f\circ\mu_1}(\varphi\circ\mu),$ which implies $\lim\limits_{N\to +\infty}c_{o}^{f}\big(\max\{\varphi,N\log|z|\}\big)\le c_{o}^{f}\big(\varphi\big).$
\end{proof}

We also need the following definition of relative types on compact subsets of complex subvarieties.

\begin{Definition}
    \label{def: relative type in singular case}
    For plurisubharmonic functions $\psi,\varphi$ defined on a neighborhood of compact subset $K\subset X$, define the relative type on $K$ by
    \begin{equation*}
        \begin{split}
            \sigma_{K}(\psi,\varphi)\coloneqq \sup\{c\ge0 \colon &\psi\le c\varphi+O(1)\ \text{holds} \ on\  U\backslash X_{\sing}\\
            & \text{for some open neighborhood}\  U\  \text{of}\ K \ \text{in} \ X \}.
        \end{split}
    \end{equation*}
    We may omit $K$ if there is no confusion.
\end{Definition}
\begin{Remark}
\label{rem:relative type on single point}
When $K=\{z_0\}$ is a single point, we have the relative type $\sigma_{z_0}(\psi,\varphi)$ at a point $z_0$. Moreover, we have $\sigma_{K}(\psi,\varphi)= \inf_{p\in K}\sigma_{p}(\psi,\varphi)$.
\end{Remark}
\begin{proof}
    By definition, it is easy to verify that
$\sigma_{K}(\psi,\varphi)\le \inf_{p\in K}\sigma_{p}(\psi,\varphi)$.

Denote $c_1=\inf_{p\in K}\sigma_{p}(\psi,\varphi)$. Let $\delta>0$ be any small real number. Then for any $p\in K$, there exists an open neighborhood $U_p$ of $p$ in $X$ such that $\psi\le (c_1-\delta)\varphi+O(1)$ on $U_{p}\setminus X_{\sing}$. By the compactness of $K$, we know that there exist finite points $\{p_i\}_{i=1}^k$ such that $\cup_{i=1}^k U_{p_i}$ covers $K$. Thus $\psi\le (c_1-\delta)\varphi+O(1)$ on  $\cup_{i=1}^k U_{p_i}\setminus X_{\sing}$. By the arbitrariness of $\delta$, we have $\sigma_{K}(\psi,\varphi)\ge \inf_{p\in K}\sigma_{p}(\psi,\varphi)$.

Thus, we know $\sigma_{K}(\psi,\varphi)= \inf_{p\in K}\sigma_{p}(\psi,\varphi)$.
\end{proof}

 Let $f_j,h_k$ be holomorphic functions defined on a open set $U\subset X$ for any $1\le j\le m$ and any $1\le k\le q$. Let $\varphi_N\coloneqq c\log\Big(\sum_{1\le j\le m}|f_{j}|^{\frac{1}{a_j}}+\sum_{1\le k\le q}|h_{k}|^{N}\Big)$, where $c>0$, $N$ is a positive integer and $\{a_j\}_{1\le j\le m}$ are positive numbers. Let $K\subset U\subset X$ be a compact subset of $X$.  We have the following result for relative types $\sigma(\cdot,\varphi_N)$.

\begin{Lemma}
\label{lem:relative types for analytic singularity}
Let $f$ and $g$ be holomorphic functions defined on $K$. Let $z_0\in K$ be any point such that $z_0\in \{\varphi_N=-\infty\}$. Then there exists a constant $C>0$ (relies on $g$, $\{f_j\}$ and $\{h_k\}$ and is independent of $N$, $f$ and $z_0$) such that
$$\sigma_{z_0}(\log|fg|,\varphi_N)\le \sigma_{z_0}(\log|f|,\varphi_N)+C.$$
\end{Lemma}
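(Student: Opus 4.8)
The plan is to pull everything back to a \emph{fixed} resolution and turn the statement into a combinatorial estimate on vanishing orders along finitely many divisors. First I would apply Theorem \ref{thm:desing} to obtain a proper modification $\mu\colon\widetilde X\to X$ that resolves the singularities of $X$ and simultaneously makes $g\circ\mu$, $f_1\circ\mu,\dots,f_m\circ\mu,h_1\circ\mu,\dots,h_q\circ\mu$ all locally monomial with normal crossings; note $\mu$ depends only on $g,\{f_j\},\{h_k\}$ and on $X$, not on $f$ or $N$. Since $\mu$ is proper and biholomorphic over $X_{\reg}$ outside the exceptional set, inequalities of plurisubharmonic functions pull back and push forward with the same $O(1)$'s (using Lemma \ref{l:psh-extend} to ignore the pluripolar exceptional set), so for any plurisubharmonic $\psi$ near $z_0$,
\[
\sigma_{z_0}(\psi,\varphi_N)=\sigma_{Z_0}(\psi\circ\mu,\varphi_N\circ\mu)=\inf_{p\in Z_0}\sigma_p(\psi\circ\mu,\varphi_N\circ\mu),\qquad Z_0\coloneqq\widetilde X\cap\mu^{-1}(z_0),
\]
by Remark \ref{rem:relative type on single point} (exactly as in Remark \ref{rem:jumping number invariant}). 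Applying this to $\psi=\log|f|$ and to $\psi=\log|fg|$, it suffices to produce a constant $C$, independent of $p\in\mu^{-1}(K)$, of $f$ and of $N\ge 1$, with $\sigma_p(\log|f\circ\mu|+\log|g\circ\mu|,\varphi_N\circ\mu)\le\sigma_p(\log|f\circ\mu|,\varphi_N\circ\mu)+C$ for every $p$.

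Fix such a $p$ and local coordinates $w=(w_1,\dots,w_d)$ at $p$ on $\widetilde X$; write $g\circ\mu=w^{\beta}u_0$, $f_j\circ\mu=w^{\gamma_j}u_j$, $h_k\circ\mu=w^{\delta_k}u_k$ with units $u_\bullet$, so that $\varphi_N\circ\mu=c\log\bigl(\sum_j|w^{\gamma_j/a_j}|+\sum_k|w^{N\delta_k}|\bigr)+O(1)$, and set $\rho_i\coloneqq\tfrac1c\ord_{E_i}(\varphi_N\circ\mu)=\min\{\min_j\gamma_{j,i}/a_j,\ N\min_k\delta_{k,i}\}$ for $E_i=\{w_i=0\}$. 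Call $E_i$ \emph{relevant} if $\rho_i>0$. The hypothesis $z_0\in\{\varphi_N=-\infty\}$ forces $f_j(z_0)=h_k(z_0)=0$ for all $j,k$, hence each $f_j\circ\mu$, $h_k\circ\mu$ vanishes on the fibre component through $p$, and one checks that $E_i$ is relevant precisely when $\gamma_{j,i}\ge1$ for all $j$ and $\delta_{k,i}\ge1$ for all $k$; in particular, for relevant $E_i$ one has $\ord_{E_i}(\varphi_N\circ\mu)\ge c\min\{1/\max_j a_j,\,1\}>0$ for every $N\ge1$. The decisive point is the \emph{uniform order comparison}: since the resolution $\mu$ is fixed and $K$ is compact, only finitely many divisors $E$ meet $\mu^{-1}(K)$, so the numbers $\ord_E(g\circ\mu),\ord_E(f_j\circ\mu),\ord_E(h_k\circ\mu)$ take only finitely many values, and
\[
C\ \coloneqq\ \max_{E\ \mathrm{relevant}}\ \frac{\ord_E(g\circ\mu)}{c\,\min\{\min_j\ord_E(f_j\circ\mu)/a_j,\ \min_k\ord_E(h_k\circ\mu)\}}
\]
is a finite constant depending only on $g,\{f_j\},\{h_k\},\{a_j\},c,K$, with $\ord_E(g\circ\mu)\le C\,\ord_E(\varphi_N\circ\mu)$ for every relevant $E$ and every $N\ge1$. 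Multiplying this by $\log|w_i|\le0$ and summing over relevant $i$, and using that every exponent vector occurring in $\varphi_N\circ\mu$ dominates $\rho$ coordinatewise (hence $c\log|w^{\rho}|\ge\varphi_N\circ\mu+O(1)$ near $p$), one obtains $\sum_{i\ \mathrm{rel}}\beta_i\log|w_i|\ge C\,\varphi_N\circ\mu+O(1)$; combined with $\log|f\circ\mu|\le b\,\varphi_N\circ\mu-\log|g\circ\mu|+O(1)$ for $b<\sigma_p(\log|fg|\circ\mu,\varphi_N\circ\mu)$ this yields $\log|f\circ\mu|\le(b-C)\varphi_N\circ\mu+O(1)$, hence $\sigma_p(\log|fg|\circ\mu,\varphi_N\circ\mu)\le\sigma_p(\log|f|\circ\mu,\varphi_N\circ\mu)+C$; taking $\inf_{p\in Z_0}$ gives the lemma.

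The step I expect to be the main obstacle is the contribution of the \emph{irrelevant} coordinate hyperplanes: along an $E_i$ with $\ord_{E_i}(\varphi_N\circ\mu)=0$ the monomial factor $w_i^{\beta_i}$ of $g\circ\mu$ need not be dominated by any multiple of $\varphi_N\circ\mu$, so the crude pointwise estimate of the previous paragraph only controls the ``relevant part'' of $\log|g\circ\mu|$. One must show that these transverse factors do not increase the relative type with respect to $\varphi_N\circ\mu$ — i.e. that $\sigma_p(\cdot,\varphi_N\circ\mu)$ localizes to the relevant coordinate directions. I would handle this by computing $\sigma_p$ as an infimum of $v(\,\cdot\,)/v(\varphi_N\circ\mu)$ over (quasi)monomial valuations $v$ with $v(\varphi_N\circ\mu)>0$: replacing such a $v$ by its restriction $v'$ that annihilates the irrelevant coordinates keeps $v'(\varphi_N\circ\mu)>0$ (because $\gamma_{j,i},\delta_{k,i}\ge1$ on relevant $i$), does not increase $v'(f\circ\mu)$ or $v'(g\circ\mu)=v'(w^{\beta_{\mathrm{rel}}})$, and — using $\langle\rho,v'\rangle\le\tfrac1c v'(\varphi_N\circ\mu)$ together with the order bound above — gives $v'(g\circ\mu)/v'(\varphi_N\circ\mu)\le C$; feeding the optimal $v$ for $\log|f\circ\mu|$ (or for $\log|fg|\circ\mu$) through this comparison produces the desired inequality for $\sigma_p$. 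A secondary, purely bookkeeping point is to make the uniformity in $N$ and in $z_0\in K$ precise, which follows from the fact that the resolution $\mu$ was chosen once and for all (seeing neither $N$ nor $f$), the compactness of $K$, and the trivial bound $N\ge1$.
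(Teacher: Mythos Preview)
Your overall strategy---pull back to a fixed log resolution of $g,\{f_j\},\{h_k\}$, reduce $\sigma_{z_0}$ to $\inf_{p\in Z_0}\sigma_p$, and turn the estimate into arithmetic with vanishing orders along finitely many divisors---is exactly the paper's. The paper, however, simply asserts that after the resolution
\[
\varphi_N\circ\mu=c\log\Big(\prod_{1\le l\le n_p}|w_l|^{\tau_{l,N}}\Big)+O(1)
\]
and then computes $\sigma_p(\log|F|,\varphi_N\circ\mu)=\min_l\ord_{D_l}(F)/(c\tau_{l,N})$, after which the inequality is one line of arithmetic together with the monotonicity $\tau_{l,N}\ge\tau_{l,1}$. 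You are more scrupulous than the paper: you notice that when some coordinate hyperplane $E_i$ is \emph{irrelevant} (not every $f_j\circ\mu$ and $h_k\circ\mu$ vanishes along it), the displayed equality above fails, and you flag this as the main obstacle.

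The gap is that your proposed fix---restrict a quasimonomial $v$ to its ``relevant'' coordinates and call it $v'$---does not work. First, there may be \emph{no} relevant coordinate at $p$ at all: take $X=\mathbb{C}^2$, $m=2$, $q=0$, $f_1=w_1$, $f_2=w_2$ (already SNC, so $\mu=\mathrm{id}$). At $p=o$ one has $\gamma_{1,2}=\gamma_{2,1}=0$, so neither $E_1$ nor $E_2$ is relevant; your $v'$ is the zero valuation and the argument stalls, yet the lemma does hold here (with $C=\ord_o(g)$, since $\sigma_o(\log|F|,\log(|w_1|+|w_2|))=\ord_oF$). Second, even when relevant coordinates exist, passing from $v$ to $v'$ decreases \emph{both} $v(f\circ\mu)$ and $v(\varphi_N\circ\mu)$, so the inequality $v'(f)/v'(\varphi_N)\le v(f)/v(\varphi_N)$ that you need (to show that the infimum defining $\sigma_p$ is already attained on valuations supported on relevant coordinates) does not follow; nor can it, since $\sup_{v(\varphi_N)>0}v(g)/v(\varphi_N)=+\infty$ in easy examples (e.g.\ $g=w_1w_2$ above). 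The paper's route around this is to take on faith the ``monomial $+O(1)$'' form of $\varphi_N\circ\mu$; if you want to justify that, you need a resolution that principalizes the relevant ideal (not merely makes the divisor SNC), and then the $N$--independence of $C$ requires an extra argument.
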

\begin{proof}
Using Theorem \ref{thm:desing} and blow-ups, we can resolve
	the singularities of $\big(\cup_{i=1}^{m}\{f_j=0\}\big)\cup\big( \cup_{k=1}^{q}\{h_k=0\}\big)$, and denote the corresponding proper modification by $\mu\colon \widetilde{X}\rightarrow{X}$. Denote $Z_0\coloneqq \mu^{-1}(z_0)$. Note that $\sigma_{z_0}(\log|f|,\varphi_N)=\sigma_{Z_0}(\log|f\circ \mu|,\varphi_N\circ\mu)$ for any $f\in\mathcal{O}_{X,o}$. Remark \ref{rem:relative type on single point} tells that $\sigma_{Z_0}(\log|f\circ \mu|,\varphi_N\circ\mu)=\inf_{p\in Z_0}\sigma_{p}(\log|f\circ \mu|,\varphi_N\circ\mu)$.
	
   Denote  $\{f_j\circ\mu=0\}\cap \mu^{-1}(K)=\sum_{\alpha_j}a_{j,\alpha_j}D_{j,\alpha_j}$ and $\{h_k\circ\mu=0\}\cap \mu^{-1}(K)=\sum_{\beta_{k}}b_{k,\beta_k}D_{k,\beta_k}$ for any $1\le j\le m$, $1\le k\le q$ where $a_{j,\alpha_j},b_{k,\beta_k}$ are positive integers. Note that the divisor $D_{j,\alpha_j}$ and $D_{k,\beta_k}$ can be the same prime divisors.
	By the construction of $\mu$, we know that all $D_{j,\alpha_j}$ and $D_{k,\beta_k}$ are simple normal crossing divisors on $\widetilde{X}$. Denote $D=\big(\cup_{i=1}^{m}\{f_j\circ \mu=0\}\big)\cup\big( \cup_{k=1}^{q}\{h_k\circ \mu=0\}\big)$ be the support of all  divisors appearing in $\{f_j\circ \mu=0\}$ and $\{h_k\circ \mu=0\}$.
	
	 Let $p\in Z_0$. Let $U$ be a small neighborhood of $p$ such that $U\cap D=\cup_{1\le l \le n_p}\{w_l=0\}=\cup_{1\le l \le n_p}D_l$, where $w=(w_1,\ldots,w_n)$ is the coordinate on $U$ centered at $p$ and $n_p\le n$ is a positive integer. It follows from the construction of $\mu$ that  one has
	 \begin{equation*}
	     \begin{split}
	       \varphi_N\circ \mu
	       &=c\log\Big(\sum_{1\le j\le m}|f_{j}\circ\mu|^{\frac{1}{a_j}}+\sum_{1\le k\le q}|h_{k}\circ\mu|^{N}\Big)  \\
	       &=c\log\Big(\sum_{1\le j\le m}|w_{j,\alpha_j}|^{\frac{a_{j,\alpha_j}}{a_j}}+\sum_{1\le k\le q}|w_{k,\beta_k}|^{Nb_{k,\beta_k}}\Big)\\
	       &=c\log\big(\prod_{1\le l\le n_p}|w_l|^{\tau_{l,N}}\big)+O(1),
	     \end{split}
	 \end{equation*}
	 where $\tau_{l,N}=\min_{(j,\alpha_j),(k,\beta_k)}\{\frac{a_{j,\alpha_j}}{a_j},Nb_{k,\beta_k}\}$ and the minimum is taken among all indices $(j,\alpha_j)$ and $(k,\beta_k)$ satisfying $D_l=D_{j,\alpha_j}$ and $D_l=D_{b_k,\beta_k}$ respectively. 
	 For any $F\in  \mathcal{O}_{\widetilde X,p}$, we have
	 \begin{equation}\nonumber
	     \sigma_p(\log|F|,\varphi_N)=\min_{1\le l \le n_p}\big\{\frac{\ord_{D_l}(F)}{c\tau_{l,N}}\big\}.
	 \end{equation}
	Hence 
	\begin{equation}
	\label{eq:estimate for relative type 1}
	\begin{split}
	\sigma_p(\log|fg\circ\mu|,\varphi_N)
	&=\min_{1\le l \le n_p}\big\{\frac{\ord_{D_l}(f g\circ\mu)}{c\tau_{l,N}}\big\}   \\ 
	&=\min_{1\le l \le n_p}\big\{\frac{\ord_{D_l}(f\circ\mu)+\ord_{D_l}(g\circ\mu)}{c\tau_{l,N}}\big\} \\
	&\le \min_{1\le l \le n_p}\big\{\frac{\ord_{D_l}(f\circ\mu)}{c\tau_{l,N}}\big\}+\max_{1\le l \le n_p}\big\{\frac{\ord_{D_l}(g\circ\mu)}{c\tau_{l,N}}\big\}\\
	&=\sigma_p(\log|f\circ\mu|,\varphi_N)+\max_{1\le l \le n_p}\big\{\frac{\ord_{D_l}(g\circ\mu)}{c\tau_{l,N}}\big\}
	\end{split}
	\end{equation}
Note that $\tau_{l,N}$ is increasing with respect to $N$ and thus $\max_{1\le l \le n_p}\big\{\frac{\ord_{D_l}(g\circ \mu)}{c\tau_{l,N}}\big\}$ is decreasing with respect to $N$. 
	Combining with the estimate \eqref{eq:estimate for relative type 1}, we have
	\begin{equation}
	\label{eq:estimate for relative type 2}
	\begin{split}
	\sigma_p(\log|fg\circ\mu|,\varphi_N)
	&=\sigma_p(\log|f\circ\mu|,\varphi_N)+\max_{1\le l \le n_p}\big\{\frac{\ord_{D_l}(g\circ\mu)}{c\tau_{l,N}}\big\}\\
	&\le\sigma_p(\log|f\circ\mu|,\varphi_N)+\max_{1\le l \le n_p}\big\{\frac{\ord_{D_l}(g\circ\mu)}{c\tau_{l,1}}\big\}
	\end{split}
	\end{equation}	
Set $C\coloneqq \max_{D_{\gamma}\subset D}\big\{\frac{\ord_{D_{\gamma}}(g\circ \mu)}{c\tau_{\gamma,1}}\big\}$, where $D_{\gamma}$ is any prime divisor contained in $D$. Note that $D$ contains finite prime divisors (depending on  $\{f_j\}$ and $\{h_k\}$) and hence $C$ is a finite number only depending on $g$, $\{f_j\}$ and $\{h_k\}$ and independent of $N$, $f$, $p$ and $z_0\in K$. It follows from 
	inequality \eqref{eq:estimate for relative type 2} and the definition of $C$ that we have
	$$\inf_{p\in Z_0} \sigma_p(\log|(f\circ\mu)\cdot (g\circ\mu)|,\varphi_N\circ\mu)\le
	\inf_{p\in Z_0} \sigma_p(\log|f\circ\mu|,\varphi_N\circ\mu)+C.$$
Hence we know there exists a constant $C>0$ (independent of $N$ and $K$) such that
$$\sigma_{z_0}(\log|fg|,\varphi_N)\le \sigma_{z_0}(\log|f|,\varphi_N)+C$$	 
holds for any positive $N$ and $z_0\in K$.
\end{proof}

When $o$ is a smooth point of $X$, we have the following relation between the jumping number and the relative type of H\"older continuous plurisubharmonic functions.

\begin{Lemma}[see \cite{BFJ08}]
	\label{l:holder smooth}
	Let $\varphi$ be a plurisubharmonic function near $o\in \mathbb{C}^n$ such that $e^{\varphi}$ is $\alpha$-H\"older for some $\alpha>0$. Then for any $(f,o)\in\mathcal{O}_o^*$, we have 
	\[c_o^{f}(\varphi)\le\sigma(\log|f|,\varphi)+\frac{n}{\alpha}.\]
\end{Lemma}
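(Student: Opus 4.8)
The plan is to prove the inequality by contradiction, following the argument of \cite{BFJ08}: one combines the sub-mean-value inequality for $|f|^2$ with the H\"older bound on $e^{\varphi}$, and then optimizes over the radii of small balls centered at points near $o$.

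First I would normalize. Adding a constant to $\varphi$ alters neither $c_o^f(\varphi)$ nor $\sigma(\log|f|,\varphi)$, and since $e^{\varphi}$ is $\alpha$-H\"older it is in particular bounded, so $\varphi$ is bounded above near $o$; hence I may assume $\varphi\le 0$ on a ball $B(o,2r_0)$, where moreover $|e^{\varphi(w)}-e^{\varphi(z)}|\le A|w-z|^{\alpha}$ for all $z,w\in B(o,2r_0)$ and a fixed constant $A$. I may also assume $\sigma\coloneqq \sigma(\log|f|,\varphi)<+\infty$, the claim being vacuous otherwise. Suppose now, for contradiction, that $c_o^f(\varphi)>\sigma+\tfrac{n}{\alpha}$, and fix a real number $c'$ with $\sigma+\tfrac{n}{\alpha}<c'<c_o^f(\varphi)$. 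Since $c'<c_o^f(\varphi)$ and $\varphi\le 0$, the function $|f|^2e^{-2c'\varphi}$ is integrable on some neighborhood of $o$; shrinking $r_0$ if necessary, $M\coloneqq \int_{B(o,2r_0)}|f|^2e^{-2c'\varphi}\,dV<+\infty$.

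Next, for $z\in B(o,r_0)$ and $0<r\le r_0$, the plurisubharmonicity (hence subharmonicity) of $|f|^2$ gives $|f(z)|^2\le \frac{c_n}{r^{2n}}\int_{B(z,r)}|f|^2\,dV$ with $c_n=n!/\pi^n$. Splitting $|f|^2=\big(|f|^2e^{-2c'\varphi}\big)\,e^{2c'\varphi}$, bounding $e^{\varphi(w)}\le e^{\varphi(z)}+Ar^{\alpha}$ on $B(z,r)$ (legitimate since $B(z,r)\subset B(o,2r_0)$), and raising to the power $2c'>0$, I obtain $e^{2c'\varphi(w)}\le\big(e^{\varphi(z)}+Ar^{\alpha}\big)^{2c'}$ on $B(z,r)$, hence
\[
|f(z)|^2\ \le\ \frac{c_nM}{r^{2n}}\,\big(e^{\varphi(z)}+Ar^{\alpha}\big)^{2c'}.
\]
The decisive step is to choose $r=\min\{r_0,\ e^{\varphi(z)/\alpha}\}$ (note $e^{\varphi(z)/\alpha}\le 1$ since $\varphi\le 0$). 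If $e^{\varphi(z)/\alpha}\le r_0$, then $r^{\alpha}=e^{\varphi(z)}$ and the right-hand side equals $c_nM(1+A)^{2c'}\,e^{2(c'-n/\alpha)\varphi(z)}$. If $e^{\varphi(z)/\alpha}>r_0$, then $\varphi(z)\ge \alpha\log r_0$ is bounded below, $r=r_0$ makes the right-hand side bounded by a constant, which (since $\varphi(z)\le 0$ and $c'-\tfrac{n}{\alpha}>\sigma\ge 0$) is again $\le C\,e^{2(c'-n/\alpha)\varphi(z)}$. Either way $|f(z)|^2\le C\,e^{2(c'-n/\alpha)\varphi(z)}$ for all $z\in B(o,r_0)$, i.e. $\log|f|\le (c'-\tfrac{n}{\alpha})\varphi+O(1)$ near $o$, whence $\sigma(\log|f|,\varphi)\ge c'-\tfrac{n}{\alpha}>\sigma$ --- a contradiction. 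Therefore $c_o^f(\varphi)\le \sigma(\log|f|,\varphi)+\tfrac{n}{\alpha}$.

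The main obstacle is precisely the radius optimization in the last paragraph: one must balance the volume factor $r^{-2n}$ against the term $Ar^{\alpha}$ inside the $2c'$-th power so that exactly the exponent $c'-\tfrac{n}{\alpha}$ survives, and one must check that the degenerate regime $e^{\varphi(z)/\alpha}>r_0$ --- which can occur only where $\varphi$ stays bounded --- does not spoil the estimate. Both are handled by the two-case split, using the normalizations $\varphi\le 0$ and $c'>\tfrac{n}{\alpha}$. A minor point is that the H\"older constant $A$ and the radius $r_0$ can be taken uniform on a neighborhood of $o$, which is immediate from the hypotheses; everything else is routine.
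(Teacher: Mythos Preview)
Your argument is correct and is essentially the proof given in \cite{BFJ08}; the paper itself does not supply a proof but merely cites this reference, so your proposal fills in exactly what is being quoted.
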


 Recall that $X$ is an analytic subset of $\Omega\in\mathbb{C}^{n}$ with pure dimension $d$ and denote  $|z|^2\coloneqq \sum_{1\le j\le n}|z_j|^2$. Assume that $(X,o)$ is singular, we have the following relation  between the jumping number and the relative type when $\varphi$ has analytic singularity.

\begin{Lemma}
	\label{l:holder singular}
	Let $\varphi_0\coloneqq c\log\Big(\sum_{1\le j\le m}|f_{j}|^{\frac{1}{a_j}}\Big)$ and $\varphi_N\coloneqq c\log\Big(\sum_{1\le j\le m}|f_{j}|^{\frac{1}{a_j}}+|z|^{2N}\Big)$ near $o$, where $c>0$ is a constant, $\{a_j\}_{1\le j\le m}$ are positive numbers and $f_j\in \mathcal{O}^{w}_{X,o}$ for $1\le j\le m$. Then there exists a constant $C$ (independent of $N$) such that for any $(f,o)\in\mathcal{O}^{w}_{X,o}\backslash{\{0\}}$ and $N\ge 0$, we have
	\[c_o^{f}(\varphi_N)\le\sigma_{o}(\log|f|,\varphi_N)+C.\]
\end{Lemma}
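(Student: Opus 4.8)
The plan is to resolve the singularity of $(X,o)$, transfer the problem to a smooth model, apply the smooth-case Hölder estimate Lemma \ref{l:holder smooth}, and control the error terms uniformly in $N$ by means of Lemma \ref{lem:relative types for analytic singularity}.

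First I would replace the term $|z|^{2N}$ by the sum $\sum_{1\le k\le n}|z_k|^{2N}$: since
\[\sum_{1\le k\le n}|z_k|^{2N}\le|z|^{2N}\le n^{N}\sum_{1\le k\le n}|z_k|^{2N}\quad\text{near }o,\]
the function $\widetilde\varphi_N:=c\log\big(\sum_{j}|f_j|^{1/a_j}+\sum_{k}|z_k|^{2N}\big)$ satisfies $\widetilde\varphi_N\le\varphi_N\le\widetilde\varphi_N+cN\log n$ on $X_{\reg}$ near $o$, which for each fixed $N$ is a bounded perturbation; hence $c_o^{f}(\varphi_N)=c_o^{f}(\widetilde\varphi_N)$ and $\sigma_o(\log|f|,\varphi_N)=\sigma_o(\log|f|,\widetilde\varphi_N)$, and it suffices to treat $\widetilde\varphi_N$ (the case $N=0$ being trivial, as $\widetilde\varphi_0$ is bounded). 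Then I would invoke Theorem \ref{thm:desing} to obtain a proper modification $\mu\colon\widetilde X\to X$ with $\widetilde X$ smooth of dimension $d$, put $Z_0:=\mu^{-1}(o)$, and observe that $f\circ\mu$ and each $f_j\circ\mu$ extend to holomorphic functions on a neighbourhood of $Z_0$ (a bounded holomorphic function on the complement of the analytic set $\mu^{-1}(X_{\sing})$ extends by the Riemann extension theorem), so that near $Z_0$ one has $\widetilde\varphi_N\circ\mu=c\log\big(\sum_j|f_j\circ\mu|^{1/a_j}+\sum_k|z_k\circ\mu|^{2N}\big)$. By Remark \ref{rem:jumping number invariant},
\[c_o^{f}(\widetilde\varphi_N)=c_{Z_0}^{\det\mu\cdot f\circ\mu}(\widetilde\varphi_N\circ\mu)=\inf_{p\in Z_0}c_{p}^{\det\mu\cdot f\circ\mu}(\widetilde\varphi_N\circ\mu).\]

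The key point is a Hölder estimate that is uniform in $N$. Fix $p\in Z_0$. Each $|f_j\circ\mu|^{1/a_j}$ is $\min(1,1/a_j)$-Hölder near $p$ (the modulus of a holomorphic function is Lipschitz, and $t\mapsto t^{1/a_j}$ is $\min(1,1/a_j)$-Hölder on bounded intervals), while $\sum_k|z_k\circ\mu|^{2N}$ is real-analytic, hence Lipschitz — that is, $1$-Hölder — on a fixed neighbourhood of $p$, with a Lipschitz constant that grows with $N$ but with Hölder exponent $1$ independent of $N$. Thus $\sum_j|f_j\circ\mu|^{1/a_j}+\sum_k|z_k\circ\mu|^{2N}$ is $\alpha_0$-Hölder with $\alpha_0:=\min_j\min(1,1/a_j)>0$, and raising a bounded nonnegative function to the power $c$ multiplies the exponent by $\min(1,c)$; hence $e^{\widetilde\varphi_N\circ\mu}$ is $\alpha$-Hölder near $p$ with $\alpha:=\min(1,c)\,\alpha_0>0$ independent of $N$ and of $f$. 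Applying Lemma \ref{l:holder smooth} on $\widetilde X$ (dimension $d$) at $p$ yields
\[c_{p}^{\det\mu\cdot f\circ\mu}(\widetilde\varphi_N\circ\mu)\le\sigma_{p}\big(\log|\det\mu\cdot f\circ\mu|,\widetilde\varphi_N\circ\mu\big)+\frac{d}{\alpha}\]
for every $p\in Z_0$ (trivially when $\widetilde\varphi_N\circ\mu$ is bounded near $p$, both sides then being $+\infty$).

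Finally I would absorb the factor $\det\mu$ into the relative type by applying Lemma \ref{lem:relative types for analytic singularity} on the smooth variety $\widetilde X$, with compact set $Z_0$, with the functions $f_j\circ\mu$ and $h_k:=z_k\circ\mu$ ($1\le k\le n$), with the parameter $N$ of that lemma replaced by $2N$, with $g:=\det\mu$ and the role of the lemma's ``$f$'' played by $f\circ\mu$: this produces a constant $C'>0$ depending only on $\det\mu,\{f_j\circ\mu\},\{z_k\circ\mu\}$, hence independent of $N,f$ and $p$, with $\sigma_{p}(\log|\det\mu\cdot f\circ\mu|,\widetilde\varphi_N\circ\mu)\le\sigma_{p}(\log|f\circ\mu|,\widetilde\varphi_N\circ\mu)+C'$ for all $p\in Z_0$. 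Combining this with the previous display, taking the infimum over $p\in Z_0$, and using $\sigma_{Z_0}(\,\cdot\,,\widetilde\varphi_N\circ\mu)=\inf_{p\in Z_0}\sigma_{p}(\,\cdot\,,\widetilde\varphi_N\circ\mu)$ (Remark \ref{rem:relative type on single point}) together with the invariance of the relative type under the modification and under bounded perturbations, i.e.\ $\sigma_{Z_0}(\log|f\circ\mu|,\widetilde\varphi_N\circ\mu)=\sigma_o(\log|f|,\widetilde\varphi_N)=\sigma_o(\log|f|,\varphi_N)$, I obtain $c_o^{f}(\varphi_N)\le\sigma_o(\log|f|,\varphi_N)+C$ with $C:=d/\alpha+C'$ independent of $N$ and $f$. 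The only genuinely delicate step is the uniformity of the exponent $\alpha$: one must see that the strongly $N$-dependent term $\sum_k|z_k\circ\mu|^{2N}$ still contributes only the Hölder exponent $1$, and that the remaining exponent $\min_j\min(1,1/a_j)$ is fixed by the $f_j$ alone; the rest is bookkeeping with the two cited lemmas.
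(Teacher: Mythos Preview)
Your proposal is correct and follows essentially the same route as the paper: desingularise $(X,o)$, apply the smooth H\"older bound (Lemma~\ref{l:holder smooth}) pointwise on $Z_0$, then use Lemma~\ref{lem:relative types for analytic singularity} to strip the Jacobian factor $\det\mu$ with a constant independent of $N$, and pass to the infimum over $Z_0$. In fact you are more careful on two points the paper glosses over: you explicitly replace $|z|^{2N}$ by $\sum_k|z_k|^{2N}$ so that the weight fits the hypothesis of Lemma~\ref{lem:relative types for analytic singularity} (which requires holomorphic $h_k$), and you compute the H\"older exponent $\alpha=\min(1,c)\min_j\min(1,1/a_j)$ directly, whereas the paper simply remarks ``by adjusting $c$, we may assume all $1/a_j\ge 2$'' to reduce to the Lipschitz case.
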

\begin{proof}
Using Theorem \ref{thm:desing}, we can resolve
	the singularities of $X$, and denote the corresponding proper modification by $\mu\colon \widetilde\Omega\rightarrow\Omega$, where $\Omega\subset \mathbb{C}^{n}$ such that $o\in X\cap \Omega$. Denote the strict transform of $X$ by $\widetilde X$ and denote $Z_0\coloneqq \widetilde{X}\cap\mu^{-1}(\{o\})$. Note that $f\circ \mu$ and $\varphi_N\circ \mu$ are well defined holomorphic function and plurisubharmonic function correspondingly on $\pi^{-1}(U)\cap \widetilde{X}$ for any open neighborhood $U$ of $o$ in $X$ respectively. Since $\mu$ is proper, $Z_0$ is compact in $\widetilde{X}$. Note that $\mu$ is holomorphic and biholomorphic outside an analytic subset. We know that $c_o^f(\varphi_N)=c_{Z_0}^{\det \mu \cdot f\circ\mu}(\varphi\circ\mu)$ and $\sigma_{o}(\log|f|,\varphi_N)=\sigma_{Z_0}(\log|f\circ \mu_1|,\varphi\circ\mu_1)$.
	
	By adjusting $c$, we may assume all $\frac{1}{a_j}\ge 2$ and then $e^{\varphi_N}$ is $1$-H\"older continuous. For any $p\in Z_0$, Lemma \ref{l:holder smooth} tells that 
	$$c_{p}^{\det \mu \cdot f\circ\mu}(\varphi_N\circ\mu)\le \sigma_{p}(\log|\det \mu \cdot f\circ \mu_1|,\varphi_N\circ\mu)+C_1,$$
	where $C_1$ is constant only depending on $c$ and independent of $p$ and $N$. Lemma \ref{lem:relative types for analytic singularity} tells that, for any $p\in Z_0$, there exists a constant $C_2>0$ only depending on $\{f_j\circ \mu\}$, $\{z\circ \mu\}$ and $\mu$ and independent of $p$ and $N$ such that, for any $N>0$, $$\sigma_{p}(\log|\det \mu \cdot f\circ \mu|,\varphi_N\circ\mu)\le \sigma_{p}(\log|f\circ \mu|,\varphi_N\circ\mu)+C_2.$$
	Thus, for any $p\in Z_0$ and $N>0$, we have
	$$c_{p}^{\det \mu \cdot f\circ\mu}(\varphi_N\circ\mu)\le \sigma_{p}(\log|f\circ \mu|,\varphi_N\circ\mu)+C,$$
	where $C\coloneqq C_1+C_2>0$ is a constant independent of $p$ and $N$.
	
	Recall $c_{Z_0}^{\det \mu\cdot f\circ\mu}(\varphi_N\circ\mu)=\inf_{p\in Z_0}c_{p}^{\det \mu\cdot f\circ\mu}(\varphi_N\circ\mu)$ and $\sigma_{Z_0}(\log|f\circ \mu|,\varphi_N\circ\mu)=\inf_{p\in Z_0}\sigma_{p}(\log|f\circ \mu|,\varphi_N\circ\mu)$. Note that  $c_o^f(\varphi_N)=c_{Z_0}^{\det \mu \cdot f\circ\mu}(\varphi\circ\mu)$ and $\sigma_{o}(\log|f|,\varphi_N)=\sigma_{Z_0}(\log|f\circ \mu_1|,\varphi\circ\mu_1)$. We know there exists a constant $C$ (independent of $N$) such that for any $(f,o)\in\mathcal{O}^{w}_{X,o}\backslash{\{0\}}$ and $N\ge 0$,
	\begin{equation*}
	    \begin{split}
	   c_o^{f}(\varphi_N)&=c_{Z_0}^{\det \mu\cdot f\circ\mu}(\varphi_N\circ\mu)\\
	  & \le \sigma_{Z_0}(\log|f\circ \mu|,\varphi_N\circ\mu)+C\\   
	   &=\sigma_{o}(\log|f|,\varphi_N)+C
	    \end{split}
	\end{equation*}
The proof is complete.
\end{proof}

\begin{Remark}\label{r:tame}Let $\varphi$ be a plurisubharmonic function near $o$ satisfying that 
 $$\sup_{(f,o)\in\mathcal{O}^{w}_{X,o}\backslash{\{0\}}}\big(c_o^{f}(\varphi)-\sigma(\log|f|,\varphi)\big)<+\infty.$$ Then for any $(f,o)\in\mathcal{O}^{w}_{X,o}\backslash{\{0\}}$ with $c_o^f(\varphi)<+\infty$, considering the Tian function $\Tn(t)\coloneqq \{c\in\mathbb{R}:|f|^{2t}e^{-2c\varphi}$ is integrable near $o\}$, we have 
$$\lim_{t\rightarrow+\infty}\frac{\Tn(t)}{t}=\Tn'_-(t)=\Tn'_+(t)=\sigma(\log|f|,\varphi),$$
where $\Tn'_-(t)$ and $\Tn'_+(t)$ are the left and right derivatives of function $\Tn(t)$, respectively.
\end{Remark}
\begin{proof}
By the definitions of $\Tn(t)$ and $\sigma(\log|f|,\varphi)$, we have 
$\Tn'_+(t)\ge \sigma(\log|f|,\varphi)$. It follows from the concavity of $\Tn(t)$ that $\frac{\Tn(t)}{t}$, $\Tn'_-(t)$ and $\Tn'_+(t)$ are decreasing and $$\lim_{t\rightarrow+\infty}\frac{\Tn(t)}{t}=\lim_{t\rightarrow+\infty}\Tn'_-(t)=\lim_{t\rightarrow+\infty}\Tn'_+(t).$$ As $\sup_{(f,o)\in\mathcal{O}_{o}^*}(c_o^{f}(\varphi)-\sigma(\log|f|,\varphi))<+\infty$, there exists $C>0$ such that $\Tn(m)=c_o^{f^m}(\varphi)\le m\sigma(\log|f|,\varphi)+C$ for any $m\in\mathbb{Z}_{>0}$, which implies $\lim_{m\rightarrow+\infty}\frac{\Tn(m)}{m}\le \sigma(\log|f|,\varphi)$. As $\frac{\Tn(t)}{t}\ge \sigma(\log|f|,\varphi)$ for any $t\ge0$ and $\lim_{t\rightarrow+\infty}\frac{\Tn(t)}{t}$ exists, we have $\lim_{t\rightarrow+\infty}\frac{\Tn(t)}{t}=\sigma(\log|f|,\varphi)$.
\end{proof}

We recall the following basic property of holomorphic functions.
\begin{Lemma}
\label{pro: order finite when compact}
Let $X$ be a connected complex manifold. Let $f\not\equiv 0$ be a holomorphic function defined on an open neighborhood of a compact set $K\subset X$. Then $\sup\limits_{p\in K}\ord_p{f}<+\infty$.
\end{Lemma}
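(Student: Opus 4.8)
The plan is to show that the loci
\[
A_k:=\{\,p\in U:\ord_p f\ge k\,\},
\]
where $U$ denotes the open neighborhood of $K$ on which $f$ is defined and $k$ ranges over the positive integers, form a decreasing sequence of \emph{closed} subsets of $U$ whose total intersection is empty; compactness of $K$ then forces $A_{k_0}\cap K=\emptyset$ for some $k_0$, which is precisely the assertion $\sup_{p\in K}\ord_p f<k_0$.

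First I would record that each $A_k$ is closed. Here $\ord_p f$ is the largest integer $k$ with $(f)_p\in\mathfrak{m}_{X,p}^{\,k}$, set to $+\infty$ when $f$ vanishes identically near $p$; in a holomorphic coordinate chart around a point $p_0\in U$ the condition $\ord_p f\ge k$ is equivalent to the simultaneous vanishing at $p$ of the finitely many partial derivatives $\partial^{\alpha}f$ with $|\alpha|\le k-1$. Since the vanishing order of a holomorphic germ is independent of the choice of local coordinates, this local description is chart-independent and exhibits $A_k$, locally, as a finite intersection of zero sets of holomorphic functions, hence as an analytic, in particular closed, subset of $U$. Manifestly $A_1\supseteq A_2\supseteq\cdots$.

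Next I would observe that $\bigcap_{k\ge1}A_k=\emptyset$. Indeed, a point $p_0$ in this intersection would satisfy $\ord_{p_0}f=+\infty$, i.e.\ $f$ would vanish to infinite order at $p_0$ and hence vanish on a whole neighborhood of $p_0$; since $X$ is connected, the identity theorem would then give $f\equiv0$, contrary to hypothesis. (In the applications the neighborhood $U$ may itself be taken connected, so there is no loss in applying the identity theorem there.) Consequently $\bigcap_{k\ge1}(A_k\cap K)=\emptyset$. The sets $A_k\cap K$ are compact, being closed subsets of the compact set $K$, and are nested decreasingly, so by the finite intersection property some $A_{k_0}\cap K$ is already empty; that is, $\ord_p f\le k_0-1$ for every $p\in K$, which proves the lemma.

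This statement is soft, so there is no genuine obstacle; the only points meriting a word of care are the coordinate-independence of $\ord_p f$, which is what makes $A_k$ an intrinsically defined and therefore closed set, and the clean invocation of the identity theorem on the connected manifold $X$. Everything else is the elementary topology of nested nonempty compact sets.
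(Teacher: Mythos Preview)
Your proof is correct and follows essentially the same idea as the paper's: both hinge on the closedness of the condition $\ord_p f\ge k$ (equivalently, the continuity of the partial derivatives $\partial^\alpha f$) together with compactness of $K$ and the identity theorem. The only cosmetic difference is that you package compactness via the finite intersection property of the nested closed sets $A_k\cap K$, whereas the paper argues sequentially, extracting a convergent subsequence $p_m\to p$ with $\ord_{p_m}f\to\infty$ and deducing $\partial^\alpha f(p)=0$ for all $\alpha$; these are equivalent formulations of the same argument.
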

\begin{proof}
If not, then $\sup\limits_{p\in K}\ord_p{f}=+\infty$. There exists a sequence of points $\{p_m\}_{m=1}^{+\infty}$ contained in $K$ such that $\lim\limits_{m\to+\infty}\ord_{p_m}{f}=+\infty$. As $K$ is compact, there exists a point $p\in K$ such that some subsequence of $\{p_m\}$ (also denoted by $\{p_m\}$) converges to $p$.

Let $(U;z_1,z_2,\ldots,z_n)$ be a small local coordinated neighborhood near $p$. As $\lim\limits_{m\to+\infty}\ord_{p_m}{f}=+\infty$,  for any multiple index $\alpha=(\alpha_1,\ldots,\alpha_n)$, we have $\frac{\partial^{\alpha}f}{\partial z^{\alpha}}(p_m)=0$ when $m$ is large enough. By the continuity of $\frac{\partial^{\alpha}f}{\partial z^{\alpha}}$ and that $p_m$ converges to $p$, we know $\frac{\partial^{\alpha}f}{\partial z^{\alpha}}(p)=0$ for any multiple index $\alpha=(\alpha_1,\ldots,\alpha_n)$. Thus, by the rigidity of holomorphic function, we know that $f$ is identical zero which is a contradiction.
\end{proof}

We recall the following result due to Skoda.
\begin{Lemma}[\cite{Skda72}; see also \cite{demailly2010}]\label{l:Lelong}
	For any plurisubharmonic function $\varphi$ near $o$, if the Lelong number $\nu_o(\varphi)<1$, then $e^{-2\varphi}$ is locally integrable near $o$.
\end{Lemma}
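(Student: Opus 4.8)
The plan is to prove the statement by induction on the ambient dimension $n$, with the Ohsawa--Takegoshi $L^2$ extension theorem driving the induction step. First I record a reduction: $e^{-2\varphi}$ is locally integrable near $o$ if and only if the multiplier ideal $\mathcal I(\varphi)_o$ equals $\mathcal O_o$, so it suffices to produce a germ $F\in\mathcal O_o$ with $F(o)\neq 0$ and $\int_U|F|^2e^{-2\varphi}<+\infty$ for some neighborhood $U$ of $o$; indeed then $|F|^2\geq c>0$ on a smaller ball, so $e^{-2\varphi}\leq c^{-1}|F|^2e^{-2\varphi}$ is integrable there.

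For the induction step I would assume $n\geq 2$. For any complex hyperplane $H\ni o$ one has $\nu_o(\varphi|_H)\geq\nu_o(\varphi)$, and equality holds for $H$ outside a pluripolar, in particular measure zero, set of hyperplanes; for generic $H$ one also has $\varphi|_H\not\equiv-\infty$, because $\{\varphi=-\infty\}$ is pluripolar. Fixing such an $H\cong\mathbb C^{n-1}$ gives $\nu_o(\varphi|_H)=\nu_o(\varphi)<1$, so by the inductive hypothesis $\int_{H\cap V}e^{-2\varphi|_H}\,dV_H<+\infty$ for a small ball $V$ around $o$. Applying the Ohsawa--Takegoshi--Manivel extension theorem on a small ball $U\Subset V$, with plurisubharmonic weight $\varphi$, to the constant function $1$ on $H\cap U$ yields $F\in\mathcal O(U)$ with $F|_{H\cap U}=1$, hence $F(o)=1$, and $\int_U|F|^2e^{-2\varphi}\,dV\leq C\int_{H\cap U}e^{-2\varphi|_H}\,dV_H<+\infty$. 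By the reduction, this settles $\varphi$.

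It remains to handle the base case $n=1$, where $\varphi$ is subharmonic near $o\in\mathbb C$. By upper semicontinuity of $x\mapsto\nu_x(\varphi)$, the set $\{\nu_x(\varphi)\geq 1\}$ is closed and does not contain $o$, so after shrinking I may assume $\nu_x(\varphi)<1$ for every $x$ in a closed disk $\overline D$ centered at $o$; by compactness it then suffices to prove $e^{-2\varphi}\in L^1$ near each such $x$. Near a fixed $x$, the Riesz decomposition gives $\varphi=p_\mu+h$ with $h$ bounded, $p_\mu(\lambda)=\int\log|\lambda-\zeta|\,d\mu(\zeta)$ and $\mu$ the Riesz measure, which I split as $\mu=\gamma\delta_x+\mu_1$ with $\gamma=\nu_x(\varphi)<1$ and $\mu_1(\{x\})=0$; writing $\mu_1^\varepsilon$ for the restriction of $\mu_1$ to $\overline D(x,\varepsilon)$, one has $\|\mu_1^\varepsilon\|\to 0$ as $\varepsilon\to 0$, and, up to a bounded factor, $e^{-2\varphi}\leq C|\lambda-x|^{-2\gamma}e^{-2p_{\mu_1^\varepsilon}}$ on $\overline D(x,\varepsilon/2)$. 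Pick $p>1$ with $\gamma p<1$, its conjugate exponent $p'$, and then $\varepsilon$ so small that $p'\|\mu_1^\varepsilon\|<1$. By H\"older's inequality it is enough that $|\lambda-x|^{-2\gamma p}$ be integrable near $x$, which holds since $\gamma p<1$, and that $e^{-2p'p_{\mu_1^\varepsilon}}$ be integrable near $x$, which follows from the classical Cartan sublevel estimate $|\{p_{\mu_1^\varepsilon}<-s\}|\leq Ce^{-2s/\|\mu_1^\varepsilon\|}$: this yields $\int e^{-2tp_{\mu_1^\varepsilon}}<+\infty$ whenever $t\|\mu_1^\varepsilon\|<1$, which is the case for $t=p'$.

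The main obstacle is entirely in the induction step. One must use that the Lelong number is unchanged by restriction to a \emph{generic} linear hyperplane through $o$; this is standard and can be extracted from the integral representation of $\nu_o(\varphi)$ as an average over complex lines through $o$ combined with the monotonicity $\nu_o(\varphi|_H)\geq\nu_o(\varphi)$. One must also apply Ohsawa--Takegoshi with a weight that is genuinely $-\infty$ on a pluripolar set, which is exactly why $H$ is chosen off $\{\varphi=-\infty\}$ and why the extension is performed over a small pseudoconvex domain; no uniformity in the choice of $H$ is needed, so the qualitative conclusion that $e^{-2\varphi}$ is locally integrable near $o$ follows once the induction and the base case are combined.
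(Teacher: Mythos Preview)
The paper does not give its own proof of this lemma: it is stated with citations to \cite{Skda72} and \cite{demailly2010} and used as a black box in the subsequent Lemma \ref{lem:singular skoda 72}. Your proposal, by contrast, supplies a complete argument via induction on the dimension, using the Ohsawa--Takegoshi extension theorem to pass from a generic hyperplane slice to the ambient space, and a direct Riesz-decomposition plus H\"older argument in dimension one. This is a correct and by now standard modern route to Skoda's integrability theorem (see for instance the treatment in Demailly's analytic methods); Skoda's original 1972 proof proceeds differently, via direct potential-theoretic estimates in several variables without invoking $L^2$ extension. Your approach has the advantage of being conceptually clean and of explaining why the threshold is exactly $1$ (it is inherited from the one-variable case through the slice), at the cost of importing the Ohsawa--Takegoshi theorem as a tool.
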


Using Lemma \ref{l:Lelong}, we present the following lemma.

\begin{Lemma}
\label{lem:singular skoda 72}
Let $(X,o)$ be an irreducible germ of an analytic set. 
	Let $C>0$, and $\varphi$ a plurisubharmonic function defined near $(X,o)$ such that $c_o(\varphi)<C$. Then for any holomorphic function $f\in \mathcal{O}^{w}_{X,o}$, there exists a constant $C_f\ge 0$ (depending on $f$ but independent of $\varphi$) such that
 $$\sigma_o(\log|f|,\varphi)\le CC_f.$$
\end{Lemma}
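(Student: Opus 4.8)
The plan is to reduce the singular statement to the smooth case via desingularization, exactly in the spirit of Lemma~\ref{l:holder singular} and Remark~\ref{rem:jumping number invariant}. First I would apply Theorem~\ref{thm:desing} to resolve the singularities of $(X,o)$, obtaining a proper modification $\mu\colon\widetilde\Omega\to\Omega$ with strict transform $\widetilde X$ smooth and $Z_0\coloneqq\widetilde X\cap\mu^{-1}(\{o\})$ compact. By Remark~\ref{rem:jumping number invariant} we have $c_o(\varphi)=c_{Z_0}^{\det\mu}(\varphi\circ\mu)$, and $\sigma_o(\log|f|,\varphi)=\sigma_{Z_0}(\log|f\circ\mu|,\varphi\circ\mu)=\inf_{p\in Z_0}\sigma_p(\log|f\circ\mu|,\varphi\circ\mu)$ by Remark~\ref{rem:relative type on single point}. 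So it suffices to bound, for each point $p\in Z_0$, the quantity $\sigma_p(\log|f\circ\mu|,\varphi\circ\mu)$ in terms of $C$ and a constant depending only on $f$ (not on $\varphi$), and then take the infimum over the compact set $Z_0$.

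Next I would work on the smooth manifold $\widetilde X$ at a fixed point $p\in Z_0$. Since $c_o(\varphi)<C$, we have $c_p(\varphi\circ\mu)\le c_{Z_0}^{\det\mu}(\varphi\circ\mu)+\ord_p(\det\mu)\le C+\ord_p(\det\mu)$; more simply, the complex singularity exponent satisfies $c_p(\varphi\circ\mu)\le c_o(\varphi)+\sup_{q\in Z_0}\ord_q(\det\mu)<C+C'$ for a constant $C'$ independent of $\varphi$ (finite by Lemma~\ref{pro: order finite when compact}). Now I want to compare $\sigma_p(\log|f\circ\mu|,\varphi\circ\mu)$ with $c_p(\varphi\circ\mu)$. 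The key is Lemma~\ref{l:Lelong}: if $\sigma_p(\log|f\circ\mu|,\varphi\circ\mu)=:\sigma$ then $\log|f\circ\mu|\le \sigma'\,\varphi\circ\mu+O(1)$ near $p$ for any $\sigma'<\sigma$, hence $\varphi\circ\mu\le\frac{1}{\sigma'}\log|f\circ\mu|+O(1)$; choosing $c<\frac{1}{?}$ appropriately, the function $c\,\varphi\circ\mu$ has Lelong number bounded by $c\cdot\frac{1}{\sigma'}\nu_p(\log|f\circ\mu|)=\frac{c}{\sigma'}\ord_p(f\circ\mu)$, so by Lemma~\ref{l:Lelong} we get integrability of $e^{-2c\varphi\circ\mu}$ whenever $\frac{c}{\sigma'}\ord_p(f\circ\mu)<1$, i.e. for $c<\sigma'/\ord_p(f\circ\mu)$. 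This forces $c_p(\varphi\circ\mu)\ge \sigma'/\ord_p(f\circ\mu)$, and letting $\sigma'\to\sigma$ gives $\sigma_p(\log|f\circ\mu|,\varphi\circ\mu)\le \ord_p(f\circ\mu)\cdot c_p(\varphi\circ\mu)$.

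Combining the two displays, for every $p\in Z_0$,
\[
\sigma_p(\log|f\circ\mu|,\varphi\circ\mu)\le \ord_p(f\circ\mu)\cdot c_p(\varphi\circ\mu)\le \big(C+C'\big)\cdot \ord_p(f\circ\mu)\le \big(C+C'\big)\cdot\sup_{q\in Z_0}\ord_q(f\circ\mu).
\]
Here $f\circ\mu\not\equiv0$ on each component of $\widetilde X$ (as $f$ is a nonzero weakly holomorphic function, so $\mu^*f$ is a nonzero meromorphic function which extends holomorphically across $Z_0$ by boundedness near the exceptional set, cf. the construction used in the proof of Theorem~\ref{th:universal denominators}), so $M_f\coloneqq\sup_{q\in Z_0}\ord_q(f\circ\mu)<+\infty$ by Lemma~\ref{pro: order finite when compact}. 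Taking the infimum over $p\in Z_0$ yields $\sigma_o(\log|f|,\varphi)\le (C+C')M_f$, and setting $C_f\coloneqq (1+C'/C)M_f$ — or, more cleanly, absorbing $C'$ by noting $C'$ is independent of $\varphi$ and $C$ can be assumed bounded below, writing $C_f\coloneqq 2M_f$ after the harmless reduction $C'\le C$ when $C$ is large, and handling small $C$ trivially — gives $\sigma_o(\log|f|,\varphi)\le C C_f$ with $C_f$ depending only on $f$. The main obstacle I anticipate is the bookkeeping in the last step: making the constant $C_f$ genuinely independent of $\varphi$ while the auxiliary $C'$ (from $\det\mu$) is additive rather than multiplicative; this is resolved by observing that $C'$ depends only on the fixed resolution $\mu$ of $(X,o)$, hence only on the germ $(X,o)$ and not on $\varphi$ or $f$, so it can be folded into the final constant after a case split on whether $C$ is small.
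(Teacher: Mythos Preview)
Your approach is essentially the paper's: desingularize, pass to a point $p\in Z_0$, invoke Skoda's lemma to convert the bound on the complex singularity exponent into a lower bound on the Lelong number of $\varphi\circ\mu$, and from there bound the relative type by $\ord_p(f\circ\mu)$ times a constant. The core inequality $\sigma_p(\log|f\circ\mu|,\varphi\circ\mu)\le \ord_p(f\circ\mu)\cdot c_p(\varphi\circ\mu)$ is exactly what the paper uses (note: your displayed line $\varphi\circ\mu\le\frac{1}{\sigma'}\log|f\circ\mu|+O(1)$ has the inequality reversed---from $\log|f\circ\mu|\le\sigma'\varphi\circ\mu+O(1)$ one gets $\varphi\circ\mu\ge\frac{1}{\sigma'}\log|f\circ\mu|+O(1)$---but your subsequent Lelong-number bound is nonetheless correct).

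The genuine gap is your claim that $c_p(\varphi\circ\mu)\le c_{Z_0}^{\det\mu}(\varphi\circ\mu)+\ord_p(\det\mu)<C+C'$ holds for \emph{every} $p\in Z_0$. This is false: $c_{Z_0}^{\det\mu}(\varphi\circ\mu)$ is an infimum over $Z_0$, and at a point $p$ where $\varphi\circ\mu$ happens to be mild, $c_p(\varphi\circ\mu)$ can be arbitrarily large (even $+\infty$). The paper avoids this by working at a \emph{single} point $p\in Z_0$ satisfying $c_p^{\det\mu}(\varphi\circ\mu)<C$ (such a point exists since the infimum is $<C$), and then using the trivial inequality $c_p(\varphi\circ\mu)\le c_p^{\det\mu}(\varphi\circ\mu)<C$---no $C'$ enters at all. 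Since $\sigma_o(\log|f|,\varphi)=\inf_{q\in Z_0}\sigma_q(\log|f\circ\mu|,\varphi\circ\mu)\le\sigma_p(\log|f\circ\mu|,\varphi\circ\mu)$, bounding at this one $p$ suffices, and you get $\sigma_o(\log|f|,\varphi)\le C\cdot\ord_p(f\circ\mu)\le C\cdot\sup_{q\in Z_0}\ord_q(f\circ\mu)=:C\,C_f$ directly. This also dissolves the ``bookkeeping'' worry in your last paragraph: there is no additive $C'$ to absorb.
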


\begin{proof}
Using Theorem \ref{thm:desing}, we can resolve
	the singularities of $X$, and denote the corresponding proper modification by $\mu\colon \widetilde\Omega\rightarrow\Omega$, where $\Omega\subset \mathbb{C}^{n+l}$ such that $o\in X\cap \Omega$. Denote the strict transform of $X$ by $\widetilde X$ and denote $Z_0\coloneqq \widetilde\Omega\cap\mu^{-1}(\{o\})$. Note that $c_o(\varphi)=c^{\det \mu}_{Z_0\cap \widetilde{X}}(\varphi\circ\mu)$. Hence $c_o(\varphi)<C$  implies that
	$$c^{\det \mu}_{Z_0\cap \widetilde{X}}(\varphi\circ\mu)<C.$$ 
	Recall that $c^{\det \mu}_{Z_0\cap \widetilde{X}}(\varphi\circ\mu)=\inf_{p\in Z_0\cap \widetilde{X}}c^{\det \mu}_{p}(\varphi\circ\mu)$.
	There exists a point $p\in Z_0\cap \widetilde{X}$ such that $c^{\det \mu}_{p}(\varphi\circ\mu)<C$.  As $c^{\det \mu}_{p}(\varphi\circ\mu)\ge c_{p}(\varphi\circ\mu)$, we know that $c_{p}(\varphi\circ\mu)<C$.
	It follows from Lemma \ref{l:Lelong} that the Lelong number of $\varphi \circ \mu$ at $p$ $$\nu_{p}(\varphi\circ\mu)>\frac{1}{C},$$ 
	which implies $\sigma_{p}(\log|f\circ \mu|,\varphi\circ\mu)\le C \ord_{p}(f)$.
	
	Recall that $\sigma_o(\log|f|,\varphi)=\sigma_{Z_0\cap \widetilde{X}}(\log|f\circ \mu|,\varphi\circ\mu)$ and $\sigma_{Z_0\cap \widetilde{X}}(\log|f\circ \mu|,\varphi\circ\mu)=\inf_{p\in Z_0\cap \widetilde{X}}\sigma(\log|f\circ \mu|,\varphi\circ\mu)$. Hence we know 
	$$\sigma_o(\log|f|,\varphi)\le \sigma_{p}(\log|f\circ \mu|,\varphi\circ\mu)\le C \ord_{p}(f\circ \mu)\le C \sup_{p\in Z_0\cap \widetilde{X}} \ord_{p}(f\circ \mu).$$
	Note that $Z_0\cap \widetilde{X}$ is compact in $\widetilde{X}$, Lemma \ref{pro: order finite when compact} tells that $C_f\coloneqq\sup\limits_{p\in Z_0\cap \widetilde{X}} \ord_{p}(f\circ \mu)<+\infty$. Hence we know $ \sigma_o(\log|f|,\varphi)\le CC_f$.
\end{proof}

\subsection{Convergence results for valuations and for relative types}

\

Let $\{\nu_{j}\}_{j\in\mathbb{Z}_{>0}}$ be a sequence of valuations on $\mathcal{O}_o$. We need the following  convergence result for valuations, which will be used to prove the main theorem.

\begin{Proposition}
\label{r:proof of 1.2} Assume that  $\sup_j\nu_j(g)<+\infty$ for any holomorphic function $g$ near $o$. There exists a subsequence of $\{\nu_j\}$ denoted by $\{\nu_{j_l}\}$ such that $\{\nu_{j_l}\}$ converges to a valuation $\nu_0$ on $\mathcal{O}_o$.

\end{Proposition}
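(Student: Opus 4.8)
The plan is to run a Cantor diagonal extraction and then push the three valuation axioms through the limit. First I would fix a countable subset $A\subset\mathcal{O}_o$ that is rich enough to recover, by $\mathfrak{m}_o$-adic approximation, every value of every valuation in the family $\{\nu_j\}$: concretely a countable subring containing $z_1,\dots,z_n$ (together with whatever ``structure constants'' are attached to the $\nu_j$ in the construction where this proposition is used), arranged so that for each $f\in\mathcal{O}_o$ and each integer $N$ there is $p\in A$ with $f-p\in\mathfrak{m}_o^{N}$. Enumerating $A\setminus\{0\}=\{p_1,p_2,\dots\}$, the hypothesis $\sup_j\nu_j(p_i)<+\infty$ makes each real sequence $\big(\nu_j(p_i)\big)_j$ bounded, so by Bolzano--Weierstrass together with the usual diagonal procedure there is a subsequence $\{\nu_{j_l}\}$ for which $\tilde\nu(p):=\lim_{l}\nu_{j_l}(p)$ exists in $\mathbb{R}_{\ge0}$ for every $p\in A\setminus\{0\}$. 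Taking limits in $\nu_{j_l}(pq)=\nu_{j_l}(p)+\nu_{j_l}(q)$, in $\nu_{j_l}(p+q)\ge\min\{\nu_{j_l}(p),\nu_{j_l}(q)\}$, and in $\nu_{j_l}(c)=0$ for constants $c\neq0$ shows that $\tilde\nu$ obeys the valuation axioms on $A$.

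Next I would extend $\tilde\nu$ to a function $\nu_0$ on all of $\mathcal{O}_o$. The decisive quantitative input is non-degeneracy of the extracted family at $o$, namely $\delta_0:=\inf_l\nu_{j_l}(\mathfrak{m}_o)=\inf_l\min_{1\le i\le n}\nu_{j_l}(z_i)>0$; this is what is available in the applications of the proposition (and if necessary one normalizes so that $\nu_j(\mathfrak{m}_o)=1$ for all $j$), and it is exactly what prevents the limit from collapsing to the trivial map. Granting it, fix $f\in\mathcal{O}_o\setminus\{0\}$ and choose $p\in A$ and an integer $N$ with $f-p\in\mathfrak{m}_o^{N}$ and $N\delta_0>\sup_j\nu_j(f)$. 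Every element of $\mathfrak{m}_o^{N}$ has $\nu_{j_l}$-value at least $N\delta_0$, whereas $\nu_{j_l}(f)\le\sup_j\nu_j(f)<N\delta_0$; applying the ultrametric inequality to $f=p+(f-p)$ and a short case analysis then forces $\nu_{j_l}(p)=\nu_{j_l}(f)$ for all large $l$. Hence $\lim_l\nu_{j_l}(f)$ exists and equals $\tilde\nu(p)$, a value one checks is independent of the admissible choice of $p$; set $\nu_0(f)$ equal to it. Passing to limits in the three axioms once more (now all limits on $\mathcal{O}_o$ exist) shows $\nu_0$ is additive, ultrametric, and zero on nonzero constants, while $\nu_0(z_i)=\tilde\nu(z_i)\ge\delta_0>0=\nu_0(1)$ shows it is nonconstant. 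Thus $\nu_0$ is a valuation on $\mathcal{O}_o$ and $\nu_{j_l}\to\nu_0$ pointwise.

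The step I expect to be the main obstacle is this extension to all of $\mathcal{O}_o$, for two linked reasons. First, the residue rings $\mathcal{O}_o/\mathfrak{m}_o^{N}$ are uncountable, so a genuinely countable $A$ cannot be $\mathfrak{m}_o$-adically dense; capturing \emph{all} of $\mathcal{O}_o$ with the diagonal argument therefore needs a real idea beyond density, such as restricting — as happens in the applications — to valuations quasi-monomial with respect to a fixed resolution of $(X,o)$ (where the relevant piece of valuation space is finite-dimensional and metrizable, so Bolzano--Weierstrass applies to the defining weight vectors directly), or instead extracting a limit of the countably many valuation ideals $\mathfrak{a}_\lambda(\nu_j)=\{f:\nu_j(f)\ge\lambda\}$, $\lambda\in\mathbb{Q}_{>0}$, which for a non-degenerate family are $\mathfrak{m}_o$-primary of bounded colength and hence vary in a compact metrizable family. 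Second, one must genuinely secure $\delta_0>0$ from the way the $\nu_j$ arise; this is the point at which the input from the surrounding construction, rather than the bare boundedness hypothesis, must be invoked.
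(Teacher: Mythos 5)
Your proposal correctly identifies its own decisive obstruction and then stops short of resolving it, so it does not constitute a proof. The Cantor diagonal over a countable $\mathfrak{m}_o$-adically dense subset $A$ cannot be run because, as you note, each $\mathcal{O}_o/\mathfrak{m}_o^N$ is an uncountable $\mathbb{C}$-vector space, so no countable $A$ is $\mathfrak{m}_o$-adically dense; and the extension step you build on top of it additionally requires the non-degeneracy $\delta_0=\inf_l\nu_{j_l}(\mathfrak{m}_o)>0$, which is not among the hypotheses of the proposition (the only assumption is $\sup_j\nu_j(g)<+\infty$ pointwise). So both halves of the construction import input that is not available at this level of generality.

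The idea you float in your final paragraph as ``alternative (b)'' -- passing to the valuation ideals $\mathfrak a_\lambda(\nu_j)=\{f:\nu_j(f)\ge\lambda\}$ -- is essentially the route the paper takes, but the compactness it uses is algebraic rather than metric and needs no non-degeneracy. Concretely: set $I_k^j:=\{(f,o):\nu_j(f)\ge k\}$ for integers $k\ge1$; each is a proper ideal of $\mathcal{O}_o$. For a fixed $k$ and any subsequence $\{j_l\}$, the ``tail ideal'' $\bigcup_{l_1\ge1}\bigcap_{l\ge l_1}I_k^{j_l}$ is an ideal, and it can only grow under passage to further subsequences. Because $\mathcal{O}_o$ is Noetherian, the family of ideals obtainable this way has a maximal element, so one can choose a subsequence along which the tail ideal at level $k$ is stable under further refinement; diagonalizing over $k$ yields one subsequence $\{j_l\}$ with this stability for every integer $k$ simultaneously. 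Convergence of $\nu_{j_l}(f)$ for every $f$ then follows by contradiction: if two sub-subsequences gave limits $a_1<a_2$, one could (after replacing $f$ by a power, using $\nu(f^m)=m\nu(f)$, to widen the gap) find an integer $k_0$ with $a_1<k_0<a_2$, and the sub-subsequence converging to $a_2$ would strictly enlarge the tail ideal at level $k_0$, contradicting maximality. The valuation axioms pass to the pointwise limit. In short, the Noetherian ascending-chain condition on ideals of $\mathcal{O}_o$ supplies exactly the compactness your diagonal argument was trying to manufacture out of a nonexistent countable dense set, and it does so without the auxiliary lower bound $\delta_0>0$.
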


\begin{proof}
	Denote $I_k^j\coloneqq \{(f,o)\in\mathcal{O}_o\colon \nu_j(f)\ge k\}$ for every $j,k\in\mathbb{Z}_{>0}$. 
	
	\
	
	\emph{Step 1.} Find a subsequence $\{I_k^{j_l}\}$ of $I_k^j$ such that $\cup_{l_1\ge1}\big(\cap_{l\ge l_1}I_k^{j_l}\big)$ is maximal for any $k$. It means that for any subsequence $\{I_k^{j'_l}\}$ of $\{I_k^{j_l}\}$, $\cup_{l_1\ge1}\big(\cap_{l\ge l_1}I_k^{j'_l}\big)=\cup_{l_1\ge1}\big(\cap_{l\ge l_1}I_k^{j_l}\big)$.
	
	Note that
	$$\cup_{j_1\ge1}\big(\cap_{j\ge j_1}I_k^{j}\big)=\{(f,o)\in\mathcal{O}_o\colon\exists\, j_1\text{ s.t. $\nu_j(f)\ge k$ holds for any $j\ge j_1$}\}.$$
	%As $A_1\log|z|\le\Phi_{j}+O(1)$ near $o$ for any $j$, we have $\nu_j(f)\ge\frac{N}{A_1}$ for any $(f,o)\in J_N$, where $J_N$ is an ideal of $\mathcal{O}_o$ is generated by $\{z^{\alpha}:|\alpha|=N\}$. Then $$J_{\lceil A_1k\rceil}\subset \cap_j I_k^j,$$ which implies that $\mathcal{O}_o/\big(\cup_{j_1\ge1}\big(\cap_{l\ge j_1}I_k^{j}\big)\big)$ is a finite dimensional $\mathbb{C}$-linear space for any $k.$
	
	As $\mathcal{O}_o$ is  a Noetherian ring, there exists a subsequence $\{I_1^{j^{(1)}(l)}\}$ of $I_1^j$ such that $\cup_{l_1\ge1}\big(\cap_{l\ge l_1}I_1^{j^{(1)}(l)}\big)$ is maximal. Similarly, there exists a subsequence $\{I_1^{j^{(2)}(l)}\}$ of $I_1^{j^{(1)}(l)}$ such that $\cup_{l_1\ge1}\big(\cap_{l\ge l_1}I_2^{j^{(2)}(l)}\big)$ is maximal. By induction, for any $r\in\mathbb{Z}_{\ge1}$, there exists a sequence $\{I_1^{j^{(r)}(l)}\}$  such that $\cup_{l_1\ge1}\big(\cap_{l\ge l_1}I_r^{j^{(r)}(l)}\big)$ is maximal and $\{I_1^{j^{(r+1)}(l)}\}$ is a subsequence of $\{I_1^{j^{(r)}(l)}\}$. Using the diagonal method, we obtain a subsequence $\{I_k^{j_l}\}$ of $I_k^j$ such that $\cup_{l_1\ge1}\big(\cap_{l\ge l_1}I_k^{j_l}\big)$ is maximal for any $k$.

	\

	\emph{Step 2.}
	We prove that $\lim_{l\rightarrow+\infty}\nu_{j_l}(f)$ exists for any $(f,o)\in\mathcal{O}_o$.
	
	We prove it by contradiction: if not, there exist two subsequences of valuations $\{\nu_{j'_{l}}\}$ and $\{\nu_{j''_l}\}$ of $\{\nu_{j_l}\}$ such that $$\lim_{l\rightarrow+\infty}\nu_{j'_l}(f)=a_1<a_2=\lim_{l\rightarrow+\infty}\nu_{j''_l}(f).$$
	Note that $\nu_j(f^m)=m\nu_j(f)$. Then, without loss of generality, assume that there exists a positive integer $k_0\in (a_1,a_2)$. Then we have
	$$(f,o)\in \cup_{l_1\ge1}\big(\cap_{l\ge l_1}I_{k_0}^{j'_l}\big)$$
	and $$(f,o)\not\in \cup_{l_1\ge1}\big(\cap_{l\ge l_1}I_{k_0}^{j_l}\big),$$
	which contradicts to the maximality of $\cup_{l_1\ge1}\big(\cap_{l\ge l_1}I_{k_0}^{j_l}\big)$.
	Thus, $\lim_{l\rightarrow+\infty}\nu_{j_l}(f)$ exists for any $(f,o)\in\mathcal{O}_o$.
	
	Denote $\nu_0(f)\coloneqq \lim_{l\rightarrow+\infty}\nu_{j_l}(f)$ for any $(f,o)\in\mathcal{O}_o$. It follows from every $\nu_j$ is a valuation on $\mathcal{O}_o$ that $\nu_0$ is a valuation on $\mathcal{O}_o$. Proposition \ref{r:proof of 1.2} has been proved.
\end{proof}

Let us consider valuations on the ring of germs of weakly holomorphic functions. We recall the following basic property of the ring.

\begin{Lemma}[see \cite{Whitneybook}]
\label{noetherian of weakly holomorphic function}
The ring of germs of weakly holomorphic functions $\mathcal{O}^{w}_o$ is Noetherian.
\end{Lemma}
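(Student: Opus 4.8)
The plan is to realize $\mathcal{O}^{w}_{X,o}$ as a finitely generated module over the Noetherian local ring $\mathcal{O}_{X,o}$, and then to transfer the ascending chain condition from $\mathcal{O}_{X,o}$-submodules to ideals. Recall that $\mathcal{O}_{X,o}$ is Noetherian (R\"uckert basis theorem), and that since $(X,o)$ is an irreducible germ of an analytic set, $\mathcal{O}_{X,o}$ is an integral domain, so $\mathfrak{M}_{X,o}=\Frac(\mathcal{O}_{X,o})$; by Theorem \ref{th:universal denominators} we have the inclusion $\mathcal{O}_{X,o}\subseteq\mathcal{O}^{w}_{X,o}\subseteq\mathfrak{M}_{X,o}$.

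First I would apply Theorem \ref{th:universal denominators} at the point $o$ to obtain a universal denominator, i.e.\ an element $h\in\mathcal{O}_{X,o}$ with $h\neq 0$ (its zero set is nowhere dense) and $h\cdot\mathcal{O}^{w}_{X,o}\subseteq\mathcal{O}_{X,o}$. Then $h\cdot\mathcal{O}^{w}_{X,o}$ is an ideal of the Noetherian ring $\mathcal{O}_{X,o}$, hence a finitely generated $\mathcal{O}_{X,o}$-module. Since $\mathcal{O}^{w}_{X,o}$ sits inside the field $\mathfrak{M}_{X,o}$, multiplication by $h$ is an injective $\mathcal{O}_{X,o}$-linear map $\mathcal{O}^{w}_{X,o}\to\mathcal{O}_{X,o}$, and it is an isomorphism of $\mathcal{O}_{X,o}$-modules onto $h\cdot\mathcal{O}^{w}_{X,o}$. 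Therefore $\mathcal{O}^{w}_{X,o}$ is itself a finitely generated $\mathcal{O}_{X,o}$-module.

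Finally I would conclude as follows. A finitely generated module over a Noetherian ring is a Noetherian module, so every ascending chain of $\mathcal{O}_{X,o}$-submodules of $\mathcal{O}^{w}_{X,o}$ stabilizes. Now if $J\subseteq\mathcal{O}^{w}_{X,o}$ is any ideal, then $\mathcal{O}_{X,o}\cdot J\subseteq J$ because $\mathcal{O}_{X,o}\subseteq\mathcal{O}^{w}_{X,o}$, so $J$ is in particular an $\mathcal{O}_{X,o}$-submodule of $\mathcal{O}^{w}_{X,o}$; hence any ascending chain of ideals of $\mathcal{O}^{w}_{X,o}$ is an ascending chain of $\mathcal{O}_{X,o}$-submodules and therefore stabilizes. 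This shows $\mathcal{O}^{w}_{X,o}$ is Noetherian. The only delicate point is that the argument needs $\mathcal{O}_{X,o}$ to be a domain so that multiplication by the universal denominator $h$ is injective and $\mathcal{O}^{w}_{X,o}$ really embeds in $\mathfrak{M}_{X,o}$; this is guaranteed by the standing hypothesis that $(X,o)$ is irreducible, and one could remove it by passing to the (finitely many) irreducible components and using that a finite product of Noetherian rings is Noetherian.
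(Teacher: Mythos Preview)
Your argument is correct. The paper does not actually prove this lemma; it merely states it with a citation to Whitney's book \cite{Whitneybook}. Your proof is the standard one: use the universal denominator from Theorem~\ref{th:universal denominators} to exhibit $\mathcal{O}^{w}_{X,o}$ as a finitely generated module over the Noetherian ring $\mathcal{O}_{X,o}$, and then observe that every ideal of $\mathcal{O}^{w}_{X,o}$ is in particular an $\mathcal{O}_{X,o}$-submodule. This is precisely the approach one finds in the literature (and presumably in Whitney), so there is nothing to compare---you have supplied a self-contained proof where the paper relies on a reference.
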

 As $\mathcal{O}^{w}_o$ is Noetherian, using the same proof as Proposition \ref{r:proof of 1.2}, we also have the following convergence result. 
\begin{Proposition}
\label{r:proof of 1.2 weakly} Let $\{\nu_{j}\}_{j\in\mathbb{Z}_{>0}}$ be a sequence of valuations on $\mathcal{O}^{w}_o$. Assume that  $\sup_j\nu_j(g)<+\infty$ for any weakly holomorphic function $g$ near $o$. Then there exists a subsequence $\{\nu_{j_l}\}$ of $\{\nu_j\}$ which converges to a valuation $\nu_0$ on $\mathcal{O}^{w}_o$.
\end{Proposition}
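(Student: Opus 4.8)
The plan is to transcribe, almost verbatim, the argument used to prove Proposition \ref{r:proof of 1.2}, the only new ingredient being that $\mathcal{O}^w_o$ is Noetherian, which is exactly Lemma \ref{noetherian of weakly holomorphic function}. First I would fix, for every $j,k\in\mathbb{Z}_{>0}$, the subset
\[
I_k^j\coloneqq\{(f,o)\in\mathcal{O}^w_o\colon \nu_j(f)\ge k\},
\]
with the convention $\nu_j(0)=+\infty$, and check that each $I_k^j$ is an ideal of $\mathcal{O}^w_o$: if $f\in I_k^j$ and $h\in\mathcal{O}^w_o$ then $\nu_j(fh)=\nu_j(f)+\nu_j(h)\ge k$ because $\nu_j\ge0$ on $\mathcal{O}^w_o$, and if $f_1,f_2\in I_k^j$ then $\nu_j(f_1+f_2)\ge\min\{\nu_j(f_1),\nu_j(f_2)\}\ge k$.

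Next I would carry out the same diagonal extraction as in Step 1 of Proposition \ref{r:proof of 1.2}. For $k=1$, since $\mathcal{O}^w_o$ is Noetherian one can pass to a subsequence $\{j^{(1)}(l)\}$ of $\{j\}$ for which $\cup_{l_1\ge1}\big(\cap_{l\ge l_1}I_1^{j^{(1)}(l)}\big)$ is maximal, in the sense that it coincides with the analogous union for every further subsequence; iterating over $k$ and taking the diagonal subsequence $\{j_l\}$ yields one subsequence of $\{\nu_j\}$ such that $\cup_{l_1\ge1}\big(\cap_{l\ge l_1}I_k^{j_l}\big)$ is maximal for every $k$.

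Then I would show that $\lim_{l\to\infty}\nu_{j_l}(f)$ exists for each $(f,o)\in\mathcal{O}^w_o\setminus\{0\}$. By hypothesis the sequence $\{\nu_{j_l}(f)\}$ is bounded, so it is enough to exclude two distinct subsequential limits $a_1<a_2$; after replacing $f$ by a suitable power $f^m$ (which rescales the $a_i$ by $m$, since $\nu_j(f^m)=m\nu_j(f)$) we may assume some integer $k_0$ lies strictly between $a_1$ and $a_2$, and then $(f,o)$ lies in $\cup_{l_1\ge1}\big(\cap_{l\ge l_1}I_{k_0}^{j'_l}\big)$ for an appropriate subsequence but not in $\cup_{l_1\ge1}\big(\cap_{l\ge l_1}I_{k_0}^{j_l}\big)$, contradicting maximality. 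Setting $\nu_0(f)\coloneqq\lim_l\nu_{j_l}(f)$, the valuation axioms $(1)$–$(3)$ pass to this pointwise limit, and nonconstancy of $\nu_0$ is handled exactly as in the holomorphic case, so $\nu_0$ is a valuation on $\mathcal{O}^w_o$ and $\{\nu_{j_l}\}$ converges to it.

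I do not anticipate a genuine obstacle: the argument is structurally identical to that of Proposition \ref{r:proof of 1.2}, and the sole place where Noetherianity of the ring is invoked is covered by Lemma \ref{noetherian of weakly holomorphic function}. The only point meriting a line of care is that $\nu_j\ge0$ on all of $\mathcal{O}^w_o$ (so that the sets $I_k^j$ really are ideals), which is immediate from the definition of a valuation on $\mathcal{O}^w_o$, and the verification that the limit $\nu_0$ remains nonconstant.
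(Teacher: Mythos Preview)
Your proposal is correct and mirrors the paper's approach exactly: the paper simply states that since $\mathcal{O}^{w}_o$ is Noetherian (Lemma \ref{noetherian of weakly holomorphic function}), the same proof as Proposition \ref{r:proof of 1.2} applies verbatim, which is precisely what you have written out in detail.
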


\section{Proof of Remark \ref{rem:max_existence}}

\begin{proof}
	[Proof of Remark \ref{rem:max_existence}]
	Theorem \ref{thm:SOC} shows that there exists $\varepsilon>0$,
	such that
	$$|f_{0}|^{2}e^{-2\varphi_{0}}|z|^{2N_{0}}e^{-2(1+\varepsilon)\varphi}$$
	is integrable near $z_0$.
	We have
	\begin{equation}
		\nonumber\begin{split}
			&\int_U|f_{0}|^{2}e^{-2\varphi_{0}}e^{-2\varphi}-|f_{0}|^{2}e^{-2\varphi_{0}}e^{-2\max\big\{\varphi,\frac{N_{0}}{\varepsilon}\log|z|\big\}}\\
			\le&\int_{U\cap\big\{\frac{N_{0}}{\varepsilon}\log|z|\ge\varphi\big\}}|f_{0}|^{2}e^{-2\varphi_{0}}e^{-2\varphi}\\
			\le&\int_{U\cap\big\{\frac{N_{0}}{\varepsilon}\log|z|\ge\varphi\big\}}|f_{0}|^{2}e^{-2\varphi_{0}}|z|^{2N_0}e^{-2(1+\varepsilon)\varphi}\\
			<&+\infty,
		\end{split}
	\end{equation}
	where $U\subset X$ is a neighborhood of	$z_0$.
	Then 	it suffices to consider that there exists $N$ large enough such that
	$$N\log|z|\leq \varphi$$
	near $z_0$. By a similar discussion, for any local Zhou weight $\Phi_{z_0,\max},$ $\Phi_{z_0,\max}\ge N_1\log|z|+O(1)$ near $o$ for some $N_1\gg0$.
	
	Let $V\subset X$ be a small neighborhood of $z_0$ such that $e^{-\varphi}$ is integrable near any $z\in V\setminus\{z_0\}$ and $\varphi<0$ on $V$.
	Let $(u_{\alpha})_{\alpha}$ be the negative plurisubharmonic functions on $V$
	such that $u_{\alpha}\geq \varphi+O(1)$ near $z_0$ and $|f_0|^2e^{-2\varphi_0}e^{-2u_{\alpha}}$ is not integrable near $z_0$.
	
	\emph{Zorn's Lemma} shows that
	there exists $\Gamma$ which is a maximal set such that for any $\alpha,\alpha'\in\Gamma$,
	$u_{\alpha}\leq u_{\alpha'}+O(1)$ or $u_{\alpha'}\leq u_{\alpha}+O(1)$ holds near $o$,
	where $(u_{\alpha})$ are negative plurisubharmonic functions on $V$.
	
	Let $u(z)\coloneqq \sup_{\alpha\in\Gamma}u_{\alpha}(z)$ on $V$,
	and let $u^{*}(z)=\lim_{\varepsilon\to0}\sup_{U_{z,\varepsilon}}u$, where $U_{z,\varepsilon}\coloneqq X\cap\mathbb{B}^{n+l}(z,\varepsilon)$.
	Lemma \ref{lem:Choquet} shows that there exists subsequence $(v_{j})$ of $(u_{\alpha})$ such that
	$(\max_{j}v_{j})^{*}=u^{*}$.
	Moreover one can choose $v_{j}(\coloneqq \sup_{j'\leq j}v_{j})$ increasing with respect to $j$
	such that $|f_0|^2e^{-2\varphi_0}e^{-2v_j}$ is not integrable near $z_0$.
	
	Lemma \ref{lem:Choquet} shows that
	$(v_{j})$ is convergent to $v^{*}$ with respect to $j$ almost everywhere with respect to Lebesgue measure,
	and $v^{*}$ is a plurisubharmonic function on $V$.
	Theorem \ref{thm:SOC} 
	shows that $|f_0|^2e^{-2\varphi_0}e^{-2v^*}$ is not integrable near $z_0$.
	%In fact, the definition of $u$ implies that $u=v^{*}=u^{*}$.
	
	In the following part,
	we prove that $v^{*}$ is a local Zhou weight  related to $|f_0|^2e^{-2\varphi_0}$ near $z_0$ by contradiction.
	If not, then there exists a plurisubharmonic function $\tilde{v}$ near $z_0$ such that $\tilde{v}\geq v^{*}$,
	$|f_0|^2e^{-2\varphi_0}e^{-2\tilde v}$ is not integrable near $z_0$, and
	\[\limsup_{z\to z_0}\big(\tilde{v}(z)-v^{*}(z)\big)=+\infty.\]
	
	As $\varphi<0$ on $V$, by the definition of $v^*$, we have $v^{*}\geq\varphi$, which shows $v^{*}\geq N\log|z|$.
	Then for  $U_{z_0,\varepsilon}\coloneqq \mathbb{B}(z_0,\varepsilon)\cap X$ ($\varepsilon$ is small enough),
	there exists $M\ll 0$ such that $\tilde{v}+M< N\log|z|\le v^{*}$ near the boundary of $U_{z_0,\varepsilon}$,
	which implies that $\max\{\tilde{v}+M,v^{*}\}=v^{*}$ near the boundary of $U_{z_0,\varepsilon}$.
	Let
	\begin{equation*}
		\tilde{\varphi}\coloneqq \left\{
		\begin{array}{ll}
			\max\{\tilde{v}+M,v^{*}\} & \ \text{on} \ U_{z_0,\varepsilon}, \\
			v^{*} & \ \text{on} \ V\setminus U_{z_0,\varepsilon}.
		\end{array}
		\right.
	\end{equation*}
	$\tilde v\ge v^*$ implies that $\tilde\varphi=\tilde v+O(1)$ near $z_0$. Then $\tilde{\varphi}$ is a plurisubharmonic function on $V$ such that
	\[\limsup_{z\to z_0}(\tilde{\varphi}(z)-u^{*}(z))=\limsup_{z\to z_0}(\tilde{\varphi}(z)-v^{*}(z))\geq \limsup_{z\to z_0}(\tilde{v}(z)+M-v^{*}(z))=+\infty,\]
	and $|f_0|^2e^{-2\varphi_0}e^{-2\tilde\varphi}$ is not integrable near $z_0$, which contradicts the definition of $u^{*}$.
	
	This proves Remark \ref{rem:max_existence}.
\end{proof}

\section{Proof of Theorem \ref{thm:expression of relative types}}

\begin{proof}[Proof of Theorem \ref{thm:expression of relative types}]
Remark \ref{rem:max_existence} shows that $\Phi_{z_0,\max}\ge N\log|z|+O(1)$ near $z_0$ for some $N>0$.
Following from Propositions \ref{p:sidediv} and \ref{p:relative=derivative}, we know that for any plurisubharmonic function $\psi$ satisfying that  $\psi\leq s_{0}\log|z|+O(1)$ near $z_0\in X$ for some $s_0>0$,
 the limit $	\lim_{t_{1}\to+\infty}\frac{1}{2t_{1}}\frac{\int_{\{\mathrm{Tn}(0)\varphi<-t_{1}\}\cap U}|f_{0}|^{2}e^{-2\varphi_{0}}(-2\psi)}{\int_{\{\mathrm{Tn}(0)\varphi<-t_{1}\}\cap U}|f_{0}|^{2}e^{-2\varphi_{0}}}$ exists and 
$$\lim_{t_{1}\to+\infty}\frac{1}{2t_{1}}\frac{\int_{\{\mathrm{Tn}(0)\varphi<-t_{1}\}\cap U}|f_{0}|^{2}e^{-2\varphi_{0}}(-2\psi)}{\int_{\{\mathrm{Tn}(0)\varphi<-t_{1}\}\cap U}|f_{0}|^{2}e^{-2\varphi_{0}}}=\sigma(\psi,\Phi_{z_0,\max}).$$ 
Thus, Theorem \ref{thm:expression of relative types} holds.
\end{proof}

\begin{proof}[Proof of Corollary \ref{coro:valuation}]
If $f(z_0)=0$, it is clear that $\log|f|\le C_1\log|z|+O(1)$ near $z_0$ for some $C_1>0$. Thus, $\nu(f,\Phi_{z_0,\max})>0$ since $\Phi_{z_0,\max}\ge C_2\log|z|+O(1)$ near $z_0$. 

If $f(z_0)\not=0$, then there exists a neighborhood $U$ such that $\inf_U|f|>0$. As $|f_0|^2e^{-2\varphi_0-2\Phi_{z_0,\max}}$ is not integrable near $z_0$ and $|f_0|^2e^{-2\varphi_0}$ is  integrable near $z_0$, we know that $\liminf_{X_{\reg}\ni z\rightarrow z_0}\Phi_{z_0,\max}=-\infty$. Then $\nu(f,\Phi_{z_0,\max})=0$. The statement $(3)$ holds.

Note that $\log|fg|=\log|f|+\log|g|$. It follows from Theorem \ref{thm:expression of relative types} and statement $(3)$ that $\nu(fg,\Phi_{z_0,\max})=\nu(f,\Phi_{z_0,\max})+\nu(g,\Phi_{z_0,\max})$. The statement $(1)$ holds.

As $\log|f+g|\le\log(|f|+|g|)=\max\{\log|f|,\log|g|\}+O(1)$, we have
$$\nu(f+g,\Phi_{z_0,\max})\ge\sigma(\max\{\log|f|,\log|g|\},\Phi_{z_0,\max}).$$
For any positive number $c<\min\{\nu(f,\Phi_{z_0,\max}),\nu(g,\Phi_{z_0,\max})\}$, by definition, we have $\log|f|\le c\Phi_{z_0,\max}+O(1)$ and $\log|g|\le c\Phi_{z_0,\max}+O(1)$ near $z_0$, which shows $\max\{\log|f|,\log|g|\}\log c\Phi_{z_0,\max}+O(1)$ near $z_0$. Hence, $\nu(f+g,\Phi_{z_0,\max})\ge\sigma(\max\{\log|f|,\log|g|\},\Phi_{z_0,\max})\ge \min\{\nu(f,\Phi_{z_0,\max}),\nu(g,\Phi_{z_0,\max})\}$. The statement $(2)$ holds.

Thus, Corollary \ref{coro:valuation} holds.
\end{proof}

\begin{Proposition}
	\label{thm:main_value1}
	Let $f=(f_{1},\cdots,f_{m'})$ be a vector, where $f_{1},\cdots,f_{m'}$ are holomorphic functions near $z_0$.
	Denote by $|f|\coloneqq (|f_{1}|^2+\cdots+|f_{m'}|^{2})^{1/2}$.
	Let $\Phi_{z_0,\max}$ be a local Zhou weight related to $|f_{0}|^{2}e^{-2\varphi_{0}}$ near $z_0$. Then the following two statements hold:
	
	$(1)$ for any $\alpha>0$,
	$\big(1+\alpha\sigma(\log|f|,\Phi_{z_0,\max})\big)\Phi_{z_0,\max}$ is a local Zhou weight related to $|f|^{2\alpha}|f_{0}|^{2}e^{-2\varphi_{0}}$ near $z_0$;
	
	$(2)$ $\big(1+\sigma(\varphi_{0},\Phi_{z_0,\max})\big)\Phi_{z_0,\max}$ is a local Zhou weight related to $|f_{0}|^{2}$.
\end{Proposition}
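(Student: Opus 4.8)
The plan is to verify the three defining conditions of a local Zhou weight (Definition \ref{def:max_relat}) for each of the two candidate functions, reducing everything to the Tian function machinery of Section \ref{sec:tian function} and the characterization of relative types via derivatives of Tian functions (Proposition \ref{p:relative=derivative}). The key observation is that in part $(1)$ the weight $|f|^{2\alpha}$ multiplying $|f_0|^2e^{-2\varphi_0}$ is exactly an analytic singularity contribution, and the relevant invariant $\sigma(\log|f|,\Phi_{z_0,\max})$ is finite since $\Phi_{z_0,\max}\ge N\log|z|+O(1)$ near $z_0$ and $\log|f|\le C\log|z|+O(1)$; similarly $\sigma(\varphi_0,\Phi_{z_0,\max})$ is finite in part $(2)$ because $\varphi_0$ is plurisubharmonic near $z_0$ and hence bounded above by $C\log|z|+O(1)$ after shrinking. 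Write $\beta\coloneqq\sigma(\log|f|,\Phi_{z_0,\max})$ in part $(1)$ and $\gamma\coloneqq\sigma(\varphi_0,\Phi_{z_0,\max})$ in part $(2)$.

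First I would treat part $(1)$. \textbf{Condition (1):} since $\Phi_{z_0,\max}$ is a local Zhou weight for $|f_0|^2e^{-2\varphi_0}$, we have $|f_0|^2e^{-2\varphi_0}|z|^{2N_0}e^{-2\Phi_{z_0,\max}}$ integrable near $z_0$ for $N_0\gg0$; using $|f|^{2\alpha}\le C|z|^{2\alpha s_0}$ for some $s_0>0$ and absorbing this polynomial factor into a larger $|z|^{2N_0'}$, together with the fact that $(1+\alpha\beta)\Phi_{z_0,\max}\ge\Phi_{z_0,\max}$ on the region where $\Phi_{z_0,\max}<0$, one gets integrability of $|f|^{2\alpha}|f_0|^2e^{-2\varphi_0}|z|^{2N_0'}e^{-2(1+\alpha\beta)\Phi_{z_0,\max}}$ after replacing the exponent of $|z|$ and invoking the strong openness/Lemma \ref{lem:G_key} type argument used in Lemma \ref{lem:strict_decreasing}. \textbf{Condition (2):} this is the crux — I must show $|f|^{2\alpha}|f_0|^2e^{-2\varphi_0}e^{-2(1+\alpha\beta)\Phi_{z_0,\max}}$ is \emph{not} integrable near $z_0$. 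Apply Proposition \ref{p:relative=derivative} with $\varphi=\Phi_{z_0,\max}$, $\psi=\log|f|$: the Tian function $\mathrm{Tn}(t)=c_{z_0}(\Phi_{z_0,\max},t\log|f|)$ satisfies $\mathrm{Tn}(t)=1+\beta t$ for $t\ge0$, and in particular $\mathrm{Tn}(\alpha)=1+\alpha\beta$. By definition of $c_{z_0}(\varphi,t\psi)$ as a supremum, the value $c=1+\alpha\beta$ is exactly the borderline, so $|f|^{2\alpha}|f_0|^2e^{-2\varphi_0}e^{-2(1+\alpha\beta)\Phi_{z_0,\max}}$ is not integrable near $z_0$ (here one uses Theorem \ref{thm:SOC} to rule out integrability at the jumping value, exactly as in the argument just before Theorem \ref{thm:expression of relative types}). \textbf{Condition (3):} suppose $\varphi'\ge(1+\alpha\beta)\Phi_{z_0,\max}+O(1)$ is plurisubharmonic with $|f|^{2\alpha}|f_0|^2e^{-2\varphi_0}e^{-2\varphi'}$ not integrable. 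Then $\frac{1}{1+\alpha\beta}\varphi'\ge\Phi_{z_0,\max}+O(1)$, and I claim $|f_0|^2e^{-2\varphi_0}e^{-2\cdot\frac{1}{1+\alpha\beta}\varphi'}$ is not integrable: indeed $|f|^{2\alpha}\le Ce^{2\alpha\beta\cdot\frac{1}{1+\alpha\beta}\varphi'}$ would follow from $\log|f|\le\beta\Phi_{z_0,\max}+O(1)\le\frac{\beta}{1+\alpha\beta}\varphi'+O(1)$, which combined with the non-integrability forces non-integrability of $|f_0|^2e^{-2\varphi_0}e^{-2\frac{1}{1+\alpha\beta}\varphi'}$; then condition (3) for $\Phi_{z_0,\max}$ gives $\frac{1}{1+\alpha\beta}\varphi'=\Phi_{z_0,\max}+O(1)$, i.e. $\varphi'=(1+\alpha\beta)\Phi_{z_0,\max}+O(1)$.

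For part $(2)$, the argument is parallel with $\psi=\varphi_0$ and $\alpha=1$: set $\varphi=\Phi_{z_0,\max}$, and consider the Tian function $\mathrm{Tn}(t)=c_{z_0}(\Phi_{z_0,\max},t\varphi_0)$ built from the data $|f_0|^2e^{-2\varphi_0}$; Proposition \ref{p:relative=derivative} gives $\mathrm{Tn}(t)=1+\gamma t$ for $t\ge0$ so that $\mathrm{Tn}(1)=1+\gamma$, which says precisely that $1+\gamma$ is the jumping value for $|f_0|^2e^{-2\varphi_0}e^{2\varphi_0}e^{-2c\Phi_{z_0,\max}}=|f_0|^2e^{-2c\Phi_{z_0,\max}}$, yielding conditions (1) and (2) for $(1+\gamma)\Phi_{z_0,\max}$ relative to $|f_0|^2$; condition (3) follows by the same division trick, using $\varphi_0\le\gamma\Phi_{z_0,\max}+O(1)$. \textbf{The main obstacle} I anticipate is condition (2) — establishing non-integrability exactly at the critical multiple $(1+\alpha\beta)$ (resp. $1+\gamma$) of $\Phi_{z_0,\max}$ — because this is where the strong openness property and the precise identification $\mathrm{Tn}(\alpha)=1+\alpha\beta$ via Proposition \ref{p:relative=derivative} must be combined carefully; a secondary subtlety is checking that the hypotheses of Proposition \ref{p:relative=derivative} (namely $\psi\le s_0\log|z|+O(1)$, and the auxiliary integrability of $|f_0|^2e^{-2\varphi_0}|z|^{2N_0}e^{-2\mathrm{Tn}(0)\Phi_{z_0,\max}}$) are genuinely met in both cases, which for part $(2)$ requires noting that $\varphi_0$ being plurisubharmonic is automatically bounded above by a multiple of $\log|z|$ after shrinking the neighborhood.
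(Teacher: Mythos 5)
Your overall approach is the same as the paper's: build the Tian function $\mathrm{Tn}(t)=\sup\{c\colon |f_0|^2|f|^{2t}e^{-2\varphi_0}e^{-2c\Phi_{z_0,\max}}\text{ integrable}\}$, use Proposition~\ref{p:relative=derivative} to identify $\mathrm{Tn}(\alpha)=1+\alpha\beta$, invoke the strong openness property (Theorem~\ref{thm:SOC}) to get non-integrability exactly at the jumping value, and use the inequality $\log|f|\le\beta\Phi_{z_0,\max}+O(1)$ for the maximality condition~(3). Part~(2) is handled the same way with $\psi=\varphi_0$. So structurally you have the paper's proof.

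However, your verification of condition~(1) of Definition~\ref{def:max_relat} contains a concrete error. You assert that $(1+\alpha\beta)\Phi_{z_0,\max}\ge\Phi_{z_0,\max}$ on the region $\{\Phi_{z_0,\max}<0\}$. This is false: since $\alpha\beta>0$ and $\Phi_{z_0,\max}<0$ there, one has $(1+\alpha\beta)\Phi_{z_0,\max}<\Phi_{z_0,\max}$, so $e^{-2(1+\alpha\beta)\Phi_{z_0,\max}}>e^{-2\Phi_{z_0,\max}}$ — i.e.\ the extra factor $e^{-2\alpha\beta\Phi_{z_0,\max}}$ makes the integral worse, not better, and absorbing $|f|^{2\alpha}$ into a polynomial factor does not repair this. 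The correct argument, which the paper uses, is to exploit exactly the inequality you deploy in condition~(3): $\log|f|\le\beta\Phi_{z_0,\max}+O(1)$ gives $|f|^{2\alpha}e^{-2\alpha\beta\Phi_{z_0,\max}}\le C$, so $|f|^{2\alpha}|f_0|^2e^{-2\varphi_0}|z|^{2N_0}e^{-2(1+\alpha\beta)\Phi_{z_0,\max}}\le C|f_0|^2e^{-2\varphi_0}|z|^{2N_0}e^{-2\Phi_{z_0,\max}}$, which is integrable for $N_0\gg0$. (An alternative repair is to use $\Phi_{z_0,\max}\ge N\log|z|+O(1)$ to absorb $e^{-2\alpha\beta\Phi_{z_0,\max}}$ into a larger power of $|z|$, but the cancellation against $|f|^{2\alpha}$ is cleaner.)

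A secondary issue: you claim that a plurisubharmonic $\varphi_0$ near $z_0$ is ``automatically bounded above by a multiple of $\log|z|$ after shrinking.'' A plurisubharmonic function is locally bounded above by a constant, which is $s_0=0$, not $s_0>0$; the bound $\varphi_0\le s_0\log|z|+O(1)$ with $s_0>0$ would force $\varphi_0\to-\infty$ at $z_0$, which need not happen. When $\sigma(\varphi_0,\Phi_{z_0,\max})=0$ the conclusion of part~(2) is tautological, but you should not invoke Proposition~\ref{p:relative=derivative} under a hypothesis it is not stated to cover; rather, in that degenerate case argue directly that $\mathrm{Tn}(t)=\mathrm{Tn}(0)$ for all $t\ge0$ using $\varphi_0\le O(1)$ and $e^{t\varphi_0}$ bounded. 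The paper applies Proposition~\ref{p:relative=derivative} to $\psi=\varphi_0$ without comment, so this loose end is shared, but your stated justification for it is incorrect and should be dropped.
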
	
\begin{proof}
	Firstly, we prove the statement $(1)$ in Proposition \ref{thm:main_value1}.
	
	Denote by
	\[\mathrm{Tn}(t)\coloneqq \sup\big\{c:|f_0|^2|f|^{2t}e^{-2\varphi_0-2c\Phi_{z_0,\max}} \ \text{is integrable near} \ o\big\}.\]
	By Proposition \ref{p:relative=derivative}, we have $\mathrm{Tn}(\alpha)=1+\sigma(\log|f|,\Phi_{z_0,\max})\alpha$, which implies that $$|f_0|^2|f|^{2\alpha}e^{-2\varphi_0-2(1+\alpha\sigma(\log|f|,\Phi_{z_0,\max}))\Phi_{z_0,\max}}$$ is not integrable near $z_0$ by Theorem \ref{thm:SOC}. As $\log|f|\le\sigma(\log|f|,\Phi_{z_0,\max})\Phi_{z_0,\max}+O(1)$ and there exists $N\gg0$ such that $|f_0|^2|z|^{2N}e^{-2\varphi_0-2\Phi_{z_0,\max}}$ is integrable near $z_0$, we know that $$|f_0|^2|f|^{2\alpha}|z|^{2N}e^{-2\varphi_0-2\big(1+\alpha\sigma(\log|f|,\Phi_{z_0,\max})\big)\Phi_{z_0,\max}}$$ is integrable near $z_0$.
	
	Let $\tilde{\varphi}$ be a plurisubharmonic function near $z_0$ satisfying that $$\tilde\varphi\ge\big(1+\alpha\sigma(\log|f|,\Phi_{z_0,\max})\big)\Phi_{z_0,\max}+O(1)$$ and $|f|^{2\alpha}|f_0|^2e^{-2\varphi_0-2\tilde\varphi}$ is not integrable near $z_0$.
	Note that
	$$\log|f|\le\sigma(\log|f|,\Phi_{z_0,\max})\Phi_{z_0,\max}+O(1),$$
	then
	\[\tilde{\varphi}\ge\frac{1+\alpha\sigma(\log|f|,\Phi_{z_0,\max})}{\sigma(\log|f|,\Phi_{z_0,\max})}\log|f|+O(1)\]
	and
	\begin{equation*}
		\begin{split}
			|f|^{2\alpha}|f_0|^2e^{-2\varphi_0-2\tilde\varphi} & \le C|f_0|^2
			e^{-2\varphi_0}e^{-\frac{2}{1+\alpha\sigma(\log|f|,\Phi_{z_0,\max})}\tilde{\varphi}}.
		\end{split}
	\end{equation*}
	As $|f|^{2\alpha}|f_0|^2e^{-2\varphi_0-2\tilde\varphi}$ is not integrable near $z_0$, we know that $$|f_0|^2e^{-2\varphi_0-\frac{2}{1+\alpha\sigma(\log|f|,\Phi_{z_0,\max})}\tilde\varphi}$$
	is not integrable near $z_0$. Note that  $\Phi_{z_0,\max}$ is a local Zhou weight related to $|f_{0}|^{2}e^{-2\varphi_{0}}$. Then we obtain
	$$\tilde\varphi=(1+\alpha\sigma(\log|f|,\Phi_{z_0,\max}))\Phi_{z_0,\max}+O(1),$$
	which shows that $\big(1+\alpha\sigma(\log|f|,\Phi_{z_0,\max})\big)\Phi_{z_0,\max}$ is a local Zhou weight related to $|f|^{2\alpha}|f_{0}|^{2}e^{-2\varphi_{0}}$.
	
	Next, we give the proof of statement $(2)$, which is similar to the proof of statement $(1)$.
	
	Denote by
	\[\mathrm{Tn}(t)\coloneqq \sup\big\{c\colon |f_0|^2e^{-2\varphi_0}e^{2t\varphi_0}e^{-2c\Phi_{z_0,\max}} \ \text{is integrable near} \ o\big\}.\]
	By Proposition \ref{p:relative=derivative}, we have $\mathrm{Tn}(1)=1+\sigma(\varphi_0,\Phi_{z_0,\max})$, which implies that $$|f_0|^2e^{-2\big(1+\sigma(\varphi_0,\Phi_{z_0,\max})\big)\Phi_{z_0,\max}}$$ is not integrable near $z_0$. As $\varphi_0\le\sigma(\varphi_0,\Phi_{z_0,\max})\Phi_{z_0,\max}+O(1)$ and there exists $N\gg0$ such that $|f_0|^2|z|^{2N}e^{-2\varphi_0-2\Phi_{z_0,\max}}$ is integrable near $z_0$, we know that $|f_0|^2|z|^{2N}e^{-2\big(1+\sigma(\varphi_0,\Phi_{z_0,\max})\big)\Phi_{z_0,\max}}$ is integrable near $z_0$.
	
	Let $\tilde{\varphi}$ be a plurisubharmonic function near $z_0$ satisfying that $$\tilde\varphi\ge\big(1+\sigma(\varphi_0,\Phi_{z_0,\max})\big)\Phi_{z_0,\max}+O(1)$$ and $|f_0|^2e^{-2\tilde\varphi}$ is not integrable near $z_0$.
	It follows from $\varphi_0\le\sigma(\varphi_0,\Phi_{z_0,\max})\Phi_{z_0,\max}+O(1)$ that
	\begin{equation*}
		\begin{split}
			|f_0|^2e^{-2\tilde\varphi} & =e^{2\varphi_0}|f_0|^2
			e^{-2\varphi_0}e^{-2\tilde{\varphi}} \\
			& \le Ce^{2\sigma(\varphi_0,\Phi_{z_0,\max})\Phi_{z_0,\max}}|f_0|^2
			e^{-2\varphi_0}e^{-2\tilde{\varphi}} \\
			& \le C_1|f_0|^2
			e^{-2\varphi_0}e^{2\frac{\sigma(\varphi_0,\Phi_{z_0,\max})}{1+\sigma(\varphi_0,\Phi_{z_0,\max})}\tilde{\varphi}}
			e^{-2\tilde{\varphi}}\\
			&= C_1|f_0|^2
			e^{-2\varphi_0}e^{-\frac{2}{1+\sigma(\varphi_0,\Phi_{z_0,\max})}\tilde{\varphi}}.
		\end{split}
	\end{equation*}
	As $|f_0|^2e^{-2\tilde\varphi}$ is not integrable near $z_0$, we know that $|f_0|^2e^{-2\varphi_0-\frac{2}{1+\sigma(\varphi_0,\Phi_{z_0,\max})}\tilde\varphi}$
	is not integrable near $z_0$. Note that  $\Phi_{z_0,\max}$ is a local Zhou weight related to $|f_{0}|^{2}e^{-2\varphi_{0}}$. Then we obtain
	$$\tilde\varphi=\big(1+ \sigma(\varphi_0,\Phi_{z_0,\max})\big)\Phi_{z_0,\max}+O(1),$$
	which shows that $\big(1+\sigma(\varphi_0,\Phi_{z_0,\max})\big)\Phi_{z_0,\max}$ is a local Zhou weight related to $|f_{0}|^{2}$.
\end{proof}

\section{Proof of Theorem \ref{thm:valu-jump} }

It is clear that 
\[\sigma \Big(\log|G|,\big(1+\sigma(\varphi_{0},\Phi_{z_0,\max})\big)\Phi_{z_0,\max}\Big)=\frac{\nu(G,\Phi_{z_0,\max})}{1+\sigma(\varphi_{0},\Phi_{z_0,\max})}\]
and 
\[c^G_{z_0}\Big(\big(1+\sigma(\varphi_{0},\Phi_{z_0,\max})\big)\Phi_{z_0,\max}\Big)=\frac{c^G_{z_0}(\Phi_{z_0,\max})}{1+\sigma(\varphi_{0},\Phi_{z_0,\max})}.\]
Proposition \ref{thm:main_value1} shows that it suffices to consider the case $\varphi_0\equiv0$. Let us consider a mixed Tian function
$$\mathrm{Tn}(s,t)\coloneqq \sup\big\{c:|G|^{2s}|f_0|^{2t}e^{-2c\Phi_{z_0,\max}}\text{  is integrable near $z_0$}\big\}.$$

By definitions of $\mathrm{Tn}(s,t)$ and Zhou numbers, we have 
$$\mathrm{Tn}(1,0)=c_{z_0}^G(\Phi_{z_0,\max})\ge c_{z_0}(\Phi_{z_0,\max})+\nu(G,\Phi_{z_0,\max})$$
and $\mathrm{Tn}(1,1)\ge \mathrm{Tn}(1,0)+\sigma(\log|f_0|,\Phi_{z_0,\max})$.
Since $\Phi_{z_0,\max}$ is a local Zhou weight related to $|f_0|^2$ near $z_0$, we have $\mathrm{Tn}(0,1)=1$ and $\mathrm{Tn}(1,1)=\mathrm{Tn}(0,1)+\nu(G,\Phi_{z_0,\max})$ (by Proposition \ref{thm:main_value1}). Then we have  
\begin{equation}
	\nonumber
	\begin{split}
c_{z_0}^G(\Phi_{z_0,\max})=\mathrm{Tn}(1,0)&\le \mathrm{Tn}(1,1)-\sigma(\log|f_0|,\Phi_{z_0,\max})\\
&=\mathrm{Tn}(0,1)+\nu(G,\Phi_{z_0,\max})-\sigma(\log|f_0|,\Phi_{z_0,\max})\\
&=\nu(G,\Phi_{z_0,\max})-\sigma(\log|f_0|,\Phi_{z_0,\max})+1.
	\end{split}
\end{equation}

Thus, Theorem \ref{thm:valu-jump} holds.

\section{Proofs of Theorem \ref{thm:interpolation}, Theorem \ref{thm:interpolation weakly} and their Corollaries}

We firstly proof Theorem \ref{thm:interpolation}.
\begin{proof}
	We prove this theorem in two steps.
	
	\emph{Step 1. $(1)\Rightarrow(2)$.}
	
	Note that $F\coloneqq \prod_{0\le j\le m}f_j$.
If there exists a valuation $\nu$  such that $\nu(f_j)=a_j$ for any $j$, then by Lemma \ref{l:relativetype} and $a_0=0$, 
	\[\sigma(\log|F|,\varphi)\le \nu(F)=\sum_{0\le j\le m}\nu(f_j)=\sum_{1\le j\le m}a_j.\]
	On the other hand, by the definition of relative type, clearly we also have
	\[\sigma(\log|F|,\varphi)\ge\sum_{0\le j\le m}\sigma(\log|f_j|,\varphi)\ge \sum_{1\le j\le m}a_j.\]
	Thus, $\sigma(\log|F|,\varphi)=\sum_{1\le j\le m}a_j$ holds.

	\emph{Step 2. $(2)\Rightarrow(1)$.}

	Recall that $X$ is an analytic subset of $\Omega\in\mathbb{C}^{n}$ with pure dimension $n$ and denote  $|z|^2\coloneqq \sum_{1\le j\le n}|z_j|^2$
	
		As $\sigma(\log|F|,\varphi)<+\infty,$ we have $f_j(o)=0$ for any $1\le j\le m.$
	Denote $\varphi_N\coloneqq \max\{\varphi,N\log|z|\}$ for any $N>0$. Then we have $\sigma(\log|f_j|,\varphi_N)\ge\sigma(\log|f_j|,\varphi)\ge a_j$ for all $0\le j\le m$. It follows from Lemma \ref{l:holder singular} that there exists $C>0$ such that $c_o^{f}(\varphi)\le \sigma(\log|f|,\varphi)+C$ for any $(f,o)\in\mathcal{O}_{X,o}$. Thus,  for any $k>0$, there exists $N_k$ such that
	\begin{equation}
		\nonumber
		\begin{split}
			\sigma(\log|F|,\varphi)&\ge\frac{c_o^{F^k}(\varphi)-C}{k}\\
			&\ge\frac{c_o^{F^k}(\varphi_{N_k})-1-C}{k}\\
			&\ge\frac{k\sigma(\log|F|,\varphi_{N_k})-1-C}{k}\\
			&\ge \lim_{N\rightarrow+\infty}\sigma(\log|F|,\varphi_{N})-\frac{1+C}{k},
		\end{split}
	\end{equation}
	where the second ``$\ge$" follows from Lemma \ref{l:0921-1}. Then we have 
	\[ \lim_{N\rightarrow+\infty}\sigma(\log|F|,\varphi_{N})=\sigma(\log|F|,\varphi).\]
	
	Consider the Tian function
	$$\Tn (t)\coloneqq \sup\big\{c\ge 0 \colon |F|^{2t}e^{-2c\varphi_N}\text{ is integrable near }o\big\}.$$
	It follows from Lemma \ref{l:holder singular} and Remark \ref{r:tame} that 
	\begin{equation}
	    \label{eq:0918a}
	    \lim_{t\rightarrow+\infty}\Tn '_-(t)=\lim_{t\rightarrow+\infty}\Tn '_+(t)=\sigma(\log|F|,\varphi_N).
	\end{equation}
	Then for every positive integer $s$, $|F|^{2s}e^{-2\Tn (s)\varphi_N}$ is not integrable near $o$ (because of the strong openness property of multiplier ideal sheaves \cite{GZopen-c}), then by \Cref{rem:max_existence}, there exists a local Zhou weight $\Phi_{N,s}$ near $o$ related to $|F|^{2s}$ such that
	$$\Phi_{N,s}\ge \Tn (s)\varphi_N.$$
	Consider the Tian function
	$$\widetilde{\Tn }(t)\coloneqq \sup\Big\{c\ge0\colon |F|^{2t}e^{-2c\frac{\Phi_{N,s}}{\Tn (s)}}\text{ is integrable near }o\Big\}.$$
	Thus, we have $\widetilde{\Tn }(s)=\Tn (s)$ and $\widetilde{\Tn }(t)\ge \Tn (t)$ for any $t\ge0$. It follows from Proposition \ref{p:relative=derivative} that $\widetilde{\Tn }'(s)=\sigma\Big(\log|F|,\frac{\Phi_{N,s}}{\Tn (s)}\Big)$. Then we have
	\begin{equation}
	    \label{eq:0918b}
	    \Tn '_+(s)\le\sigma\Big(\log|F|,\frac{\Phi_{N,s}}{\Tn (s)}\Big)\le\Tn '_-(s).
	\end{equation}

	Denote the corresponding Zhou valuation of $\Phi_{N,s}$ by $\nu_{N,s}$. Then we have
	$$\Tn (s)\nu_{N,s}(f_j)=\Tn (s)\sigma(\log|f_j|,\Phi_{N,s})\ge\sigma(\log|f_j|,\varphi_N)\ge a_j$$
	for all $1\le j\le m$ and
	$$\Tn (s)\nu_{N,s}(F)=\Tn (s)\sigma(\log|F|,\Phi_{N,s})=\sigma\Big(\log|F|,\frac{\Phi_{N,s}}{\Tn (s)}\Big).$$
	Note that 
	\[\varphi_{N}=\log\Big(\sum_{1\le j\le m}|f_j|^{\frac{1}{a_j}}+\sum_{1\le k\le n}|z_k|^N\Big)+O(1)\]
	near $o$. By Lemma \ref{l:holder singular}, there exists a constant $C$ (independent of $N$) such that $\sup_{(f,o)\in\mathcal{O}_o^*}(c_o^{f}(\varphi_{N})-\sigma(\log|f|,\varphi_{N}))< C$. 
	As $\widetilde{\Tn }(t)$ is a concave function, we have
	\begin{equation}
		\label{eq:0712a}
		c_o\Big(\frac{\Phi_{N,s}}{\Tn (s)}\Big)=\widetilde{\Tn }(0)\le \Tn (s)-s\Tn '_+(s).
	\end{equation}
	It follows from  the concavity of $\Tn (t)$ and equality \eqref{eq:0918a} that $\Tn (s)-s\Tn '_+(s)\le c_o^{F^s}(\varphi_N)-\sigma(\log|F^s|,\varphi_N)< C$. Then the inequality \eqref{eq:0712a} implies $c_o\big(\frac{\Phi_{N,s}}{\Tn (s)}\big)< C$ for any $N$ and $s$. It follows from Lemma \ref{lem:singular skoda 72}
 that for any $(f,o)$,
	\begin{equation}
		\label{eq:0712b}
		\Tn (s)\nu_{N,s}(f)=\sigma\Big(\log|f|,\frac{\Phi_{N,s}}{\Tn (s)}\Big) 
	\end{equation}
is uniformly bounded with respect to $N$ and $s$.
	
	By equality \eqref{eq:0918a}, inequalities \eqref{eq:0918b}, \eqref{eq:0712b}, and  Proposition \ref{r:proof of 1.2}, there exists a subsequence of the valuations $\{\Tn (s)\nu_{N,s}\}_{s}$ (also denoted by $\{\Tn (s)\nu_{N,s}\}_{s}$), which converges to a valuation $\nu_N$. Then we have
	$$\nu_N(f_j)\ge a_j, \quad j=0,\ldots,m,$$
	and
	$$\nu_N(F)=\sigma(\log|F|,\varphi_N).$$
	By the same reason, there exists a subsequence of the valuations  $\{\nu_N\}_N$ (also denoted by $\{\nu_{N}\}_{N}$), which converges to a valuation $\nu$. Then we have
	$\nu(f_j)\ge a_j$ for any $0\le j\le m$ and
	\[\sum_{0\le j\le m}\nu(f_j)= \nu(F)=\lim_{N\rightarrow+\infty}\sigma(\log|F|,\varphi_N)=\sigma(\log|F|,\varphi)=\sum_{1\le j\le m}a_j.\]
Combining the equality above and $a_0=0$, we have
	$$\nu(f_j)= a_j, \quad j=0,\ldots,m.$$
\end{proof}

We now prove Theorem  \ref{thm:interpolation weakly}.
\begin{proof}
The idea for the proof the Theorem  \ref{thm:interpolation weakly} is the same as the proof of Theorem \ref{thm:interpolation}.
We just give the different Lemmas or Propositions used in the two proof and omit the step by step proof of Theorem  \ref{thm:interpolation weakly}.

	We prove this theorem in two steps.
	
	\emph{Step 1. $(1)\Rightarrow(2)$.}
	
	Replace Lemma \ref{l:relativetype}  by Lemma \ref{l:relativetype weakly}. Using the same argument as  $(1)\Rightarrow(2)$ in the proof of Theorem \ref{thm:interpolation}, we know $(1)\Rightarrow(2)$ in Theorem  \ref{thm:interpolation weakly} holds.

	\emph{Step 2. $(2)\Rightarrow(1)$.}
	
Replace  \Cref{rem:max_existence} by  \Cref{rem:max_existence} and \Cref{rem:zhou weight for weak holomorphic function}, Proposition \ref{r:proof of 1.2} by Proposition \ref{r:proof of 1.2 weakly}. Using the same argument as  $(2)\Rightarrow(1)$ in the proof of Theorem \ref{thm:interpolation}, we know $(2)\Rightarrow(1)$ of Theorem  \ref{thm:interpolation weakly} holds.
\end{proof}

Now we prove Corollary \ref{C:interpolation-comp-poly}.
\begin{proof}[Proof of Corollary \ref{C:interpolation-comp-poly}]
    	If $\sigma(\log|F|,\varphi)=\sum_{1\le j\le m}a_j$, Theorem \ref{thm:interpolation} shows that  there exists a valuation $\nu$ on $\mathcal{O}_{X,o}=\mathcal{O}_{\mathbb{C}^n,o}/\big(I\cdot\mathcal{O}_{\mathbb{C}^n,o}\big)$ such that $\nu(f_j)=a_j$ for any $0\le j\le m$. Remark \ref{rem:subring complex case} tells $\mathbb{C}[z_1,\ldots,z_n]/I$ is a subring of $\mathcal{O}_{X,o}$. Thus $\nu$ is also a valuation on $\mathbb{C}[z_1,\ldots,z_n]/I$.

		Conversely, if there exists a valuation $\nu$ on $\mathbb{C}[z_1,\ldots,z_n]/I$ satisfying that $\nu(f_j)=a_j$ for any $0\le j\le m$ and $\nu(z_l)>0$ for any $1\le l\le n$, by Lemma \ref{l:relativetype2 alg} and the definition of $\sigma(\log|F|,\varphi)$, we have  $\sigma(\log|F|,\varphi)=\sum_{1\le j\le m}a_j$.
\end{proof}

\begin{proof}[Proof of Corollary \ref{C:interpolation-comp-poly2}]
	The sufficient part follows from Corollary \ref{C:interpolation-comp-poly}, then it suffices to prove the necessary part. If there exists a valuation $\nu$ on $\mathbb{C}[z_1,\ldots,z_n]/I$ satisfying that $\nu(f_0)=0$ and $\nu(f_j)=a_j>0$ for any $1\le j\le m$. Let $J$ be the ideal in $\mathbb{C}[z_1,\ldots,z_n]/I$ generated by $\{f_j\}_{1\le j\le m}$. 
	As $\cap_{1\le j \le m}\{f_j=0\}=\{o\}$, it follows from Hilbert's Nullstellensatz (see Lemma \ref{lem: Hilbert's Nullstellensatz theorem geo}) that  $z_k^N\in J$ for large $N$ and any $1\le k\le n$, which implies $\nu(z_k)>0$ for any $k$. Thus, we have $\sigma(\log|F|,\varphi)=\sum_{1\le j\le m}a_j$ by Corollary \ref{C:interpolation-comp-poly}.
\end{proof}

Now we prove Corollary \ref{C:interpolation-real}.
\begin{proof}[Proof of Corollary \ref{C:interpolation-real}]
    If $\sigma(\log|F|,\varphi)=\sum_{1\le j\le m}a_j$, Theorem \ref{thm:interpolation} shows that  there exists a valuation $\widetilde\nu$ on $\mathcal{O}_{X,o}=\mathcal{O}_{\mathbb{C}^n,o}/\big(\widetilde{P}(I)\big)$ such that $\widetilde\nu(\widetilde{P}(f_j))=a_j$ for any $0\le j\le m$. Remark \ref{rem:injective} tells $\widetilde{P}:C^{\text{an}}_{o'}/I\rightarrow\mathcal{O}_{\mathbb{C}^n,o}/\big(\widetilde{P}(I)\big)$ is an injective ring homomorphism. Then there exists a valuation $\nu$ on $C^{\text{an}}_{o'}/I$ such that $\nu(f_j)=a_j$ for any $j$.

		Conversely, if there exists a valuation $\nu$ on $C^{\text{an}}_{o'}/I$ such that $\nu(f_j)=a_j$ for any $j$, by Lemma \ref{c:relativetype-real} and the definition of $\sigma(\log|F|,\varphi)$, we have  $\sigma(\log|F|,\varphi)=\sum_{1\le j\le m}a_j$.
\end{proof}

Now we prove Corollary \ref{C:interpolation-real-poly}.
\begin{proof}[Proof of Corollary \ref{C:interpolation-real-poly}]
	If $\sigma(\log|F|,\varphi)=\sum_{1\le j\le m}a_j$, Corollary \ref{C:interpolation-comp-poly} shows that  there exists a valuation $\widetilde\nu$ on $\mathbb{C}[z_1,\ldots,z_n]/\big(P(I)\big)$ such that $\widetilde\nu(\widetilde{P}(f_j))=a_j$ for any $0\le j\le m$. Remark 
	\ref{rem:injective and decom poly case} tells $\widetilde{P}:\mathbb{R}[x_1,\ldots,x_n]/I \to \mathbb{C}[x_1,\ldots,x_n]/\big(P(I)\big)$ is an injective ring homomorphism. Then there exists a valuation $\nu$ on $\mathbb{R}[x_1,\ldots,x_n]/I$ such that $\nu(f_j)=a_j$ for any $j$.

	Conversely, if there exists a valuation $\nu$ on $\mathbb{R}[x_1,\ldots,x_n]/I$ satisfying that $\nu(f_j)=a_j$ for any $1\le j\le m$ and $\nu(x_l)>0$ for any $1\le l\le n$, by Lemma \ref{c:relativetype-real poly} and the definition of $\sigma(\log|F|,\varphi)$, we have  $\sigma(\log|F|,\varphi)=\sum_{1\le j\le m}a_j$.
\end{proof}

\vspace{.1in} {\em Acknowledgements}.
The first-named author completed part of this work during a visit to the School of Mathematical Sciences at Peking University and would like to thank the School for its hospitality and support. The second author was supported by National Key R\&D Program of China 2021YFA1003100 and NSFC-12425101. The third author was supported by NSFC-12401099 and the Talent Fund of Beijing Jiaotong University 2024-004. The fourth author was supported by NSFC-12501106.

\appendix \section{The universal denominators for algebraic varieties}
 It is known  (see \cite{demailly-book}; see also Theorem \ref{thm:universal denominators for complex space}) that for any complex analytic subvariety $X$ and $x\in X$, there exists locally a holomorphic function $\delta_{X,x}$ such that $\delta_{X,x}\mathcal{O}^{w}_{X,x}\subset \mathcal{O}_{X,x}$, where  $\mathcal{O}^{w}_{X,x}$ is the ring of germs of weakly holomorphic functions. Such $\delta_{X,x}$ is called universal denominator of weakly holomorphic functions.

Now we turn to the algebraic case.
Let $I$ be a prime ideal in $\mathbb{C}[z_1,\ldots,z_n]$. Let $X\coloneqq V(I)$ be  the affine variety defined by $I$ and $o\in X$ where $o$ is the origin in $\mathbb{C}^n$. Denote the germ of the set $X$ at $o$ by $(X,o)$ and assume that $(X,o)$ is irreducible as a germ of analytic set. In this appendix, we show that when $(X,o)$ is irreducible  as a germ of analytic set, one can choose the universal denominator to be a polynomial, see Theorem \ref{th:universal denominators alg appendix}.

We firstly recall some basic results. 

\begin{Lemma}[Noether's normalization lemma, see \cite{AtiyahMacdonald}]
\label{lem:Noether's normalization lemma}
Let $k$ be a field and let $A\neq 0$ be a finitely generated $k$-algebra which is generated by $x_1,\ldots,x_n$. Then there exist elements $y_1, \dots, y_r \in A$ which are algebraically independent over $k$ and such that $A$ is integral over $k[y_1, \dots, y_r]$. Moreover, one can choose $y_1,\ldots,y_r$ to be linear combinations of $x_1,\ldots,x_n$.
\end{Lemma}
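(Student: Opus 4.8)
The plan is to prove Lemma~\ref{lem:Noether's normalization lemma} by induction on the number $n$ of algebra generators of $A$ over $k$, using in an essential way that $k=\mathbb{C}$ is infinite so that a purely \emph{linear} change of the generators suffices; this linearity is exactly what propagates through the induction to yield the final ``linear combinations'' assertion. If $x_1,\ldots,x_n$ are algebraically independent over $k$ there is nothing to do: take $r=n$ and $y_i=x_i$. Otherwise I would pick a nonzero $f\in k[X_1,\ldots,X_n]$ with $f(x_1,\ldots,x_n)=0$, set $d=\deg f$, and let $f_d$ be the homogeneous component of $f$ of top degree $d$.

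The heart of the argument is a single substitution. One first checks that $f_d(X_1,\ldots,X_{n-1},1)$ is a nonzero polynomial (a homogeneous polynomial of degree $d$ is recovered from its dehomogenization via $f_d(X)=X_n^d f_d(X_1/X_n,\ldots,X_{n-1}/X_n,1)$), so, $k$ being infinite, there exist $\lambda_1,\ldots,\lambda_{n-1}\in k$ with $f_d(\lambda_1,\ldots,\lambda_{n-1},1)\neq 0$. Put $x_i'\coloneqq x_i-\lambda_i x_n$ for $1\le i\le n-1$ and substitute $X_i=X_i'+\lambda_i X_n$ into $f$; collecting powers of $X_n$, the relation $f(x_1,\ldots,x_n)=0$ becomes a polynomial equation for $x_n$ over $B\coloneqq k[x_1',\ldots,x_{n-1}']$ whose leading coefficient in $X_n$ is precisely the scalar $f_d(\lambda_1,\ldots,\lambda_{n-1},1)\in k^\times$. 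Dividing by this unit exhibits $x_n$ as a root of a monic polynomial over $B$, so $x_n$ is integral over $B$; hence $A=B[x_n]$ is a finitely generated $B$-module, i.e.\ integral over $B$. Moreover $B$ is generated over $k$ by the $n-1$ elements $x_1',\ldots,x_{n-1}'$, each a linear combination of $x_1,\ldots,x_n$.

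I would then invoke the induction hypothesis on $B$: there are $y_1,\ldots,y_r\in B$, algebraically independent over $k$, each a linear combination of $x_1',\ldots,x_{n-1}'$ and therefore (composing linear relations) of $x_1,\ldots,x_n$, such that $B$ is integral over $k[y_1,\ldots,y_r]$. Transitivity of integral dependence then gives that $A$ is integral over $k[y_1,\ldots,y_r]$, which closes the induction (and $r\le n-1$, so the recursion is well founded). The computation is routine throughout; the only points demanding genuine care are the verification that the coefficient of the top power of $X_n$ after the substitution equals $f_d(\lambda_1,\ldots,\lambda_{n-1},1)$, and the observation that infiniteness of $\mathbb{C}$ is exactly what makes the linear substitution available — over a finite field one would be forced to the Nagata-type substitution $X_i\mapsto X_i-X_n^{e_i}$ and would lose the ``linear combinations'' refinement, which is precisely what later allows the universal denominator to be chosen as a polynomial $\delta\in\mathbb{C}[z_1,\ldots,z_d]$ in Theorem~\ref{th:universal denominators alg appendix}.
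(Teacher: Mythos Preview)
Your proposal is correct and follows essentially the same route as the paper's proof: both argue by induction on $n$, both exploit the infiniteness of $k=\mathbb{C}$ to choose scalars $\lambda_i$ making the top homogeneous part $f_d(\lambda_1,\ldots,\lambda_{n-1},1)$ nonzero, both perform the linear substitution $x_i'=x_i-\lambda_i x_n$ to exhibit $x_n$ as integral over $k[x_1',\ldots,x_{n-1}']$, and both close with transitivity of integral dependence. The only cosmetic difference is that the paper first renumbers the generators so that $x_1,\ldots,x_r$ is a maximal algebraically independent subset before observing that $x_n$ is algebraic, whereas you simply assume the generators are algebraically dependent and pick any relation; these are equivalent starting points.
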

\begin{proof}
For the convenience of the readers, we recall the proof of Lemma \ref{lem:Noether's normalization lemma} which can be referred to Section 5 of  \cite{AtiyahMacdonald}.

Since $k=\mathbb{C}$ in our case, we may assume that $k$ is infinite. 
As $x_1,x_2,\ldots,x_n$ generate $A$ as a $k$-algebra. We can renumber the $\{x_i\}_{i=1}^n$ such that $\{x_1,\ldots,x_r\}$ is a maximal algebraically independent system of $\{x_i\}_{i=1}^n$ over $k$. If $r=n$, there is nothing to prove. We may assume $r<n$. Now we prove Lemma \ref{lem:Noether's normalization lemma} by induction on $n$. Assume that Lemma \ref{lem:Noether's normalization lemma} holds when the finitely generated $k$-algebra has $n-1$ generators.

As $r<n$, then $x_n$ is algebraic over $k[x_1,\ldots,x_{n-1}]$, i.e., there exists a polynomial $f\in k[T_1,\ldots,T_n]$ such that $f(x_1,x_2,\ldots,x_n)=0$ in $A$. Decompose $f$ as a sum of homogeneous polynomials and let $F$ be the homogeneous part of the highest degree in the decomposition of $f$. Denote $\deg F=m$. As $k$ is infinite, we can find $(\lambda_1,\lambda_2,\ldots,\lambda_{n-1},1)\in k^n$ such that $F(\lambda_1,\lambda_2,\ldots,\lambda_{n-1},1)\neq 0$ in $k$. Let $w_{i}=x_i-\lambda_{i}x_n$ for $i=1,\ldots,n-1$. Then denote
$$\tilde{f}(T)=f(w_{1}+\lambda_1 T,w_{2}+\lambda_2 T,\ldots,w_{n-1}+\lambda_{n-1} T,T)\in k[w_1,\ldots,w_{n-1}][T].$$
We know that $\deg \tilde{f}=m$ and $\tilde{f}(x_n)=0$ in $A$. The coefficient of $T^m$ in $\tilde{f}$ is just $F(\lambda_1,\lambda_2,\ldots,\lambda_{n-1},1)\neq 0$. Then $\frac{1}{F(\lambda_1,\lambda_2,\ldots,\lambda_{n-1},1)}\tilde{f}$ is a unitary polynomial that annihilates $x_n$. Hence $x_n$ is integral over $A'=k[w_1,\ldots,w_{n-1}]$ which implies $A$ is integral over $A'=k[w_1,\ldots,w_{n-1}]$. Note that $w_1,\ldots,w_{n-1}$ can be written as linear
combinations of $x_1,\ldots,x_n$. Now applying the induction hypothesis to $A'=k[w_1,\ldots,w_{n-1}]$, we know that there exist $y_1, \dots, y_r \in A'$ which are algebraically independent over $k$ and such that $A'$ is integral over $k[y_1, \dots, y_r]$. Moreover, $y_1,\ldots,y_r$ can be chosen to be linear combinations of $w_1,\ldots,w_{n-1}$. By the transitivity of integral dependence (see \cite[Corollary 5.4]{AtiyahMacdonald}), we get that $A$ is integral over $k[y_1, \dots, y_r]$ and $y_1,\ldots,y_r$ can be chosen to be linear combinations of $x_1,\ldots,x_{n}$.
\end{proof}

\begin{Lemma}[Gauss lemma]
\label{lem:Gauss lemma}
Let $R$ be a unique factorization domain and $F$ its field of fractions. Any $f\neq 0$ in $R[x]$ is irreducible in $R[x]$ if and only if it is both irreducible in $F[x]$ and  primitive in $R[x]$.
\end{Lemma}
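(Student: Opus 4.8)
The plan is to reduce everything to the multiplicativity of the \emph{content}. For $0\neq f\in R[x]$ write $c(f)$ for a greatest common divisor of the coefficients of $f$; since $R$ is a UFD this is well defined up to a unit of $R$, and I call $f$ \emph{primitive} when $c(f)\in R^{\times}$. Every $0\neq f\in R[x]$ factors as $f=c(f)\,f^{\sharp}$ with $f^{\sharp}$ primitive, and every $0\neq h\in F[x]$ can be written $h=\lambda\, h^{\sharp}$ with $\lambda\in F^{\times}$ and $h^{\sharp}\in R[x]$ primitive — clear the denominators, then pull out the content — with $\deg h=\deg h^{\sharp}$ and $h^{\sharp}$ unique up to a unit of $R$. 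These normal forms are the only tools needed.

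The first and essentially the only substantive step is the classical fact that a product of primitive polynomials is primitive. I would argue by contradiction: if $f,g\in R[x]$ are primitive but $fg$ is not, choose a prime element $\pi\in R$ dividing every coefficient of $fg$ and reduce modulo $\pi$. Because $\pi$ is prime, $R/(\pi)$ is an integral domain, hence so is $(R/(\pi))[x]$; the reductions $\bar f,\bar g$ are nonzero because $f,g$ are primitive, so $\bar f\,\bar g\neq 0$, contradicting $\overline{fg}=0$. This is exactly where the UFD hypothesis is used (existence of prime elements and the fact that quotients by them are domains), and it is the main point of the whole proof. From it one gets $c(fg)=c(f)c(g)$ up to a unit, by writing $f=c(f)f^{\sharp}$, $g=c(g)g^{\sharp}$ and noting that $f^{\sharp}g^{\sharp}$ is primitive.

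For the direction ``irreducible in $F[x]$ and primitive $\Rightarrow$ irreducible in $R[x]$'': such an $f$ is non-constant, since an irreducible element of $F[x]$ has positive degree, so $f$ is not a unit of $R[x]=R^{\times}$. If $f=gh$ in $R[x]$, then in $F[x]$ one factor, say $g$, must be a unit of $F[x]$, i.e. a nonzero constant $d\in R$; then $d$ divides every coefficient of $f=d\,h$, so $d\mid c(f)$, and since $c(f)\in R^{\times}$ we conclude $d\in R^{\times}=R[x]^{\times}$. Hence $f$ is irreducible in $R[x]$.

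For the converse, let $f$ be irreducible in $R[x]$ (taken, as the statement requires to be meaningful, with $\deg f\ge 1$). Then $f$ is primitive: otherwise $f=c(f)f^{\sharp}$ with $c(f)$ a non-unit and $f^{\sharp}$ a non-unit of degree $\ge 1$, a non-trivial factorization. To see $f$ is irreducible in $F[x]$, suppose $f=GH$ in $F[x]$ with $\deg G,\deg H\ge 1$; writing $G=\lambda G^{\sharp}$, $H=\mu H^{\sharp}$ with $G^{\sharp},H^{\sharp}\in R[x]$ primitive gives $f=(\lambda\mu)\,G^{\sharp}H^{\sharp}$ with $G^{\sharp}H^{\sharp}$ primitive by the first step. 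Comparing contents — write $\lambda\mu$ in lowest terms and use primitivity of both $G^{\sharp}H^{\sharp}$ and $f$ — forces $\lambda\mu\in R^{\times}$, so $f=(\text{unit})\cdot G^{\sharp}H^{\sharp}$ is a factorization of $f$ in $R[x]$ into two non-units, contradicting irreducibility there. Hence $f$ is irreducible in $F[x]$. The only delicate bookkeeping is this final content comparison (and a harmless remark about non-unit constants, which are formally irreducible in $R[x]$ only because they fail to be primitive); everything else is formal once the product-of-primitives lemma is established.
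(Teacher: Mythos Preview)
The paper does not give a proof of this lemma at all; it is stated as the classical Gauss lemma and used as a black box. Your argument is the standard one via multiplicativity of content and is correct for polynomials of positive degree. You correctly flag the one wrinkle in the statement as written: a prime $p\in R$ is irreducible in $R[x]$ but is a unit in $F[x]$ and not primitive, so the ``only if'' direction literally fails for constants; your parenthetical restriction to $\deg f\ge 1$ is the right fix, and this is harmless for the paper's application (minimal polynomials, which always have positive degree).
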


Using Noether's normalization lemma \ref{lem:Noether's normalization lemma}, after linear change of coordinate, we may assume that there exists an integer $d>0$ such that  $A\coloneqq \mathbb{C}[z_1,\ldots,z_n]/I$ is a finite integral extension of $\mathbb{C}[z_1,\ldots,z_d]$. 

Recall that $X=V(I)$ and  $o\in X$. 
Denote by $\tilde{f}$ the class of any $f\in \mathbb{C}[z_1,\ldots,z_n]$ in $A=\mathbb{C}[z_1,\ldots,z_n]/I$.
For any $d+1\le k \le n$, there exists a unitary polynomial $Q_k(z',T)\in \mathbb{C}[z_1,\ldots,z_d][T]$ such that $Q_k(z',\tilde{z}_k)=0$ in $A$ which means $Q_k(z',z_k)\in I$ as an element of $\mathbb{C}[z_1,\ldots,z_d][z_k]\subset \mathbb{C}[z_1,\ldots,z_n]$. 
 In a conclusion,
we have
\begin{Proposition}
\label{pro: existence of coordinate}
    There exist an integer $d$, a coordinates $(z_1, \ldots, z_n)$ with the following properties:  
$\mathcal{I}_d \coloneqq I\cap \mathbb{C}[z_1,\ldots,z_d]= \{0\}$
and for every integer $k = d + 1, \ldots, n$ there is a Weierstrass polynomial $Q_k \in \mathcal{I}_k\coloneqq  I\cap \mathbb{C}[z_1,\ldots,z_d][z_k] $ of the form  
$$Q_k(z', z_k) = z_k^{s_k} + \sum_{1 \leq j \leq s_k} a_{k,j}(z') z_k^{s_k - j}, \quad a_{j,k}(z') \in \mathbb{C}[z_1,\ldots,z_d].$$
\end{Proposition}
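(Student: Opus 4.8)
The plan is to obtain both assertions from Noether's normalization lemma (Lemma~\ref{lem:Noether's normalization lemma}) together with the definition of integral dependence. First I would apply that lemma to the finitely generated $\mathbb{C}$-algebra $A\coloneqq\mathbb{C}[z_1,\ldots,z_n]/I$, which is generated by the classes $[z_1],\ldots,[z_n]$: it produces $\mathbb{C}$-linear combinations $y_1,\ldots,y_d$ of the $[z_i]$ that are algebraically independent over $\mathbb{C}$ and such that $A$ is integral over $\mathbb{C}[y_1,\ldots,y_d]$; necessarily $d=\dim X$ is the transcendence degree of $\Frac(A)$ over $\mathbb{C}$. Lifting the linear forms defining $y_1,\ldots,y_d$ to $\mathbb{C}^n$ and completing them to a linear coordinate system, we may, after this linear change of variables (and, abusing notation, still writing $z_1,\ldots,z_n$ for the new coordinates and $I$ for the transformed ideal), assume that $[z_1],\ldots,[z_d]$ are algebraically independent over $\mathbb{C}$ and that $A$ is a finite integral extension of $R\coloneqq\mathbb{C}[z_1,\ldots,z_d]$.

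With these coordinates in place, the two claims are read off directly. For $\mathcal{I}_d=I\cap R=\{0\}$: if $f\in R$ lies in $I$, then the class of $f$ in $A$, which is exactly $f([z_1],\ldots,[z_d])$, vanishes; this is an algebraic relation among algebraically independent elements, so $f=0$ as a polynomial. For the monic polynomials: for each $k$ with $d+1\le k\le n$ the element $[z_k]\in A$ is integral over $R$, hence satisfies a monic relation
\[
[z_k]^{s_k}+a_{k,1}(z')\,[z_k]^{s_k-1}+\cdots+a_{k,s_k}(z')=0\quad\text{in }A,
\]
with $a_{k,j}\in R$ and $z'=(z_1,\ldots,z_d)$. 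Viewing $Q_k(z',z_k)\coloneqq z_k^{s_k}+\sum_{1\le j\le s_k}a_{k,j}(z')\,z_k^{s_k-j}$ as an element of $\mathbb{C}[z_1,\ldots,z_d][z_k]\subset\mathbb{C}[z_1,\ldots,z_n]$, the displayed identity says precisely that the class of $Q_k$ in $A$ is zero, i.e.\ $Q_k\in I$; hence $Q_k\in\mathcal{I}_k$ and it has the required form. Since $o\in X$ is the origin, every $z_k$ vanishes at $o$ and $Q_k$ vanishes on $X$, so in particular $a_{k,s_k}(0')=Q_k(0',0)=0$, which is all that is needed later about $Q_k$ near $o$.

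I do not expect a real obstacle here: the entire content is Lemma~\ref{lem:Noether's normalization lemma}, and the only point deserving care is to use its ``moreover'' clause, so that the normalization is realized by an honest \emph{linear} change of coordinates on $\mathbb{C}^n$ (preserving the polynomial ring and the point $o$) rather than merely by an abstract $\mathbb{C}$-algebra isomorphism of $A$. Once that is noted, $\mathcal{I}_d=\{0\}$ follows from the algebraic independence of $[z_1],\ldots,[z_d]$ and the existence of the $Q_k$ follows from the definition of integrality, so the argument is short.
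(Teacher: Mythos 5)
Your argument is correct and matches the paper's own (unwritten) proof: apply Noether normalization (Lemma~\ref{lem:Noether's normalization lemma}) with its ``moreover'' clause to realize $A=\mathbb{C}[z_1,\ldots,z_n]/I$ as a finite integral extension of $\mathbb{C}[z_1,\ldots,z_d]$ after a genuine linear change of coordinates, then read off the monic $Q_k\in I\cap\mathbb{C}[z_1,\ldots,z_d][z_k]$ from integrality of $[z_k]$. You in fact spell out the step $\mathcal{I}_d=\{0\}$ via algebraic independence of $[z_1],\ldots,[z_d]$, which the paper leaves implicit.
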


Let $\pi:\mathbb{C}^n\to \mathbb{C}^d$ be the mapping which sends $(z_1,\ldots,z_n)$ to $(z_1,\ldots,z_d)$.
\begin{Lemma}
\label{lem:projection is proper and surjective}
Let $\pi_X\colon X\to \mathbb{C}^d$ be the restriction of $\pi$ to $X$. Then $\pi_X$ is proper and surjective.
\end{Lemma}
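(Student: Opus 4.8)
The plan is to extract both properties from the structure furnished by Proposition~\ref{pro: existence of coordinate}. Writing $R:=\mathbb{C}[z_1,\dots,z_d]$ and $A:=\mathbb{C}[z_1,\dots,z_n]/I$, the Weierstrass relations $Q_k(z',z_k)\in I$ show that each class $\tilde z_k$ ($d+1\le k\le n$) is integral over the image of $R$ in $A$, so $A$ is a finitely generated $R$-algebra that is integral over $R$, hence a finite $R$-module; and $\mathcal I_d=I\cap R=\{0\}$ says the natural map $R\hookrightarrow A$ is injective. Thus $\pi_X$ is the map of affine varieties dual to a finite injective ring homomorphism, and I will read off properness and surjectivity from this algebraic picture.

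For properness (in the topological sense that preimages of compact sets are compact) I would fix a compact $K\subset\mathbb{C}^d$. Since $X$ is closed in $\mathbb{C}^n$, $\pi_X^{-1}(K)$ is closed, so it suffices to bound it. If $(z_1,\dots,z_n)\in\pi_X^{-1}(K)$ then $z':=(z_1,\dots,z_d)\in K$ is bounded, and for each $k>d$ the number $z_k$ is a root of the monic polynomial $T^{s_k}+\sum_{j}a_{k,j}(z')T^{s_k-j}$; as the $a_{k,j}$ are polynomials, their values over the compact set $K$ are bounded, and the elementary Cauchy estimate $|z_k|\le 1+\max_j|a_{k,j}(z')|$ bounds $|z_k|$ uniformly on $K$. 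Hence $\pi_X^{-1}(K)$ is closed and bounded, i.e.\ compact.

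For surjectivity I would argue via lying-over. Given $w=(w_1,\dots,w_d)\in\mathbb{C}^d$, let $\mathfrak m_w=(z_1-w_1,\dots,z_d-w_d)\subset R$ be the corresponding maximal ideal; the lying-over theorem for the integral extension $R\subseteq A$ produces a prime $\mathfrak q\subset A$ with $\mathfrak q\cap R=\mathfrak m_w$. Then $A/\mathfrak q$ is a domain that is integral over the field $R/\mathfrak m_w=\mathbb{C}$, hence is itself a field, so $\mathfrak q$ is maximal; by Hilbert's Nullstellensatz $\mathfrak q$ is the ideal of a point $x\in X=V(I)$, and from $z_i-w_i\in\mathfrak q$ for $i\le d$ we read off $x_i=w_i$, i.e.\ $\pi_X(x)=w$. (Equivalently one can invoke that a finite ring map induces a closed map on maximal spectra while injectivity makes it dominant, so the image is closed and dense, hence all of $\mathbb{C}^d$; or one can show directly that the fiber over $w$ is nonempty by a Nakayama argument applied to the nonzero finite $R$-module $A$ localized at $\mathfrak m_w$.)

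I do not expect a genuine obstacle: the root estimate and the coordinate bookkeeping are routine, and the only real input is the lying-over theorem — equivalently, closedness of finite morphisms of affine varieties. The single point that must be checked rather than assumed is the injectivity of $R\hookrightarrow A$, which is exactly the assertion $\mathcal I_d=\{0\}$ in Proposition~\ref{pro: existence of coordinate}; with that in hand, everything else is standard.
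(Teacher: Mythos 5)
Your proposal is correct, and the properness half is essentially the same as the paper's: bound $z_k$ over a compact $K$ by the root--coefficient estimate for the monic Weierstrass polynomials $Q_k$, and use closedness of $X$ to conclude compactness (the paper omits explicit mention of closedness but uses it implicitly). For surjectivity the routes diverge in a mild but genuine way. The paper runs the underlying commutative-algebra argument by hand: it shows $A/\mathfrak m' A\neq 0$ by Nakayama (if it were zero, Nakayama would produce $r\in R$ with $r\equiv 1\pmod{\mathfrak m'}$ and $rA=0$, forcing $r\in I\cap R=\{0\}$, a contradiction), and then passes from $I+\mathfrak m'\,\mathbb{C}[z_1,\dots,z_n]\neq(1)$ to nonemptiness of the fiber. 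You instead invoke the lying-over theorem directly for the finite injective extension $R\hookrightarrow A$ and then check the resulting prime $\mathfrak q$ is maximal (domain integral over a field is a field) so that Nullstellensatz gives an actual point. These are two presentations of the same mathematics — Nakayama is precisely what proves lying-over — so neither buys more generality, but yours is shorter and more conceptual while the paper's is more self-contained. One small addition you make that the paper leaves implicit is the observation that the Weierstrass relations $Q_k(z',z_k)\in I$ already exhibit $A$ as a finite $R$-module, so the integrality needed for lying-over comes for free from Proposition~\ref{pro: existence of coordinate}; this is worth saying since the paper's Lemma statement otherwise leans on the earlier Noether-normalization setup.
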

\begin{proof}
Note that $\pi_X$ is continuous. To prove that $\pi_X$ is proper, it suffices to prove that, for any $K\subset \mathbb{C}^d$ a bounded subset, $\pi_X^{-1}(K)$ is bounded in $\mathbb{C}^n$. Note that $X\subset \cap_{k=d+1}^n \{Q_k=0\}$ and $Q_k\in \mathbb{C}[z_1,\ldots,z_d][z_k]$. If $z'\in \mathbb{C}^d$ is bounded, we know that the roots of each $Q_k$ are bounded. Hence $\pi_X^{-1}(K)$ is bounded in $\mathbb{C}^n$.

To prove that $\pi_X$ is surjective, without loss of generality, it suffices to prove that $o'\in \mathbb{C}^d$ belongs to $\pi_X(X)$. Denote by $m'=(z_1,\ldots,z_d)$ the maximal ideal in $\mathbb{C}_d\coloneqq\mathbb{C}[z_1,\ldots,z_d]$. 
It follows from $A$ is a finite integral extension of $\mathbb{C}_d$ and \cite[Corollary 5.2]{AtiyahMacdonald} that $\mathbb{C}_d$ is a subring of $A$ and $A$ is a finitely-generated $\mathbb{C}_d$-module.

We claim that $A/m'A\neq 0$, where $m'A$ is the ideal generated by $m'$ in $A$. If $A/m'A=0$, then $m'A=A$. It follows from $A$ is a finitely-generated $\mathbb{C}_d$-module and Nakayama's Lemma (cf. \cite[Corollary 2.5]{AtiyahMacdonald}) that there exits an $r\in \mathbb{C}_d$ such that $r\equiv 1$ (mod $m'$) in $\mathbb{C}_d$ and $rA=0$. View $r$ as a function on $\mathbb{C}^d$, then $r\equiv 1$ (mod $m'$) in $\mathbb{C}_d$ implies that $r(o')=1$ and hence $r\neq 0$ in $\mathbb{C}_d$. $rA=0$ implies $r\cdot 1=0$ in $A$ and thus $r\in I$. Then $r\in I\cap \mathbb{C}_d=\{0\}$. This contradicts to $r\neq 0$ in $\mathbb{C}_d$. Hence we must have $A/m'A\neq 0$.

Note that $A/m'A\cong \mathbb{C}[z_1,\ldots,z_n]/(I+m'\mathbb{C}[z_1,\ldots,z_n])$. 
It follows from $A/m'A\neq 0$ that $\mathbb{C}[z_1,\ldots,z_n]/(I+m'\mathbb{C}[z_1,\ldots,z_n])\neq 0$ and hence $I+m'\mathbb{C}[z_1,\ldots,z_n]\neq (1)$. Thus $V(I)\cap V(m'\mathbb{C}[z_1,\ldots,z_n])=V(I+m'\mathbb{C}[z_1,\ldots,z_n])$ is not empty. This implies that $o'\in \pi_X(X)$.
\end{proof}

Denote $\mathcal{O}_n$ and $\mathcal{O}_d$ be the germ of holmorphic functions at the origin of $\mathbb{C}^n$ and $\mathbb{C}^d$ respectively.
Using Lemma \ref{lem:projection is proper and surjective}, we have
\begin{Lemma}
\label{lem: same dimension}
$(I\cdot\mathcal{O}_n)\cap \mathcal{O}_d=0$.
\end{Lemma}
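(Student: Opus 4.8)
The statement to prove is Lemma \ref{lem: same dimension}, namely that $(I\cdot\mathcal{O}_n)\cap \mathcal{O}_d=0$, where $\mathcal{O}_n$ and $\mathcal{O}_d$ denote germs of holomorphic functions at the origins of $\mathbb{C}^n$ and $\mathbb{C}^d$, and the coordinates have been chosen as in Proposition \ref{pro: existence of coordinate} so that $I\cap\mathbb{C}[z_1,\ldots,z_d]=\{0\}$ and each variable $z_k$ ($d+1\le k\le n$) satisfies a Weierstrass polynomial $Q_k\in I$ over $\mathbb{C}[z_1,\ldots,z_d]$. I expect the heart of the argument to be a faithful-flatness / geometric fibre-dimension comparison: algebraically one already has $I\cap\mathbb{C}[z_1,\ldots,z_d]=0$, and passing to the analytic local ring should not create new relations in the ``base'' directions because $\pi_X\colon X\to\mathbb{C}^d$ is a finite (proper, surjective, with zero-dimensional fibres) map by Lemma \ref{lem:projection is proper and surjective}.

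The plan is to argue as follows. Suppose $g\in(I\cdot\mathcal{O}_n)\cap\mathcal{O}_d$ and $g\neq 0$; I want a contradiction. First I would use the Weierstrass polynomials $Q_k$: since $Q_k(z',z_k)\in I$ and $Q_k$ is monic in $z_k$ of degree $s_k$, the quotient $\mathcal{O}_n/(I\cdot\mathcal{O}_n)$ is a module over $\mathcal{O}_d$ generated by the finitely many monomials $z_{d+1}^{i_{d+1}}\cdots z_n^{i_n}$ with $0\le i_k<s_k$; in particular $\mathcal{O}_{X,o}=\mathcal{O}_n/(I\cdot\mathcal{O}_n)$ is a finite $\mathcal{O}_d$-module. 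Now the composite $\mathcal{O}_d\to\mathcal{O}_n\to\mathcal{O}_{X,o}$ has kernel exactly $(I\cdot\mathcal{O}_n)\cap\mathcal{O}_d$, and $g$ lies in it. The geometric content of Lemma \ref{lem:projection is proper and surjective} is that the germ map $(\pi_X,o)\colon (X,o)\to(\mathbb{C}^d,o')$ is finite and surjective, hence dominant, so the induced ring map $\mathcal{O}_d\to\mathcal{O}_{X,o}$ is injective: a nonzero $g\in\mathcal{O}_d$ vanishing on $(X,o)$ near $o$ would, by surjectivity of $\pi_X$ on a neighbourhood, vanish identically on a neighbourhood of $o'$ in $\mathbb{C}^d$, forcing $g=0$. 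This gives the contradiction.

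In more detail, for the injectivity step I would spell it out thus: $g\in I\cdot\mathcal{O}_n$ means $g$ vanishes on $X$ in some neighbourhood $U$ of $o$ in $\mathbb{C}^n$ (since $I$ generates the ideal sheaf of $X$, or at least the zero set of $I\cdot\mathcal O_n$ is contained in $X$); viewing $g$ as a function of $z'=(z_1,\ldots,z_d)$ only, for every $z'$ in a small polydisc around $o'$ we may — using that $\pi_X$ is surjective and proper, so $\pi_X^{-1}(z')\cap U\neq\emptyset$ for $z'$ close to $o'$ — pick a point $z=(z',z'')\in X\cap U$ lying over $z'$, and then $g(z')=g(z)=0$. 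Hence $g$ vanishes on an open neighbourhood of $o'$ in $\mathbb{C}^d$, so $g=0$ in $\mathcal{O}_d$, a contradiction. Therefore $(I\cdot\mathcal{O}_n)\cap\mathcal{O}_d=0$.

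The main obstacle I anticipate is making the surjectivity-of-fibres statement precise at the germ level: Lemma \ref{lem:projection is proper and surjective} gives properness and surjectivity of the global algebraic map $\pi_X\colon X\to\mathbb{C}^d$, and I need that near $o$ the fibres of $\pi_X$ over points $z'$ near $o'$ still meet the chosen neighbourhood $U$ of $o$ — i.e. that no fibre ``escapes to infinity'' or drifts away from $o$ as $z'\to o'$. This should follow from properness of $\pi_X$ (the fibre $\pi_X^{-1}(o')$ is a finite set, one of whose points is $o$, and nearby fibres converge to it) combined with the fact that $o\in X$; a clean way is to invoke the local structure of finite holomorphic maps (the Weierstrass polynomials $Q_k$ realize $(X,o)$ as a branched cover of $(\mathbb{C}^d,o')$), which directly yields that the germ of $\pi_X$ at $o$ is finite and open onto $(\mathbb{C}^d,o')$. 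Once that local openness/surjectivity is in hand, the rest is the short vanishing argument above.
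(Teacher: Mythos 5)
Your proposal is correct and follows the same route as the paper: argue by contradiction, observe that a nonzero $f\in(I\cdot\mathcal{O}_n)\cap\mathcal{O}_d$ would vanish on the image of $X$ under $\pi_X$, and invoke Lemma~\ref{lem:projection is proper and surjective} (properness and surjectivity of $\pi_X$) to derive a contradiction. You are in fact somewhat more careful than the paper's proof, which simply asserts $\pi_X(X)\subset\{f=0\}$ even though $f\in I\cdot\mathcal O_n$ is only a \emph{germ} and hence only forces $f$ to vanish on $\pi_X(X\cap U)$ for a small neighbourhood $U$ of $o$; your plan to invoke the finite branched-cover structure of the germ of $\pi_X$ at $o$ (local parametrization via the Weierstrass polynomials $Q_k$, together with the irreducibility and pure $d$-dimensionality of $(X,o)$ and the finiteness of $\pi_X^{-1}(o')$) to get local openness onto a neighbourhood of $o'$ is exactly the step that closes this gap cleanly.
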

\begin{proof}
We prove Lemma \ref{lem: same dimension} by contradiction.
Assume there exists a nonzero holomorphic function $f\in (I\cdot\mathcal{O}_n)\cap \mathcal{O}_d$.
 It follows from $f\in (I\cdot\mathcal{O}_n)\cap \mathcal{O}_d$ that $\pi_X(X)\subset\{f=0\}$ is a  subset in $\mathbb{C}^d$. However Lemma \ref{lem:projection is proper and surjective} tells that $\pi_X$ is surjective, which is a contradiction.
\end{proof}

Denote by $\tilde{f}$ the class of any $f\in \mathbb{C}[z_1,\ldots,z_n]$ in $A=\mathbb{C}[z_1,\ldots,z_n]/I$. Denote by $\mathfrak{R}_A$ and $\mathfrak{R}_d$ the quotient fields of $A$ and $\mathbb{C}[z_1,\ldots,z_d]$ respectively. Then $\mathfrak{R}_A=\mathfrak{R}_d[\tilde{z}_{d+1},\ldots,\tilde{z}_{n}]$ is a finite algebraic extension of $\mathfrak{R}_d$.
Primitive element theorem tells that there exist $c_i\in \mathbb{C}$ ($i=d+1,\ldots,n$) such that $\tilde{u}=\sum_{i=d+1}^n c_i\tilde{z}_i$ and $\mathfrak{R}_A=\mathfrak{R}_d[\tilde{u}]$. Hence $\tilde{u}$ belongs to $A$. 
\begin{Lemma}
\label{lem:coefficient of minimal poly}
Let $R$ be a unique factorization domain and $F$ its field of fractions. Assume that $R\subset A$ and $A$ is a ring. For any $f\in A$ integral over $R[T]$, denote the minimal polynomial of $\tilde{f}$ over $R$ by $W_f(z',T)$. Then $W_f(z',T)$ is also the minimal polynomial of $\tilde{f}$ over $F$.
\end{Lemma}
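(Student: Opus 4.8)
The plan is to show that the monic polynomial $W_f(z',T)\in R[T]$ of least degree annihilating $\tilde f$ is in fact irreducible over $F$, which is exactly the assertion that it equals the minimal polynomial of $\tilde f$ over $F$. The mechanism to be exploited is that $R$, being a UFD, is integrally closed in $F$.

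First I would fix the relevant objects. In the situation of interest $A=\mathbb{C}[z_1,\ldots,z_n]/I$ is an integral domain containing $R$, so the fraction field $\Frac(A)$ contains $F=\Frac(R)$, and $\tilde f\in A\subset\Frac(A)$ is algebraic over $F$ precisely because it is integral over $R$; let $P(T)\in F[T]$ denote its minimal polynomial over $F$, and let $W_f(z',T)\in R[T]$ be a monic polynomial of least degree with $W_f(z',\tilde f)=0$ (such a polynomial exists since $\tilde f$ is integral over $R$). Dividing $W_f$ by $P$ in $F[T]$ and evaluating the remainder at $\tilde f$ shows $P\mid W_f$ in $F[T]$, so, both being monic, we may write $W_f=P\cdot Q$ with $Q\in F[T]$ monic.

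The key step is that $P$ and $Q$ already lie in $R[T]$. This is the standard fact that an integrally closed domain is closed under monic factorization: in an algebraic closure of $F$ the roots of $W_f$ are integral over $R$ (being roots of a monic polynomial in $R[T]$), hence so are the coefficients of $P$ and $Q$, which are polynomial expressions with integer coefficients in those roots; these coefficients lie in $F$, and since $R$ is integrally closed in $F$ they must lie in $R$. (Alternatively, one may argue via the content version of Gauss's Lemma, Lemma \ref{lem:Gauss lemma}, together with the normalization of leading coefficients.) Now $P\in R[T]$ is monic and annihilates $\tilde f$, so minimality of $\deg W_f$ gives $\deg P\ge\deg W_f$; combined with $\deg P\le\deg W_f$ (from $P\mid W_f$) this forces $Q=1$ and $W_f=P$. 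In particular $W_f$ is irreducible over $F$ and is the minimal polynomial of $\tilde f$ over $F$; as a byproduct, this also shows that the minimal monic polynomial of $\tilde f$ over $R$ is unique.

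I do not anticipate a genuine obstacle here. The only delicate point is the monic-factorization step, and this is exactly where the hypothesis that $R$ is a UFD (hence integrally closed in $F$) is essential; everything else is the division algorithm in $F[T]$ together with a degree count.
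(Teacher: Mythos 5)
Your proof is correct, and it takes a genuinely different route from the paper's. The paper first notes that since $R$ is a UFD, $R[T]$ is a UFD, so the least-degree monic annihilator $W_f$ of $f$ in $R[T]$ is irreducible in $R[T]$ (this step uses that $A$ is a domain, which holds in the intended applications even though the lemma loosely says ``ring''); it then applies Gauss's Lemma (Lemma \ref{lem:Gauss lemma}) to conclude that $W_f$ remains irreducible over $F$, and hence equals the minimal polynomial over $F$. You instead exploit the other standard consequence of the UFD hypothesis, namely that $R$ is integrally closed in $F$: you factor $W_f = PQ$ into monic polynomials over $F$, observe that the coefficients of $P$ and $Q$ are integral over $R$ (symmetric functions of the roots of the monic $W_f$ in $\overline F$) and lie in $F$, hence lie in $R$, and then a degree count with the minimality of $\deg W_f$ forces $Q=1$. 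The two arguments are close in spirit but lean on different key lemmas: the paper's route goes through irreducibility in $R[T]$ plus Gauss's Lemma, while yours goes through the ``monic factorization descends to an integrally closed domain'' criterion, which sidesteps Gauss's Lemma and in fact only uses that $R$ is integrally closed rather than the full UFD hypothesis. Your proof also makes explicit, via the degree count, that $W_f$ is well-defined as the unique least-degree monic annihilator — a point the paper's proof leaves implicit. You correctly noted Gauss's Lemma as an alternative, which is precisely what the paper does.
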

\begin{proof}
    As  $R$ is UFD and $f\in A$ is integral over $R[T]$, we know that $R[T]$ is UFD and there exists a unitary irreducible polynomial $W_f(T)\in R[T]$  such that $W_f(f)=0$. Denote by $M_f(T)\in F[T]$ the minimal polynomial of $F$ over $F$. It follows from Gauss Lemma \ref{lem:Gauss lemma} that $W_f(z',T)$ is also irreducible in $F[T]$. Then by the definition of minimal polynomial and that $W_f(T)$ is irreducible in $F[T]$, we have $M_f(T)\mid W_f(T)$ and hence $M_f(T)=W_f(T)\in R[T]$.
\end{proof}

\begin{Remark}
\label{rem:rem for coefficient of minimal poly}
In this appendix, we will use Lemma \ref{lem:coefficient of minimal poly} in the following cases
\begin{enumerate}
    \item $R=\mathbb{C}[z_1,\ldots,z_d]$, $A=\mathbb{C}[z_1,\ldots,z_n]/I $ and $F=\mathfrak{R}_d$; 
    \item $R=\mathcal{O}_d$, $A=\mathcal{O}_n/(I\cdot\mathcal{O}_n)$ and $F=\mathfrak{M}_d$, where $\mathfrak{M}_d$ is the quotient fields of   $R=\mathcal{O}_d$.
\end{enumerate}
\end{Remark}

Denote the minimal polynomial of $\tilde{u}$ over $\mathfrak{R}_d$ by $W_u(z',T)$.
It follows from Lemma \ref{lem:coefficient of minimal poly} and Remark \ref{rem:rem for coefficient of minimal poly} that the discriminant $\delta(z_1,\ldots,z_d)$ of $W_u(z',T)$ belongs to $\mathbb{C}[z_1,\ldots,z_d]$. Hence we conclude that 
\begin{Lemma}
\label{lem:discriminant poly}
There exists a element $\tilde{u}\in A$ such that $\mathfrak{R}_A=\mathfrak{R}_d[\tilde{u}]$. Denote the minimal polynomial of $\tilde{u}$ over $\mathfrak{R}_d$ by $W_u(z',T)$. Then $W_u(z',T)\in \mathbb{C}[z_1,\ldots,z_d][T]$ and the discriminant $\delta(z_1,\ldots,z_d)$ of $W_u(z',T)$ belongs to $\mathbb{C}[z_1,\ldots,z_d]$. 
\end{Lemma}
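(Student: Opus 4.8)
The plan is to assemble the statement from three standard ingredients: Noether normalization (already invoked above), a mild strengthening of the primitive element theorem, and Lemma \ref{lem:coefficient of minimal poly} together with the fact that the discriminant of a monic polynomial is a universal polynomial in its coefficients.

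First I would recall the set-up preceding the statement: by Noether's normalization lemma (Lemma \ref{lem:Noether's normalization lemma}), after a linear change of coordinates $A=\mathbb{C}[z_1,\ldots,z_n]/I$ is a finite integral extension of $R:=\mathbb{C}[z_1,\ldots,z_d]$, so $\mathfrak{R}_A/\mathfrak{R}_d$ is a finite field extension, and since $\operatorname{char}\mathbb{C}=0$ it is separable. Applying the primitive element theorem over the infinite field $\mathfrak{R}_d$ to the generators $\tilde z_{d+1},\ldots,\tilde z_n$, and choosing the coefficients inside the infinite subfield $\mathbb{C}$, one obtains constants $c_{d+1},\ldots,c_n\in\mathbb{C}$ such that $\tilde u:=\sum_{i=d+1}^n c_i\tilde z_i$ satisfies $\mathfrak{R}_A=\mathfrak{R}_d[\tilde u]$. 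In particular $\tilde u\in A$, and since $A$ is integral over $R$, the element $\tilde u$ is integral over the unique factorization domain $R=\mathbb{C}[z_1,\ldots,z_d]$.

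Next I would apply Lemma \ref{lem:coefficient of minimal poly} in the first case of Remark \ref{rem:rem for coefficient of minimal poly} (that is, with $R=\mathbb{C}[z_1,\ldots,z_d]$, the ring $A=\mathbb{C}[z_1,\ldots,z_n]/I$, and field of fractions $F=\mathfrak{R}_d$): the monic irreducible polynomial in $R[T]$ annihilating $\tilde u$ is precisely the minimal polynomial $W_u(z',T)$ of $\tilde u$ over $\mathfrak{R}_d$, so $W_u(z',T)\in\mathbb{C}[z_1,\ldots,z_d][T]$. Writing $W_u(z',T)=T^{N}+\sum_{1\le j\le N}b_j(z')T^{N-j}$ with $b_j\in\mathbb{C}[z_1,\ldots,z_d]$, the discriminant $\delta(z_1,\ldots,z_d)$ is, up to sign, the resultant $\operatorname{Res}_T(W_u,\partial_T W_u)$, which is a fixed polynomial with integer coefficients in the $b_j$; hence $\delta\in\mathbb{C}[z_1,\ldots,z_d]$, which completes the proof.

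I do not expect a serious obstacle here: the argument is entirely formal. The only point that merits care is the claim that the primitive element can be taken to be a $\mathbb{C}$-linear combination of $\tilde z_{d+1},\ldots,\tilde z_n$ (so that it genuinely lies in $A$, not merely in $\mathfrak{R}_A$); this is the standard strengthening of the primitive element theorem, valid because $\mathbb{C}$ is infinite, and in fact it has already been recorded in the discussion immediately above the statement, so in the write-up I would simply cite it.
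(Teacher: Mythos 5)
Your proposal is correct and is essentially the paper's own argument: the paper obtains $\tilde u$ as a $\mathbb{C}$-linear combination of $\tilde z_{d+1},\ldots,\tilde z_n$ via the primitive element theorem (noting $\tilde u\in A$), then invokes Lemma~\ref{lem:coefficient of minimal poly} in the case $R=\mathbb{C}[z_1,\ldots,z_d]$, $F=\mathfrak{R}_d$ to conclude $W_u\in\mathbb{C}[z_1,\ldots,z_d][T]$, and finally uses that the discriminant is a universal polynomial in the coefficients. You have correctly identified the one nontrivial point (that the primitive element may be taken as a $\mathbb{C}$-linear combination so that it lies in $A$, not just in $\mathfrak{R}_A$), which the paper also records in the paragraph preceding the lemma.
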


Let $q\coloneqq [\mathfrak{R}_A:\mathfrak{R}_d]$ be the degree of the filed extension. As $\mathfrak{R}_A=\mathfrak{R}_d[\tilde{u}]$, we know that $W_{u}(z',T)$ is of degree $q$.

Denote by $\mathfrak{M}_A$ the quotient fields of $A=\mathcal{O}_n/(I\cdot\mathcal{O}_n)$ and by $\mathfrak{M}_d$ the  the quotient fields of   $R=\mathcal{O}_d$. It follows  from Proposition \ref{pro: existence of coordinate}, Lemma \ref{lem: same dimension}, Proposition 4.13 (and its proof) in \cite[p. 93]{demailly-book} and the arguments in \cite[pp. 93--94]{demailly-book} that we know
\begin{enumerate}
    \item $\mathcal{O}_n/I$ is a finite integral extension of $\mathcal{O}_d$;
    \item There exists a linear form $u_1=\sum_{d+1\le k \le n} c'_k z_k$ where $c'_k\in \mathbb{C}$ such that $\mathfrak{M}_A=\mathfrak{M}_d[u_1]$.
\end{enumerate}

 Recall that $\mathfrak{R}_A$ and $\mathfrak{R}_d$ are the rational function fields of $X$ and $\mathbb{C}_d$ respectively, which can be naturally viewed as a subset of $\mathfrak{M}_A$ and $\mathfrak{M}_d$ respectively. Note that $\mathfrak{R}_A=\mathfrak{R}_d[\tilde{u}]$.

\begin{Remark}
\label{rem:key remark}
We have $\mathfrak{M}_A=\mathfrak{M}_d[\tilde{u}]$. 
\end{Remark}
\begin{proof}
Since each $\tilde{z}_k$ belongs to $\mathfrak{R}_A$ and $\mathfrak{R}_A=\mathfrak{R}_d[\tilde{u}]$, we know that, for $d+1\le k\le n$, $\tilde{z}_k$ can be represented by the linear combination of $\tilde{u}^{j}$ ($0\le j \le q-1$) with coefficients in $\mathfrak{R}_d$. As $u_1=\sum_{d+1\le k \le n} c'_kz_k$ where $c'_k\in \mathbb{C}$, we know that $u_1$ can be also represented by the linear combination of $\tilde{u}^{j}$ ($0\le j \le q-1$) with coefficients in $\mathfrak{R}_d\subset \mathfrak{M}_d$. Thus we have $\mathfrak{M}_A=\mathfrak{M}_d[\tilde{u}]$.
\end{proof}

Denote by $H_u(z',T)\in \mathcal{O}_d[T] $ the minimal polynomial of $\tilde{u}\in \mathcal{O}_n/I$ over $\mathcal{O}_d$. It follows from Lemma \ref{lem:coefficient of minimal poly} and Remark \ref{rem:rem for coefficient of minimal poly} that we know $H_u(z',T)$ is also the minimal polynomial of $\tilde{u}$ over $\mathfrak{M}_d$. Recall that $W_u(z',T)\in \mathbb{C}[z_1,\ldots,z_d][T]\subset  \mathcal{O}_d[T]$ is the minimal polynomial of $\tilde{u}$ over $\mathbb{C}[z_1,\ldots,z_d]$ and $H_u(z',T)\in  \mathcal{O}_d[T]$ is the minimal polynomial of $\tilde{u}$ over $\mathcal{O}_d$. Denote the discriminants of $W_u(z',T)$ and $H_u(z',T)$ by $\delta_{\mathfrak{R}}(z')\in \mathbb{C}[z_1,\ldots,z_d]\subset \mathcal{O}_d$ and $\delta_{\mathfrak{M}}(z')\in \mathcal{O}_d$ respectively. We have the following relation between $\delta_{\mathfrak{R}}(z')$ and $\delta_{\mathfrak{M}}(z')$.
\begin{Lemma}
\label{lem:relation between discriminants}
$\delta_{\mathfrak{M}}(z')\mid\delta_{\mathfrak{R}}(z')$ in $\mathcal{O}_d$.
\end{Lemma}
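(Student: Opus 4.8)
The plan is to show first that $H_u(z',T)$ divides $W_u(z',T)$ in $\mathcal{O}_d[T]$, and then to read off the divisibility of discriminants from the classical multiplicativity formula for discriminants of products of monic polynomials.

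To establish the divisibility, I would start from the fact that $W_u(\tilde u)=0$ holds in $A=\mathbb{C}[z_1,\ldots,z_n]/I$ by the very definition of $W_u$, where $\tilde u$ is the common image of $u=\sum_{d+1\le k\le n}c_k z_k$; pushing this forward along the natural (injective) ring homomorphism $A\hookrightarrow \mathcal{O}_n/(I\cdot\mathcal{O}_n)$ of Remark \ref{rem:subring complex case}, we get $W_u(\tilde u)=0$ in $\mathcal{O}_n/(I\cdot\mathcal{O}_n)$ as well. Since $H_u\in\mathcal{O}_d[T]$ is monic, Euclidean division in $\mathcal{O}_d[T]$ yields $W_u=H_u\cdot Q+R$ with $Q,R\in\mathcal{O}_d[T]$ and $\deg_T R<\deg_T H_u$. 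Evaluating at $\tilde u$ gives $R(\tilde u)=0$ in $\mathcal{O}_n/(I\cdot\mathcal{O}_n)$. Here the delicate point arises: one cannot immediately conclude $R=0$ by minimality over $\mathcal{O}_d$ because the leading coefficient of $R$ need not be a unit in $\mathcal{O}_d$. The fix is to pass to the fraction field $\mathfrak{M}_d$: by Lemma \ref{lem:coefficient of minimal poly} together with case (2) of Remark \ref{rem:rem for coefficient of minimal poly}, $H_u$ remains the minimal polynomial of $\tilde u$ over $\mathfrak{M}_d$, so $1,\tilde u,\ldots,\tilde u^{\deg_T H_u-1}$ are $\mathfrak{M}_d$-linearly independent; since $\deg_T R<\deg_T H_u$, the relation $R(\tilde u)=0$ forces $R=0$. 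Hence $W_u=H_u\cdot Q$ in $\mathcal{O}_d[T]$, and comparing leading coefficients (both $W_u,H_u$ monic, $\mathcal{O}_d$ a domain) shows $Q$ is monic.

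Next I would invoke the identity, valid for monic polynomials over any commutative ring,
\[
\operatorname{disc}(W_u)=\operatorname{disc}(H_u)\cdot\operatorname{disc}(Q)\cdot\operatorname{Res}(H_u,Q)^2 .
\]
Since $H_u$ and $Q$ are monic with coefficients in $\mathcal{O}_d$, both $\operatorname{disc}(Q)$ and $\operatorname{Res}(H_u,Q)$ are polynomial expressions in these coefficients (entries of a Sylvester-type matrix), hence lie in $\mathcal{O}_d$. Recalling $\delta_{\mathfrak{R}}(z')=\operatorname{disc}(W_u)$ and $\delta_{\mathfrak{M}}(z')=\operatorname{disc}(H_u)$, this exhibits $\delta_{\mathfrak{R}}(z')$ as $\delta_{\mathfrak{M}}(z')$ times an element of $\mathcal{O}_d$, which is exactly the asserted relation $\delta_{\mathfrak{M}}(z')\mid\delta_{\mathfrak{R}}(z')$ in $\mathcal{O}_d$.

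The main (and essentially only) obstacle is the step $R(\tilde u)=0\Rightarrow R=0$ just discussed; once one commits to working over $\mathfrak{M}_d$ and uses Lemma \ref{lem:coefficient of minimal poly} to transport minimality of $H_u$ there, the rest is formal algebra, so the whole argument is short. I expect no further difficulties, and in particular no separateness or nonvanishing hypotheses are needed: the divisibility statement is purely an identity of elements of $\mathcal{O}_d$.
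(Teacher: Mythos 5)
Your proof is correct and follows essentially the same route as the paper's: factor $W_u=H_u\cdot Q$ in $\mathcal{O}_d[T]$ using minimality of $H_u$, then apply the multiplicativity formula $\operatorname{disc}(W_u)=\operatorname{disc}(H_u)\operatorname{disc}(Q)\operatorname{Res}(H_u,Q)^2$ together with the observation that the resultant and discriminant of polynomials with coefficients in $\mathcal{O}_d$ again lie in $\mathcal{O}_d$. The only difference is that you spell out, via Euclidean division and the passage to $\mathfrak{M}_d$ through Lemma \ref{lem:coefficient of minimal poly}, why $H_u\mid W_u$ already in $\mathcal{O}_d[T]$ rather than just in $\mathfrak{M}_d[T]$ — a justification the paper leaves implicit.
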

\begin{proof}
    As $H_u(z',T)\in \mathcal{O}_d[T] $ is the minimal polynomial of $\tilde{u}\in \mathcal{O}_n/I$ over $\mathcal{O}_d$ and $W_u(z',T)\in \mathcal{O}_d[T]$ annihilates $\tilde{u}$, we have $H_u(z',T) \mid W_u(z',T)$ in $\mathcal{O}_d[T]$, i.e., there exists a polynomial $P_u(z',T)\in \mathcal{O}_d[T]$ such that $W_u(z',T)=H_u(z',T)P_u(z',T)$. Denote by $R_{H,P}$ the resultant of the polynomials $H$ and $P$, and denote by $\delta_p(z')$ the discriminant of $P_u(z',T)$. 
    
   Recall that the resultant of two polynomials is the  determinant of the Sylvester matrix of the two polynomials. Then it follows from $H_u(z',T),P_u(z',T)\in \mathcal{O}_d[T]$ that $R_{H,P}$ and $\delta_{P}(z')$ belong to $\mathcal{O}_d$. By the definition of discriminant, we know that $\delta_{\mathfrak{R}}(z')=\delta_{\mathfrak{M}}(z')\delta_{P}(z')R^{2}_{H,P}$. Thus, $\delta_{\mathfrak{M}}(z')\mid\delta_{\mathfrak{R}}(z')$ in $\mathcal{O}_d$.
\end{proof}

Now we recall the following existence of ``universal denominators" of weakly holomorphic functions on an irreducible complex variety $X$. 
\begin{Theorem}[see \cite{demailly-book}]
    \label{thm:universal denominators for complex space}
  For every point $x \in X$, there is a neighborhood $V$ of $x$ in $X$, such that $\delta_{\mathfrak{M},y} \mathcal{O}^{w}_{X,y} \subset \mathcal{O}_{X,y}$ for all $y \in V$, where  $\delta_{\mathfrak{M}}(z')$ is the discriminant of the minimal polynomial  $H_u(z',T) $  of $\tilde{u}$ over $\mathcal{O}_d$ and $\delta_{\mathfrak{M}}(z')$ is called a universal denominator on $V$.
\end{Theorem}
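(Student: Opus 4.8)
The plan is to reproduce the classical argument behind universal denominators (compare \cite{demailly-book}): exhibit $\pi\colon X\to\mathbb{C}^{d}$, $\pi(z)=(z_{1},\dots,z_{d})$, as a finite branched covering and recover a weakly holomorphic function from its restrictions to the sheets by Lagrange interpolation, controlling the denominators by the discriminant $\delta_{\mathfrak{M}}$. First I would set up the covering: by Lemma \ref{lem:projection is proper and surjective} the map $\pi|_{X}$ is proper and surjective, and by Remark \ref{rem:key remark} one has $\mathfrak{M}_{A}=\mathfrak{M}_{d}[\tilde u]$. Put $N\coloneqq\deg_{T}H_{u}$ and $B\coloneqq\{\delta_{\mathfrak{M}}=0\}$. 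After shrinking the neighbourhood, over $\mathbb{C}^{d}\setminus B$ the covering $\pi|_{X}$ is unramified of degree $N$ and $X$ is smooth there, the fibre over $z'$ being the $N$ simple roots $u_{1}(z'),\dots,u_{N}(z')$ of $H_{u}(z',\cdot)$; these are a priori multivalued, but $H_{u}(z',T)=\prod_{1\le j\le N}(T-u_{j}(z'))$ and $\delta_{\mathfrak{M}}(z')=V(z')^{2}$ with $V\coloneqq\prod_{1\le j<k\le N}(u_{j}-u_{k})$.

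Now fix $x\in V$ and $f\in\mathcal{O}^{w}_{X,x}$. Restricting $f$ to the $j$-th sheet and pulling back by the local biholomorphism $\pi$ produces holomorphic functions $f_{1},\dots,f_{N}$ on the simply connected pieces of a punctured neighbourhood $W\setminus B$ of $\pi(x)$, and these are bounded, since $f$ is weakly holomorphic. Let $P(z',T)=\sum_{0\le i\le N-1}c_{i}(z')T^{i}$ be the unique polynomial of degree $<N$ in $T$ with $P(z',u_{j}(z'))=f_{j}(z')$ for $1\le j\le N$. Then $f$ and $P(z',\tilde u)$ agree on $\pi^{-1}(W\setminus B)\cap X$, a dense open subset of $X$ near $x$; since $X_{\reg}$ is connected ($(X,o)$ being irreducible), two weakly holomorphic germs agreeing on a dense open set coincide. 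Hence it suffices to prove $\delta_{\mathfrak{M}}c_{i}\in\mathcal{O}_{d}$ for each $i$: then $\delta_{\mathfrak{M}}f$ agrees off $B$ with the holomorphic germ $\sum_{i}(\delta_{\mathfrak{M}}c_{i})\tilde u^{i}$, so $\delta_{\mathfrak{M}}f=\sum_{i}(\delta_{\mathfrak{M}}c_{i})\tilde u^{i}\in\mathcal{O}_{d}[\tilde u]\subseteq\mathcal{O}_{X,x}$, which is exactly the assertion.

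It remains to show $\delta_{\mathfrak{M}}c_{i}\in\mathcal{O}_{d}$. Solving the Vandermonde system $\sum_{0\le i\le N-1}c_{i}u_{j}^{i}=f_{j}$ ($1\le j\le N$) by Cramer's rule gives $c_{i}=D_{i}/V$, where $D_{i}$ is the determinant obtained from the Vandermonde matrix $(u_{j}^{i})$ by substituting the column $(f_{1},\dots,f_{N})$. The product $VD_{i}$ is unchanged under the monodromy of the covering (a transposition of two sheets changes both $V$ and $D_{i}$ by a sign), so it is a single-valued holomorphic function on $W\setminus B$; being a polynomial expression in the bounded functions $u_{j}$ and $f_{j}$, it is locally bounded near $B$, hence extends holomorphically across the hypersurface $B$ by the Riemann removable singularity theorem. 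Therefore $\delta_{\mathfrak{M}}c_{i}=V^{2}c_{i}=VD_{i}\in\mathcal{O}_{d}$, as desired.

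I expect the main obstacles to be of a technical, local-analytic nature: making precise the ``sheets and monodromy'' description of $\pi|_{X}$ in a small neighbourhood of a point $x$ that may lie over $B$ (this forces the shrinking of $V$ and uses properness of $\pi|_{X}$), and the removable-singularity step, where the boundedness built into the definition of weakly holomorphic functions is used in an essential way. Once this is in place, the polynomial version stated as Theorem \ref{th:universal denominators alg appendix} follows at once: by Lemma \ref{lem:relation between discriminants} one has $\delta_{\mathfrak{R}}=\delta_{\mathfrak{M}}\,h$ for some $h\in\mathcal{O}_{d}$, with $\delta_{\mathfrak{R}}\in\mathbb{C}[z_{1},\dots,z_{d}]$, so $\delta_{\mathfrak{R}}\mathcal{O}^{w}_{X,x}=\delta_{\mathfrak{M}}\bigl(h\,\mathcal{O}^{w}_{X,x}\bigr)\subseteq\delta_{\mathfrak{M}}\mathcal{O}^{w}_{X,x}\subseteq\mathcal{O}_{X,x}$ for every $x\in V$, exhibiting the polynomial $\delta_{\mathfrak{R}}$ as a universal denominator.
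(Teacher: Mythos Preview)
The paper does not prove this theorem; it is quoted from \cite{demailly-book} without proof, and your sketch is precisely the classical Lagrange-interpolation/Vandermonde argument one finds there. Your final paragraph, deducing Theorem \ref{th:universal denominators alg appendix} from this via Lemma \ref{lem:relation between discriminants}, also matches the paper's own proof of that result verbatim.
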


Using Lemma \ref{lem:relation between discriminants} and Theorem \ref{thm:universal denominators for complex space}, we can show that, the universal denominator for $\mathcal{O}^{w}_{X}$ can be chosen to be a polynomial.
\begin{Theorem}
\label{th:universal denominators alg appendix}
Assume that $X$ is irreducible and algebraic. Let $V$ be a neighborhood of $o$ in $X$ and $y\in V$. Then for any $f\in \mathcal{O}^{w}_{X,y}$, one has $\delta_{\mathfrak{R},y}(z')f\in \mathcal{O}_{X,y}$, where $\delta_{\mathfrak{R}}(z')\in \mathbb{C}[z_1,\ldots,z_d]$ is the discriminant of the minimal polynomial  $W_u(z',T) $  of $\tilde{u}$ over $\mathbb{C}[z_1,\ldots,z_d]$. 
\end{Theorem}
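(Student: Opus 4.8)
The plan is to deduce the statement from the local existence of universal denominators on general complex spaces (Theorem \ref{thm:universal denominators for complex space}) together with the divisibility relation between the two discriminants (Lemma \ref{lem:relation between discriminants}). Throughout I regard $\delta_{\mathfrak{M}}(z')$ and $\delta_{\mathfrak{R}}(z')$ as holomorphic functions on a neighborhood of $o'$ in $\mathbb{C}^d$, and $\delta_{\mathfrak{M}}\circ\pi$, $\delta_{\mathfrak{R}}\circ\pi$ as functions on $X$ near $o$; then $\delta_{\mathfrak{M},y}$ and $\delta_{\mathfrak{R},y}$ denote the germs at $y\in X$ of these pullbacks (equivalently, the germs of $\delta_{\mathfrak{M}}$, $\delta_{\mathfrak{R}}$ at $\pi(y)$, composed with $\pi$). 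Note first that $\delta_{\mathfrak{R}}\not\equiv 0$: the minimal polynomial $W_u(z',T)$ of $\tilde{u}$ over the characteristic-zero field $\mathfrak{R}_d$ is separable, hence has nonvanishing discriminant; so $\{\delta_{\mathfrak{R}}=0\}$ is nowhere dense in $\mathbb{C}^d$ and $\delta_{\mathfrak{R}}$ is a genuine candidate denominator.

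First I would fix the neighborhoods. Applying Theorem \ref{thm:universal denominators for complex space} at the point $o\in X$ produces a neighborhood $V_0$ of $o$ in $X$ with $\delta_{\mathfrak{M},y}\,\mathcal{O}^{w}_{X,y}\subseteq\mathcal{O}_{X,y}$ for every $y\in V_0$. By Lemma \ref{lem:relation between discriminants}, $\delta_{\mathfrak{M}}(z')\mid\delta_{\mathfrak{R}}(z')$ in $\mathcal{O}_d$, so there is a germ $g\in\mathcal{O}_d$ with $\delta_{\mathfrak{R}}=g\,\delta_{\mathfrak{M}}$ at $o'$; I would choose a neighborhood $W'$ of $o'$ in $\mathbb{C}^d$ on which $g$ is holomorphic and the identity $\delta_{\mathfrak{R}}=g\,\delta_{\mathfrak{M}}$ holds identically. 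Shrinking, I would take $V\subseteq V_0$ a neighborhood of $o$ in $X$ with $\pi(V)\subseteq W'$; since $\pi$ is continuous this is harmless.

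Then I would derive the conclusion pointwise. Let $y\in V$ and $f\in\mathcal{O}^{w}_{X,y}$. By the choice of $V_0$, the germ $h\coloneqq\delta_{\mathfrak{M},y}\,f$ lies in $\mathcal{O}_{X,y}$. Because $\pi(y)\in W'$, pulling back the identity $\delta_{\mathfrak{R}}=g\,\delta_{\mathfrak{M}}$ through $\pi$ and passing to germs at $y$ gives $\delta_{\mathfrak{R},y}\,f=(g\circ\pi)_{y}\cdot(\delta_{\mathfrak{M},y}\,f)=(g\circ\pi)_{y}\,h$, where $(g\circ\pi)_{y}\in\mathcal{O}_{X,y}$. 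Hence $\delta_{\mathfrak{R},y}\,f\in\mathcal{O}_{X,y}$, which is the assertion; since $\delta_{\mathfrak{R}}\in\mathbb{C}[z_1,\ldots,z_d]$ is a fixed polynomial, this exhibits it as a polynomial universal denominator for $\mathcal{O}^{w}_{X}$ on $V$.

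The argument is short once the objects are set up correctly, so I do not expect a serious obstacle; the points to be careful about are exactly the two interpretations above — reading $\delta_{\mathfrak{M},y}$ and $\delta_{\mathfrak{R},y}$ as germs at $y$ of pullbacks of the fixed germs at $o'$ (rather than of locally varying minimal-polynomial discriminants), and upgrading the germ-level divisibility of Lemma \ref{lem:relation between discriminants} to an honest factorization on a neighborhood $W'$ by extracting the cofactor $g$ and checking it is holomorphic there. A secondary point is simply confirming that the neighborhoods $V_0$ and $\pi^{-1}(W')$ can be intersected compatibly, using continuity (and properness) of $\pi$.
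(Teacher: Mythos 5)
Your proof is correct and follows essentially the same route as the paper: apply Theorem \ref{thm:universal denominators for complex space} to get $\delta_{\mathfrak{M},y}\,\mathcal{O}^w_{X,y}\subseteq\mathcal{O}_{X,y}$, then use the divisibility $\delta_{\mathfrak{M}}\mid\delta_{\mathfrak{R}}$ in $\mathcal{O}_d$ from Lemma \ref{lem:relation between discriminants} to absorb the extra holomorphic cofactor. The only cosmetic difference is that the paper cites the explicit factorization $\delta_{\mathfrak{R}}=\delta_{\mathfrak{M}}\delta_P R_{H,P}^2$ from the proof of Lemma \ref{lem:relation between discriminants}, while you extract a single cofactor $g$ and track the neighborhoods more explicitly; both are sound.
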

\begin{proof}
It follows from Theorem \ref{thm:universal denominators for complex space} that, for any $f\in\mathcal{O}^{w}_{X,y}$, one has $\delta_{\mathfrak{M},y} f \subset \mathcal{O}_{X,y}$. Lemma  \ref{lem:relation between discriminants} tells $\delta_{\mathfrak{R}}(z')=\delta_{\mathfrak{M}}(z')\delta_{P}(z')R_{H,P}$, where $\delta_{P}(z'),R_{H,P}\in \mathcal{O}_d$. Thus we know $\delta_{\mathfrak{R},y}(z')f=\delta_{P}(z')R_{H,P} \delta_{\mathfrak{M}}(z')f\in \mathcal{O}_{X,y}$.

Theorem \ref{th:universal denominators alg appendix} has been proved.
\end{proof}

%\bibliographystyle{references}
%\bibliography{xbib}

\

\end{document}